\newtcolorbox{leftbrace}{%
    enhanced jigsaw, 
    breakable, 
    frame hidden, 
    overlay={%
        \draw [
            decoration={brace,amplitude=0.5em},
            decorate,
            ultra thick,
        ]
        (frame.north west)--(frame.south west);
    },
    parbox=false,
}
\renewcommand{\nomgroup}[1]{%
\ifthenelse{\equal{#1}{B}}{\item[\textbf{Invariants}]}{%
\ifthenelse{\equal{#1}{C}}{\item[\textbf{Functions}]}{%
\ifthenelse{\equal{#1}{D}}{\item[\textbf{Other}]}{%
\ifthenelse{\equal{#1}{A}}{\item[\textbf{Collections of subposets}]}{%
}}}}}
\numberwithin{equation}{section}
\newcommand\freefootnote[1]{%
  \let\thefootnote\relax%
  \footnotetext{#1}%
  \let\thefootnote\svthefootnote%
}
\definecolor{darkblue}{rgb}{0.0, 0.0, 0.8}
\definecolor{darkred}{rgb}{0.8, 0.0, 0.0}
\definecolor{darkgreen}{rgb}{0.0, 0.65, 0.0}
\DeclareMathAlphabet{\mathcal}{OMS}{cmsy}{m}{n}
\newcommand{\ok}[1]{{#1}}
\newcommand{\white}[1] {{\textcolor{white}{#1}}}
\newcommand{\contributionn}{}
\newcommand{\pfin}{pointwisely finite}
\newcommand{\invertible}{M\"obius invertible{}}
\newcommand{\convolvable}{convolvable}
\newcommand{\lin}{\mathrm{Span}}
\newcommand{\abs}[1]{\lvert{#1}\rvert}
\newcommand{\pmo}{$\catP$-module}
\newcommand{\catQ}{\mathrm{Q}}
\newcommand{\R}{\mathbb{R}}
\newcommand{\N}{\mathbb{N}}
\newcommand{\Z}{\mathbb{Z}}
\newcommand{\kf}{k}
\newcommand{\im}{\mathrm{im}}
\newcommand{\dE}{d_\mathrm{E}}
\newcommand{\cvec}{\mathbf{vec}}
\newcommand{\dI}{d_\mathrm{I}}
\newcommand{\inv}{^{-1}}
\newcommand{\id}{\mathrm{id}}
\newcommand{\dgm}{\mathrm{\mathrm{dgm}}}
\newcommand{\rk}{\mathrm{rk}}
\newcommand{\eps}{\epsilon}
\newcommand{\catP}{\mathrm{P}}
\newcommand{\catS}{\mathrm{S}}
\newcommand{\rank}{\mathrm{rk}}
\newcommand{\mob}{\zeta}
\newcommand{\Int}{\mathrm{Int}}
\newcommand{\fint}{\mathrm{int}}
\newcommand{\con}{\mathrm{con}}
\newcommand{\Con}{\mathrm{Con}}
\newcommand{\Jcal}{\mathcal{J}}
\newcommand{\Ical}{\mathcal{I}}
\newcommand{\Scal}{\mathcal{S}}
\newcommand{\Tcal}{\mathcal{T}}
\newcommand{\Rcal}{\mathcal{R}}
\newcommand{\Pcal}{\mathcal{P}}
\newcommand{\Lfrak}{\mathfrak{L}}
\newcommand{\barc}{\mathrm{barc}}
\newcommand{\ZZ}{\mathbb{ZZ}}
\newcommand{\SZZ}{\mathrm{S}\mathbb{ZZ}}
\newcommand{\overmax}{\max_{\ZZ}}
\newcommand{\overmin}{\min_{\ZZ}}
\newcommand{\bv}{\mathbf{v}}
\newcommand{\bw}{\mathbf{w}}
\newcommand{\seg}{\mathrm{Seg}}
\newcommand{\bareps}{\vec{\epsilon}}
\newcommand{\Q}{\mathbb{Q}}
\newcommand{\mult}{\mathrm{mult}}
\newcommand{\Zplus}{\Z_{\geq 0}}
\newcommand{\drm}{\mathrm{d}}
\newcommand{\mmod}{\mathrm{mod}\ }
\newcommand{\Hom}{\mathrm{Hom}}
\newtheorem{theorem}{Theorem}
\newtheorem{proposition}{Proposition}[section]
\newtheorem{oldtheorem}[proposition]{Theorem}
\newtheorem{lemma}[proposition]{Lemma}
\newtheorem{corollary}[proposition]{Corollary}
\theoremstyle{definition}
\newtheorem{definition}[proposition]{Definition}
\newtheorem{example}[proposition]{Example}
\newtheorem{convention}[proposition]{Convention}
\theoremstyle{remark}
\newtheorem{remark}[proposition]{Remark}
\newtheorem{claim}{Claim}
\title{The Generalized Rank Invariant:\\ M\"obius invertibility, Discriminating Power, and Connection to Other Invariants \freefootnote{\emph{2020 Mathematics Subject Classification}:\ \ 55N31, 55U99} \freefootnote{\emph{Keywords}: M\"obius inversion, persistence diagrams, multi-parameter persistent homology, topological data analysis}}
\date{}
\author[1]{Nathaniel Clause} 
\author[2]{Woojin Kim} 
\author[1]{Facundo M\'{e}moli}
\affil[1]{The Ohio State University, Columbus OH \thanks{\texttt{facundo.memoli@gmail.com}} \thanks{\texttt{clause.15@osu.edu}}}
\affil[2]{KAIST, Daejeon, South Korea 	\thanks{\texttt{woojin.kim.math@gmail.com}}}
\begin{document}
\maketitle

\begin{abstract}

\ok{
In addition to inherent computational challenges, the absence of a canonical method for quantifying `persistence' in multi-parameter persistent homology remains a hurdle in its application.} 

One of the best known 
 quantifications of persistence for multi-parameter persistent homology is the rank invariant, which has recently evolved into the generalized rank invariant (GRI) by naturally extending its domain. This extension enables us to quantify persistence across a broader range of regions in the indexing poset compared to the rank invariant.  However, the size of the domain of the GRI is generally formidable, making it desirable to restrict its domain to a more manageable subset for computational purposes. The foremost questions regarding such a restriction of the domain are:
 \textbf{(1)} 
How to restrict, if possible, the domain of the GRI without any loss of information? 
 \textbf{(2)} When can we more compactly encode the GRI 
 as a `persistence diagram'?
 \textbf{(3)} What is the trade-off  between
computational efficiency and the discriminating power of the GRI as the amount of the restriction on the domain varies? 
\ok{\textbf{(4)} What proxies exist for persistence diagrams in the multi-parameter setting that can be derived from the GRI?}
To address \ok{the first three} questions, we 
generalize and axiomatize the classic fundamental lemma of persistent homology via the notion of \emph{M\"obius invertibility} of the GRI which we propose. This extension also contextualizes known results regarding the (generalized) rank invariant within the classical theory of M\"obius inversion. We conduct a comprehensive comparison between M\"obius invertibility and other existing concepts related to the structural simplicity of persistence modules, such as Miller's notion of tameness. During this investigation, we identify an example of a persistence module whose GRI is not M\"obius invertible. 

\ok{We address the fourth question through the notion of motivic invariants. We demonstrate that many invariants from the literature can be both derived from the GRI and recast as motivic invariants.}

\end{abstract}

\setlength{\nomitemsep}{-\parsep}

\nomenclature[A, 1]{\(\catP\)}{A poset \(\catP=(\catP,\leq)\), regarded as a category.}

\nomenclature[A, 1.5]{\([n]\)}{The totally ordered set $\{1<\ldots<n\}$.}

\nomenclature[A, 2]{\(\seg(\catP)\)}{Poset of segments of the poset \(\catP\), ordered by containment (Equation \ref{eq:segment}).}

\nomenclature[A, 3]{\(\Int(\catP)\)}{Poset of intervals of the poset \(\catP\), ordered by containment (Definition \ref{def:intervals}).}

\nomenclature[A, 4]{$\fint(\catP)$}{Poset of finite intervals of the poset $\catP$, ordered by containment (pg. \pageref{ex:mn subposet}).} 

\nomenclature[A, 4.5]{$\Int_{m,n}(\catP)$}{Poset of intervals of $\catP$ with at most $m$ minimal points and $n$ maximal points (pg. \pageref{eq:intmn}).}

\nomenclature[A, 1]{\(\Con(\catP)\)}{Poset of connected subposets of \(\catP\), ordered by containment (pg. \pageref{nom:Con(P)}).}

\nomenclature[A, 6]{$\widehat{\Ical}$}{The limit completion of $\Ical\subset \Int(\catP)$ (Equation (\ref{eq:limit intervals})).}

\nomenclature[B, 1]{RI}{Rank invariant (Definition \ref{def:generalized rank invariant}).}

\nomenclature[B, 2]{GRI}{Generalized rank invariant (Definition {\ref{def:generalized rank invariant}}).}

\nomenclature[B, 3]{$\rk(M)$}{The (generalized) rank of $M$ (pg. \pageref{nom:generalized rank}).}

\nomenclature[B, 4]{$\rk_M^\Ical$}{The generalized rank invariant of $M$ over $\Ical\subset\Con(\catP)$ (Definition \ref{def:generalized rank invariant}).}

\nomenclature[B, 5]{$\rk_M^{\Int}$}{The generalized rank invariant of $M$ over $\Int(\catP)$ (Definition \ref{def:generalized rank invariant}).
}

\nomenclature[B, 6]{$\dgm_M^\Ical$}{The generalized persistence diagram of $M$ over $\Ical$ (Definition \ref{def:GPD over I}).}

\nomenclature[B, 7]{ZIB}{The zigzag-path-indexed-barcode (Definition \ref{def:zigzag-path-indexed barcode general poset}).}

\nomenclature[B, 8]{$M_{S\Z\Z}$}{The Zigzag-path-indexed-barcode over simple paths of $M:\catP\to \cvec$ (Definition \ref{def:zigzag-path-indexed barcode}).}

\nomenclature[C, 1]{$I(\catQ,\kf)$}{The incidence algebra of $\catQ$ over $\kf$ (pg. \pageref{nom:incidence algebra}).}

\nomenclature[C, 2]{$\delta_\catQ$}{The Dirac delta function (Equation (\ref{eq:delta function})).}

\nomenclature[C, 3]{$\zeta_\catQ$}{The zeta function (Equation (\ref{eq:zeta function})).}

\nomenclature[C, 4]{$\mu_\catQ$}{The M\"obius function (Equation (\ref{eq:mobius})).}

\nomenclature[D]{$\barc(M)$}{The barcode of $M$ (pg. \pageref{nom:barcode}).}

\nomenclature[D]{$\kf_I$}{Interval module for the interval $I$ (Equation (\ref{eq:interval module})).}

\nomenclature[D]{$\Gamma$}{A path in $\Z^2$ (pg. \pageref{nom: path}).}

\nomenclature[D]{$\kf^\catQ$}{The vector space of functions $f:\catQ\to \kf$ (pg. \pageref{nom: kq}).}

\nomenclature[D]{$p^\downarrow$}{The set of elements in $\catP$ less than or equal to $p$ (pg. \pageref{nom: down p}).}

\nomenclature[D]{$p^\uparrow$}{The set of elements in $\catP$ greater than or equal to $p$ (pg. \pageref{nom: up p}).}

\tableofcontents

\section{Introduction}\label{section:introduction}
One-parameter persistent homology is a central concept in topological data analysis (TDA) with a wide range of applications \cite{bleher2021topological,cang2015topological,carlsson2021topological,chan2013topology,chowdhury2018importance,dabaghian2012topological,PhysRevC.106.064912,lee2018high,stolz2023relational}. Under fairly general assumptions, a one-parameter persistence module $M:\R\to \cvec$ is completely characterized by its so-called \emph{persistence diagram}, which provides a \ok{canonical}, compact and interpretable summary of $M$. 

However, one-parameter persistent homology has well-known struggles when dealing with noise and outliers, motivating a surge in research in recent years on the more robust \emph{multi-parameter} persistent homology, i.e. the setting in which we contend with \ok{\emph{multi-parameter} persistence modules $M:\R^d\to \cvec$. Unfortunately, in contrast with the case of one-parameter persistence modules (where persistence diagrams are full invariants thereof),  there is no complete and simultaneously discrete invariant for multi-parameter persistence modules \cite{carlsson2009theory}.}
Accordingly, many discrete and necessarily incomplete invariants have been studied, e.g. \cite{asashiba2023approximation,asashiba2019approximation,blanchette2021homological,botnan2021signed,chacholski2022effective,harrington2019stratifying,landi2018rank,lesnick2015interactive,miller2020homological,oudot2024stability}. The central hurdle preventing widespread use of \ok{many of} these invariants in practice is their computational complexity.

In this work, we focus on a particular class of invariants for multi-parameter 
persistence modules, or more generally for \emph{$\catP$-modules} $M:\catP\rightarrow \cvec$, where $\catP$ is an arbitrary poset. For these invariants, we study the tension that exists  between their  computational complexity and their inherent discriminating power.

One of these invariants that is intimately tied to this work is the classical \emph{rank invariant} (RI), which encodes the rank of all linear maps present in a given $\catP$-module; see  \cite{carlsson2009theory,puuska2020erosion}. 
For one-parameter persistence modules, the RI contains equivalent information to the classical persistence diagram \cite{abeasis1981geometry,carlsson2009theory}, making it a natural notion to import into the multi-parameter setting.
The RI allows us to quantify persistence over \emph{segments} in the indexing poset (a segment is the set of points lying between a given pair of comparable points), and it is a lossless invariant when applied to so-called rectangle-decomposable modules \cite{botnan2022rectangle}.
As a downside, the RI \emph{only} allows us to quantify persistence over segments, and in the multi-parameter setting there \ok{typically are many}  natural non-segment like subsets of the indexing poset $\catP$ over which we desire to quantify persistence.

Motivated by this, the RI has recently evolved into the \emph{generalized rank invariant} (GRI)  by extending the domain of the RI from the set $\seg(\catP)$ of segments of the indexing poset $\catP$ to the set $\Int(\catP)$ of \emph{intervals} of $\catP$ or to the even larger set $\Con(\catP)$ of \emph{connected subposets} of $\catP$ \cite{kim2021generalized}.
By recording the rank of the canonical limit-to-colimit map of the diagram over each element of $\Int(\catP)$ (or of $\Con(\catP)$), the GRI  quantifies persistence across a broader range of regions in the indexing poset compared to the RI and, therefore, the GRI acquires more discriminating power than the RI.

However, the size of $\Int(\catP)$, let alone that of the larger $\Con(\catP)$, are generally formidable, causing a bottleneck for computing the GRI over the entire $\Int(\catP)$ or $\Con(\catP)$. 
For example, if $\catP$ is the 2-dimensional 10-by-10 grid (which serves as a much simplified setup for 2-parameter persistent homology), then $\Int(\catP)$ comprises 1,497,925,315 elements \cite[Theorem 31]{asashiba2022interval}. 
Therefore, \textbf{restricting the domain of the GRI} to a more manageable subset is desirable, \ok{if not downright necessary}, 
for computational purposes. 

\medskip
Questions regarding the magnitude and consequent effect of this restriction can be grouped into two paradigms: 
\begin{itemize}
\item[\faDiamond] In the \emph{lossless paradigm}, \ok{we ask} when we can, and if so how to, restrict the GRI's domain without losing information \ok{and efficiently represent the information.}
\item[\faCompress] In the \emph{lossy paradigm},\footnote{I.e. when we `compress'.} \ok{we ask} how much information 
 a restricted GRI retains. 
 \end{itemize}
Within these two categories, the foremost questions regarding the restriction of the GRI's domain are:
    
\begin{enumerate}
\item[(\faDiamond)] \hypertarget{q:1}{\textbf{Question 1.}} 
How to restrict, if possible, the domain of the GRI
without any loss of information? 

\item[(\faDiamond)] \hypertarget{q:2}{\textbf{Question 2.}} 
Under what conditions can we more compactly encode the GRI as a 'persistence diagram,' even when the indexing poset $\catP$ is not discrete?

\item[(\faCompress)]\hypertarget{q:3}{\textbf{Question 3.}} In the lossy regime, what is the trade-off  between
computational efficiency and the discriminating power of the GRI as the amount of the restriction varies? 
\end{enumerate}
\ok{Finally, we move beyond considering the GRI itself and inquire as to what other invariants exist for quantifying persistence which are downstream from the GRI: }
\begin{enumerate}
\item[(\faCompress)] \hypertarget{q:4}{\textbf{Question 4.}}   What proxies exist for persistence diagrams in the multi-parameter setting that can be derived from the GRI?
\end{enumerate}

Our work embarks on addressing these questions. 

\subsection*{Contributions.} 

In order to tackle \hyperlink{q:1}{\textbf{Question 1}} and \hyperlink{q:2}{\textbf{Question 2}}, we introduce the concept of \emph{M\"obius invertibility} of the GRI which is obtained by axiomatizing the classical fundamental lemma of persistent homology \cite[Section VII]{edelsbrunner2008computational}. For persistence modules over finite posets,
M\"obius invertibility of the GRI is always guaranteed, and its M\"obius inverse is known as the \emph{generalized persistence diagram} \cite{kim2021generalized} (see also \cite{asashiba2019approximation}) --- a compact encoding of the GRI.\footnote{In fact, the generalized persistence diagram is well-defined for persistence modules over a slightly more general class of indexing posets; see \cite[Section 3]{kim2021generalized}.}
 
In addition, M\"obius invertibility is useful to contextualize known theorems regarding the RI or GRI within the framework of classical M\"obius inversion theory, which also sometimes enables us to simplify existing proofs or strengthen statements of existing theorems.
Furthermore, M\"obius invertibility is closely linked to several invariants of multi-parameter persistence \cite{blanchette2021homological,botnan2021signed, gulen2022galois,lesnick2015interactive}, as we elucidate in this work. \ok{We also observe that M\"obius invertibility of the GRI of a $\catP$-module $M$ is connected to the structural simplicity of $M$ in a sense akin to Miller’s notion of tameness \cite{miller2020homological}.  In relation to this, we conduct a thorough comparison between tameness and M\"obius invertibility.}

To tackle \hyperlink{q:3}{\textbf{Question 3}}, we study the \emph{completeness} properties of the GRI.
Namely, fixing any collection of intervals  $\Ical$ in the indexing poset $ \catP$,
we characterize the collection of $\catP$-modules on which the GRI restricted to $\Ical$ is a complete invariant.
Further, if the GRI restricted to $\Ical$ is not a complete invariant on an arbitrary fixed collection of modules, we suitably quantify this lack of completeness. 

We tackle \hyperlink{q:4}{\textbf{Question 4}} by considering the concept of \emph{motivic invariants}, which is based on the idea of `probing' the indexing poset via a simpler poset called a \emph{motif}.
We demonstrate that many invariants from the literature can be derived from the GRI, and showcase how they fit into the framework of motivic invariants: see Table \ref{table:comparison}, Examples \ref{ex:gri is motivic}, \ref{ex:table 1 motivic}, and Remark \ref{rem:zib as motivic}.
One specific motivic invariant we focus on is the Zigzag-path-Indexed Barcode (ZIB, Definition \ref{def:zigzag-path-indexed barcode general poset}), which consists of the restrictions of a given persistence module along all zigzag paths in its indexing poset.

A recent finding that the GRI of a $\Z^2$-module $M$ is determined by the collection of all zigzag persistence modules that arise from  restricting  $M$ to certain paths in $\Z^2$ \cite{dey2022computing}
indicates that 
identifying the \emph{minimal} set of paths over which  zigzag barcodes should be computed  to determine the (restricted) GRI 
is important in order to be able to: 
\begin{enumerate}
\item[(i)]  exploit existing efficient zigzag persistence algorithms \cite{carlsson2009zigzag,dey2022fast,milosavljevic2011zigzag} for computing the (restricted) GRI of $\Z^2$-modules as well as for computing other related invariants \cite{amiot2024invariants,asashiba2023approximation,blanchette2021homological,botnan2021signed,chacholski2022effective}. 
\item[(ii)]  
examine how much is gained in terms of efficiency when computing the  restricted GRI of a \(\mathbb{Z}^2\)-module as opposed  to computing the entire GRI.
 Also, from a different perspective, by investigating  
 the extent to which the ZIB can be recovered from the (restricted) GRI, we can ascertain the discriminating power of the (restricted) GRI.
 \end{enumerate}
 Therefore, clarifying the connection between the GRI and the ZIB for $\Z^2$-modules also aids in addressing \hyperlink{q:3}{\textbf{Question 3}} for $\Z^2$-modules.

A priori, there is the possibility that the class of zigzag paths in $\Z^2$ composing the the domain of the ZIB can be further restricted compared to the class considered in \cite{dey2022computing}, while still being able to recover the entire GRI.
One natural candidate for such a class is the collection of \emph{simple} paths, i.e. paths with no repeated vertices (Definition \ref{def:zigzag-path-indexed barcode}).
Motivated by this, we elucidate the precise relationship between the ZIB over simple paths and the GRI over $\Int(\Z^2)$, the set of all intervals of $\Z^2$.

\paragraph{Details about Questions 1, 2 and 3.}

\begin{enumerate}[label=\textbf{(\Roman*)},wide, labelindent=0pt]
    \item Our results related to \hyperlink{q:1}{\textbf{Question 1}} and \hyperlink{q:2}{\textbf{Question 2}} are the following.\label{item:contribution1}
    
    \begin{enumerate}[leftmargin=*,label=(\roman*)]   
    \item 
   The notion of M\"obius invertibility of the GRI encompasses \emph{generalized persistence diagrams \cite{kim2021generalized}} and \emph{rank decompositions} \cite{botnan2021signed}, and properly adapts \emph{M\"obius inversion of a constructible function} \cite{gulen2022galois}: see \textbf{Section \ref{sec:relative to a dictionary}}.
     \item We show that if {a GRI admits a rank decomposition, then even without assuming that $\Int(\catP)$ is locally finite, we have that the minimal rank decomposition of the GRI is obtained from M\"obius inversion over a specific locally finite $\Ical\subset \Int(\catP)$:} 
    see \textbf{Theorem \ref{thm:rk decomposition is GPD}}. \label{item:contribution2a}
    \item We clarify the relationship among the concepts pertaining to structural simplicity of persistence modules, including M\"obius invertibility of the GRI, tameness \cite{miller2020homological}, constructibility \cite{gulen2022galois,patel2018generalized}, interval decomposability \cite{botnan2018algebraic}, finite presentability: see \textbf{Figure \ref{fig:tameness + MI visual}}.
    \item We identify a 2-parameter persistence module whose GRI over intervals is not M\"obius invertible (which implies that the GRI admits no rank decomposition). To the best of our knowledge, this is the first example of its kind in the existing literature: see \textbf{Theorem \ref{thm:tame does not imply Int-GRI invertible}} and \textbf{Remark \ref{rem:implication of the counterexample}.}

      \item 
    We establish a number of sufficient conditions for M\"obius invertibility of the GRI. For instance, our results imply that the GRI \ok{over intervals} of any finitely presentable $\R^d$-module is M\"obius invertible.
    Furthermore, in this scenario, we construct a finite poset of intervals to which the GRI \ok{over intervals} can be restricted \emph{without any loss of information}. 
    That is, the M\"obius inversion of the GRI over this \emph{finite} poset encodes the entire GRI over the uncountable set of intervals in $\R^d$: \ok{see \textbf{Theorem \ref{thm:sufficient conditions for invertibility} and \textbf{Proposition \ref{prop:finitely presentable invertiver over intmn}}.} \ok{In another instance,  we establish the M\"obius invertibility of $\catP$-modules satisfying certain assumptions akin to \emph{tameness}  \cite{miller2020homological}: see \textbf{Theorem \ref{thm:pull-back GPD}}}.}
    
    All these results shed light on computational aspects of the GRI and also on its compact encoding as a ‘persistence diagram’, which provides a concrete answer to \hyperlink{q:1}{\textbf{Question 1}}.     
    \end{enumerate}

    \item \label{item:contribution2} We address \hyperlink{q:3}{\textbf{Question 3}} as follows.
\begin{enumerate}[leftmargin=*,label=(\roman*)]
\item  
Let $\Ical\subset \Int(\catP)$ be any collection of intervals.
We use the M\"obius inversion formula to prove that the GRI over $\Ical$ is a complete invariant for $\catP$-modules whose indecomposable summands are interval modules supported on elements in $\Ical$: see \textbf{Theorem \ref{thm:mainthm}} (note: the statement itself was already known; see Remark \ref{rem:relationship with known result}). \label{item:contribution1a}

\item  We show that Theorem \ref{thm:mainthm} is, in a certain precise sense, optimal: see \textbf{Theorem \ref{thm:tightness}} and \textbf{Corollary \ref{cor:Ical and Jcal}}. \label{item:contribution1b}
 \item We go one step further and describe the equivalence classes of $\catP$-modules that have the same GRI over any $\Ical\subset \Int(\catP)$.
 Interestingly, we show this via exploiting the M\"obius inversion of functions \ok{which do not arise as GRIs of $\catP$-modules}: see \textbf{Theorem \ref{thm:coincidence of rank invariants}} and \textbf{Corollary \ref{cor:minimal pairs}}. \label{item:contribution1c}
 \end{enumerate}

In what follows, we give a special attention to the discriminating power and computational cost of the (restricted) GRI of 2-parameter persistence modules. Let $\fint(\Z^2)$ be the collection of all \emph{finite} intervals of $\Z^2$. 

\begin{enumerate}[resume,label=(\roman*),leftmargin=*]
\item  
  We show that the ZIB over simple paths and the GRI over $\fint(\Z^2)$ do not determine each other. As a corollary to this result and \cite[Theorem 24]{dey2022computing}, it follows that the ZIB over \emph{all} paths is a \emph{strictly} finer invariant than both the ZIB over \emph{simple} paths and the GRI over $\fint(\Z^2)$: see \textbf{Examples \ref{ex:Int does not recover simple ZZ}, \ref{ex:intvszz}} and \textbf{Figure \ref{fig:intro}}.  \label{item:contribution3a}
 
\item  Using M\"obius inversion, we show the ZIB over simple paths and the GRI over $\fint(\Z^2)$  
  estimate each other: see \textbf{Remark \ref{rem:approximating Int rank invariant via simple zz}} and \textbf{Proposition \ref{prop:approximating zz}}.  \label{item:contribution3b}

\item We show that the ZIB (or equivalently GRI) over zigzag paths of the form $\bullet \rightarrow \bullet \leftarrow \bullet$ and $\bullet \leftarrow \bullet \rightarrow \bullet$ is a strictly stronger invariant than the bigraded Betti numbers: see \textbf{Proposition \ref{prop:bigraded Bettis do not determine GRI over length-3 zigzags}}.
\end{enumerate}

\item A few subsidiary contributions follow:
We establish a stability theorem for GRIs and their restrictions 
-- a property that was not addressed in \cite{kim2021generalized}: see \textbf{Theorem \ref{thm:gristability}.}
A suitable reinterpretation of this theorem implies stability of ZIBs: 
see \textbf{Theorem \ref{thm:ZIB stability}}.  \label{item:contribution3}
Also, we analyze the trade-off between computational complexity of the erosion distance (Definition \ref{def:erosion distance Z2}) between restricted GRIs and the discriminating power of restricted GRIs as the domain of the restriction grows; \textbf{Remark \ref{rem:tension remark}}.

\end{enumerate}

\begin{figure}[h]
\captionsetup{singlelinecheck=off}
\caption{Implications and non-implications among the concepts pertaining to the structural simplicity of a persistence module $M$ over $\R^d$, as detailed in Sections \ref{sec:On structural simplicity of persistence modules} and \ref{sec:sufficient conditions}:
}
\begin{multicols}{3}
\begin{itemize}[topsep=0pt]
  \setlength\itemsep{0em}
\item[(1)] Proposition \ref{prop:constructible implies finitely presentable},
\item[(2)] Pf. of Thm. \ref{thm:sufficient conditions for invertibility} \ref{item:finitely presentable invertible},
\item[(3)] Theorem \ref{thm:sufficient conditions for invertibility} \ref{item:interval-decomposable},
\item[(4)] Theorem \ref{thm:sufficient conditions for invertibility} \ref{item:finitely encodable},
\item[(5)] By definition,
\item[(6)] Proposition \ref{prop:Mobius invertibility does not imply tameness},
\item[(7)] Theorem \ref{thm:tame does not imply Int-GRI invertible},
\item[(8)] Remark \ref{rem:monotonicity of mobius invertibility} \ref{item:monotonicity of mobius invertibility},
\item[(9)] Remark \ref{rem:implication of the counterexample} \ref{item:implication of the counterexample1}.
\end{itemize}
\end{multicols}

\begin{center}
    \includegraphics[width=0.9\linewidth]{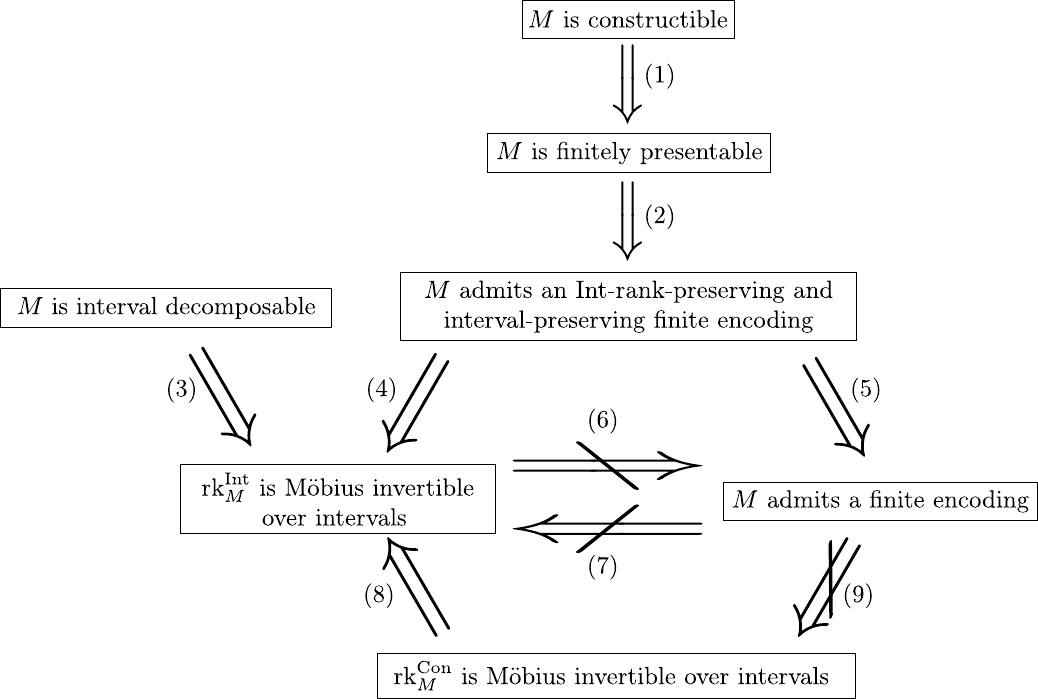}
\label{fig:tameness + MI visual}
\end{center}
\end{figure}

\newpage

\begin{landscape}

\begin{table}[h]
\caption{Comparison between the GRI 
 of a $\catP$-module $M$, denoted by $\rk_M$, and other invariants of $M$. Different choices of the indexing poset $\catP$, and domain \ok{$\Jcal\subset \Int(P)$} of $\rk_M$ are made. $\partial \rk_M^{\Jcal}$ denotes the M\"obius inversion of either $\rk_M^{\Jcal}$ or an appropriate restriction of $\rk_M^{\Jcal}$, which is essentially unique (Proposition \ref{prop:justifying the term mobius invertibility}). 
Section \ref{sec:comparison} is dedicated to providing detailed explanations of this table.
}\label{table:comparison}
\begin{center}
 \renewcommand\theadfont{}
\begin{tabular}{|l|l|l|l|l|l|}
\hline
\cellcolor[HTML]{EFEFEF} \# & \cellcolor[HTML]{EFEFEF}$\catP$ & \cellcolor[HTML]{EFEFEF}Domain $\Jcal$ of $\rk^\Jcal_M$                                                                  & \cellcolor[HTML]{EFEFEF}Name or Comparison &\cellcolor[HTML]{EFEFEF} $ \partial \rk_M^{\Jcal}$    & \cellcolor[HTML]{EFEFEF} \thead{$ \partial \rk_M^{\Jcal}$ \\always exists?} \\ \hline

1& Any &  $\{\{p\}:p\in \catP\}$                                                                 & $\Leftrightarrow$ Dimension function                                               & Dimension function & Yes   \\ \hline

2& Any & $\seg(\catP)$                                                                             & $\Leftrightarrow$ Rank Invariant              
& Signed barcode \cite{botnan2021signed} & ?   \\ \hline
3& $\R^d$ & $\displaystyle \bigsqcup \{\seg(\ell): \mbox{$\ell$ is a monotone line in $\R^d$}\}$                                                                               & $\Leftrightarrow$ Rank invariant       & Fibered barcode \cite{lesnick2015interactive} & Yes            
\\ \hline
4 & Any                         & 
  $\Int(\catP)$&          $\Int$-GRI   & \thead{GPD over $\Int(\catP)$\\ Rank decomposition} & No   \\ \hline

5 & Any                          & $\Con(\catP)$                        & $\Con$-GRI       & GPD over $\Con(\catP)$ & No \\ \hline 
6 & $[m]\times [n]$ &  $\Int(\catP)$                                                                 & \thead{tot-Compressed multiplicity\\ ($\Leftrightarrow \Int$-GRI)}                                               & $\delta^\mathrm{tot}_M$ & Yes   \\ \hline 
7 & $[m]\times [n]$ &  $\{\min(I)\cup\max(I):I\in \Int(\catP)\}$                                                              & ss-Compressed multiplicity                                               & $\delta^\mathrm{ss}_M$ & Yes   \\ \hline

8 & $\Z^2$                        & \thead{\{\mbox{Zigzag paths of the forms} \\ \mbox{$\bullet \rightarrow \bullet \leftarrow \bullet$ and $\bullet \leftarrow \bullet \rightarrow \bullet$ in $\Z^2$}\}}                                                                                 & $\Rightarrow$ Bigraded Betti numbers          & \thead{Barcodes over \\ zigzag paths of length 3}                                             & Yes \\ \hline
9 & $\Z^2$                        & $\{\mbox{Zigzag paths in $\Z^2$}\}$                                                                                  & $\Rightarrow$ $\Int$-GRI      &  \thead{Barcodes over \\ zigzag paths}                                   & Yes
  \\ \hline
10&  $\Z^2$                        & $\{\mbox{Simple zigzag paths in $\Z^2$}\}$                                                                                  & $\not\Leftrightarrow$ $\Int$-GRI         & \thead{Barcodes over\\ simple zigzag paths}                                             & Yes
  
  \\ \hline

\end{tabular}
\end{center}
\end{table}
\end{landscape}

\subsubsection*{Related work.} 
Patel first noted that in the one-parameter setting the persistent diagram can be defined as the M\"{o}bius inversion of the RI and thereby introduced the generalized persistence diagram \cite{patel2018generalized} of a constructible $\R$-indexed functor whose target can be different from the category of vector spaces. Patel's work became a motivation for the work by Kim and M\'emoli \cite{kim2021generalized} and the work by McCleary and Patel  \cite{mccleary2020bottleneck,mccleary2022edit}.
In particular, in \cite{kim2021generalized}, the generalized persistence diagram of a $\catP$-module is defined as the M\"{o}bius inversion of the GRI over $\Con(\catP)$, which can be viewed as a multiset of signed elements of $\Con(\catP)$ {(assuming that $\Con(\catP)$ is locally finite)}. 

\medskip
The following are also related to our work.
\begin{itemize}
\item Asashiba et al. also invoke M\"{o}bius inversion to devise methods for approximating a persistence module $M$ over a finite 2d-grid by an interval-decomposable module \cite{asashiba2019approximation}. One of their approximation methods yields an invariant that possesses the same amount of information as 
(the M\"obius inversion of) the GRI of $M$ over the intervals in the grid \cite[Remark 2.19]{kim2021bettis}. These two equivalent invariants naturally encode the bigraded Betti numbers of $M$ \cite{kim2021bettis}. \ok{See also \cite{asashiba2024interval,hiraoka2023refinement} for further developments of these ideas.}

\item \ok{Amiot et al. take a systematic approach to the invariant obtained by restricting the indexing poset of a given persistence module $M$ to a subposet $X\subset\catP$ of a finite representation type \cite{amiot2024invariants}.
Their work is related to ours in the sense that invariants from both works consider restrictions of the indexing poset of a given module.
Indeed, as the GRI is a complete invariant for interval-decomposable modules, whenever a choice of $X$ ensures that $M|_X$ is interval-decomposable, their work can be seen as studying the restriction of the GRI to $X$.}

\item \ok{Recently, connections between relative homological algebra and the RI/GRI have been explored} \cite{ asashiba2023approximation,blanchette2021homological, botnan2021signed, botnan2022bottleneck}.

\item In recent years, M\"obius inversion has been utilized alongside other invariants of persistence modules, including the graded rank function \cite{betthauser2022graded}, the birth-death function \cite{gulen2022galois,mccleary2022edit,morozov2021output}, the meta-rank \cite{clause2023meta}, the Hilbert function \cite{oudot2024stability},
the multi-rank \cite{thomas2019invariants}, the persistent cup length and persistent cup product \cite{memoli2022persistent}, and the Grassmanian persistence diagram \cite{gulen2023orthogonal}.  
\item Very recently, through an extension of the main result from \cite{dey2022computing}, in  \cite{dey2024computing} Dey et al. also propose to utilize  zigzag persistence for computing the rank of the limit-to-colimit map \cite{kim2021generalized} for persistence modules over posets more general than $\Z^2$. 
\end{itemize}

\subsubsection*{Organization.} In Section \ref{section:prelim} we review the requisite concepts about persistence modules, their decompositions, and introduce the concept of a motivic invariant, relating it to several existing invariants.
In Sections \ref{sec:mobius invertible Rk}, \ref{sec:main}, and \ref{sec:stability}, we establish the results outlined in Contributions \ref{item:contribution1}, \ref{item:contribution2}, and \ref{item:contribution3} respectively. 
In Section \ref{section:discussion}, we discuss open questions. 

\medskip
A table with the main nomenclature used in this paper is given below.

\subsubsection*{Acknowledgements.} This research was partially supported by the NSF through grants 
DMS-1723003, DMS-2301359, CCF-2310412, IIS-1901360, CCF-1740761, DMS-1547357 and by the BSF under grant  2020124. The authors thank anonymous reviewers for sharing their insight. \ok{We thank Profs. Hideto Asashiba and Emerson Escolar for bringing a gap in the proof of Theorem \ref{thm:gristabilityold} to our attention.} 

\printnomenclature[1in]

\section{Preliminaries}\label{section:prelim}

This section is organized as follows: in Section \ref{sec:persistence modules and their decomposition}, we review the concepts of persistence modules and their structures. In Section \ref{sec:mobius inversion}, we review the notion of the incidence algebra as well as the M\"obius inversion formula. In Section \ref{sec:GRI}, we recall the notions of the rank invariant, the generalized rank invariant, and their properties. In Section \ref{sec:GPD}, we recall the notion of the generalized persistence diagram. In Section \ref{sec:GPD and Rk decomposition}, we recall the notion of the rank decomposition and its connection with the notion of the generalized persistence diagram. 
In Section \ref{sec:motivic invariants}, we introduce the notion of a motivic invariant, provide historical context and motivation for the definition, and exemplify how certain invariants we discuss in this paper fit into this framework.
In Section \ref{sec:ZIB general}, we introduce a new motivic invariant called the zigzag-path-indexed barcode, which is based on slicing a $\catP$-module over zigzag paths in $\catP$.

\subsection{Persistence modules}\label{sec:persistence modules and their decomposition}

Throughout this paper, $\catP=(\catP,\leq)$ is a poset, regarded as the category  with objects the elements $p\in \catP$, and a unique morphism $p\to q$ if and only if $p\leq q\in \catP$.
All vector spaces in this paper are over a fixed field $\kf$. 
Let $\cvec$ denote the category of finite-dimensional vector spaces and linear maps over $\kf$.
\paragraph{Persistence modules and their decompositions.} 
 
A persistence module over $\catP$ is a functor $M:\catP\to \cvec$.\footnote{In the literature, $M$ is often referred to as a pointwisely finite-dimensional persistence module.} We refer to $M$ simply as a \textbf{$\catP$-module}. 
For any $p\in \catP$, we denote the vector space $M_p:=M(p)$, and for any $p\leq q\in \catP$, we denote the linear map $\varphi_M(p,q):=M(p\leq q)$. 
Given any $\catP$-modules $M$ and $N$, the \text{direct sum} $M\oplus N$ is defined pointwisely at each $p\in \catP$.
We say that a nontrivial $\catP$-module $M$ is \textbf{decomposable} if $M$ is isomorphic to $N_1\oplus N_2$ for some non-trivial $\catP$-modules $N_1$ and $N_2$, which we denote by $M\cong N_1\oplus N_2$. 
Otherwise, we say that $M$ is \textbf{indecomposable}. 

By the Azumaya-Krull-Remak-Schmidt Theorem \cite[Theorem 1]{azumaya1950corrections}, every $\catP$-module is isomorphic to a direct sum of indecomposable $\catP$-modules; see also \ok{\cite[Theorem 1.1]{botnan2020decomposition}}.  
This direct sum decomposition is unique up to isomorphism and permutations of summands. The multiset of isomorphism classes of indecomposable summands of $M$ is called the \textbf{barcode} of $M$, denoted by $\barc(M)$.

\begin{definition}[\cite{botnan2018algebraic}] \label{def:intervals} An \textbf{interval} of a poset $\catP$ is any non-empty subset $I\subset \catP$ such that 
\begin{enumerate}[label=(\roman*),leftmargin=*]
    \item  (\textbf{convexity}) If $p,r\in I$ and $q\in \catP$ with $p\leq q\leq r$, then $q\in I$, \label{item:convexity}
    \item (\textbf{connectivity}) For any $p,q\in I$, there is a sequence $p=r_0,r_1,\ldots,r_n=q$ of elements of $I$, where $r_i$ and $r_{i+1}$ are comparable for $0\leq i\leq n-1$.
    \label{item:connectivity}
\end{enumerate}
\end{definition}

Given an interval $I$ of $\catP$, the \textbf{interval module} $\kf_I$ is the $\catP$-module, with
\begin{equation}\label{eq:interval module} (\kf_I)_p := \begin{cases} \kf & \mathrm{if \ } p\in I\\
0 & \mathrm{otherwise}
\end{cases},
\hspace{10mm}
\varphi_{\kf_I}(p,q) := \begin{cases} \id_\kf & \mathrm{if \ } p\leq q\in I\\
0 & \mathrm{otherwise}\end{cases}\end{equation}

Every interval module is indecomposable \cite[Proposition 2.2]{botnan2018algebraic}. 
\ok{\label{nom:barcode} 
A $\catP$-module $M$ is \textbf{interval-decomposable} if it is isomorphic to a direct sum of interval modules.}
If \ok{the equivalence class of the interval module $\kf_I$ belongs to} $\barc(M)$, then \ok{we simply say that $I$ belongs to $\barc(M)$ and write $I\in \barc(M)$.}  
A \textbf{zigzag poset} of $n$ points is 
    $\bullet_{1}\leftrightarrow \bullet_{2} \leftrightarrow \ldots \bullet_{n-1} \leftrightarrow \bullet_n$
where $\leftrightarrow$ stands for either $\leq$ or $\geq$. A functor from a zigzag poset (of $n$ points) to $\cvec$ is called a \textbf{zigzag module} \cite{carlsson2010zigzag}. 

\begin{oldtheorem}[\cite{carlsson2010zigzag,gabriel1972unzerlegbare}]\label{thm:zigzag modules are interval-decomposable}
Zigzag modules are interval-decomposable.
\end{oldtheorem}

\ok{Let $\mathfrak{L}$ be any collection of indecomposable $\catP$-modules. A $\catP$-module $M$ is called \textbf{$\mathfrak{L}$-decomposable}, if every indecomposable summand of $M$ is isomorphic to an element in $\mathfrak{L}$.\footnote{This terminology was used in \cite{asashiba2022interval}.} If $\mathfrak{L}$ consists solely of interval modules supported on intervals in a collection $\Ical$ of intervals of $\catP$, we simply say that $M$ is \textbf{$\Ical$-decomposable}.}

\paragraph{Tame, Finitely presentable, and Constructible $\catP$-modules.} Aside from interval decomposability, there are other notions of structural simplicity for persistence modules.  

For any element $p$ of a poset $\catP$, let $p^\uparrow:=\{q\in \catP:p\leq q\}$\label{nom: up p}, which is an interval of $\catP$.
\begin{definition}\label{def:finitely presentable}
 A $\catP$-module $M$ is said to be \textbf{finitely presentable} if $M$ is isomorphic to the cokernel of a morphism $F_1\rightarrow F_0$, where $F_0$ and $F_1$ are direct sums of the form $F_0=\bigoplus_{a\in A} \kf_{a^\uparrow}$ and $F_1=\bigoplus_{b\in B} \kf_{b^\uparrow}$, with $A$ and $B$ finite multisets of elements of $\catP$.
\end{definition}

\begin{definition}[\cite{miller2020homological}] \label{def:finite encoding}An \textbf{encoding} of a $\catP$-module $M$ by a poset $\catQ$ is an order-preserving map $\pi : \catP \rightarrow \catQ$ together with a $\catQ$-module $N$ such that $M$ is the \textbf{pullback} of $N$ along $\pi$, i.e. for all $p,p'\in \catP$ with $p\leq p'$, we have that
\[M_p=N_{\pi(p)}\ \mbox{and } \varphi_M(p, p')=\varphi_N(\pi(p), \pi(p')).\]
In this case, we write $M=\pi^\ast N$. The encoding is called \textbf{finite} if the poset $\catQ$ is finite.\footnote{In \cite{miller2020homological}, it is also assumed that for all $q \in \catQ$, the vector space $N_q$ has finite dimension. However, in this paper, that must readily be the case since $M_p$ is assumed to be finite-dimensional for all $p\in \catP$.} A $\catP$-module $M$ is \textbf{tame} if $M$ admits a finite encoding \cite[Theorem 6.12]{miller2020homological}.
\end{definition}

There are other notions of structural simplicity of persistence modules. One of them is \emph{$q$-tameness}. An $\R$-module $M$ is considered $q$-tame if, for any $p < q$ in $\R$, the rank of the map $\varphi_M(p, q)$ is finite \cite{chazal2009proximity, chazal2016structure}. This definition can be directly generalized to $\catP$-modules for arbitrary posets $\catP$. Since this work only deals with pointwise finite-dimensional persistence modules, we readily assumed $q$-tameness throughout. Another is \emph{constructibility} \cite{de2016categorified,gulen2022galois,patel2018generalized}:

\begin{definition}\label{def:co-closure and constructible}
An order-preserving map $c:\catP\rightarrow \catP$ such that $c(p) \leq p$ for all $p \in \catP$, $c \circ c = c$ is called a \textbf{co-closure}.
A $\catP$-module $M$ (resp. any function $f$ from $\catP$) is \textbf{constructible} if there is a co-closure $c:\catP\rightarrow \catP$ with finite image, and  $M\circ c=M$ (resp. $f\circ c= f$). 
Specifically, if the image of $c$ is $\catS\subset \catP$, then $M$ (resp. $f$) is called \textbf{$\catS$-constructible}. 
\end{definition}

\subsection{The M\"obius inversion formula}\label{sec:mobius inversion}
We review the notions of incidence algebra and M\"obius inversion \cite{rota1964foundations, stanley2011enumerative}.
Throughout this section, let $\catQ$ denote a  \textbf{locally finite} poset, i.e. for all $p,q\in \catQ$ with $p\leq q$, the \textbf{segment} \[[p,q]:=\{x\in \catQ:p\leq x\leq q\}\]
is finite. Let 
\begin{equation}\label{eq:segment}
    \seg(\catQ):=\{[p,q]:p\leq q\ \mbox{in $\catQ$}\}.
\end{equation} Given any function $\alpha:\seg(\catQ)\rightarrow \kf$, we write $\alpha(p,q)$ for $\alpha([p,q])$.
\label{nom:incidence algebra} The \textbf{incidence algebra} $I(\catQ,\kf)$ of $\catQ$ over $\kf$ is the $\kf$-algebra of all functions $\seg(\catQ)\rightarrow \kf$ with the usual structure of a vector space over $\kf$, where multiplication is given by convolution:
\begin{equation}\label{eq:convolution}
(\alpha\, \beta)(p,r):=\sum_{q\in [p,r]}\alpha(p,q)\cdot\beta(q,r).
\end{equation}
Since $\catQ$ is locally finite, the above sum is finite and hence $\alpha\beta$ is well-defined. This multiplication is associative and thus $I(\catQ,k)$ is an associative algebra. The \textbf{Dirac delta function} $\delta_\catQ\in I(\catQ,\kf)$ is given by 
\begin{equation}\label{eq:delta function}
\delta_\catQ(p,q):=\begin{cases}1,&p=q\\0,&\mbox{else,}\end{cases}
\end{equation}
which is the two-sided multiplicative identity. 
\begin{remark}[\cite{stanley2011enumerative}]\label{rem:invertibility} An element $\alpha\in I(\catQ,\kf)$ admits a multiplicative inverse if and only if $\alpha(q,q)\neq 0$ for all $q\in \catQ$. 
\end{remark}

Another important element of $I(\catQ,\kf)$ is the \textbf{zeta function}:
\begin{equation}\label{eq:zeta function}
    \zeta_\catQ(p,q):=\begin{cases}1,&p\leq q\\0,&\mbox{else.}\end{cases}
\end{equation}
By Remark \ref{rem:invertibility}, the zeta function $\zeta_\catQ$ admits a multiplicative inverse, which is called the \textbf{M\"obius function} $\mu_{\catQ}\in I(\catQ,\kf)$. 
The M\"obius function can be computed recursively as 
\begin{equation}\label{eq:mobius}
    \mu_{\catQ}(p,q)=\begin{cases}1,& \mbox{$p=q$,}\\ -\sum\limits_{p\leq r< q}\mu_{\catQ}(p,r),& \mbox{$p<q$,} \\ 0,& \mbox{otherwise.} \end{cases}
\end{equation}

\label{nom: kq} \ok{Let $\kf^\catQ$ denote the
vector space of all functions $\catQ\rightarrow \kf$. 
\label{nom: down p}Also, for $q\in \catQ$, let \[q^\downarrow:=\{p\in \catQ: p\leq q\},\] called a \textbf{principal ideal}.}  Assuming that $q^{\downarrow}$ 
is finite for each $q\in \catQ$, every element in $I(\catQ,\kf)$ acts on  $\kf^\catQ$ by right multiplication: for any $f\in k^\catQ$ and for any $\alpha\in I(\catQ,k)$, we have

\begin{equation}\label{eq:function times matrix}(f\ast\alpha)(q):=\sum_{p\leq q}f(p)\,\alpha(p,q).
\end{equation}

In fact, even when not every principal ideal in a poset $\catQ$ is finite, Equation (\ref{eq:function times matrix}) still specifies a well-defined multiplication between $f$ and $\alpha$ under the weaker assumption that
\begin{equation}\label{eq:convolvable}
\mbox{for every $q\in \catQ$, 
$f(r)=0$ for all but finitely many $r\in q^\downarrow$. 
} 
\end{equation}
\begin{definition}\label{def:convolvable}
Given a locally finite poset $\catQ$,
we call a function $f:\catQ\rightarrow \kf$ \textbf{\convolvable} (over $\catQ$) if $f$ satisfies Condition (\ref{eq:convolvable}).
\end{definition}

For a function $f\in \kf^\catQ$, the \textbf{support} of $f$ is the set $\{q\in \catQ \ : \ f(q) \neq 0\}$.

\begin{remark}[About convolvability]\label{rem:convolvability}
Let $\catQ$ be a locally finite poset.
\begin{enumerate}[label=(\roman*),leftmargin=*]
\item  Any function $\catQ\rightarrow \kf$ with finite support is convolvable. \ok{Hence, if $\catQ$ is finite, \emph{any} function $\catQ\rightarrow \kf$ is convolvable.} \label{item:convolvability1}
\item If every principal ideal in $\catQ$ is finite, then every $f\in \kf^\catQ$ is \convolvable.\label{item:convolvability2}
\item The collection of all convolvable functions $\catQ\rightarrow \kf$ is a linear subspace of $\kf^\catQ$.\label{item:convolvability3}
\item If $f:\catQ\rightarrow \kf$ is \convolvable\ over $\catQ$, then for any $\catP\subset \catQ$, the restriction $f|_{\catP}$ is  \convolvable.\label{item:convolvability4}
\item Let $P_1,P_2\subset \catQ$. Assume that, for each $i=1,2$, $f_i:\catP_i\rightarrow k$ is convolvable over $\catP_i$, and that $f_1=f_2$ on $\catP_1\cap \catP_2$. Then $f_1\cup f_2:\catP_1\cup \catP_2\rightarrow k$ is convolvable over $\catP_1\cup \catP_2$.
\label{item:convolvability5}
\end{enumerate}
\end{remark}

\begin{remark}\label{rem:right multiplication} 
Let $\catQ$ be a locally finite poset. Let $\kf_c^\catQ$ be the space of convolvable functions $\catQ\rightarrow \kf$, which is a subspace of $\kf^\catQ$. It follows that:

\begin{enumerate}[label=(\roman*),leftmargin=*]
    \item \ok{By Remark \ref{rem:convolvability} \ref{item:convolvability1}, if $\catQ$ is finite, then $\kf_c^\catQ=\kf^\catQ$.} \label{item:right multiplication-Q is finite}
    \item Let $\alpha\in I(\catQ,\kf)$. The  \textbf{right multiplication map} $\ast \alpha:\kf_c^\catQ\rightarrow \kf_c^\catQ$ given by $f\mapsto f\ast \alpha$ is an automorphism if and only if $\alpha$ is invertible.\label{item:right multiplication}
    \item By  Remark \ref{rem:invertibility} and the previous item, the right multiplication map $\ast\zeta_\catQ$ by the zeta function is an automorphism on $\kf_c^\catQ$ with inverse $\ast\mu_{\catQ}.$\label{item:zeta defines an automorphism}
\end{enumerate}  
\end{remark}

The M\"obius inversion formula is a powerful tool that has found widespread applications in combinatorics despite the fact that it has its origins in number theory. It will be a central tool for establishing our main results.

\begin{oldtheorem}[M\"obius Inversion formula]\label{thm:mobius}
Let $\catQ$ be a locally finite poset.
For any pair of \convolvable\  functions $f,g:\catQ\rightarrow k$,   \begin{equation}\label{eq:zeta}\displaystyle g(q)=\sum_{r\leq q} f(r)\ \mbox{for all $q\in \catQ$}
\end{equation} if and only if 
\begin{equation}\label{eq:zeta inverse}\displaystyle f(q)=\sum_{r\leq q} g(r)\cdot \mu_{\catQ}(r,q) \ \mbox{for all } q\in \catQ.
\end{equation}
\end{oldtheorem}

\begin{proof}
    Equation (\ref{eq:zeta}) can be represented as $g=f\ast\zeta_\catQ$. By multiplying both sides by $\zeta_\catQ^{-1}=\mu_\catQ$ on the right, we have $g\ast\mu_\catQ=f$, which is precisely Equation (\ref{eq:zeta inverse}).
\end{proof}

\begin{definition}\label{def:mobius inversion}The function $f=g\ast\mu_Q$ is referred to as the \textbf{M\"obius inversion} of $g$ (over $\catQ$).
\end{definition}

\ok{M\"obius inversion serves as a discrete analogue of the concept of a derivative in calculus, as illustrated by the following example.}

\begin{example}Let $q\in \catQ$ and define the two functions $\mathbf{1}_q,\ \mathbf{1}_{\geq q}:\catQ\rightarrow \kf$ to be 
\[\mathbf{1}_q(p):=\begin{cases}1,&p=q\\0,&\mbox{otherwise.}\end{cases} \ \ \ \ \ \ \mathbf{1}_{\geq q}(p):=\begin{cases}1,&p \geq q\\0,&\mbox{otherwise.}\end{cases}
\]
Then, both functions are convolvable. Indeed, Remark \ref{rem:convolvability} \ref{item:convolvability1} implies that $\mathbf{1}_q$ is convolvable and the local finiteness of $\catQ$ guarantees that $\mathbf{1}_{\geq q}$ is convolvable. Notice that the M\"obius inversion of $\mathbf{1}_{\geq q}$ is equal to $\mathbf{1}_q$
and that 
$\mathbf{1}_q$ captures where 
 the function values of $\mathbf{1}_{q\geq}$ change. 
\end{example}

The following proposition will be useful for some proofs in the sequel. 

\begin{proposition}\label{prop:mobius inversion of non-decreasing map is convolvable} Let $g:\catQ\rightarrow \R$ be non-decreasing and convolvable over $\catQ$. Then, the M\"obius inversion of $g$ is  convolvable over $\catQ$.
\end{proposition}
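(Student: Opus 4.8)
The plan is to unwind the definition of convolvability and use the non-decreasing hypothesis to control the support of the M\"obius inversion $f := g \ast \mu_\catQ$ on each principal ideal. Fix $q \in \catQ$; we must show that $f(r) = 0$ for all but finitely many $r \in q^\downarrow$. Since $g$ is convolvable, the set $F_q := \{ r \in q^\downarrow : g(r) \neq 0 \}$ is finite, and more generally, for each $r \le q$ the set $F_r = \{ s \le r : g(s) \neq 0\}$ is finite and contained in $F_q$ (using that $s \le r \le q$). So the only candidate source of infinitely many nonzero values of $f$ on $q^\downarrow$ is an $r \le q$ with $g(r) \neq 0$ or with $g$ nonzero somewhere below $r$, together with the cofinite set of $r \le q$ on which $g$ vanishes identically below $r$.

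The key observation is the following: if $r \le q$ is such that $g(s) = 0$ for every $s \le r$ (equivalently $F_r = \emptyset$), then from Equation (\ref{eq:zeta inverse}) we get $f(r) = \sum_{s \le r} g(s)\, \mu_\catQ(s,r) = 0$. Hence $\{ r \in q^\downarrow : f(r) \neq 0\} \subseteq \{ r \in q^\downarrow : F_r \neq \emptyset\}$. It remains to argue the latter set is finite. Here is where I would use that $g$ takes values in $\R$ and is \emph{non-decreasing}: if $F_r \neq \emptyset$, pick $s \le r$ with $g(s) \neq 0$; I claim $g(s) \ne 0$ forces $g(s) > 0$ or we can reduce to that case — actually the cleaner route is to observe that $\{r \le q : F_r \neq \emptyset\} = \bigcup_{s \in F_q} s^\uparrow \cap q^\downarrow$, a finite union (since $F_q$ is finite) of sets of the form $[s,q]$. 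Each segment $[s,q]$ is finite by local finiteness of $\catQ$, so the union is finite. This already gives the result and does not even need monotonicity — so let me reconsider where the hypothesis is really needed.

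The point of "non-decreasing, $\R$-valued" is presumably to ensure $g$ itself is convolvable in the first place in the intended applications, but as a \emph{hypothesis} here it may be that the authors want to allow a subtlety: perhaps convolvability of $g$ is not quite enough and one needs that the partial sums $\sum_{s \le r} g(s)$ behave well. In any case, the robust skeleton of the argument is: (1) convolvability of $g$ gives finiteness of $F_q := \supp(g) \cap q^\downarrow$; (2) from the M\"obius inversion formula, $f(r) = 0$ whenever $\supp(g) \cap r^\downarrow = \emptyset$; (3) therefore $\supp(f) \cap q^\downarrow \subseteq \bigcup_{s \in F_q} ([s,q])$, which is a finite union of finite segments, hence finite; (4) conclude $f$ is convolvable. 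I would present it in exactly this order, invoking Theorem \ref{thm:mobius} for step (2) and local finiteness of $\catQ$ for the finiteness of each $[s,q]$ in step (3).

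The main obstacle I anticipate is pinning down precisely which hypothesis is doing the work — in particular making sure that "$f(r)$ depends only on values of $g$ on $r^\downarrow$" is stated correctly (it does, directly from Equation (\ref{eq:zeta inverse}), once we know $g$ is convolvable so that the defining sum is finite), and then being careful that the containment $\supp(f) \cap q^\downarrow \subseteq \bigcup_{s \in F_q} [s,q]$ genuinely uses only local finiteness and convolvability of $g$. If the monotonicity of $g$ turns out to be genuinely essential (e.g. because without it $g$ convolvable does not imply the partial-sum function is convolvable, which could matter if the statement is really about iterated inversions), I would instead argue that a non-decreasing $\R$-valued convolvable $g$ has the property that $\supp(g)$ restricted to any $q^\downarrow$ is finite \emph{and} that $g$ is eventually constant along each chain down from $q$, and feed that into the same support-containment argument; but I expect the plain version above suffices.
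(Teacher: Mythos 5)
Your final four-step skeleton is correct and in fact establishes a stronger statement: convolvability of $g$ and local finiteness of $\catQ$ alone give convolvability of $f=g\ast\mu_\catQ$, without any monotonicity. The paper's proof takes the more direct route you considered and abandoned: since $g$ is non-decreasing, $g(r)=0$ forces $g(r')=0$ for all $r'\leq r$ (this silently also uses $g\geq 0$, which holds for the intended application $g=\rank_M^\Ical$), hence $f(r)=\sum_{r'\leq r}g(r')\mu_\catQ(r',r)=0$; combined with convolvability of $g$ this immediately yields the result. The two approaches buy different things. The paper's argument gives the pointwise support containment $\supp(f)\subseteq\supp(g)$, which is strictly stronger than your cover $\supp(f)\cap q^\downarrow\subseteq\bigcup_{s\in F_q}[s,q]$ and is exactly what is reused downstream as Proposition \ref{prop:properties of GPD}\ref{item:GPD is more parsimonious}. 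Your argument, in exchange, proves the stated proposition under weaker hypotheses and settles the question you raised mid-proof: monotonicity is not needed for convolvability itself, only for the sharper support inclusion. In a polished write-up you should excise the exploratory middle passage where you briefly weigh a $g(s)>0$ case, and drop the closing speculation about iterated inversions, since that is not the role the non-decreasing hypothesis plays here.
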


\begin{proof} Let $f$ be the M\"obius inversion of $g$. Since $g$ is non-decreasing, if $g(r)=0$, then $g(r')=0$ for all $r'\leq r$  and thus $f(r)=\sum_{r'\leq r}g(r')\cdot\mu(r',r)=0$. Furthermore, since $g$ is convolvable, $g(r)=0$  for all but finitely many $r\in q^\downarrow$, for every $q\in \catQ$. Hence, we have $f(r)=0$ for all but finitely many $r\in q^\downarrow$, for every $q\in \catQ$, as desired. 
\end{proof}

A constructible function (Definition \ref{def:co-closure and constructible}) admits M\"obius inversion after its proper restriction:
\begin{proposition}[{\cite[Proposition 3.4]{gulen2022galois}}]\label{prop:Mobius inversion of a contructible function}\label{prop:inversion after restriction}Let $\catS$ be a nonempty finite subset of a given poset $\catP$. Let $m:\catP\rightarrow \Z$ be an $\catS$-constructible function. Then,  for all $q\in \catP$,
\[m(q)=\sum_{\substack{p\leq q \\ p\in S}} (m|_S\ast \mu_S)(p).\]
\end{proposition}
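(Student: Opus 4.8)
The plan is to reduce the claimed identity to the ordinary M\"obius inversion formula (Theorem \ref{thm:mobius}) applied on the finite poset $\catS$ equipped with the order inherited from $\catP$. Write $c:\catP\to\catP$ for the co-closure whose image is $\catS$, so that $m\circ c=m$. The first step is to observe that for every $q\in\catP$ we have $c(q)\in\catS$ and $m(q)=m(c(q))$, so it suffices to prove the identity for $q=c(q)$, i.e. for $q\in\catS$; I will then treat the general $q$ by applying the $\catS$-case to $c(q)$ and using the easy fact that $\{p\le q:p\in\catS\}=\{p\le c(q):p\in\catS\}$ (this is where $c$ being an order-preserving, idempotent, deflationary map is used: $p\le q$ and $p\in\catS$ imply $p=c(p)\le c(q)$, and conversely $p\le c(q)\le q$).

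Second, with $q\in\catS$ fixed, I would apply Theorem \ref{thm:mobius} on the locally finite (indeed finite) poset $\catS$ to the functions $f:=m|_\catS\ast\mu_\catS$ and $g:=m|_\catS$. Since $\catS$ is finite, every function on $\catS$ is convolvable (Remark \ref{rem:convolvability}\ref{item:convolvability1}), so the hypotheses of Theorem \ref{thm:mobius} are met; the formula gives
\[
m(q)=m|_\catS(q)=\sum_{\substack{p\le q\\ p\in\catS}}(m|_\catS\ast\mu_\catS)(p),
\]
where the sum ranges over the segment $[{-},q]$ computed \emph{inside} $\catS$, which equals $\{p\in\catS:p\le q\}$ because the order on $\catS$ is the restriction of the order on $\catP$. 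This is exactly the asserted equation for $q\in\catS$, and combining with the first step finishes the general case.

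The only genuinely delicate point — and the one I would single out as the main obstacle — is making sure the M\"obius function $\mu_\catS$ appearing in the statement is the one intrinsic to the poset $\catS$, and that restricting $m$ to $\catS$ does not lose the information encoded by $m$ on all of $\catP$. The latter is guaranteed precisely by $\catS$-constructibility via $m=m\circ c$; the former is a matter of being careful that segments $[p,q]$ are taken in $\catS$ and not in $\catP$ (these can differ), so that the recursion (\ref{eq:mobius}) for $\mu_\catS$ is the correct one. Everything else is bookkeeping: no convolvability subtleties arise because $\catS$ is finite, and no infinite sums appear. I would therefore present the proof as (i) the reduction $m=m\circ c$ together with the segment-identification lemma for $c$, and (ii) a one-line invocation of Theorem \ref{thm:mobius} on $\catS$.
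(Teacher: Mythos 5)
Your proof is correct, and the paper itself does not include one — it simply cites Proposition 3.4 of Gülen–McCleary — so there is nothing to compare against; your argument supplies a complete, self-contained justification. The reduction to $q\in\catS$ via the two observations $m(q)=m(c(q))$ and $\{p\le q: p\in\catS\}=\{p\le c(q): p\in\catS\}$ (the forward containment using that $c$ is order-preserving, idempotent, and deflationary so $p=c(p)\le c(q)$; the reverse using $c(q)\le q$) is exactly the right bookkeeping, and once $q\in\catS$ the identity is a one-line consequence of Theorem \ref{thm:mobius} on the finite poset $\catS$: setting $g:=m|_\catS$ and $f:=g\ast\mu_\catS$ gives $g=f\ast\zeta_\catS$, which unwinds to the claimed sum. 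Your flagged subtlety — that $\mu_\catS$ is the Möbius function intrinsic to $\catS$ and that segments $[p,q]$ must be taken in $\catS$, not in $\catP$ — is indeed the point most worth making explicit, since the two M\"obius functions generally disagree when $\catS$ is not convex in $\catP$; the statement is only correct with the intrinsic $\mu_\catS$, which is what the paper's notation intends and what your argument correctly uses.
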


\paragraph{A matrix algebra perspective on the incidence algebra} Let $m\in \N$ and let $\catQ$ be a poset with $m$ elements. 
By the order-extension principle, we can extend the order on $\catQ$ to a total order.
Thus, we fix $\catQ:= \{q_1,\ldots,q_m\}$, where $q_i<q_j$ implies $i<j$.
Then, each element $\alpha$ in the incidence algebra $I(\catQ,\kf)$ is canonically identified with the $(m\times m)$-upper-triangular matrix $(\alpha_{ij})$ whose $(i,j)$-entry is
\[ \alpha_{ij}:=\begin{cases}\alpha(q_i,q_j)&\mbox{if $q_i<q_j$}
\\0,&\mbox{else.}\end{cases}
\]
Then, for $\beta\in I(\catQ,\kf)$, the product $\alpha\, \beta$ in Equation (\ref{eq:convolution}) can be expressed as the multiplication of the upper-triangular matrices $(\alpha_{ij})$ and $(\beta_{ij})$, where \[(\alpha\, \beta)_{ij}= \mbox{(the $i$-th row of $\alpha$)$\cdot$(the $j$-th column of $\beta$$)^t$}.\]
Now let us identify each $f\in k^\catQ$ with the $m$-dimensional row vector $(f_i)$ where $f_i=f(q_i)$ for $i=1,\ldots,m$. 
Then, the multiplication $f\ast\alpha$ in Equation (\ref{eq:function times matrix}) can be expressed as the multiplication of the $(1\times m)$-matrix $(f_i)$ and the $(m\times m)$-matrix $(\alpha_{ij})$.

\begin{remark} Recall that an upper-triangular matrix is invertible if and only if all of its diagonal entries are nonzero. Remark \ref{rem:invertibility} can be seen as a straightforward from this fact. 
\end{remark}

\subsection{Generalized rank invariant}\label{sec:GRI}
In this section, we recall the definitions of the rank invariant \cite{carlsson2009theory,puuska2020erosion} and the generalized rank invariant \cite{kim2021generalized}. 

Let $M$ be any $\catP$-module. 
\label{nom:rank invariant}
Then, $M$ admits both a \emph{limit} and a \emph{colimit} \cite[Chapter V]{mac2013categories}: 
A limit of $M$, denoted by $\varprojlim M$, consists of a vector space $L$ together with a collection of linear maps $\{\pi_p:L\to M_p\}_{p\in \catP}$ such that 
\begin{equation}\label{eq:limit}
    \varphi_M(p, q)\circ \pi_p =\pi_q\  \mbox{for every $p\leq q$ in $\catP$}.
\end{equation}
A colimit of $M$, denoted by $\varinjlim M$, consists of a vector space $C$ together with a collection of linear maps $\{i_p:M_p\to C\}_{p\in \catP}$ such that 
\begin{equation}\label{eq:colimit}
    i_q\circ \varphi_M(p, q)=i_p\  \mbox{for every $p\leq q$ in $\catP$}.
\end{equation}
Both $\varprojlim M$ and $\varinjlim M$ satisfy certain universal properties, making them unique up to isomorphism.

Let us assume that $\catP$ is connected (Definition \ref{def:intervals} \ref{item:connectivity}). The connectedness of $\catP$ alongside the equalities given in Equations (\ref{eq:limit}) and (\ref{eq:colimit}) imply that $i_p \circ \pi_p=i_q \circ \pi_q:L\rightarrow C$ for any $p,q\in \catP$. 
This fact ensures that the \textbf{canonical limit-to-colimit map} \[\psi_M:\varprojlim M\longrightarrow \varinjlim M\] given by $i_p\circ \pi_p$ for any $p\in \catP$ is well-defined. 
\label{nom:generalized rank} The \textbf{(generalized) rank} of $M$ is defined to be:\footnote{{This construction was considered in the study of quiver representations \cite{kinser2008rank}.}} 
\begin{equation}\label{eq:generalized rank of M}
    \rank(M):=\rank(\psi_M).
\end{equation}

\begin{remark}\label{rem:generalized rank is finite for pfd modules}
    $\rank(M)$ is finite as $\rank(M)=\rank(i_p\circ \pi_p)\leq \dim(M_p)<\infty$ for any $p\in P$.
\end{remark}

The rank of $M$ is a count of the `persistent features' in $M$ that span the entire indexing poset $\catP$. 
\ok{Such `persistent features' appear as summands of $M$ in the form of $\kf_\catP$:}

\begin{oldtheorem}[{\cite[Lemma 3.1]{chambers2018persistent}}]
\label{thm:rkequalsintervals}
Let $\catP$ be a connected poset. Assume that a $\catP$-module $M$ is isomorphic to a direct sum $\bigoplus_{a\in A} M_a$ for some indexing set $A$ where each $M_a$ is indecomposable. Then, the rank of $M$ is equal to the cardinality of the set $\{a\in A: M_a\cong \kf_P\}.$ 
\end{oldtheorem}

We refine the rank of a $\catP$-module, which is a single integer, into an integer-valued function. Possible domain options for the function include the following sets, in addition to $\seg(\catP)$ as shown in Equation (\ref{eq:segment}):
\label{nom:Con(P)}
\begin{align*}
\Con(\catP)&:=\{I\subset\catP:\mbox{$I$ is connected}\}, \mbox{}
\\
\Int(\catP)&:=\{I\subset\catP:\mbox{$I$ is an interval}\}.
\end{align*}
Note the inclusions $\seg(\catP)\subset \Int(\catP)\subset \Con(\catP).$

\begin{definition}[\cite{asashiba2019approximation,carlsson2009theory,dey2022computing,kim2021generalized}]\label{def:generalized rank invariant}   
    The \textbf{generalized rank invariant (GRI) of} a $\catP$-module $M$ is the map 
    \begin{align*}
    \rank_M:\Con(\catP)&\to \Z_{\geq 0}\\
    I&\mapsto \rank(M\vert_I)
    \end{align*}
where $M\vert_I$ is the restriction of $M$ to $I$.  The restriction of $\rk_M$ to $\Ical\subset \Con(\catP)$ is denoted by $\rk_M^\Ical$ and is called the \textbf{GRI of $M$ over} $\Ical$.  
When $\mathcal{I}=\Int(\catP)$, the GRI over $\Ical$ is simply called the \textbf{Int-GRI} and denoted by $\rk_M^{\Int}$. When $\Ical=\seg(\catP)$, the GRI of $M$ over $\Ical$ is the \textbf{rank invariant (RI)} of $M$, introduced in \cite{carlsson2009theory}. 
\end{definition}

\begin{remark} The following are useful properties of the GRI.\label{rem:properties of rank invariant}
\begin{enumerate}[label=(\roman*),leftmargin=*]
\item (Monotonicity)\label{item:monotonicity} If $I\subset J$ in $\Con(\catP)$, then $\rank_M(I)\geq \rank_M(J)$. This is because the canonical limit-to-colimit map over $I$ is a factor of the canonical
limit-to-colimit map over $J$ \cite[Proposition 3.8]{kim2021generalized}. 

\item (Additivity) If $M\cong \bigoplus_{i=1}^n M_i$, and $I\in \Con(\catP)$, then $\rank_M(I) = \sum_{i=1}^n \rank_{M_i}(I)$. \label{item:additivity}

\item (The GRI of an interval module) Let $J\in \Int(\catP)$. For the interval module $\kf_J$ and any $I\in \Con(\catP)$, we have
\[\rank_{\kf_J}(I)=\begin{cases}1,& J\supset I\\0,&\mbox{else.}\end{cases}\]
\label{item:the GRI of an interval module}
\end{enumerate}
\end{remark}

\begin{remark}[Domain of the GRI]
\begin{enumerate}[label=(\roman*)]
\item For 2-parameter persistence modules \cite{kim2021bettis}, the Con-GRI was proved to be strictly stronger than the Int-GRI.

\item The Int-GRI is strictly stronger than the bigraded Betti numbers, which follows from \cite{kim2021bettis} together with Proposition \ref{prop:bigraded Bettis do not determine GRI over length-3 zigzags}.

\item Some invariants of multi-parameter persistence modules determine and/or are determined by the GRI over a collection that includes non-intervals (e.g. fibered barcode, ss-compressed multiplicity \cite{asashiba2019approximation}, and \emph{zigzag-path-indexed barcode} (Definition \ref{def:zigzag-path-indexed barcode})).
\end{enumerate}
\end{remark}

We discuss the GRI's properties and discriminating power in Section \ref{sec:main}. 

\smallskip 
We close this section by describing a parameterized family of subsets of $\Int(\catP)$ that will be useful later. Given a poset $\catP$, let $a\in \catP$ and let $B\subset \catP$ be an antichain. We write $a\leq B$ (resp. $a\geq B$) if there exists $b\in B$ such that $a\leq b$ (resp. $a\geq b$). When two antichains $A,B\subset \catP$ are given, we write $A\leq B$ if $\forall a\in A$, $\forall b\in B$, $a\leq B$ and $A\leq b$. This defines a partial order on the set of antichains in $\catP$ \cite[Section 2.1]{blanchette2021homological}.
When $A\leq B$, define 
 \[[A,B):=\{x\in \catP: \exists a\in A, \exists b\in B, a\leq x <b\},\]
 \[[A,B]:=\{x\in \R^2: \exists a\in A, \exists b\in B, a\leq x \leq b\}.\]
  $[A,B)$ is either empty (when $A=B$) or an interval of $\catP$ (when $A<B$), whereas $[A,B]$ is always an interval of $\catP$.
    For $m,n\in \N$, let
    \begin{equation}\label{eq:intmn}
    \Int_{m,n}(\catP):=\{[A,B): A,B\subset \catP\ \mbox{are antichains s.t.  $A<B$, $\abs{A}\leq m$, $\abs{B} \leq n$ \}.}
    \end{equation}
    See for an illustration corresponding to the case $\catP=\R^2$.
    \begin{figure}[h]
    \centering
    \includegraphics[width=0.6\textwidth]{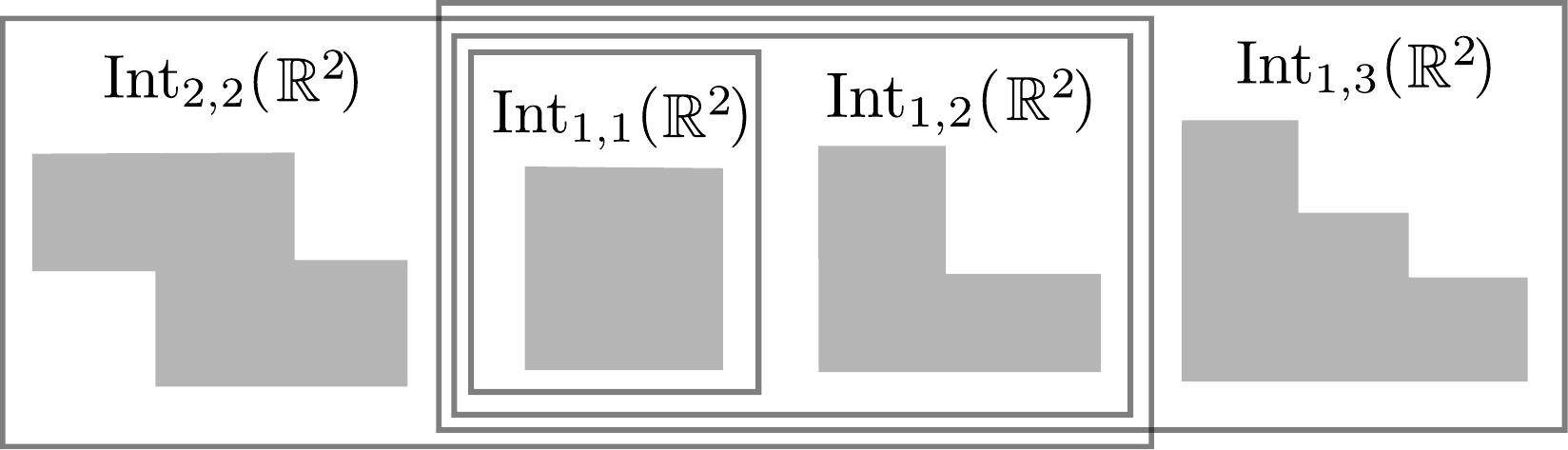}
\end{figure}

Similarly, define 
\begin{equation}\label{eq:intmn cc}\Int_{m,n}^\mathrm{cc}(\catP):=\{[A,B]:A,B\subset \catP\ \mbox{are antichains s.t.  $A\leq B$, $\abs{A}\leq m$, $\abs{B} \leq n$}\}.
\end{equation}

\subsection{Generalized persistence diagrams}\label{sec:GPD}

\ok{In this section we review the notion of the generalized persistence diagram (GPD) introduced in \cite{kim2021generalized}, but we modify the original formulation by adding more flexibility in choosing the domain of the GPD.} 
\ok{This additional flexibility is useful} when clarifying the relationship between the GRI/GPD and other invariants of persistence modules such as graded Betti numbers \cite{kim2021bettis} and rank decompositions (Section \ref{sec:GPD and Rk decomposition}).

 Let $\Con(\catP)$ and any subcollection of $\Con(\catP)$ be ordered by containment $\supset$. Let $M$ be a $\catP$-module.
If $\Ical\subset \Con(\catP)$ is locally finite and $\rank_M^\Ical$ is convolvable over $\Ical$, we say that \textbf{the GRI of $M$ is convolvable over $\Ical$}, cf. Definition \ref{def:convolvable}.

\begin{definition}\label{def:GPD over I}
    Let $M$ be a $\catP$-module and let $\Ical\subset \Con(\catP)$.
    Assume that the GRI of $M$ is \convolvable\ over $\Ical$. 
    The \textbf{generalized persistence diagram (GPD) of $M$ over $\Ical$} is the M\"obius inversion of $\rank_M^\Ical$ over the poset $(\Ical,\supset)$, i.e. the function $\dgm_M^{\Ical}:\Ical\rightarrow \Z$ given by:\begin{equation}\label{eq:dgm}\dgm_M^{\Ical}:=\rank_M^{\Ical}\ast\mu_{\Ical}.\end{equation}
   When $\Ical=\Int(\catP)$, the GPD of $M$ over $\Ical$ will be referred to as simply the \textbf{\ok{Int}-GPD} of $M$, and will be denoted by $\dgm_M$.  
   \ok{As will be shown later, $\Ical=\Int(\catP)$ is the minimal subset of $\Con(\catP)$ which allows the generalized rank invariant over $\Ical$ to be at least as strong an invariant as the barcode of interval-decomposable $\catP$-modules 
   (Theorem \ref{thm:tightness})}.
\end{definition}

\begin{remark}\label{rem:additivity of GPD} When the GRIs of $\catP$-modules $M$ and $N$ are both convolvable over $\Ical\subset \Con(\catP)$, by additivity of the GRI (cf. Remark \ref{rem:properties of rank invariant} \ref{item:monotonicity}), we have $\dgm_{M\oplus N}^\Ical=\dgm_M^\Ical+\dgm_N^\Ical$. 
\end{remark}

We establish a few other desirable properties of the GPD.

\begin{proposition}\label{prop:properties of GPD} \contributionn If the GRI of a $\catP$-module $M$ is convolvable over a locally finite $\Ical\subset \Con(\catP)$, then:

\begin{enumerate}[leftmargin=*, label=(\roman*)]
\item The support of $\dgm_M^{\Ical}$ is contained in the support of $\rank_M^{\Ical}$. In other words, for any $I\in \Ical$, if $\rank_M^{\Ical}(I)=0$, then $\dgm_M^{\Ical}(I)=0$. \label{item:GPD is more parsimonious}
\item $\dgm_M^{\Ical}$ is convolvable over $\Ical$.\label{item:convolvability of GPD}
\item The unique function $\drm:\Ical\rightarrow \R$ satisfying the following equality is $\dgm_M^{\Ical}$:
\[\rank_M(I)=\sum_{\substack{J\supset I\\ J\in \Ical}}\drm(J) \ \ \mbox{for all $I\in \Ical$}.\]
In other words, Equation (\ref{eq:dgm}) implies $\rk_M^\Ical = \dgm_M^\Ical \ast \zeta_\Ical$.
\label{item:universal property of GPD}
\end{enumerate} 
\end{proposition}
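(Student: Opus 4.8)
The plan is to deduce all three items directly from the M\"obius inversion formula (Theorem \ref{thm:mobius}) applied to the locally finite poset $(\Ical,\supset)$, using convolvability and monotonicity of the GRI as the only real inputs. First I would verify the standing hypotheses so that the formula applies: $(\Ical,\supset)$ is locally finite by assumption, and $\rank_M^\Ical$ is convolvable over it, so the M\"obius inversion $\dgm_M^\Ical = \rank_M^\Ical \ast \mu_\Ical$ is well-defined and Theorem \ref{thm:mobius} gives the inverse relation $\rank_M^\Ical = \dgm_M^\Ical \ast \zeta_\Ical$, i.e.\ $\rank_M(I) = \sum_{J\supset I,\, J\in\Ical}\dgm_M^\Ical(J)$ for all $I\in\Ical$. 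This last equality, together with the fact that $\zeta_\Ical$ is invertible in the incidence algebra (Remark \ref{rem:invertibility}), already yields item (iii): any $\drm$ satisfying $\rank_M = \drm\ast\zeta_\Ical$ must equal $\rank_M\ast\zeta_\Ical^{-1}=\rank_M\ast\mu_\Ical=\dgm_M^\Ical$, so $\dgm_M^\Ical$ is the unique such function. (One should note that for this divided-difference argument to be literally a statement in $I(\Ical,\kf)$ one uses Remark \ref{rem:right multiplication}\ref{item:right multiplication}–\ref{item:zeta defines an automorphism}, after checking $\Ical$ has finite principal ideals in the $\supset$-order, which follows from local finiteness once we also know the relevant functions are convolvable; alternatively one argues pointwise by downward induction along $\supset$-chains, using local finiteness to make each step finite.)

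For item (i), I would argue by (transfinite / Noetherian) induction on $I\in\Ical$ with respect to the order $\supset$ restricted to the support-region, or more cleanly, reason directly from the defining recursion of $\mu_\Ical$ in Equation (\ref{eq:mobius}). Fix $I$ with $\rank_M(I)=0$. By monotonicity of the GRI (Remark \ref{rem:properties of rank invariant}\ref{item:monotonicity}), $I\subset J$ for $J\in\Ical$ forces $\rank_M(J)\le \rank_M(I)=0$, hence $\rank_M(J)=0$ for every $J\in\Ical$ with $J\supset I$. Now compute $\dgm_M^\Ical(I)=\sum_{J\supset I,\, J\in\Ical}\rank_M(J)\,\mu_\Ical(J,I)$; every term has $\rank_M(J)=0$, so the sum vanishes and $\dgm_M^\Ical(I)=0$. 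This is essentially Proposition \ref{prop:mobius inversion of non-decreasing map is convolvable} applied to the non-increasing (with respect to $\subset$, i.e.\ non-decreasing with respect to $\supset$) function $\rank_M^\Ical$, and indeed item (ii) will follow the same way.

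For item (ii), convolvability of $\dgm_M^\Ical$ over $(\Ical,\supset)$: this is exactly the conclusion of Proposition \ref{prop:mobius inversion of non-decreasing map is convolvable} once we observe that $\rank_M^\Ical:(\Ical,\supset)\to\R$ is non-decreasing (again by Remark \ref{rem:properties of rank invariant}\ref{item:monotonicity}: $J\supset I$ implies $\rank_M(J)\le\rank_M(I)$, which is non-decreasing in the $\supset$-order) and convolvable by hypothesis. So I would simply invoke that proposition; if a self-contained argument is preferred, repeat its proof: for $I\in\Ical$, the principal ideal $I^\downarrow$ in $(\Ical,\supset)$ is $\{J\in\Ical: J\supset I\}$, convolvability of $\rank_M^\Ical$ says $\rank_M(J)=0$ for all but finitely many such $J$, and by the support containment in item (i) the same holds for $\dgm_M^\Ical$.

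I do not expect a genuine obstacle here; the only point requiring care is the bookkeeping around whether right multiplication by $\zeta_\Ical$ and $\mu_\Ical$ is literally defined on all of $\kf^\Ical$ or only on convolvable functions — so in item (iii) I would phrase the uniqueness statement within the subspace of convolvable functions (which is closed under these operations by Remark \ref{rem:convolvability}\ref{item:convolvability3} and the fact, just proved in (ii), that the outputs stay convolvable), rather than in all of $\kf^\Ical$, to avoid any ill-defined infinite sums.
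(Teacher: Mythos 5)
Your proof is correct and follows essentially the same route as the paper's, which cites monotonicity of the GRI together with Proposition~\ref{prop:mobius inversion of non-decreasing map is convolvable} for items (i)--(ii) and Theorem~\ref{thm:mobius} for item (iii); you are simply unfolding what those citations compress. One aside deserves correction: local finiteness of $\Ical$ plus convolvability of $\rank_M^\Ical$ does \emph{not} imply that $(\Ical,\supset)$ has finite principal ideals (there may be infinitely many pairwise-incomparable $J\supset I$ in $\Ical$ on which $\rank_M$ vanishes), so the automorphism route via Remark~\ref{rem:right multiplication} is unavailable in general---but this is harmless, since your fallback via Theorem~\ref{thm:mobius}, combined with the observation that any $\drm$ making the sum in (iii) well-defined is automatically convolvable, is exactly what the paper relies on.
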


\begin{proof}Item \ref{item:GPD is more parsimonious} follows from Remark \ref{rem:properties of rank invariant} \ref{item:monotonicity} and  Proposition \ref{prop:mobius inversion of non-decreasing map is convolvable}.
Item \ref{item:convolvability of GPD} is an immediate consequence of item \ref{item:GPD is more parsimonious}.
Item \ref{item:universal property of GPD} follows from Theorem \ref{thm:mobius} and item \ref{item:convolvability of GPD}.
\end{proof}
\ok{Item \ref{item:GPD is more parsimonious} together with the fact that $\dgm_M^\Ical$ determines $\rk_M^\Ical$ suggests that $\dgm_M^\Ical$ is a concise encoding of $\rk_M^\Ical$.} 

\subsection{Rank decompositions}\label{sec:GPD and Rk decomposition}

In this section, we provide a review of the notion of \emph{rank decomposition} \cite{botnan2021signed}, with a strong focus on its connection with the M\"obius inversion formula. 

For any multiset $\Rcal$ of intervals in a poset $\catP$, let $\mult_{\Rcal}:\Int(\catP)\rightarrow \Zplus$ be defined by $R\mapsto$ (the multiplicity of $R$ in $\Rcal$). The multiset $\Rcal$ is said to be \textbf{\pfin}\ if for all $p\in \catP$, the sum $\sum_{R\ni p}\mult_{\Rcal}(R)$ is finite.

\begin{remark}\label{rem:pointwisely finite multiset}
    It is not difficult to check that the following are equivalent: 
    \begin{enumerate}[leftmargin=*,label=(\roman*)]
        \item A multiset $\Rcal$ of intervals in $\catP$ is \pfin{}.\label{item:pointwisely finite multiset1}
        \item The direct sum $\kf_{\Rcal}:=\bigoplus_{R\in \Rcal}k_R$ is pointwisely finite-dimensional, i.e. $\dim((\kf_{\Rcal})_p)<\infty$ for all $p\in \catP$. \label{item:pointwisely finite multiset2}
        \item For all $I\in \Int(\catP)$, $\rank_{\kf_{\Rcal}}(I)<\infty$  (cf. Remark \ref{rem:properties of rank invariant} \ref{item:monotonicity}).  \label{item:pointwisely finite multiset3}
        \item Every principal ideal of the subposet $\{R\in \Int(\catP): \mult_\Rcal(R)\neq 0\}\subset\Int(\catP)$ is finite. \label{item:pointwisely finite multiset4}
    \end{enumerate}
\end{remark}

\begin{definition}\label{def:rank decomposition}
     Let $M$ be a $\catP$-module. 
     Whenever they exist, 
     any pair $(\Rcal,\Scal)$ of \pfin\  multisets of elements in $\Int(\catP)$  such that
    \[\rank_M^{\Int}=\rank_{\kf_{\Rcal}}^{\Int}-\rank_{\kf_{\Scal}}^{\Int}\]
    is called a \textbf{rank decomposition} of $\rank_M^\Int$.

    If $\Rcal$ and $\Scal$ are disjoint, then the rank decomposition is called \textbf{minimal} and we write $\Rcal_M$ and $\Scal_M$ (the uniqueness of the pair $(\Rcal_M,\Scal_M)$ follows from the next proposition).
\end{definition}

\begin{proposition}[{\cite[Corollary 2.12]{botnan2021signed}}]\label{prop:if rk decomposes then the minimal one exists}
  Let $M$ be a $\catP$-module such that $\rank_M^\Int$ admits a rank decomposition $(\Rcal,\Scal)$. Then, the unique minimal rank decomposition of $\rank_M^\Int$ is given by 
  $(\Rcal_M,\Scal_M)$, where $\Rcal_M:=\Rcal-(\Rcal\cap\Scal)$ and $\Scal_M:=\Scal-(\Rcal\cap\Scal)$.
\end{proposition}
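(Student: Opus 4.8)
The plan is to prove Proposition~\ref{prop:if rk decomposes then the minimal one exists} directly from the defining identity of a rank decomposition together with the additivity of the GRI (Remark~\ref{rem:properties of rank invariant}~\ref{item:additivity}). First I would record the elementary fact that for two \pfin{} multisets $\Rcal, \Scal$ of intervals, the associated interval-decomposable modules satisfy $\rk_{\kf_{\Rcal}}^\Int = \rk_{\kf_{\Rcal-\Tcal}}^\Int + \rk_{\kf_{\Tcal}}^\Int$ for any sub-multiset $\Tcal \subseteq \Rcal$; this is immediate from additivity of the GRI since $\kf_{\Rcal} \cong \kf_{\Rcal-\Tcal}\oplus\kf_{\Tcal}$. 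Applying this with $\Tcal := \Rcal\cap\Scal$ (the multiset intersection, which is again \pfin{} as a sub-multiset of a \pfin{} multiset) to both $\Rcal$ and $\Scal$ gives $\rk_{\kf_{\Rcal}}^\Int = \rk_{\kf_{\Rcal_M}}^\Int + \rk_{\kf_{\Rcal\cap\Scal}}^\Int$ and likewise for $\Scal$. Subtracting, the common term $\rk_{\kf_{\Rcal\cap\Scal}}^\Int$ cancels, so $\rk_M^\Int = \rk_{\kf_{\Rcal}}^\Int - \rk_{\kf_{\Scal}}^\Int = \rk_{\kf_{\Rcal_M}}^\Int - \rk_{\kf_{\Scal_M}}^\Int$, and by construction $\Rcal_M\cap\Scal_M=\emptyset$; hence $(\Rcal_M,\Scal_M)$ is a minimal rank decomposition, establishing existence.

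For uniqueness, suppose $(\Rcal',\Scal')$ is another minimal (i.e.\ disjoint) rank decomposition. Then $\rk_{\kf_{\Rcal'}}^\Int - \rk_{\kf_{\Scal'}}^\Int = \rk_{\kf_{\Rcal_M}}^\Int - \rk_{\kf_{\Scal_M}}^\Int$, which rearranges to $\rk_{\kf_{\Rcal'\sqcup\Scal_M}}^\Int = \rk_{\kf_{\Rcal_M\sqcup\Scal'}}^\Int$ using additivity again (here $\sqcup$ is multiset sum). So it suffices to show that two \pfin{} multisets of intervals whose associated interval-decomposable modules have equal Int-GRI must be equal as multisets; then $\Rcal'\sqcup\Scal_M = \Rcal_M\sqcup\Scal'$ as multisets, and the disjointness of $(\Rcal',\Scal')$ and of $(\Rcal_M,\Scal_M)$ forces $\Rcal'=\Rcal_M$ and $\Scal'=\Scal_M$ by a cancellation argument on multisets. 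The injectivity of $\Rcal \mapsto \rk_{\kf_{\Rcal}}^\Int$ on \pfin{} multisets is exactly the statement that the Int-GRI is a complete invariant for interval-decomposable modules; I would cite this as the corresponding result from \cite{botnan2021signed} (it is the content of their rank decomposition theory, and appears in this paper as Theorem~\ref{thm:mainthm} specialized to $\Ical = \Int(\catP)$), noting the small subtlety that one must pass through the identification of multisets of intervals with barcodes of \pfin{} interval-decomposable modules.

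The main obstacle I anticipate is precisely this injectivity/completeness input: showing that the Int-GRI distinguishes \pfin{} interval-decomposable modules is not formal — it relies on a M\"obius-type inversion or a combinatorial argument over $\Int(\catP)$ and is genuinely the substantive ingredient (in the locally finite case it is M\"obius inversion of the GRI, but for general $\catP$ with $\Int(\catP)$ possibly not locally finite one must be more careful, which is part of what this paper's later sections address). Since the proposition is quoted from \cite[Corollary 2.12]{botnan2021signed}, the cleanest route for the present exposition is to invoke that completeness statement as a black box and keep the proof at the level of the additivity-and-cancellation manipulations above; the bookkeeping with multiset sums and intersections is routine once one fixes the convention that $\mult_{\Rcal\cap\Scal}(R) = \min(\mult_\Rcal(R),\mult_\Scal(R))$ and $\mult_{\Rcal\sqcup\Scal}(R) = \mult_\Rcal(R)+\mult_\Scal(R)$.
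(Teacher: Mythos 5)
The paper does not reprove this statement; it is quoted directly from \cite[Corollary 2.12]{botnan2021signed}, so there is no internal proof in the present paper to compare against. Your argument is correct and is the natural route (and, as far as one can tell, essentially the route taken in the cited source). Two small points are worth tidying up.

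First, the additivity you invoke, namely $\rk_{\kf_\Rcal}^\Int=\rk_{\kf_{\Rcal-\Tcal}}^\Int+\rk_{\kf_\Tcal}^\Int$ for a possibly infinite \pfin{} multiset $\Rcal$, does not literally follow from Remark~\ref{rem:properties of rank invariant}~\ref{item:additivity}, which is stated for finite direct sums. The clean justification in the general \pfin{} case is Proposition~\ref{prop:gricontaining}: for each $I$, $\rk_{\kf_\Rcal}(I)$ is the total multiplicity of $R\in\Rcal$ with $R\supset I$ (finite by \pfin{}-ness), and your identity is then just splitting that count along $\Rcal=(\Rcal-\Tcal)\sqcup\Tcal$. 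The same remark applies to the rearrangement $\rk_{\kf_{\Rcal'}}^\Int+\rk_{\kf_{\Scal_M}}^\Int=\rk_{\kf_{\Rcal'\sqcup\Scal_M}}^\Int$ in your uniqueness step; here one should also observe that $\Rcal'\sqcup\Scal_M$ is again \pfin{}, being a pointwise sum of two \pfin{} multisets.

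Second, you correctly identify the completeness of the Int-GRI on \pfin{} interval-decomposable modules as the substantive ingredient. Within this paper the precise statement to cite is Proposition~\ref{prop:known discriminating power of rk} (a restatement of \cite[Proposition 2.10]{botnan2021signed}), specialized to $\Ical=\Int(\catP)$: one notes $\widehat{\Int(\catP)}=\Int(\catP)$, since a nested union of intervals is again an interval. The remainder of your uniqueness argument is sound: from $\Rcal'\sqcup\Scal_M=\Rcal_M\sqcup\Scal'$ as multisets and the disjointness of both pairs, writing $a,b,c,d$ for the four multiplicity functions with $a+d=c+b$ and $\min(a,b)=0=\min(c,d)$ pointwise forces $a=c$ and $b=d$, i.e.\ $\Rcal'=\Rcal_M$ and $\Scal'=\Scal_M$.
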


\begin{proposition}[{\cite[Proposition 3.3]{botnan2021signed}}]\label{prop:rk decomposition is GPD} 
Assume that $\Int(\catP)$ is locally finite.
If the $\Int$-GRI of a $\catP$-module $M$ is convolvable over $\Int(\catP)$, then the minimal rank decomposition of $\rk^\Int_M$ can be obtained as
\[\Rcal_M=\{d_I\cdot I:\dgm_M(I)>0\}\ \ \ \mbox{and}\ \ \  \Scal_M=\{d_I\cdot I:\dgm_M(I)<0\},\] 
where $d_I:=\abs{\dgm_M(I)}$ and $d_I\cdot I$ stands for $d_I$ copies of $I$.  
\end{proposition}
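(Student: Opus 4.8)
The plan is to reduce the statement to the M\"obius inversion formula (Theorem \ref{thm:mobius}) together with additivity of the GRI (Remark \ref{rem:properties of rank invariant} \ref{item:additivity}) and the explicit formula for the GRI of an interval module (Remark \ref{rem:properties of rank invariant} \ref{item:the GRI of an interval module}). First I would set $\Rcal_M:=\{d_I\cdot I:\dgm_M(I)>0\}$ and $\Scal_M:=\{d_I\cdot I:\dgm_M(I)<0\}$ with $d_I=\abs{\dgm_M(I)}$, and check these are well-defined \pfin\ multisets: by Proposition \ref{prop:properties of GPD} \ref{item:convolvability of GPD}, $\dgm_M$ is convolvable over $\Int(\catP)$, so for each $I\in\Int(\catP)$ only finitely many $J\supset I$ satisfy $\dgm_M(J)\neq 0$; in particular for each $p\in\catP$, taking $I=\{p\}$, only finitely many intervals containing $p$ carry nonzero $\dgm_M$-value, and each carries a finite value, so $\sum_{R\ni p}\mult_{\Rcal_M}(R)<\infty$ and likewise for $\Scal_M$. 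This is exactly \pfin ness in the sense of Definition \ref{def:rank decomposition} (see Remark \ref{rem:pointwisely finite multiset}).

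Next I would verify the defining equality $\rk_M^{\Int}=\rk_{\kf_{\Rcal_M}}^{\Int}-\rk_{\kf_{\Scal_M}}^{\Int}$. Fix $I\in\Int(\catP)$. By additivity of the GRI and Remark \ref{rem:properties of rank invariant} \ref{item:the GRI of an interval module},
\[
\rk_{\kf_{\Rcal_M}}(I)-\rk_{\kf_{\Scal_M}}(I)
=\sum_{\substack{J\supset I\\ \dgm_M(J)>0}} d_J - \sum_{\substack{J\supset I\\ \dgm_M(J)<0}} d_J
=\sum_{\substack{J\supset I}}\dgm_M(J),
\]
where the last sum is finite by convolvability. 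By Proposition \ref{prop:properties of GPD} \ref{item:universal property of GPD} (equivalently, by applying Theorem \ref{thm:mobius} to the poset $(\Int(\catP),\supset)$ and the convolvable pair $\rk_M^{\Int}$, $\dgm_M$), this last sum equals $\rk_M^{\Int}(I)$. Hence $(\Rcal_M,\Scal_M)$ is a rank decomposition of $\rk_M^{\Int}$.

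Finally I would argue minimality: by construction $\dgm_M(I)>0$ and $\dgm_M(I)<0$ cannot hold simultaneously, so $\Rcal_M$ and $\Scal_M$ are disjoint, which is precisely the definition of a minimal rank decomposition; uniqueness of the minimal decomposition is Proposition \ref{prop:if rk decomposes then the minimal one exists}. Consequently the $\Rcal_M,\Scal_M$ defined here coincide with the minimal pair in Definition \ref{def:rank decomposition}, which is the claim. I do not expect a serious obstacle here: the only point requiring care is the bookkeeping of finiteness of the sums, i.e. carefully invoking convolvability of $\dgm_M$ so that all rearrangements of $\sum_{J\supset I}\dgm_M(J)$ into its positive and negative parts are legitimate; everything else is a direct translation through Theorem \ref{thm:mobius} and the additivity/interval-module properties of the GRI.
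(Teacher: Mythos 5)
Your proof is correct and follows essentially the same approach as the paper: expand $\rk_M^{\Int}(I)=\sum_{J\supset I}\dgm_M(J)$ (justified by convolvability), split into positive and negative parts, and identify these via additivity and the GRI of interval modules with $\rk_{\kf_{\Rcal_M}}^{\Int}(I)-\rk_{\kf_{\Scal_M}}^{\Int}(I)$, noting disjointness. Your additional checks (pointwise finiteness of $\Rcal_M,\Scal_M$ and invoking Proposition \ref{prop:if rk decomposes then the minimal one exists} for uniqueness) are sound elaborations the paper leaves implicit.
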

We include the proof from \cite{botnan2021signed}. 
\begin{proof}
Let $\mathbf{1}_{J\supset}:\Int(\catP)\rightarrow \{0,1\}$ be defined by $\mathbf{1}_{J\supset}(I)=1$ if $J\supset I$ and $\mathbf{1}_{J\supset}(I)=0$, otherwise. For every $I\in \Int(\catP)$, we have
     \begin{align*}
        \rk_M^\Int(I)&=\sum_{\substack{J\supset I\\ J\in\Int(\catP)}}\dgm_M(J)\\&=\sum_{\substack{J\supset I\\ \dgm_M(J)>0}}\dgm_M(J)+\sum_{\substack{J\supset I\\ \dgm_M(J)<0}}\dgm_M(J)\\&=\sum_{\substack{\dgm_M(J)>0}}\dgm_M(J)\cdot\mathbf{1}_{J\supset}(I)+\sum_{\substack{\dgm_M(J)<0}}\dgm_M(J)\cdot\mathbf{1}_{J\supset}(I).
    \end{align*}    
By Remark \ref{rem:properties of rank invariant} \ref{item:additivity} and \ref{item:the GRI of an interval module}, the right-hand side of the equation above equals $\rank_{\kf_{\Rcal_M}}^\Int(I)-\rank_{\kf_{\Scal_M}}^\Int(I)$ with $\Rcal_M\cap \Scal_M=\emptyset$, as desired.
\end{proof}

    Under the assumption that $\Int(\catP)$ is locally finite and $\rank_M$ is convolvable over $\Int(\catP)$, Proposition \ref{prop:rk decomposition is GPD} implies that:
    \begin{enumerate}[label=(\roman*)]
        \item  the existence of a rank decomposition is implied by the forward direction of Theorem \ref{thm:mobius}.
        \item the uniqueness of a minimal rank decomposition is implied by the backward direction of Theorem \ref{thm:mobius}.  
    \end{enumerate}

 In Definition \ref{def:rank decomposition} and Propositions \ref{prop:if rk decomposes then the minimal one exists} and \ref{prop:rk decomposition is GPD}, the collection $\Int(\catP)$ and the function $\rank_M^\Int:\Int(\catP)\rightarrow \Z$ can be replaced by any $\Ical\subset \Int(\catP)$ and any map  $r:\Ical\rightarrow \R$, respectively:

 \begin{oldtheorem}[{Restatement of \cite[Theorem 2.5]{botnan2021signed}}]\label{thm:BOO's setup is more general} Let $\Ical\subset \Int(\catP)$ be locally finite and let $r:\Ical\rightarrow \Z$ be convolvable. 
 Then, there exists $a_I\in \Z$ for each $I\in \Ical$ such that for $M:=\bigoplus\limits_{\substack{I\in \Ical\\a_I>0}} (\kf_I)^{a_I}$ and $N:=\bigoplus\limits_{\substack{I\in \Ical\\a_I<0}} (\kf_I)^{-a_I}$, we have  
 $r= \rk_M - \rk_N.$
 \footnote{\ok{Equivalently, 
 \begin{equation}\label{eq:r is a Z-linear comb.}
     r(J)=\sum_{I\in \Ical}a_I\rk_{\kf_I}^{\Ical}(J)
 \end{equation}
 for every $J\in \Ical$ 
where the RHS contains only finitely many nonzero terms.}}
  \end{oldtheorem}

 \ok{This theorem is proved by simply replacing $\Int(\catP)$,  $\rank_M^\Int$, $\dgm_M$ in the proof of Proposition \ref{prop:rk decomposition is GPD} with $\Ical$, $r$, and $r\ast\mu_{\Ical}$, respectively.}
 \ok{When $\Ical$ is finite, the theorem above can also be shown via elementary linear algebra as detailed below. This approach is distinct from that of \cite{botnan2021signed}.}
\begin{proof}[Another proof of Theorem \ref{thm:BOO's setup is more general} when $\Ical$ is finite] \ok{In this proof, we view the GPD and GRI as rational number valued functions in order to utilize the fact that the set  $\Q$ of rational numbers is a field.} For the interval module $\kf_I$, note that $\dgm_{\kf_I}^\Ical:\Ical\rightarrow \Q$ is identical to the indicator function $\mathbf{1}_I^\Ical:\Ical\rightarrow \Q$ with support $\{I\}$. Therefore, the canonical basis $\{\mathbf{1}_I^\Ical:I\in \Ical\}$ of the vector space $\Q^\Ical$ coincides with $\{\dgm_{\kf_I}^\Ical:I\in \Ical\}$. By Remark \ref{rem:convolvability} \ref{item:convolvability1},  $\mathbf{1}_I^\Ical$ is convolvable over $\Ical$. Now, as $\Q^\Ical$ is finite-dimensional, the image of $\{\dgm_{\kf_I}^\Ical:I\in \Ical\}$ under the automorphism $\ast \zeta_{\Ical}$ on $\Q^\Ical$ forms another basis for $\Q^\Ical$ (cf. Remark \ref{rem:right multiplication} \ref{item:right multiplication-Q is finite} and \ref{item:zeta defines an automorphism}). 
This implies that, since
\[\dgm_{\kf_I}^\Ical\ast\zeta_{\Ical}=\rank_{\kf_{I}}^\Ical,\] any function $r:\Ical\rightarrow \Q$ can be uniquely expressed as a linear combination $r=\sum_{I\in \Ical}a_I\cdot \rk_{k_I}$, $a_I\in \Q$. 
It remains to show that $a_I$ is an integer for every $I\in \Ical$. On one hand, since $r$ is a $\Z$-valued function, its M\"obius inverse $r\ast\mu_{\Ical}$ over $\Ical$ takes values in $\Z$. On the other hand, 
\[r\ast \mu_{\Ical}= \left(\sum_{I\in \Ical}a_I\cdot \rk_{k_I}\right)\ast\mu_\Ical=\sum_{I\in \Ical}a_I\cdot(\rk_{k_I}\ast\mu_\Ical)=\sum_{I\in \Ical}a_I\cdot \dgm_{k_I}=\sum_{I\in \Ical}a_I\cdot \mathbf{1}_I.
\]
Since $a_I=r(I)\in \Z$ for every $I\in \Ical$, we are done. (It is also noteworthy that for each $I\in \Ical$, the coefficient $a_I$ of $\rk_{\kf_I}$ is equal to the M\"obius inversion of $r$ evaluated at $I$, i.e. $r\ast \mu_{\Ical}$).
\end{proof}

\subsection{Motivic invariants}\label{sec:motivic invariants}

Throughout this section,  $\catP$ and $\catQ$ will denote posets.

\medskip
In this section we introduce a \emph{parametric} notion of invariant of persistence modules over posets that helps conceptualize several existing invariants at the same time that it permits designing new ones (as we do in Section \ref{sec:ZIB general}). We call these \emph{motivic} invariants since they are, roughly speaking, specified by the choice of a given fixed poset (or a fixed collection thereof), the \emph{motif}, which one uses to \emph{probe} a given $Q$-module.

\begin{figure}
\centering
\includegraphics[width=0.8\linewidth]{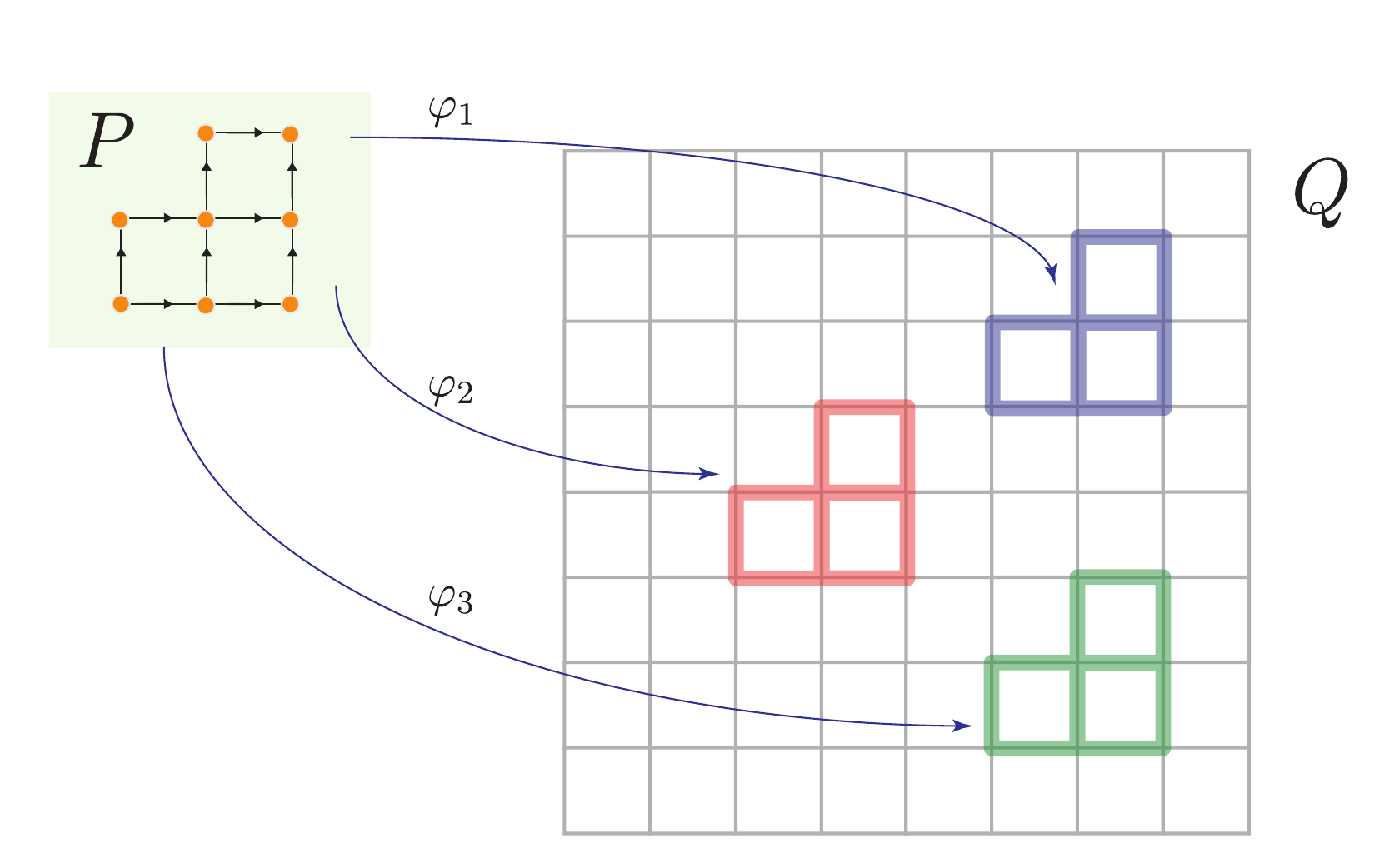}
\caption{A motivic invariant of $Q$-modules is based on finding manifestations, for example via embeddings or more general morphisms $\varphi:P\to Q$ in some allowed class $\Phi$, of a given motif $P$ (or of a collection thereof) inside a given poset $Q$ (here depicted as $\Z^2$, for simplicity). Now, for a given $Q$-module $M$, for each such  $\varphi$, one considers the pullback of $M$ via $\varphi$ and applies a given invariant (functor) $F$ to this pullback module. See Definition \ref{def:motivic invariant} for the actual definition. }\label{fig:motivic}
\end{figure}

The underlying idea of defining invariants through the process of studying all substructures of a given type (where this type is specified through the choice of motif) present in an object is manifested in different fields:
\begin{itemize}
\item In category theory through the pervasive notion of \emph{representable functor} \cite{mac2013categories}.
\item In metric geometry,  motivated by the notion of \emph{curvature sets} considered by Gromov in \cite[Chapter 3]{gromov1999metric}. For a given compact metric space $X$, Gromov considers $K_n(X)$, the $n$-th curvature set of $X$, as the set containing all $n\times n$ matrices produced by $n$-tuples of points in $X$. The collection of all curvature sets completely characterize compact metric spaces up to isometry. 

\item In Lovasz's study of graphs and graphons \cite[Chapter 5]{lovasz2012large} whereby the general notion of \emph{homomorphism number} $\mathsf{hom}(F,G)$ is considered as a count of the number of times a given graph $F$ appears as substructure (subgraph) of a given larger graph $G$. The numbers $\mathsf{hom}(F,G)$ (or closely related concepts) are then used to characterize graphs and graphons and to define distances between them.   

\end{itemize}

Invariants inspired by these notions have been considered in contexts closely related to persistence.  In \cite{gomez2023thesis,gomez2021curvature}, a persistence-like invariant of metric spaces was constructed via curvature sets. In \cite{carlsson2013classifying} the authors consider (hierarchical) clustering methods that are induced by a given motif or by a collection of motives (giving rise to \emph{representable} clustering functors). This line was further explored in \cite{carlsson2017representable,carlsson2021robust,memoli2020motivic} in the context of clustering directed graphs and networks and, in \cite{lyu2023sampling}, by directly exploiting homomorphism densities,  for the analysis of network data \cite{motif-sampling-github}. 

\medskip

We now provide a formal definition of motivic invariants, and then discuss how the invariants mentioned in Table \ref{table:comparison} can be seen as particular instantiations of this definition.

\begin{definition}\label{def:order embedding}

    Denote the set of all order-preserving maps from $\catP$ to $\catQ$ as $\Hom(\catP,\catQ)$.
    For $\Pcal$ a set of posets, let $\Hom(\Pcal,\catQ)$ denote the set 
    \[\Hom(\Pcal,\catQ):= \bigcup_{\catP\in \Pcal} \ \Hom(\catP,\catQ).\]
\end{definition}

We denote the collection of all $\catP$-modules as $\mmod\catP$.
For a $\catQ$-module $M$ and $\varphi\in \Hom(\catP,\catQ)$, recall the definition of the pullback $\varphi^\ast M$ (Definition \ref{def:finite encoding}).
\begin{definition}\label{def:motivic invariant}
    Let $\Pcal$ be a set of posets, for which we call $P\in \Pcal$ a \textbf{motif}.
    Let $\Phi\subset \Hom(\Pcal,\catQ)$, and let $F$ be an invariant for $\catP$-modules with codomain a category $\mathcal{D}$.
    A \textbf{motivic invariant of a $\catQ$-module} $M$ defined by the triple $(\Pcal,\Phi,F)$ is the map $\Psi_F(M):\Phi\to \mathcal{D}$, defined by: 
    \[\Psi_F(M)(\varphi):= F\left(\varphi^\ast M\right).\]
    
    The motivic invariant defined by the triple $(\mathcal{P},\Phi,F)$ is the assignment  $M\mapsto \Psi_F(M)$.
 
   We also say that an invariant $F$ for $\catQ$-modules is a motivic invariant if there exists a parametrization $(\Pcal,\Phi,F)$ such that for all $\catQ$-modules $M$, $F(M)$ can be computed from $\Psi_F(M)$ and vice versa.   See Figure \ref{fig:motivic} for an illustration.
\end{definition}

\begin{remark}\label{rem:justification for motivic invariant definition}
    In fact, all invariants for $\catP$-modules are motivic invariants via a trivial parametrization, i.e. a parametrization $(\Pcal,\Phi,F)$ where $|\Pcal|=|\Phi|=1$.
    Namely, if $F$ is an invariant and $M$ is a $\catQ$-module, then by letting $\Pcal = \{\catQ\}$ and $\Phi = \{\id_\catQ\}$, the map $\Psi_F$ in Definition \ref{def:motivic invariant} parametrized by $(\Pcal,\Phi,F)$ is precisely $\Psi_F(M)(\id_Q) = F(M)$.

    \ok{Nonetheless, as we see in the coming examples, numerous invariants from the literature are motivic invariants with nontrivial parametrizations.}
\end{remark}

\begin{example}\label{ex:gri is motivic}
Let $\Pcal = \Con(\catQ)$, $\Psi\subset \Hom(\Pcal,\catQ)$ consist of the canonical embeddings $\iota_I:I\hookrightarrow \catQ$ of connected subsets $I\in \Con(\catQ)$ into $\catQ$, and $F=\rk$, the generalized rank (Equation (\ref{eq:generalized rank of M})).
This parametrization yields a motivic invariant $\Psi_F$, such that for a $\catQ$-module $M$ we have $\Psi_F(M):\Phi\to \Z$ is given by
\[\Psi_\rk(M)(\iota_I) = \rk(M\vert_I).\]
This demonstrates that the GRI of a $\catQ$-module $M$ is a motivic invariant of $M$. 
    In a similar fashion, a restriction of the GRI to any $\Ical\subset \Con(\catP)$ is a motivic invariant.
\end{example}

\begin{example}[Connection to Table \ref{table:comparison}]\label{ex:table 1 motivic}
    The invariants $\rk^\Ical$ for each $\Ical$ in column 3 of Table \ref{table:comparison} are motivic invariants.
    For one example, letting $\Pcal= \{\{\ast\}\}$, $\Phi = \Hom(\{\ast\},\catQ)$, and $F=\rk$ defines a parametrization for the motivic invariant $\Psi_F$, which is equivalent to the dimension function in row 1 of Table \ref{table:comparison}.
\end{example}
Note that a motivic invariant does not necessarily have a unique parametrization.
For instance, if we set $\Pcal:= \{\{p\}: p\in \catP\}$, then the motivic invariant parametrized by $\Pcal$, $\Phi:=\{\iota_p:\{p\}\hookrightarrow \catQ\}$, $F=\rk$ is again the dimension function.

\begin{remark}\label{rem:motivic connection to Amiot}
    Amiot et al. \cite{amiot2024invariants} consider a notion of invariants for $\catP$-modules based on embeddings of posets, resulting in invariants directly related to the notion of motivic invariant we introduce.
    The invariants they study require all posets $\catP\in \Pcal$ to be of finite representation type, and they require that for all $\varphi\in \Hom(\catP,\catQ)$ and $p,p'\in \catP$, $p\leq p'\iff \psi(p)\leq \psi(p')$.
    These added conditions allow them to prove strong results about the invariant they introduce.
    This motivates a study of motivic invariants by focusing on the properties of a motivic invariant ensured by its parametrization(s).
\end{remark}

\subsection{Zigzag-path-indexed barcodes}\label{sec:ZIB general}

We introduce a motivic invariant of $\catP$-modules based on the idea of considering embeddings of zigzag posets.
We begin with preliminary terminology.

Given a poset $\catP$, a \textbf{path} in $\catP$ is a nonempty finite sequence $\Gamma:p_1,p_2,\ldots,p_n$ in $\catP$ such that $p_i\leq p_{i+1}$ or $p_{i+1} \leq p_{i}$ for each $i=1,\ldots  n-1$.
\label{nom: path} 
By inheriting the order on $\catP$, a path $\Gamma$ can be viewed as a zigzag poset $p_1\leftrightarrow p_2 \leftrightarrow \cdots \leftrightarrow p_n$, where $\leftrightarrow$ stands for either $\leq$ or $\geq$. 
For a path $\Gamma$ in $\catP$, with canonical inclusion $\iota_L:\Gamma\hookrightarrow \catP$, and a $\catP$-module $M$, we denote by $M_\Gamma$ the zigzag module $M_\Gamma:=M\circ \iota_L:\Gamma\to \cvec$, i.e. $M_\Gamma = \iota_L^\ast M$.

\begin{definition}\label{def:zigzag-path-indexed barcode general poset}
Given a $\catP$-module $M$, we define the \textbf{zigzag-path-indexed barcode (ZIB)} of $M$ as the map sending each path $\Gamma:p_1,p_2,\ldots,p_n$ in $\catP$ to $\barc(M_\Gamma)$, a multiset of intervals in the zigzag poset $p_1\leftrightarrow p_2 \leftrightarrow \cdots \leftrightarrow p_n$.
\end{definition}

\begin{remark}\label{rem:zib as motivic} 
Let $\Pcal$ be the set of all finite zigzag posets given by paths $\Gamma$ in $\catP$, and let $\Phi$ consist of the canonical embeddings $\iota_\Gamma:\Gamma\hookrightarrow \catP$.
The ZIB of a $\catP$-module $M$ is a motivic invariant with parametrization $(\Pcal, \Phi, \barc)$.

Similarly, the fibered barcode introduced by Cerri et al. \cite{cerri2013betti} and further studied by Lesnick and Wright \cite{lesnick2015interactive}, as well as the pathwise persistence barcode introduce by Neumann et al.  \cite{neumann2022murit} are all motivic invariants.

A zigzag poset is of finite representation type, but the ZIB does not fit into the framework of Amiot et al. \cite{amiot2024invariants}, as the canonical inclusion maps $\varphi_\Gamma:\Gamma\hookrightarrow \catP$ do not satisfy $p\leq p'\iff \varphi_\Gamma(p)\leq \varphi_\Gamma(p')$. Hence, the ZIB example justifies the level of generality in our definition of motivic invariant.
\end{remark}

In Section \ref{sec:ZIB over Z^2}, we will restrict our focus to the ZIB for $\Z^2$-modules, and compare its discriminating power with that of the GRI.

\section{M\"obius invertibility of the generalized rank invariant}
\label{sec:mobius invertible Rk}

In Section \ref{sec:relative to a dictionary}, we eliminate redundancy in the existing assumptions in the literature for defining the GPD and thereby further generalize the notion of a GPD. This yields the notion of \emph{M\"obius invertibility of the GRI}, which is useful to compare various invariants of multi-parameter persistence modules in a unified viewpoint (cf. Table \ref{table:comparison}). In Section \ref{sec:Mobius invertibility and rank decompositions}, we demonstrate that any rank decomposition of the GRI can always be achieved through M\"obius inversion, regardless of the local finiteness of the domain of the GRI. In Section \ref{sec:On structural simplicity of persistence modules}, we compare M\"obius invertibility of the GRI with other concepts regarding the structural simplicity of persistence modules.
In Section \ref{sec:sufficient conditions}, we identify some sufficient conditions that guarantee the M\"obius invertibility of the GRI. As a consequence, the Int-GRI of finitely presentable multi-parameter persistence modules is proved to be M\"obius invertible.

\subsection{Axiomatizing the fundamental lemma of persistent homology}\label{sec:relative to a dictionary}

In this section, we generalize the notion of GPD  (Definition \ref{def:GPD over I}) by (i) dropping the local finiteness assumption on the domain of the GRI (thus also the convolvability assumption on the GRI), and (ii) dissociating the domain of the GRI from that of the GPD.

\begin{definition}\label{def:mobius invertible} Let $M$ be a $\catP$-module and let $\Jcal\subset \Con(\catP)$. Then 
$\rk_M^{\Jcal}$ is said to be \textbf{\invertible} (over $\Ical$) if there exist 
$\Ical\subset \Jcal$ and a map $d_M:\Ical\rightarrow \Z$ such that 
\begin{equation}\label{eq:mobius invertible over I_v2}
\rank^\Jcal_M(I)=\sum_{\substack{J\supset I\\ J\in \Ical}}d_M^{\Ical}(J) \ \ \mbox{for every $I\in \Jcal$,}
\end{equation}
where the RHS contains only finitely many nonzero summands. In particular, if the collection $\Ical$ can be taken as a subset of $\Int(\catP)$, then $\rk_M^\Jcal$ is said to be M\"obius invertible \textbf{over intervals}. 
 \end{definition}

When $\catP$ is a finite totally ordered set, $\Ical=\Jcal=\Int(\catP)$, and $M$ arises from applying the homology functor to a filtration over $\catP$ of a given simplicial complex, $\drm_M^\Ical$ given in Equation (\ref{eq:mobius invertible over I}) coincides with the classical persistence diagram. In particular, in that setting, Equation (\ref{eq:mobius invertible over I}) is known as the \emph{fundamental lemma of persistent homology} \cite[Chapter VII]{edelsbrunner2008computational}. Hence, Definition \ref{def:mobius invertible} axiomatizes the fundamental lemma of persistent homology without any assumptions on $\catP$, $\Con(\catP)$ (or $\Int(\catP)$), $M$, or $\rk_M^{\Jcal}$. In this viewpoint, M\"obius invertibility of $\rk_M^{\Jcal}$ stands for availability of (a generalized version of) the fundamental lemma of persistent homology for $\rk_M^{\Jcal}$.

Next we show that $d_M^{\Ical}$ is acquired via M\"obius inversion, providing justification for the term M\"obius invertibility.
\begin{proposition}\label{prop:justifying the term mobius invertibility} If $\rk_M^\Jcal$ is M\"obius invertible, then there exists a unique minimal locally finite $\Ical\subset \Jcal$ over which $\rk_M^\Ical$ is convolvable and 
\begin{equation}\label{eq:mobius invertible over I}
\rank^\Jcal_M(I)=\sum_{\substack{J\supset I\\ J\in \Ical}}\dgm_M^{\Ical}(J) \ \ \mbox{for all $I\in \Jcal$.}
\end{equation}
\end{proposition}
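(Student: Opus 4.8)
The plan is to extract from the hypothesis a witnessing pair $(\Ical_0, d_M)$ with $\Ical_0 \subset \Jcal$ and $d_M : \Ical_0 \to \Z$ satisfying Equation (\ref{eq:mobius invertible over I_v2}), and then to show that one may shrink $\Ical_0$ canonically to the support of $d_M$, obtaining a minimal $\Ical$ on which the reconstruction is given by M\"obius inversion. First I would set $\Ical := \{ J \in \Ical_0 : d_M(J) \neq 0 \}$ and observe that replacing $\Ical_0$ by $\Ical$ does not change the right-hand side of (\ref{eq:mobius invertible over I_v2}), so $\rank_M^\Jcal(I) = \sum_{J \supset I,\, J \in \Ical} d_M(J)$ still holds for every $I \in \Jcal$, and in particular for every $I \in \Ical$. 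Moreover, by the finiteness condition built into Definition \ref{def:mobius invertible}, for each $I \in \Jcal$ only finitely many $J \in \Ical$ with $J \supset I$ have $d_M(J) \neq 0$; since every element of $\Ical$ has $d_M \neq 0$, this says exactly that each principal ideal of the poset $(\Ical, \supset)$ is finite, so $(\Ical, \supset)$ is locally finite and $\rk_M^\Ical$ (indeed $d_M$ itself, viewed as $d_M|_\Ical$) is convolvable over $\Ical$ in the sense of Definition \ref{def:convolvable}, cf. Remark \ref{rem:convolvability}\ref{item:convolvability2}.

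Next I would apply the M\"obius inversion formula. The identity $\rank_M^\Ical(I) = \sum_{J \supset I,\, J \in \Ical} d_M(J)$ for all $I \in \Ical$ is precisely $\rank_M^\Ical = d_M \ast \zeta_\Ical$ over the locally finite poset $(\Ical, \supset)$, with both functions convolvable; by Theorem \ref{thm:mobius} this is equivalent to $d_M = \rank_M^\Ical \ast \mu_\Ical = \dgm_M^\Ical$. Substituting $d_M = \dgm_M^\Ical$ back into the displayed reconstruction over all of $\Jcal$ yields Equation (\ref{eq:mobius invertible over I}). This also shows $\dgm_M^\Ical$ is well-defined (the GRI of $M$ is convolvable over $\Ical$) and has support all of $\Ical$.

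For uniqueness and minimality, suppose $\Ical'$ is any other subset of $\Jcal$ over which $\rk_M^{\Ical'}$ is convolvable and (\ref{eq:mobius invertible over I}) holds with $\dgm_M^{\Ical'}$. First, by the same argument as Proposition \ref{prop:properties of GPD}\ref{item:GPD is more parsimonious} (or directly), the support of $\dgm_M^{\Ical'}$ is contained in the support of $\rk_M^{\Ical'}$, hence in $\Ical'$; one can freely shrink $\Ical'$ to $\supp(\dgm_M^{\Ical'})$ without changing either side of (\ref{eq:mobius invertible over I}), so it suffices to treat the case $\Ical' = \supp(\dgm_M^{\Ical'})$. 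Then both $\Ical = \supp(\dgm_M^\Ical)$ and $\Ical' = \supp(\dgm_M^{\Ical'})$ are supports of convolvable functions $f, f' : \Con(\catP) \to \Z$ (extend by zero outside) satisfying $\rank_M^\Jcal(I) = \sum_{J \supset I} f(J) = \sum_{J \supset I} f'(J)$ for all $I \in \Jcal \supset \Ical \cup \Ical'$. The key step is to deduce $f = f'$ from this: letting $h := f - f'$, one has $\sum_{J \supset I} h(J) = 0$ for all $I$ in the union of the two supports, and since $h$ is convolvable one can argue by "induction from the top'' — choosing a maximal element $I_0$ of $\supp(h)$ (which exists because only finitely many $J \supset I_0$ lie in $\supp(h)$, so maximal elements of $\supp(h)$ exist), the equation at $I_0$ forces $h(I_0) = 0$, a contradiction; hence $\supp(h) = \emptyset$, i.e. $f = f'$ and $\Ical = \Ical'$. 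I expect this last uniqueness argument — making precise the "induction from the top'' in a poset that is only locally finite and need not have maximal elements globally — to be the main obstacle; the rest is a direct application of Theorem \ref{thm:mobius}. One clean way around it is to note that the equality $\sum_{J \supset I} h(J) = 0$ restricted to $I$ ranging over $\supp(h)$, together with local finiteness of $(\supp(h), \supset)$, means $h|_{\supp(h)} \ast \zeta_{\supp(h)} = 0$, and since $\ast \zeta$ is an automorphism (Remark \ref{rem:right multiplication}\ref{item:zeta defines an automorphism}) this gives $h|_{\supp(h)} = 0$, forcing $\supp(h) = \emptyset$.
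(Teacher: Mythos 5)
Your proof is correct and follows essentially the same route as the paper: identify $\Ical$ as the support of the postulated inverse, deduce local finiteness (indeed finiteness of principal ideals) from the finitely-many-nonzero-summands clause in Definition \ref{def:mobius invertible}, recover $\dgm_M^\Ical$ via Theorem \ref{thm:mobius}, and establish uniqueness by locating a maximal element of the disagreement set and deriving a contradiction there. The only quibble is a small circularity in the parenthetical justifying the existence of a maximal $I_0 \in \supp(h)$: you should first fix some $I_1 \in \supp(h)$ and then extract a maximal element from the finite set $\{J \in \supp(h) : J \supset I_1\}$, exactly as the paper does --- or, as your ``clean way'' suggests, simply invoke the automorphism $\ast\zeta_{\supp(h)}$ on the poset $(\supp(h),\supset)$, whose principal ideals are finite, which plugs the gap equally well.
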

When Equation (\ref{eq:mobius invertible over I}) holds, we call $\dgm_M^\Ical$ the \textbf{M\"obius inversion of $\rk_M^{\Jcal}$ (over 
$\Ical$).}  From another viewpoint, M\"obius invertibility of the GRI is a relaxation of the interval decomposability of persistence modules, as evidenced by Theorem \ref{thm:sufficient conditions for invertibility} \ref{item:interval-decomposable}. Note also that, when assuming $\Ical=\Jcal$ and $\rk_M^\Jcal$ is convolvable, Definition \ref{def:mobius invertible} reduces to Definition \ref{def:GPD over I}.

\begin{proof}[Proof of Proposition \ref{prop:justifying the term mobius invertibility}]  
      Let $\Ical_1,\Ical_2\subset \Jcal$ be locally finite, and let  $\drm_M^1:\Ical_1\rightarrow \Z$ and $\drm_M^2:\Ical_2\rightarrow \Z$ be any two functions such that 
      \[\forall I\in \Jcal,\ \ \  
 \rk_M^\Jcal(I)=\sum_{\substack{J\supset I\\ J\in \Ical_1}}d_M^1(J)=\sum_{\substack{J\supset I\\ J\in \Ical_2}}d_M^2(J),\]
      and such that $\rk_M^{\Ical_1}$ and  $\rk_M^{\Ical_1}$ are convolvable. Clearly, $\Ical_1\cup \Ical_2$ is also locally finite, and by Remark \ref{rem:convolvability} \ref{item:convolvability5}, $\rk_M^{\Ical_1\cup \Ical_2}$ is  convolvable.
        Now, we regard both $\drm_M^1$ and $\drm_M^2$ as functions $\Jcal\rightarrow \Z$ that vanish outside $\Ical_1$ and $\Ical_2$, respectively. The above equation gives:
        \[\forall I\in \Jcal,\ \ \  0=\sum_{\substack{J\supset I \\ J\in \Ical_1\cup\Ical_2}}(\drm_M^1-\drm_M^2)(J),\]
     which implies that 
     \[\forall I\in \Ical_1\cup \Ical_2,\ \ \  0=\sum_{\substack{J\supset I \\ J\in \Ical_1\cup\Ical_2}}(\drm_M^1-\drm_M^2)(J).\]
     This means that the map  $d_M^1-d_M^2$ is the M\"obius inversion of the zero function on $\Ical_1\cup \Ical_2$. Since the zero function itself is the unique M\"obius inversion of the zero function, we have that $d_M^1-d_M^2$ is the zero function on $\Ical_1\cup \Ical_2$. Also, as both $d_M^1$ and $d_M^2$ vanish outside of $\Ical_1\cup \Ical_2$, the map $d_M^1-d_M^2$ is the zero function on $\Jcal$. Now, the unique minimal $\Ical\subset \Jcal$ in the statement is the support of $d_M^1(=d_M^2)$, completing the proof.
\end{proof}

\begin{remark}[{Comparing Definition \ref{def:mobius invertible} with ideas from \cite[Section 8]{botnan2021signed}}]\label{rem:comparison with BOO21}
\white{.} 
\phantomsection

\begin{enumerate}[label=(\roman*)]
\item When the sets $\Ical$ and $\Jcal$ in Definition \ref{def:mobius invertible} consists solely of intervals of $\catP$ and one contains the other, then saying that $\rk_M^\Jcal$ is M\"obius invertible over $\Ical$ is equivalent to saying that the GRI of $M$ with \emph{test set} $\Jcal$ admits a rank decomposition with \emph{dictionary} $\Ical$ \cite[Section 8]{botnan2021signed}, i.e. there exist $\Ical$-decomposable $\catP$-modules $N_1$ and $N_2$ such that
\[\rk_M^{\Jcal}=\rk_{N_1}^{\Jcal}-\rk_{N_2}^\Jcal.\]
This can be proved in a similar way as Proposition \ref{prop:rk decomposition is GPD}. 
However, Definition \ref{def:mobius invertible} generalizes ideas in \cite[Section 8]{botnan2021signed} in that neither the test set nor the dictionary set needs to be a set of intervals.
This level of generality is useful in building a connection between invariants of persistence modules or clarifying the discriminating power of the GRI: Remarks \ref{rem:rectangle decomposition}, \ref{rem:implications of gen by zz}, Rows 3, 9, 10 of Table \ref{table:comparison}.
\label{item:GPD is more general than rank decomposition}

\item 
In contrast to \cite[Section 8]{botnan2021signed}, we consider the case where $\Ical\subsetneq \Jcal$ to be important both in theory and in practice: 
 Since $\dgm_M^\Ical$ and $\rank_M^{\Ical}$ determine each other (cf. Proposition \ref{prop:properties of GPD} \ref{item:universal property of GPD}), the M\"obius invertibility of \ok{$\rk_M^\Jcal$} over $\Ical$ implies that 
\ok{$\rk_M^\Jcal$} can be restricted to the smaller domain $\Ical$ \emph{without losing any information.} In particular, significant discrepancy between $\Ical$ and $\Jcal$ indicates $\rk^\Jcal_M$ is mostly constant.  
Identifying such an $\mathcal{I}$, ideally the minimal one, is crucial to minimize the computational burden as well as facilitating vectorization of the information for usage in machine learning pipeline \cite{loiseaux2024stable,xin2023gril}.  Consider, for instance, a finitely presentable $\R^d$-module $M$. While the poset $\Jcal:=(\Int(\R^d),\supset)$ is not locally finite, $
\rk_M^\Jcal$ is completely encoded into $\dgm_M^{\Ical}$ 
(Theorem \ref{thm:sufficient conditions for invertibility} \ref{item:finitely presentable invertible}). 
\end{enumerate}
\end{remark}

\begin{remark}\phantomsection\label{rem:monotonicity of mobius invertibility}
Further remarks on Definition \ref{def:mobius invertible} follow.

\begin{enumerate}[label=(\roman*)]

\item (Monotonicity) Let $\Jcal'\subset \Jcal$ and $\Ical\subset \Ical'$.  If $\rk_M^\Jcal$ is M\"obius invertible over $\Ical$, then $\rk_M^{\Jcal'}$ is  M\"obius invertible over $\Ical'$. \label{item:monotonicity of mobius invertibility}

\item 
Assume $\Ical\subset\Jcal$ and $\Ical$ is finite. If $\rk_M^{\Jcal}$ is $\Ical$-constructible, then by Proposition \ref{prop:Mobius inversion of a contructible function}$, \rk_M^{\Jcal}$ is M\"obius invertible over $\Ical$. The converse does not hold. 
\end{enumerate}
\end{remark}

\subsection{Rank decomposition can always be obtained via M\"obius inversion}
\label{sec:Mobius invertibility and rank decompositions}

In this section, we demonstrate that any rank decomposition of the GRI can always be achieved through M\"obius inversion, regardless of the local finiteness of the domain of the GRI. This stands in contrast to the method presented in \cite{botnan2021signed}, where the authors establish the uniqueness of the minimal rank decomposition (if it exists) through two different means, one of which is contingent upon the local finiteness of the domain of the GRI.

We assume $\Jcal=\Int(\catP)$ for ease of notation, bearing Remark \ref{rem:monotonicity of mobius invertibility} \ref{item:monotonicity of mobius invertibility} in mind.

\begin{theorem}\label{thm:rk decomposition is GPD} \contributionn Given any $\catP$-module $M$, regardless of the locally finiteness of $\Int(\catP)$, the following are equivalent.
    \begin{enumerate}[label=(\roman*),leftmargin=*]
        \item $\rank_M^\Int$ is M\"obius invertible. 
        \label{item:1}
         \item $\rank_M^\Int$ admits a rank decomposition.  \label{item:2}     
         \item $\rank_M^\Int$ admits a unique minimal rank decomposition. \label{item:3}
        \item There exists a unique minimal $\Ical\subset \Int(\catP)$ over which $\rank_M^\Int$ is M\"obius invertible.\label{item:4}
        
        \item There exists a unique function $\drm_M:\Int(\catP)\rightarrow \Z$ such that for all $I\in\Int(\catP)$, the set $\{J\supset I: J\in \Int(\catP),\ \ \drm_M(J)\neq 0\}$ is finite and 
        \begin{equation}\label{eq:mobius invertible}
        \rank_M^\Int(I)=\sum_{\substack{J\supset I\\ J\in \Int(\catP)}}\drm_M(J).
        \end{equation}
        \label{item:5} 
    \end{enumerate}
\end{theorem}

\begin{proof}[Proof of Theorem \ref{thm:rk decomposition is GPD}] 
 The implications \ref{item:1} $\Leftrightarrow$  \ref{item:4} $\Leftrightarrow$ \ref{item:5} are direct from Proposition \ref{prop:justifying the term mobius invertibility}.
    \ref{item:1} $\Rightarrow$ \ref{item:2} follows from Proposition \ref{prop:rk decomposition is GPD} and Theorem \ref{thm:BOO's setup is more general}.    
      \noindent\ref{item:2} $\Rightarrow$ \ref{item:1}: Let $(\Rcal,\Scal)$ be a rank decomposition of the Int-GRI of $M$. Then, by Remark \ref{rem:pointwisely finite multiset} \ref{item:pointwisely finite multiset4}, every principal ideal of the poset $\Ical:=\{I\in \Int(\catP): I \mbox{ belongs to either $\mathcal{R}$ or $\mathcal{S}$}\}$ is finite.
      Define $\drm_M^{\Ical}:\Ical\rightarrow \Z$ by $I\mapsto \mult_\Rcal(I)-\mult_\Scal(I)$. Then, for all $I\in \Int(\catP)$:
    \[\rank_M(I)=(\rank_{\kf_{\Rcal}}-\rank_{\kf_{\Scal}})(I)=\sum_{\substack{J\supset I\\ J\in \Ical}}\drm_M^{\Ical}(J).\]
    By Definition \ref{def:mobius invertible}, $\rk_M^\Int$ is M\"obius invertible.

    \noindent \ref{item:2} $\Leftrightarrow$ \ref{item:3} follows from Proposition \ref{prop:if rk decomposes then the minimal one exists}. 
    
    \noindent \ref{item:3} $\Leftrightarrow$ \ref{item:4} can be proved using a similar argument as in the proof of \ref{item:1} $\Leftrightarrow$ \ref{item:2}.
    \end{proof}

\subsection{M\"obius invertibility vs. other structural simplicity measures for persistence modules}\label{sec:On structural simplicity of persistence modules}
In this section, we elucidate the relationship among various concepts pertaining to the structural simplicity of persistence modules, as depicted in Figure \ref{fig:tameness + MI visual}. Along the way, we find a persistence module over $\Z^2$ whose Int-GRI is not M\"obius invertible (which implies that its Int-GRI admits no rank decomposition). 

The following proposition is rather straightforward (cf. Definitions \ref{def:finitely presentable}, \ref{def:finite encoding}, and \ref{def:co-closure and constructible}):
\begin{proposition}\label{prop:constructible implies finitely presentable}
Constructible persistence modules are finitely presentable. 
\end{proposition}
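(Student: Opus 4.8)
The plan is to reduce the statement to the finite poset $\catS:=\im(c)$, finitely present the restriction of $M$ there, and then pull the presentation back along $c$. So suppose $M$ is $\catS$-constructible, witnessed by a co-closure $c\colon\catP\to\catP$ with finite image $\catS$ and $M\circ c=M$; I would regard $c$ as an order-preserving map $\catP\to\catS$.

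First I would note that $M=c^\ast N$ for $N:=M|_{\catS}\colon\catS\to\cvec$: this is immediate from $M\circ c=M$, since for $p\le p'$ in $\catP$ the elements $c(p),c(p')$ lie in $\catS$, so $N_{c(p)}=M_{c(p)}=M_p$ and $\varphi_N(c(p),c(p'))=\varphi_M(c(p),c(p'))=\varphi_M(p,p')$. Next, because $\catS$ is finite, $N$ is finitely presentable as an $\catS$-module: choosing a basis of each $N_s$ gives a surjection $G_0:=\bigoplus_{s\in B_0}\kf_{s^\uparrow\cap\,\catS}\twoheadrightarrow N$ from a finite direct sum of representables, its kernel is again pointwise finite-dimensional, and since $\catS$ is finite this kernel is covered in the same way by a second finite sum $G_1:=\bigoplus_{s\in B_1}\kf_{s^\uparrow\cap\,\catS}$ of representables, so $N\cong\mathrm{coker}(G_1\to G_0)$. (Equivalently, this is the standard fact that modules over the finite-dimensional incidence algebra of $\catS$ are finitely presentable.)

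The key step is the identity $c^{-1}(s^\uparrow\cap\,\catS)=s^\uparrow$ for each $s\in\catS$: if $c(p)\ge s$ then $p\ge c(p)\ge s$, and conversely $p\ge s$ gives $c(p)\ge c(s)=s$, using that $c$ is order-preserving and fixes $\catS$ pointwise (as $c\circ c=c$). Together with the observation that the transition maps of a pulled-back interval module are identities between comparable points of its (up-closed, convex) support, this yields $c^\ast\kf_{s^\uparrow\cap\,\catS}\cong\kf_{s^\uparrow}$, a representable over $\catP$. Since precomposition with $c$ is exact — exactness in a functor category is tested pointwise — and commutes with finite direct sums, applying $c^\ast$ to $G_1\to G_0$ gives $M=c^\ast N\cong\mathrm{coker}\bigl(\bigoplus_{s\in B_1}\kf_{s^\uparrow}\to\bigoplus_{s\in B_0}\kf_{s^\uparrow}\bigr)$ with $B_0,B_1$ finite multisets of elements of $\catP$, which is exactly a finite presentation in the sense of Definition~\ref{def:finitely presentable}. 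I expect the only point requiring genuine care to be the identity $c^{-1}(s^\uparrow\cap\,\catS)=s^\uparrow$: it is where all three defining properties of a co-closure are used, and it is precisely what makes the pullback of a representable over $\catS$ a representable over $\catP$; everything else is routine.
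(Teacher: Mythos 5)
Your proof is correct, but it takes a genuinely different route from the one in the paper. The paper's proof works directly over $\catP$: it invokes \cite[Proposition 4.13]{gulen2022galois} to produce an $\catS$-constructible free cover $F_0=\bigoplus_i\kf_{p_i^\uparrow}\twoheadrightarrow M$, observes that the kernel of this surjection is again $\catS$-constructible, and then repeats the construction once more to obtain $F_1\to F_0\to M\to 0$. You instead \emph{reduce to the finite poset} $\catS$: you observe $M=c^\ast(M|_\catS)$, finitely present $M|_\catS$ over $\catS$ (which is routine because $\catS$ is finite), and then pull the presentation back along $c$, using exactness of $c^\ast$ together with the identity $c^{-1}(s^\uparrow\cap\catS)=s^\uparrow$ to recognize that the pullback of a representable over $\catS$ is a representable over $\catP$. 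Both arguments ultimately rest on the same fact (a pointwise finite-dimensional module over a finite poset admits a two-step free resolution by representables), but yours is self-contained and isolates the single combinatorial identity that makes the reduction work, whereas the paper's is shorter at the cost of citing an external structural result. Your key lemma is verified correctly: $c(p)\ge s\Rightarrow p\ge c(p)\ge s$ since $c(p)\le p$, and $p\ge s\Rightarrow c(p)\ge c(s)=s$ since $c$ is order-preserving and idempotent; and from it $c^\ast\kf_{s^\uparrow\cap\catS}\cong\kf_{s^\uparrow}$ follows exactly as you say. One small stylistic note: in Definition \ref{def:finitely presentable} the summands are indexed by finite multisets of \emph{elements of $\catP$}, and your construction produces exactly that with $B_0,B_1\subset\catS\subset\catP$, so there is no mismatch with the definition.
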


\begin{proof}
Let $M:\catP\rightarrow \cvec$ be an $\catS$-constructible module with $\catS \subset \catP$. Then, there exist an $\catS$-constructible $\catP$-module $F_0=\bigoplus_{i=1}^n k_{p_i^\uparrow}$ and a natural transformation $\Psi^0:F_0\Rightarrow M$ such that each component is surjective, i.e. $\Psi_a^0:F_a\rightarrow M_a$ is surjective for each $a\in \catP$ \cite[Proposition 4.13]{gulen2022galois}. It is easy to see that the kernel of $\Psi^0$ is also $\catS$-constructible and thus there exists another $\catS$-constructible $\catP$-module $F_1=\bigoplus_{i=1}^mk_{q_i^\uparrow}$  with a natural transformation $\Psi^1:F_1\Rightarrow F_0$
such that $\im \Psi^1=\ker\Psi^0$.
Now we have that $M$ is the cokernel of $\Psi^1$ , as desired.
\end{proof}

\begin{proposition}[{\cite[Theorem 6.12, Remark 6.15]{miller2020homological}}]\label{prop:finitely presentable implies tame} Finitely presentable persistence modules are tame.
\end{proposition}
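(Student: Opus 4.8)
The plan is to produce an explicit finite encoding of $M$ (Definition \ref{def:finite encoding}); this is essentially Miller's argument. Fix a presentation $F_1\xrightarrow{\phi}F_0\to M\to 0$ with $F_0=\bigoplus_{a\in A}\kf_{a^\uparrow}$ and $F_1=\bigoplus_{b\in B}\kf_{b^\uparrow}$, where $A,B$ are finite multisets, and let $S\subset\catP$ be the finite set of elements occurring in $A$ or $B$. I would take as encoding poset $\catQ:=\{\,\text{down-sets of }(S,\le|_S)\,\}$ ordered by inclusion --- a finite poset --- together with the order-preserving \emph{type map} $\pi=\tau\colon\catP\to\catQ$, $\tau(p):=\{s\in S:s\le p\}$ (its values really are down-sets of $(S,\le|_S)$, and $p\le p'$ forces $\tau(p)\subseteq\tau(p')$).

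Next I would build a $\catQ$-module $N$ with $\tau^\ast N\cong M$. Since each $\kf_{b^\uparrow}$ is the free $\catP$-module on one generator placed at $b$, the morphism $\phi$ is encoded by the finite family $(\phi_b)_{b\in B}$ with $\phi_b\in(F_0)_b=\bigoplus_{a\in A,\,a\le b}\kf$, via $\phi_p(1_b)=\sum_{a\le b}(\phi_b)_a\,1_a$ for $b\le p$. The point is that $(F_i)_p$ and $\phi_p$ depend on $p$ only through $\tau(p)$: the $s$-indexed summand of $F_i$ is present at $p$ iff $s\in\tau(p)$, and if $b\in\tau(p)$ and $a\le b$ then $a\in\tau(p)$ because $\tau(p)$ is a down-set of $(S,\le|_S)$, so the target summand of $\phi_p(1_b)$ is present. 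Hence for each $T\in\catQ$ there are well-defined spaces $(G_0)_T:=\bigoplus_{a\in A,\,a\in T}\kf$, $(G_1)_T:=\bigoplus_{b\in B,\,b\in T}\kf$ and maps $\psi_T\colon(G_1)_T\to(G_0)_T$, $1_b\mapsto\sum_{a\le b}(\phi_b)_a\,1_a$; the summand-inclusions for $T\subseteq T'$ turn the $(G_i)_\bullet$ into $\catQ$-modules $G_i$ and the $\psi_T$ into a natural transformation $\psi\colon G_1\to G_0$ with $\tau^\ast G_i=F_i$ and $\tau^\ast\psi=\phi$. Put $N:=\mathrm{coker}(\psi)$.

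Finally, since $\tau^\ast$ is exact (it is computed pointwise), it preserves cokernels, so $\tau^\ast N=\mathrm{coker}(\tau^\ast\psi)=\mathrm{coker}(\phi)\cong M$; as $\catQ$ is finite and $\tau$ is order-preserving, $(\tau,N)$ is a finite encoding of $M$, hence $M$ is tame. The step I expect to need the most care is making the reduction ``$\phi_p$, and therefore $M_p$ and $\varphi_M(p,p')$, depends only on $\tau(p)$'' fully rigorous: it relies on the down-set property of $\tau$ and on carefully tracking the one-directional summand-inclusion maps, while keeping the multiset bookkeeping of $A$ and $B$ straight; once that is secured, naturality of $\psi$ and exactness of $\tau^\ast$ make the rest routine.
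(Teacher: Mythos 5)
The paper does not give its own argument here: it simply cites Miller \cite[Theorem 6.12, Remark 6.15]{miller2020homological}. Your construction is a correct, self-contained proof, and it is essentially the argument Miller has in mind: a finitely presented module is constant on the regions cut out by comparison with the finitely many generators and relators, and the induced map to the (finite) poset of down-sets of $S$ is a finite encoding. Your key observations are all sound — $\tau(p)$ is indeed a down-set of $(S,\le|_S)$, $\tau$ is order-preserving, $(F_i)_p$ together with the structure maps $\varphi_{F_i}(p,p')$ are exactly the summand-inclusion diagram $\bigoplus_{a\le p}\kf\hookrightarrow\bigoplus_{a\le p'}\kf$ which factors through $\tau$, the down-set property of $\tau(p)$ guarantees $\psi_T$ is well-defined and natural, and pointwise exactness of $\tau^\ast$ gives $\tau^\ast N\cong M$. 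The one cosmetic point: Definition \ref{def:finite encoding} asks for equality $M=\pi^\ast N$ rather than an isomorphism, but this is harmless since one can replace $M$ by the isomorphic module $\tau^\ast N$, as tameness is an isomorphism-invariant notion. Given that the paper delegates this entirely to a citation, your write-up is a useful reconstruction rather than a deviation.
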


\begin{proposition}\label{prop:Mobius invertibility does not imply tameness}
There exists a non-tame $\catP$-module whose Int-GRI is M\"obius invertible.
\end{proposition}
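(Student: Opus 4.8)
The plan is to exhibit an explicit $\catP$-module $M$ (with $\catP$ an infinite poset such as $\Z$ or $\Z^2$) that fails to be tame, yet whose $\Int$-GRI is $\Int$-decomposable in the sense of admitting a rank decomposition, hence M\"obius invertible by Theorem \ref{thm:rk decomposition is GPD}. The simplest candidate is an interval-decomposable module: if $M\cong\bigoplus_{I\in\barc(M)}\kf_I$ for a \pfin{} multiset $\barc(M)$ of intervals, then by additivity (Remark \ref{rem:properties of rank invariant} \ref{item:additivity}) and the formula for the GRI of an interval module (Remark \ref{rem:properties of rank invariant} \ref{item:the GRI of an interval module}), we get $\rk_M^\Int=\rk_{\kf_{\barc(M)}}^\Int=\rk_{\kf_{\barc(M)}}^\Int-\rk_{\kf_\emptyset}^\Int$, which is a (trivial) rank decomposition; thus $\rk_M^\Int$ is M\"obius invertible over intervals. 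So it suffices to find an interval-decomposable, pointwise finite-dimensional $\catP$-module that is \emph{not} tame.

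\textbf{The key steps.} First, I would take $\catP=\Z$ (or, to stay in the multiparameter setting consistent with the rest of the paper, $\catP=\Z^2$), and consider the module $M:=\bigoplus_{n\in\N}\kf_{I_n}$ where the $I_n$ are intervals chosen so that the multiset $\{I_n\}$ is pointwise finite (e.g. the singletons $I_n=\{n\}$, or pairwise disjoint finite intervals marching off to infinity), ensuring $M$ is pointwise finite-dimensional and hence a legitimate $\catP$-module in the sense of the paper. Second, I would verify $M$ is interval-decomposable by construction, so by the observation in the previous paragraph $\rk_M^\Int$ is M\"obius invertible (its $\Int$-GPD is simply the indicator multiset of $\{I_n\}$, which is convolvable by Remark \ref{rem:convolvability} once we check pointwise finiteness translates to finiteness of principal ideals in the relevant subposet of $\Int(\catP)$, cf. Remark \ref{rem:pointwisely finite multiset}). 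Third — the crux — I would show $M$ is not tame: any finite encoding $\pi:\catP\to\catQ$ with $\catQ$ finite would force $M$ to be constant on each fiber $\pi^{-1}(q)$, but since $\catQ$ is finite at least one fiber is infinite, and the infinitely many "features" of $M$ (the births/deaths of the $I_n$, or more precisely the locations where $\dim M_p$ or the transition maps change) cannot all be confined to finitely many fibers; one makes this precise by noting that if $M=\pi^\ast N$ then $\varphi_M(p,p')=\id$ whenever $\pi(p)=\pi(p')$, contradicting the fact that for the chosen $I_n$ there are comparable pairs $p<p'$ with $\pi(p)=\pi(p')$ but $\varphi_M(p,p')\neq\id$ (or $M_p\not\cong M_{p'}$). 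A clean way to organize this: a tame module has $\dim M_p$ taking finitely many values and the module being, in a suitable sense, "constructible after coarsening," whereas our $M$ has infinitely many distinct local isomorphism types of the germ $(M_p\to M_{p'})$.

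\textbf{Main obstacle.} The one genuinely delicate point is the non-tameness argument: one must rule out \emph{every} finite encoding, not just the obvious ones, so the proof should argue directly from Definition \ref{def:finite encoding} — given any order-preserving $\pi:\catP\to\catQ$ with $|\catQ|<\infty$, a pigeonhole argument produces two elements $p\leq p'$ in $\catP$ with $\pi(p)=\pi(p')$ such that $\varphi_M(p,p')$ is not an isomorphism (indeed is $0$ or non-invertible for a suitable choice of the $I_n$ "straddling" $p$ and $p'$), contradicting $M_p=N_{\pi(p)}=N_{\pi(p')}=M_{p'}$ and $\varphi_M(p,p')=\varphi_N(\pi(p),\pi(p'))=\id$. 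Arranging the intervals $I_n$ so that this pigeonhole always bites — e.g. placing one interval boundary between consecutive "scales" so that no matter how a finite $\catQ$ partitions $\catP$ into finitely many fibers, some fiber contains such a straddled pair — is the technical heart, and is where I would spend the care. Everything else (M\"obius invertibility of the interval-decomposable side) is immediate from the machinery already developed, especially Theorem \ref{thm:rk decomposition is GPD} and Remark \ref{rem:properties of rank invariant}.
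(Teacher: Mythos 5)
Your proposal matches the paper's proof essentially verbatim: the paper also takes $M=\bigoplus_{p\in\catP}\kf_{\{p\}}$ over an infinite poset, notes it is interval-decomposable (hence M\"obius invertible over intervals by Theorem \ref{thm:sufficient conditions for invertibility}\ref{item:interval-decomposable}, which is the rank-decomposition observation you make), and asserts non-tameness. Your pigeonhole argument for non-tameness (a chain of length $>|\catQ|$ forces comparable $p<p'$ in a common fiber, whence $\varphi_M(p,p')$ would have to be the identity rather than $0$) correctly fills in the detail the paper leaves implicit.
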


\begin{proof}
Let $\catP$ be an infinite poset, and consider any interval-decomposable $\catP$-module $M$ that has infinitely many summands, e.g. $M=\bigoplus_{p\in \catP}k_{\{p\}}$. Then, observe that $M$ cannot be finitely encoded and thus $M$ is not tame. However, by Theorem \ref{thm:sufficient conditions for invertibility} \ref{item:finitely encodable}, its GRI is M\"obius invertible.
\end{proof}

We consider the dual statement:

\begin{theorem}\label{thm:tame does not imply Int-GRI invertible}
There exists a tame $\catP$-module $M$ whose Int-GRI is not M\"obius invertible.
\end{theorem}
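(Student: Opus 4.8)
The plan is to construct an explicit tame $\Z^2$-module $M$ and show that $\rk_M^{\Int}$ cannot be M\"obius inverted over any subset $\Ical \subset \Int(\Z^2)$. By Theorem \ref{thm:rk decomposition is GPD}, it suffices to show that $\rk_M^{\Int}$ admits no rank decomposition, i.e.\ there is no pair $(\Rcal,\Scal)$ of \pfin{} multisets of intervals with $\rk_M^{\Int} = \rk_{\kf_\Rcal}^{\Int} - \rk_{\kf_\Scal}^{\Int}$. To make $M$ tame, I would take it to be $\catS$-constructible (hence finitely presentable by Proposition \ref{prop:constructible implies finitely presentable}, hence tame by Proposition \ref{prop:finitely presentable implies tame}) for a suitable finite $\catS \subset \Z^2$ --- equivalently, $M$ is pulled back from a module over a finite grid $[m]\times[n]$. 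The phenomenon to exploit is that the \emph{infinitely many} intervals of $\Z^2$ whose "shadow" on the finite grid is the same can all have nonzero generalized rank, and that no finite combination of interval modules (each supported on a single interval) can reproduce this pattern of ranks simultaneously on all of $\Int(\Z^2)$.

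Concretely, I would look for a finite-grid module $N$ on a small grid (the smallest grid on which an indecomposable non-interval module exists is $[2]\times[3]$ or $[3]\times[3]$; a natural candidate is the module whose generalized rank over some interval is $1$ but which is not interval-decomposable, e.g.\ a representation realizing the "monodromy" obstruction) and pull it back along the co-closure $\Z^2 \to [m]\times[n]$ that collapses $\Z^2$ onto the grid. Then $\rk_M^{\Int}(I)$ depends only on the image of $I$ under this collapse, so $\rk_M^{\Int}$ is constant on each of the (infinite) fibers of the induced map $\Int(\Z^2) \to \Int([m]\times[n])$. The key computation is to identify one interval $I_0$ of the grid for which $\rk_N(I_0) \neq 0$ \emph{but which is not "covered from above" correctly} --- precisely, I would show that if a rank decomposition $(\Rcal,\Scal)$ existed, then by the M\"obius inversion argument of Proposition \ref{prop:justifying the term mobius invertibility} its support $\Ical = \Rcal \cup \Scal$ would have to contain intervals $J \supsetneq I$ for infinitely many $I$ lying over a fixed grid interval, forcing $\Ical$ to be infinite in a way incompatible with \pfin{}ness (point-wise finiteness), or alternatively that the alternating-sum relation $\rk_M^{\Int}(I) = \sum_{J\supset I, J\in\Ical} d_M(J)$ cannot hold with finitely many nonzero summands on the RHS because the value of the LHS fails to "stabilize" as $I$ grows within a fiber.

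More explicitly, the mechanism I expect to use: pick grid intervals $I \subset I'$ with $\rk_N(I) = 1$ and $\rk_N(I') = 0$, realized by a non-interval indecomposable $N$ (so that the rank $1$ on $I$ is genuinely "not witnessed" by any single larger interval on which $N$ restricts to $\kf_{I'}$-type behavior). Pulling back, there are infinitely many intervals $\tilde I_k$ of $\Z^2$ over $I$, forming an increasing chain, all with $\rk_M^{\Int}(\tilde I_k)=1$. If $\rk_M^{\Int}$ were M\"obius invertible over some $\Ical$, then for each $k$ the finite sum $\sum_{J \supset \tilde I_k, J\in\Ical} d_M(J)$ equals $1$; taking $k\to\infty$ (so $\tilde I_k \uparrow \tilde I_\infty$, where $\tilde I_\infty$ is the limit interval whose grid shadow is the larger interval $I'$, hence $\rk_M^{\Int}(\tilde I_\infty) = 0$) would force a contradiction with the monotonicity Remark \ref{rem:properties of rank invariant} \ref{item:monotonicity} combined with the finiteness of each sum --- essentially, the "limit" of the ranks along the chain must agree with the rank of the union, but the M\"obius-inverted expression cannot simultaneously be $1$ at every finite stage and $0$ at the limit while having locally finite (pointwise finite) support. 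The main obstacle is making this limiting argument airtight: one must choose $N$ so that the relevant chain of intervals in $\Z^2$ genuinely has the property that $\rk_M^{\Int}$ is $1$ at every finite stage and drops at the limit (this requires a non-interval-decomposable $N$, so that the rank is not "explained" by an honest interval summand), and one must carefully track which intervals $J\in\Int(\Z^2)$ can appear in a putative support $\Ical$ --- ruling out, in particular, that $\Ical$ uses "thin" or "infinite" intervals of $\Z^2$ to cheat. I would handle this by a direct counting/parity argument on the finitely many grid intervals containing a given one, transported back to $\Z^2$, showing the alternating sums are over-determined.
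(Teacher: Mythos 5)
Your proposal has a fatal structural flaw: the class of modules you propose to use---constructible modules, equivalently (as you say) pullbacks along a ``floor-to-grid'' type map $\pi:\Z^2\to[m]\times[n]$---is precisely a class for which the paper \emph{proves} the Int-GRI is M\"obius invertible. Constructible implies finitely presentable (Proposition \ref{prop:constructible implies finitely presentable}), and the proof of Theorem \ref{thm:sufficient conditions for invertibility}\ref{item:finitely presentable invertible} shows that a floor-to-grid encoding is interval-preserving and Int-rank-preserving, so Theorem \ref{thm:sufficient conditions for invertibility}\ref{item:finitely encodable} applies. (The paper states this for $\R^d$, but the same block-and-path argument works verbatim on $\Z^d$.) So no counterexample can arise from your construction. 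The paper's point is exactly the opposite: the counterexample must be tame via an encoding $\pi:\Z^2\to\catQ$ onto an abstract finite poset $\catQ$ (not a grid, and not a co-closure) that is deliberately \emph{not} Int-rank-preserving; see Remark \ref{rem:implication of the counterexample}\ref{item:not rank-preserving}.

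Your limiting-chain mechanism also does not close. If $\tilde I_1\subset \tilde I_2\subset\cdots$ all shadow onto the grid interval $I$ under a collapse $c$, then so does their union $\tilde I_\infty$, since $c(\bigcup_k\tilde I_k)=\bigcup_k c(\tilde I_k)=I$. Thus $\rk_M(\tilde I_\infty)=\rk_N(I)=1$, not $0$; the rank does stabilize along the chain, and no contradiction appears. More fundamentally, even if you had infinitely many rank-$1$ intervals in a fiber, you cannot conclude $\Ical$ must be infinite without first establishing that those intervals are \emph{maximal} among connected subsets with nonzero rank---otherwise a few larger intervals could absorb all of them in the sum. The paper's argument hinges on two features your sketch lacks: (a) a family $\{I_a\}_{a\in\Z}$ of pairwise distinct, \emph{maximal} intervals with $\rk_M(I_a)\geq 1$ (Claims \ref{claim:I_a nonzero rank} and \ref{claim:I_a maximal}), forcing each $I_a$ into any admissible $\Ical$ with $\dgm_M^{\Ical}(I_a)=1$; and (b) a single tiny interval (the singleton $\{(-1,-1)\}$) contained in every $I_a$, at which the sum $\sum_{J\supset I,\ J\in\Ical}\dgm_M^\Ical(J)$ would then have infinitely many $+1$ summands, contradicting finiteness. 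The $I_a$ are a translated family, not a nested chain, and the contradiction is localized at one interval rather than obtained as a limit. Your intuition that a non-interval indecomposable over a small grid should be involved is on the right track, but you need to abandon the constructible/grid-encoded framework entirely and design the module so that the fibers of the encoding contain infinitely many \emph{maximal} positive-rank intervals all sharing a common point.
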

Before proving this, we remark the following: (i) By Remark \ref{rem:monotonicity of mobius invertibility} \ref{item:monotonicity of mobius invertibility}, this theorem implies that the Con-GRI of a tame persistence module is not necessarily M\"obius invertible. (ii) Theorems \ref{thm:rk decomposition is GPD} and \ref{thm:tame does not imply Int-GRI invertible} imply that the existence of a finite encoding of $M$ does not imply the rank decomposability of the Int-GRI of $M$.

In proving Theorem \ref{thm:tame does not imply Int-GRI invertible}, we will utilize the following well-known concrete formulation for the limit and colimit of any $M:\catP\to \cvec$ (see, for instance, \cite[Appendix E]{kim2021generalized}):
\begin{convention}\phantomsection\label{con:limit and colimit} 
\begin{enumerate}[label=(\roman*),topsep=0pt,itemsep=-1ex,partopsep=1ex,parsep=1ex,leftmargin=*]
    \item The limit of $M$ is the pair
    $(W,(\pi_p)_{p\in \catP})$ described as:
    \[W:=\left\{ (\ell_p)_{p\in \catP}\in \prod_{p\in \catP} M_p: \ \forall p\leq q\in \catP, \varphi_M(p,q)(\ell_p)=\ell_q\right\}\]
    where for each $p\in \catP$, the map $\pi_p:W\rightarrow M_p$ is the canonical projection. Elements of $W$ are called  \textbf{sections} of $M$.
    \item The colimit of $M$ is isomorphic to a pair $(C,(\iota_p)_{p\in \catP})$ that is described as follows.
    For $p\in \catP$, let $\bar{\iota}_p:M_p\to \bigoplus_{p\in \catP}M_p$ be the canonical injection.
    $C$ is the quotient space $\left(\bigoplus_{p\in\catP} M_p\right)/V$, where $V$ is generated by the vectors $\bar{\iota}_p(v_p)-\bar{\iota}_{p'}(v_{p'})$ over all $p\leq p'$ in $\catP$, with $v_p\in M_p,v_{p'}\in M_{p'}$. 
    Letting $f$ be the quotient map from $\bigoplus_{p\in \catP}M_p$ to $C$, for $p\in \catP$, $\iota_p:M_p\to C$ is the composition $f\circ \bar{\iota}_p$.
\end{enumerate}
\end{convention}

\begin{proof}[Proof of Theorem \ref{thm:tame does not imply Int-GRI invertible}]

\begin{figure}
    \centering
    \includegraphics[width=0.99\linewidth]{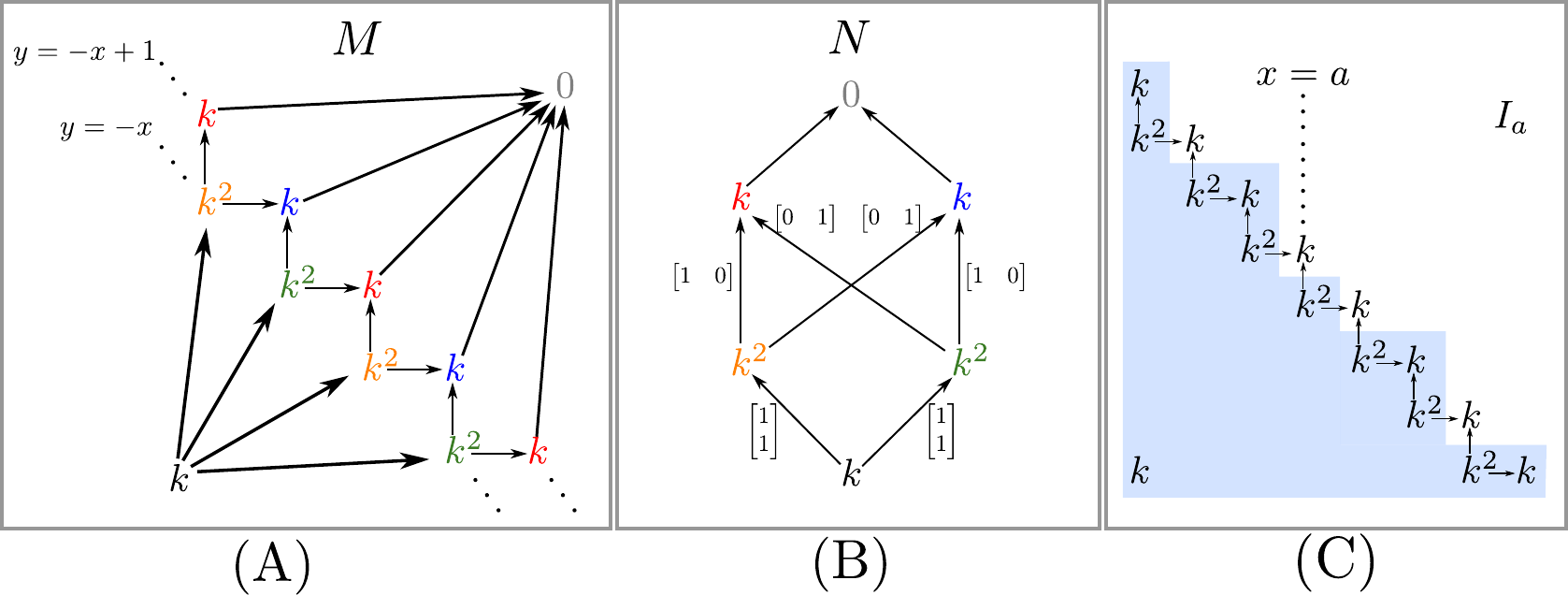}
    \caption{(A) and (B) Illustrations of the $\Z^2$-module $M$ and $\catQ$-module $N$ from the proof of Theorem \ref{thm:tame does not imply Int-GRI invertible}. (C) A visualization of an interval $I_a$ used in the proof of Theorem \ref{thm:tame does not imply Int-GRI invertible}. 
    }
    \label{fig:tame does not imply Int-GRI invertible module}
\end{figure}

    We prove the statement by constructing a $\Z^2$-module that is tame while its Int-GRI is not M\"obius invertible.
    \ok{Let $N$ be the persistence module defined in Figure \ref{fig:tame does not imply Int-GRI invertible module} (B), over the 6-point poset 
    which we denote $\catQ$.
    Define the order-preserving map $\pi:\Z^2\to \catQ$ by mapping like colors, i.e. in Figure \ref{fig:tame does not imply Int-GRI invertible module} red points in (A) map to red points in (B), blue points in (A) map to blue points in (B), etc.
    Define $M:=\pi^\ast N$, which is visualized in Figure \ref{fig:tame does not imply Int-GRI invertible module} (A).
    It is immediate by this definition that $M$ is tame.}

    Now we show that $\rk^\Int_M$ is \emph{not} M\"obius invertible.
    Define the interval $I_0$ as follows:
    \[I_0:=\{(x,y):y\leq-x\} \cup \{(-2i-1,2i+2)\}_{i=0}^\infty \cup \{(2i+2,-2i-1)\}_{i=0}^\infty.\]
    Then for $a\in \Z$, define the interval $I_a$ as the shift of $I_0$ by $(a,-a)$.
    $I_a$ contains the part of $\Z^2$ at or below $y=-x$, and skips every other point along the line $y=-x+1$ except for skipping two consecutive points at $(a,-a+1)$ and $(a+1, -a)$.
    See Figure \ref{fig:tame does not imply Int-GRI invertible module} (C) for a visualization.

    \begin{claim}\label{claim:I_a nonzero rank}
        For each $a\in \Z$, $\rk^\Int_M(I_a) = 1$.
    \end{claim}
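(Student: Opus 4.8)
The plan is to compute, or at least bound from below, the rank of the canonical limit-to-colimit map of the restricted module $M|_{I_a}$. Recall from Convention \ref{con:limit and colimit} that $\rk(M|_{I_a}) = \rk(\psi_{M|_{I_a}})$, where $\psi_{M|_{I_a}}\colon \varprojlim M|_{I_a}\to \varinjlim M|_{I_a}$ is induced by $\iota_p\circ\pi_p$ for any $p\in I_a$. Since $M = \pi^\ast N$ with $\pi\colon\Z^2\to\catQ$ collapsing the colored regions, the module $M|_{I_a}$ takes only finitely many distinct values along $I_a$ (namely the values of $N$ on the image $\pi(I_a)\subset\catQ$), and all the internal maps are either identities (within a colored region) or the corresponding maps of $N$. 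First I would identify a nonzero section of $M|_{I_a}$: a compatible tuple $(\ell_p)_{p\in I_a}$ that is nonzero. The bulk region $\{y\le -x\}\cap I_a$ maps (under $\pi$) to a single vertex of $\catQ$ whose vector space is, say, one-dimensional; choosing the value $1$ there and propagating along $I_a$, the two "skipped" points near $(a,-a+1),(a+1,-a)$ and the alternating skipped points along $y=-x+1$ are exactly what make the interval still connected while avoiding the obstruction that would force the section to vanish.

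Concretely, the key steps in order are: (1) read off the values of $N$ on $\catQ$ from Figure \ref{fig:tame does not imply Int-GRI invertible module}(B) and hence the values of $M|_{I_a}$ on the finitely many "zones" of $I_a$; (2) exhibit an explicit nonzero section $s = (\ell_p)_{p\in I_a}\in\varprojlim M|_{I_a}$ — this amounts to checking the compatibility condition $\varphi_M(p,q)(\ell_p)=\ell_q$ at each comparable pair of $I_a$, which reduces to finitely many checks because the structure maps are locally constant along $I_a$; (3) show that the image of this section in $\varinjlim M|_{I_a}$ is nonzero, i.e. $\psi_{M|_{I_a}}(s)\ne 0$. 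For step (3), using the explicit colimit description $\bigl(\bigoplus_{p\in I_a}(M|_{I_a})_p\bigr)/V$ from Convention \ref{con:limit and colimit}, one shows that $\bar\iota_p(\ell_p)\notin V$ by producing a linear functional on $\bigoplus_{p\in I_a}(M|_{I_a})_p$ that vanishes on the generators $\bar\iota_p(v_p)-\bar\iota_{p'}(v_{p'})$ of $V$ but not on $\bar\iota_p(\ell_p)$; equivalently, one exhibits a cocone $\{M|_{I_a}\to\kf\}$ under which the image of $\ell_p$ is nonzero. Since $\psi_{M|_{I_a}}(s)$ is nonzero, $\rk(\psi_{M|_{I_a}})\ge 1$, which is exactly $\rk^\Int_M(I_a)\ge 1$.

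The main obstacle I anticipate is step (3): verifying that the chosen section does not die in the colimit. The colimit quotients out all the "transport" relations, and the whole point of the construction of $N$ on the 6-point poset $\catQ$ is presumably that the two offset skipped points at $(a,-a+1)$ and $(a+1,-a)$ are precisely what allow a section to survive to the colimit for \emph{this particular} $I_a$ while failing for nearby intervals — this is what will later make $\rk^\Int_M$ constant $1$ on the $I_a$ but behave badly enough (e.g. nonzero on an infinite antichain, or with no finitely-supported M\"obius inverse) to contradict M\"obius invertibility. So the delicate part is tracking exactly which relations in $V$ are imposed: I would organize this by noting that $M|_{I_a}$ factors through $N|_{\pi(I_a)}$, reducing the colimit computation over the infinite poset $I_a$ to a colimit computation over the finite poset $\pi(I_a)\subseteq\catQ$ (using that a cofinal/connected collapse does not change the colimit up to the contribution of the collapsed fibers), and then the nonvanishing is a finite linear-algebra check on $N$. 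I would also double-check the convexity and connectivity of $I_a$ (Definition \ref{def:intervals}) to be sure $I_a\in\Int(\Z^2)$, so that $\rk^\Int_M(I_a)$ is even defined — the alternating skipped points along $y=-x+1$ are chosen to keep $I_a$ convex (no point strictly between two points of $I_a$ is omitted) while the connectivity is routed through the bulk $\{y\le-x\}$.
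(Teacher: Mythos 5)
Your plan is the paper's plan: exhibit an explicit nonzero section of $M|_{I_a}$ and verify its image in the colimit is nonzero; the paper's section is $1\in\kf$ at every $\kf$-point and $(1,1)\in\kf^2$ at every $\kf^2$-point of $I_a$, which is exactly the ``propagate a $1$ through the finitely many zones'' you describe, and your suggestion of producing a cocone (equivalently, a linear functional vanishing on the colimit relations) is a correct way to make precise the paper's one-line ``it is straightforward to check'' for nonvanishing. The only part of your sketch that goes beyond the paper — reducing the colimit over $I_a$ to the colimit over $\pi(I_a)\subset\catQ$ via a ``connected collapse'' — is not needed and would require a precise lemma (the relevant notion would be the covering/rank-preserving conditions from Section \ref{sec:sufficient conditions}, and note Remark \ref{rem:implication of the counterexample}\ref{item:not rank-preserving} warns that $\pi$ here is \emph{not} rank-preserving); the paper's direct check on the explicit section avoids this complication.
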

    \begin{proof}
        Following Convention \ref{con:limit and colimit}, we first describe a nonzero section of  $M\vert_{I_a}$.
        In fact, a basis for the only section of $M\vert_{I_a}$ which is nonzero everywhere consists of $1\in \kf$ for every copy of $\kf$ appearing in $M\vert_{I_a}$, and $(1,1)\in \kf^2$ for every $\kf^2$ appearing in $M\vert_{I_a}$.
        It is immediate to see this satisfies the definition (Convention \ref{con:limit and colimit}) to be a section.
        It is straightforward to check that the image of this section in the colimit is nonzero, and as a result $\rk^\Int_M(I_a)\geq 1$.

        Lastly, as $(-1,-1)\in I_a$ and $M_{(-1,-1)}=\kf$ has dimension 1, we know $\rk^\Int_M(I_a)\leq 1$, and thus $\rk^\Int_M(I_a)=1$.
    \end{proof}

    \begin{claim}\label{claim:I_a maximal}
        For $a\in \Z$ and $J\in \Con(\catP)$ such that $J\supsetneq I_a$, $\rk_M^\Con(J) = 0$.
    \end{claim}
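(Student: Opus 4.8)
The plan is to prove this by contradiction: assuming $\rk_M^\Con(J)\geq 1$ for some connected $J\supsetneq I_a$, produce a section of $M\vert_J$ that is nowhere zero, restrict it to $I_a$ to pin it down (up to scalar) as the distinguished section $s^\ast$ from the proof of Claim \ref{claim:I_a nonzero rank}, and then reach a contradiction at a point of $J\setminus I_a$. The first ingredient is a general observation: for any connected subposet $\catP'$ and $\catP'$-module $M'$, a section $\ell=(\ell_p)_p\in\varprojlim M'$ with $\psi_{M'}(\ell)\neq 0$ is necessarily nowhere zero, since $\psi_{M'}(\ell)=\iota_p\circ\pi_p(\ell)=\iota_p(\ell_p)$ for \emph{every} $p$, so $\ell_p=0$ for a single $p$ already forces $\psi_{M'}(\ell)=0$. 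Hence $\rk_M^\Con(J)=\rk(\psi_{M\vert_J})\geq 1$ yields a section $s$ of $M\vert_J$ nonzero at every point of $J$. Its restriction $s\vert_{I_a}$ is then a nowhere-zero section of $M\vert_{I_a}$, so by the uniqueness established inside the proof of Claim \ref{claim:I_a nonzero rank} (the only everywhere-nonzero section of $M\vert_{I_a}$ is $s^\ast$, up to a scalar) we may rescale to assume $s\vert_{I_a}=s^\ast$.

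Next I would use connectivity to locate the obstruction. Since $I_a$ is connected and $J\supsetneq I_a$ is connected, a path in $J$ from a point of $J\setminus I_a$ into $I_a$ has a last edge giving a comparable pair $p\in I_a$, $q\in J\setminus I_a$. Using $I_a\subseteq\{(x,y):x+y\leq 1\}$ and $q\notin I_a$ (hence $x+y(q)\geq 1$), one checks the comparison must be $p\leq q$ (the reverse would force $x+y(p)=x+y(q)=1$ with $p\leq q$ on the antidiagonal, hence $p=q$), and that $q$ falls into exactly one of two cases: either $x+y(q)\geq 2$, or $x+y(q)=1$ and $q$ is one of the line points skipped by $I_a$ (including the two consecutive points $(a,-a+1)$ and $(a+1,-a)$). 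Functoriality of $M$ gives $s_q=\varphi_M(p,q)(s_p)=\varphi_M(p,q)(s^\ast_p)$. The crux is the structural claim that $\varphi_M(p,q)(s^\ast_p)=0$ for every such pair; granting it, $s_q=0$ contradicts $s$ being nowhere zero, so $\psi_{M\vert_J}=0$ and $\rk_M^\Con(J)=0$.

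The main obstacle is precisely this structural claim, and it is where the explicit description of $M=\pi^\ast N$ from Figure \ref{fig:tame does not imply Int-GRI invertible module} is needed. I would verify it by a short case analysis on $q$: (i) when $x+y(q)\geq 2$, trace the relevant composite of transition maps of $N$ through the color classes and check that the image of $s^\ast_p$ (a $1\in\kf$ or a $(1,1)\in\kf^2$) is annihilated; (ii) when $q$ is a skipped line point, verify that the transition maps into $q$ from the two antidiagonal points of $I_a$ immediately below and to the left of $q$ both send $s^\ast_p$ to $0$. The only delicate bookkeeping is handling all admissible $p$ (both strictly below the line and at ``kept'' line points of $I_a$) uniformly, and treating the two specially-skipped points $(a,-a+1),(a+1,-a)$ together with the ``every other'' skipped points; the rest of the argument is formal.
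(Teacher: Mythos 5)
Your general observation that a section $\ell$ with $\psi(\ell)\neq 0$ must be nowhere zero is correct, as are your argument that the comparison must be $p\leq q$ and your handling of Case~(i), where $x+y(q)\geq 2$ forces $M_q=0$ and hence $s_q=0$ trivially. The gap is in Case~(ii): the structural claim that $\varphi_M(p,q)(s^\ast_p)=0$ for $q$ a skipped point on the line $x+y=1$ is false. The paper's own subdiagram $\kf\xleftarrow{[0\;1]}\kf^2\xrightarrow{[1\;0]}\kf$ sits at a common lower bound $p'$ on $y=-x$ (where $s^\ast_{p'}=(1,1)$) together with two incomparable points on $x+y=1$, one of which is $q$; both transition maps $[0\;1]$ and $[1\;0]$ send $(1,1)$ to $1\neq 0$. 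So the restriction of $s$ extends consistently and nontrivially to $q$, and the intended contradiction never materializes.

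The actual obstruction lives in the colimit, not the limit. For that V-shaped subdiagram, $\varprojlim\cong\kf^2$ and $(1,1)$ is a nowhere-zero section of it, but $\varinjlim=0$ (substituting $v=(1,0)$ and $v=(0,1)$ into the colimit relations annihilates the entire direct sum), so its rank is $0$; the paper then invokes monotonicity (Remark~\ref{rem:properties of rank invariant}~\ref{item:monotonicity}) to conclude $\rk_M^\Con(J)=0$. If you want to stay within a section-based framework, the correct thing to show is not $s_q=0$ but rather $[\,s_p\,]=0$ in $\varinjlim M\vert_J$; by Lemma~\ref{lem:path characterization of zero in colim} this follows from the \emph{alternative} path-section $1,\;(0,1),\;0$ along the three points of the V-shape, which connects the value $1$ to $0$ even though the genuine section $s$ is nonzero at all three points. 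In short, Case~(ii) requires a colimit-side argument; the limit-side vanishing you posit does not hold.
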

    \begin{proof}
        Let $J\in \Con(\catP)$ such that $J\supsetneq I_a$ for some fixed $a\in \Z$.
        Then there are two possibilities.
        First, suppose $J$ contains a point $(x,y)$ with $y>-x+1$.
        In this case, it is immediate $\rk^\Con_M(J)=0$ as $M(x,y) = 0$.

        In the second case, suppose $J$ contains a point $(x,y)\notin I_a$ with $y=-x+1$.
        By definition of $I_a$, we have that at least one of $(x+1,y-1)$ or $(x-1,y+1)$ must be in $J$ as well.
        This creates a sub-diagram of $M\vert_J$ of the form $\kf \xleftarrow{\begin{bmatrix} 0 & 1\end{bmatrix}} \kf^2 \xrightarrow{\begin{bmatrix}1 & 0\end{bmatrix}} \kf$.
        The rank over this diagram is 0, and thus it follows by Remark \ref{rem:properties of rank invariant} \ref{item:monotonicity} that $\rk^\Con_M(J)=0$.
    \end{proof}

    Assume by way of contradiction that $\rk^\Int_M$ is M\"obius invertible, i.e. 
    that there exists a map $d_M:\Int(\Z^2)\rightarrow \Z$ as stated in Theorem \ref{thm:rk decomposition is GPD} \ref{item:5}.

We claim that for any $a\in \Z$ there is no $J\in \Int(\Z^2)$ such that $I_a\subsetneq J$ and $d_M(J)\neq 0$.
Suppose not, i.e. there exists a fixed $a\in \Z$    and $J_1\in \Ical$ with $I_a\subsetneq J_1$ and $d_M(J_1)\neq 0$.
    By Claim \ref{claim:I_a maximal}, we know $\rk_M(J_1) = 0$, but as $d_M(J_1)\neq 0$, this alongside Equation (\ref{eq:mobius invertible}) implies there must exist $J_2\in \Ical$ with $J_1\subsetneq J_2$, and $d_M(J_2)\neq 0$.
    Again, Claim \ref{claim:I_a maximal} alongside Equation (\ref{eq:mobius invertible}) imply the existence of a $J_3\in \Ical$ with $J_2\subsetneq J_3$ and $d_M(J_3)\neq 0$.
    Repeating this procedure, it follows that there exists an infinite ascending chain of intervals $I_a\subsetneq J_1\subsetneq J_2\subsetneq\ldots$, such that $d_M(J_i)\neq 0$ for all $i\geq 1$. 
    This implies that the set $\{J\supset I_a: J\in \Int(\catP),\ \ \drm_M(J)\neq 0\}$ is infinite, contradicting Theorem \ref{thm:rk decomposition is GPD}  \ref{item:5}.
  
    Therefore, for any $a\in \Z$ there is no $J\in \Ical$ such that $J\supsetneq I_a$ and $d_M(J)\neq 0$.
    This fact alongside Claim \ref{claim:I_a nonzero rank} implies that $d_M(I_a)=1$.

 Now, observe that for all $a\in \Z$, the interval $I_a$ contains the point $(-1,-1)\in \Z^2
$. This implies that for the singleton interval $I=\{(-1,-1)\}$, the set $\{J\supset I: J\in \Int(\catP),\ \ \drm_M(J)\neq 0\}$ is infinite, contradicting Theorem \ref{thm:rk decomposition is GPD}  \ref{item:5} and completing the proof.
\end{proof}

\begin{remark}\label{rem:higher dimension}
The module $M:\Z^2\to \cvec$ from the previous gives rise to infinitely many modules which have non-M\"obius invertible Int-GRI's.
For example, let $P$ be an any poset into which $\Z^2$ can be embedded via some $f:\Z^2\hookrightarrow \catP$ such that $f(\Z^2)\in \Int(\catP)$ (e.g. $\catP=\Z^d$ for $d>2$).
Define a $\catP$-module $N$ by setting $N\vert_{f(\Z^2)}$ as the push-forward of $M$ along $f$, and setting $N:=0$ outside of $f(\Z^2)$.
Then $N$ has a non-M\"obius invertible Int-GRI.

As another example, we can naturally extend $M$ to an $\R^2$-module $M'$
where $M'_{p}:=M_{\lfloor p \rfloor_{\Z^2} }$ for all $p\in \Z^2$. Then, the Int-GRI of $M$ is not M\"obius invertible.

\end{remark}

\begin{remark}\label{rem:implication of the counterexample} 
For the $\Z^2$-module $M$ considered in the previous proof, we observe the following.
\begin{enumerate}[label=(\roman*)]
\item We have actually showed that the $\Int$-GRI of $M$ is not M\"obius invertible over \emph{any} subcollection of $\Con(\catP)$ containing $\Int(\catP)$.\label{item:implication of the counterexample1} 

\item By Item \ref{item:implication of the counterexample1} and Remark \ref{rem:monotonicity of mobius invertibility} 
\ref{item:monotonicity of mobius invertibility}, the Con-GRI of $M$ is also not M\"obius invertible.
\item The map $\pi$ is \emph{not} `rank-preserving', e.g. $\rk_M(I_a)=1\neq 0=\rk_N(\pi(I_a))$ for each $a\in \Z$. \label{item:not rank-preserving}
\end{enumerate}
\end{remark}

It is somewhat surprising that even for a persistence module $M$ that is simple enough to admit a finite encoding, both the Con-GRI and Int-GRI of $M$ might not be M\"obius invertible. Motivated by Remark \ref{rem:implication of the counterexample} \ref{item:not rank-preserving}, in the next section, we adapt the notion of finite encoding, aimed at ensuring the M\"obius invertibility of the GRI. 

\subsection{Sufficient conditions for M\"obius invertibility of the GRI}\label{sec:sufficient conditions}
In this section, we explore several conditions on persistence modules $M$ that ensure the M\"obius invertibility of the Con- or Int-GRI of $M$. 

Let $\pi:\catP\to \catQ$ be an order-preserving map.
If for all $I\in \Int(\catP)$, the image $\pi(I)$ belongs to $\Int(\catQ)$, then we say $\pi$ is \emph{interval-preserving}.
Let $M$ be a $\catP$-module and $N$ be a $\catQ$-module. We say that $\pi$ is Int\emph{-rank-preserving} (from $M$ to $N$), if $\forall I\in \Int(\catP)$,  $\rk_M(I) = \rk_N(\pi(I))$. 

\begin{theorem}\label{thm:sufficient conditions for invertibility} \contributionn Each of the following assumptions implies that {the Int-GRI of} a given $\catP$-module $M$ is \invertible{} over intervals.
\begin{enumerate}[label=(\roman*),leftmargin=*]
  \item  $M$ is interval-decomposable. 
    \label{item:interval-decomposable}
    \item There exist a finite poset $\catQ$, an order-preserving map $\pi:\catP\rightarrow \catQ$, and a $\catQ$-module $N$ such that $\pi$ is interval-preserving and Int-rank-preserving.
   \label{item:finitely encodable}  
   \item $\catP=\R^d$ and $M$ is finitely presentable (we refine this statement in Proposition \ref{prop:finitely presentable invertiver over intmn}).\label{item:finitely presentable invertible}
\end{enumerate}
\end{theorem}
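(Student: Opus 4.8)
The plan is to handle the three items separately, reducing each to the existence of a suitable locally finite $\Ical\subset\Int(\catP)$ together with a $\Z$-valued function $d_M$ on $\Ical$ satisfying Equation~(\ref{eq:mobius invertible over I_v2}); by Proposition~\ref{prop:justifying the term mobius invertibility} this is exactly M\"obius invertibility over intervals. For \ref{item:interval-decomposable}, write $M\cong\bigoplus_{I\in\barc(M)}\kf_I$ (with multiplicities). Since $M$ is pointwise finite-dimensional, $\barc(M)$ is a \pfin{} multiset of intervals, so by Remark~\ref{rem:pointwisely finite multiset} the underlying set $\Ical:=\{I:I\in\barc(M)\}$ is a locally finite subposet of $(\Int(\catP),\supset)$. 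By additivity of the GRI (Remark~\ref{rem:properties of rank invariant}\ref{item:additivity}) and the formula for the GRI of an interval module (Remark~\ref{rem:properties of rank invariant}\ref{item:the GRI of an interval module}), for every $I\in\Int(\catP)$ we get $\rk_M^\Int(I)=\sum_{J\in\barc(M),\,J\supset I}\mathrm{mult}_{\barc(M)}(J)$, and this sum is finite since $\barc(M)$ is \pfin{} (pick $p\in I$; only finitely many bars contain $p$). Taking $d_M:=\mathrm{mult}_{\barc(M)}$ finishes \ref{item:interval-decomposable}; equivalently one may just invoke Theorem~\ref{thm:rk decomposition is GPD}, since $(\barc(M),\emptyset)$ is a rank decomposition.

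For \ref{item:finitely encodable}, suppose $\pi:\catP\to\catQ$ and $N$ are as stated, with $\catQ$ finite and connected, $\pi$ interval-preserving and Int-rank-preserving. Since $\catQ$ is finite, $\Int(\catQ)$ is finite, so $\rk_N^\Int:\Int(\catQ)\to\Z$ is a function on a finite poset; being defined on a finite poset it is trivially $\Int(\catQ)$-constructible, hence M\"obius invertible over $\Int(\catQ)$ by Remark~\ref{rem:monotonicity of mobius invertibility}, with M\"obius inversion $\dgm_N:=\rk_N^\Int\ast\mu_{\Int(\catQ)}$. Now I would transport this along $\pi$: set $\Ical:=\{\,\pi(J):J\in\Int(\catQ)\,\}\subset\Int(\catP)$ (well-defined since $\pi$ is interval-preserving) and, for $K\in\Ical$, define $d_M(K):=\sum_{J\in\Int(\catQ),\,\pi(J)=K}\dgm_N(J)$. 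Since $\Ical$ is finite it is automatically locally finite. To verify Equation~(\ref{eq:mobius invertible over I_v2}), fix $I\in\Int(\catP)$; using Int-rank-preservation and then the defining property of $\dgm_N$,
\[
\rk_M^\Int(I)=\rk_N^\Int(\pi(I))=\sum_{\substack{J\in\Int(\catQ)\\ J\supset\pi(I)}}\dgm_N(J).
\]
The one point that needs care is matching the index sets: I must check that, after grouping $J$ by the value $\pi(J)=K$, the condition $J\supset\pi(I)$ can be replaced by a condition on $K$ alone, namely $K\supset I$. The inclusion $\pi(J)\supset\pi(I)$ follows from $J\supset\pi(I)\supset$ (image of $I$), but the reverse — that $K\supset I$ forces a contributing $J$ to satisfy $J\supset\pi(I)$ — is where monotonicity of $\pi$ and the hypothesis $\pi$ is Int-rank-preserving (rather than merely order-preserving) must be leveraged, together with the monotonicity of the GRI; this bookkeeping is the main obstacle for \ref{item:finitely encodable}. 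I expect the cleanest route is to instead argue directly: define $\drm_M$ on $\Ical$ by pushing forward and then show $\rk_M^\Int = \drm_M\ast\zeta_{\Ical}$ by expanding both sides against $\dgm_N$ and using that $\pi$ being interval- and Int-rank-preserving makes $I\mapsto\pi(I)$ a map of posets that intertwines the two zeta functions on the relevant finite fibers.

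For \ref{item:finitely presentable invertible}, the strategy is to exhibit a concrete finite encoding of $M$ of the type demanded in \ref{item:finitely encodable}. A finitely presentable $\R^d$-module $M=\mathrm{coker}(F_1\to F_0)$ with $F_0=\bigoplus_{a\in A}\kf_{a^\uparrow}$, $F_1=\bigoplus_{b\in B}\kf_{b^\uparrow}$ is constructible: the finitely many generators and relations determine a finite grid $\catG\subset\R^d$ (take all coordinate-wise values occurring among $A\cup B$, and one extra "point at infinity" in each coordinate), and the co-closure $c:\R^d\to\R^d$ sending each point down to the largest grid point $\leq$ it witnesses $M\circ c=M$. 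Thus $M$ is $\catS$-constructible for the finite image $\catS=\im(c)$, which is (order-isomorphic to) a finite product of finite chains, hence connected. It remains to check that the associated quotient map $\pi:\R^d\to\catS$ (i.e.\ $c$ corestricted) is interval-preserving and Int-rank-preserving: interval-preservation holds because $\pi$ is order-preserving and surjective onto an order-convex-image of a convex set in a product of chains (the image of an interval of $\R^d$ under this grid-collapse is again order-convex and connected), and Int-rank-preservation holds because for any interval $I$ of $\R^d$ the restriction $M|_I$ and $N|_{\pi(I)}$ have canonically isomorphic limit-to-colimit diagrams — every comparability in $\pi(I)$ is realized by a comparability in $I$, and $M$ is constant on $\pi$-fibers. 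Then \ref{item:finitely encodable} applies. The hardest part of \ref{item:finitely presentable invertible}, and the step most worth writing carefully, is verifying interval-preservation and Int-rank-preservation of the grid collapse $\pi$ — i.e.\ confirming that collapsing $\R^d$ onto the finite grid does not merge two points of an interval across a "gap" in a way that changes the rank; here the counterexample of Theorem~\ref{thm:tame does not imply Int-GRI invertible} is the cautionary tale showing why a general finite encoding is not enough and why finite presentability (which forces the encoding poset to be a grid refining the support of generators and relations) is exactly what rescues the argument. Since this proof is meant to give a concrete answer to Question~1, I would state explicitly the resulting locally finite $\Ical$: it is the (finite) collection of images under $\pi$ of intervals of the grid $\catS$, equivalently the intervals of $\R^d$ that are unions of grid cells.
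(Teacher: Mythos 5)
Your item \ref{item:interval-decomposable} is correct and identical to the paper's argument: $d_M := \mathrm{mult}_{\barc(M)}$ satisfies Theorem~\ref{thm:rk decomposition is GPD}~\ref{item:5}, with convolvability supplied by pointwise finite-dimensionality.

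For item \ref{item:finitely encodable} there is a genuine gap, and in fact the construction as written does not parse. You set $\Ical := \{\pi(J):J\in\Int(\catQ)\}$ with $\pi:\catP\to\catQ$, but $J$ lives in the codomain of $\pi$, so $\pi(J)$ is not defined; what you presumably mean is $\pi^{-1}(J)$, and the problem then is that $\pi^{-1}(J)$ is in general a \emph{disjoint union} of intervals of $\catP$ rather than an interval (Proposition~\ref{prop:each component is an interval}). Because of this the ``bookkeeping'' you flag at the end of the paragraph does not resolve: grouping by $K = \pi^{-1}(J)$ does not let you convert the condition $J\supset\pi(I)$ into a containment of intervals of $\catP$. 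The paper's construction is different and is precisely what fills this hole: take $\Ical := \bigcup_{J\in\Int(\catQ)}C(\pi^{-1}(J))$, the \emph{connected components} of preimages. Each element of $\Ical$ is an interval of $\catP$; since $\Int(\catQ)$ is finite and each $J$ contributes at most one component containing a given $I$, each $\Ical_{\supset I}$ is finite, so $\rk_M^\Ical$ is convolvable and $\dgm_M^\Ical$ exists. Writing $I^\Ical$ for the component of $\pi^{-1}(\pi(I))$ containing $I$, one checks $\pi(I^\Ical)=\pi(I)$, hence $\rk_M(I)=\rk_M(I^\Ical)$ by Int-rank-preservation, and the crucial equivalence $J\supset I \iff J\supset I^\Ical$ for $J\in\Ical$ (any connected $J\in\Ical$ containing $I$ is a component of some $\pi^{-1}(J')$ with $\pi(I)\subset J'$, and $I^\Ical$ is a connected subset of $\pi^{-1}(J')$ meeting $J$, hence lies inside it). That equivalence, not a push-forward of $\dgm_N$, is what closes the argument. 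Your alternative route via $\dgm_N$ would, after repair, amount to showing $\dgm_M^\Ical$ is supported on $\Ical$ with value $\dgm_N(\pi(I))$, which is the content of the paper's separate theorem about pulling back the GPD along a covering; it is more than \ref{item:finitely encodable} needs.

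For item \ref{item:finitely presentable invertible} your strategy matches the paper's (exhibit a grid collapse and reduce to \ref{item:finitely encodable}), and you correctly identify the hard step; but you assert Int-rank-preservation rather than prove it, and the assertion ``every comparability in $\pi(I)$ is realized by a comparability in $I$'' is exactly the non-trivial lemma. The paper proves it by partitioning $q_\ast^\uparrow\subset\R^d$ into half-open blocks $\prod_i[a_i,b_i)$, observing that $M$ is constant on each block by construction of the grid, and then showing that for any two $q\leq q'$ in $\lfloor I\rfloor_\catQ$ one can connect witnesses $p,p'\in I$ by a zigzag of points inside $I$ passing through a chain of adjacent blocks; convexity of $I$ is needed to guarantee that adjacent blocks meeting $I$ contain a comparable pair of points of $I$. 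With this one builds an explicit isomorphism $\Phi:\varprojlim M|_I\to\varprojlim N|_{\lfloor I\rfloor_\catQ}$, checks separately that $\Phi$ maps $\ker\psi_{M|_I}$ onto $\ker\psi_{N|_{\lfloor I\rfloor_\catQ}}$ using Lemma~\ref{lem:path characterization of zero in colim}, and concludes by rank--nullity. Also note the paper uses a single added minimum $-\infty$, not one per coordinate, and handles the case $I\not\subset q_\ast^\uparrow$ separately by observing the rank is then $0$; your sketch skips both points. So: right shape, but the load-bearing verification is missing.
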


\begin{remark}\phantomsection\label{rem:about thm sufficient conditions for invertibility}

Although the condition on the map $\pi$ given in Item \ref{item:finitely encodable} seems weaker than that of a \emph{finite encoding}, it is actually not: see Remark \ref{rem:implication of the counterexample} \ref{item:not rank-preserving}. Conversely, $\pi$ described in Item \ref{item:finitely encodable} is also not necessarily a finite encoding: Consider any pair of persistence modules $M,N$ over the same finite poset $\catP$ that are not isomorphic, but have the same Int-GRI. Then the identity map $\id_{\catP}$ on $\catP$ is interval-preserving and Int-rank-preserving, but $\id_{\catP}$ is not a finite encoding.

\ok{Nevertheless, natural candidates for such $\pi$ are finite encodings:}
see Example \ref{ex:anti-diagonal}. 
\end{remark}

Theorem \ref{thm:sufficient conditions for invertibility} \ref{item:interval-decomposable} and \ref{item:finitely encodable} respectively correspond to implications (3) and (4) of Figure \ref{fig:tameness + MI visual}. Within the proof of Theorem \ref{thm:sufficient conditions for invertibility} \ref{item:finitely presentable invertible}, Implication (2) of Figure \ref{fig:tameness + MI visual} is established.

Now we prove Theorem \ref{thm:sufficient conditions for invertibility}. Item \ref{item:interval-decomposable} is  straightforward: Define $\drm_M:\Int(\catP)\rightarrow \Z$ by sending each interval $I$ to the multiplicity of $I$ in the barcode of $M$. 
Then, the function $\drm_M$ satisfies the condition given in item \ref{item:5} of Theorem \ref{thm:rk decomposition is GPD}, completing the proof. In proving Item \ref{item:finitely encodable}, the proposition below is useful.

\begin{proposition}\label{prop:each component is an interval}
Let $\pi:\catP\rightarrow \catQ$ be an order-preserving map and let $J$ be an interval of $\catQ$. Then $\pi^{-1}(J)$ is a disjoint union of intervals of $\catP$.
\end{proposition}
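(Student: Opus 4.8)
The plan is to unwind the two defining properties of an interval of $\catQ$ -- convexity and connectivity -- and see how they interact with the preimage under an order-preserving map. First I would fix notation: write $\pi^{-1}(J) = \bigsqcup_{\lambda} C_\lambda$ for the decomposition of $\pi^{-1}(J)$ into its connected components, where the relation ``$p \sim p'$ if there is a finite zigzag $p = r_0, r_1, \ldots, r_n = p'$ inside $\pi^{-1}(J)$ with consecutive terms comparable in $\catP$'' is an equivalence relation on $\pi^{-1}(J)$ and the $C_\lambda$ are its classes. By construction each $C_\lambda$ satisfies the connectivity axiom (Definition \ref{def:intervals}\ref{item:connectivity}), and the $C_\lambda$ are pairwise disjoint with union $\pi^{-1}(J)$, so the only content to verify is that each $C_\lambda$ is \emph{convex} in $\catP$.

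The key step is the convexity check. Suppose $p, r \in C_\lambda$ and $q \in \catP$ with $p \leq q \leq r$. Since $\pi$ is order-preserving, $\pi(p) \leq \pi(q) \leq \pi(r)$ in $\catQ$, and $\pi(p), \pi(r) \in J$; by convexity of $J$ (Definition \ref{def:intervals}\ref{item:convexity}) we get $\pi(q) \in J$, hence $q \in \pi^{-1}(J)$. It remains to see $q$ lies in the \emph{same} component $C_\lambda$ rather than some other component of $\pi^{-1}(J)$: but $p \leq q$ is a comparability inside $\pi^{-1}(J)$, so $q \sim p$, placing $q \in C_\lambda$. Thus $C_\lambda$ is convex, and being also connected and nonempty it is an interval of $\catP$; this exhibits $\pi^{-1}(J)$ as a disjoint union of intervals of $\catP$, as claimed.

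The argument is essentially bookkeeping, so I do not expect a genuine obstacle; the one point requiring a little care -- and the natural place a careless proof would slip -- is the last sentence above, namely confirming that the intermediate point $q$ does not escape to a different connected component. The resolution is exactly that comparability $p \leq q$ within $\pi^{-1}(J)$ is a one-step zigzag, so $q$ is forced into $p$'s component; no appeal to convexity of the component (which is what we are trying to prove) is needed for this. One could alternatively phrase the whole proof by first noting that $\pi^{-1}(J)$ is convex in $\catP$ (immediate from order-preservation and convexity of $J$) and then invoking the general fact that a convex subset of a poset is the disjoint union of its connected components, each of which is automatically convex; either packaging is fine, and I would likely present the self-contained version above.
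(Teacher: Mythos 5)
Your proof is correct and rests on the same key observation as the paper's: order-preservation of $\pi$ together with convexity of $J$ forces $\pi^{-1}(J)$ (hence each of its connected components) to be convex. The paper takes the compact route you describe as the ``alternate packaging'' at the end — it simply shows $\pi^{-1}(J)$ is convex and leaves the decomposition into interval components implicit — but there is no substantive difference.
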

\begin{proof}
It suffices to show that $\pi^{-1}(J)$ is convex (Definition \ref{def:intervals} \ref{item:convexity}). Let $p,r\in \pi^{-1}(J)$ and $q\in \catP$ with $q\in [p,r]$. Since $\pi$ is order-preserving, we have that $\pi(q)\in [\pi(p),\pi(r)]\subset J$. Hence, $q$ belongs to $\pi^{-1}(J)$, as desired.
\end{proof}

\begin{proof}[Proof of Theorem \ref{thm:sufficient conditions for invertibility}
\ref{item:finitely encodable}] 

Since \ok{$\catQ$ is finite, the set} $\pi^{-1}\Int(\catQ):=\{\pi\inv(J):J\in \Int(\catQ)\}$ is finite. For $J\in \Int(\catQ)$, let $C(\pi\inv(J))$ be the set of connected components of $\pi\inv(J)$. \ok{This implies that any pair of elements in $C(\pi\inv(J))$ are disjoint. Also,}
by Proposition \ref{prop:each component is an interval}, every element of $C(\pi\inv(J))$ is in $\Int(\catP)$.

 Let $\Ical := \bigcup_{J\in \Int(\catQ)} C(\pi\inv(J))$. \ok{For each $I\in \Int(\catP)$,} the set
\[\Ical_{\supset I}:=\{J\in \Ical:J\supset I\}
\]
is finite, \ok{because for each $J\in\Int(\catQ)$, the intersection $\Ical_{\supset I} \cap C(\pi^{-1}(J))$ contains at most one element. This proves that} $\rk^\Ical_M$ is convolvable. 

Next, for $I\in \Int(\catP)$, let $I^\Ical\in \Ical$ be
the connected component of $\pi\inv(\pi(I))$ containing $I$.
Since $\pi(I^\Ical)=\pi(I)$ \ok{and $\pi$ is $\Int$-rank-preserving,}
we have $\rk_M(I)=\rk_M(I^\Ical)$. 
\ok{Also, observe that for any $J\in \Ical$, we have $I\subset J$ if and only if $I^\Ical\subset J$. Therefore,}

\[\rk_M(I) = \rk_M(I^\Ical) = \sum_{\substack{J\supset I^\Ical\\ J\in \Ical}} \dgm_M^\Ical(J) = \sum_{\substack{J\supset I\\ J\in \Ical}} \dgm_M^\Ical(J), \]
i.e. $\rk^\Int_M$ is M\"obius invertible over $\Ical\subset \Int(\catP)$, as desired.
\end{proof}

\ok{We now prove Theorem \ref{thm:sufficient conditions for invertibility} \ref{item:finitely presentable invertible}, building on Theorem \ref{thm:sufficient conditions for invertibility} \ref{item:finitely encodable}. 
When a given $\R^d$-module $M$ is finitely presentable, a natural finite encoding of $M$ exists. Proving that the finite encoding is Int-rank-preserving demands careful scrutiny.} 

\begin{proof}[Proof of Theorem \ref{thm:sufficient conditions for invertibility} \ref{item:finitely presentable invertible}] 
Let $M$ be a finitely presentable $\R^d$-module.
By assumption, $M$ is the cokernel of a morphism $\bigoplus_{a\in A} \kf_{a^\uparrow}\rightarrow \bigoplus_{b\in B}\kf_{b^\uparrow}$ where $A$ and $B$ are some finite multisets of elements from $\R^d$. Let $G_1,\ldots,G_d$ be finite subsets of $\R$ such that $\catQ'=\Pi_{i=1}^d G_i\subset 
\R^d$ is the smallest grid including all the elements in $A$ and $B$.  Let $q_\ast$ be the unique minimal element of $\catQ'$. Let $\catQ:=\catQ'\cup\{-\infty\}$, where we declare that $-\infty<q$ for all $q\in \catQ'$. We define the $\catQ$-module $N$ by $N|_{\catQ'}:=M|_{\catQ'}$ and $N_{-\infty}:=0$.\footnote{It is noteworthy that $M$ is the left-Kan extension of $N$ along the canonical inclusion $Q'\hookrightarrow \R^d$.} 

Let $\lfloor - \rfloor_{\catQ}:\R^d\rightarrow \catQ$ 
be the map sending each $p\in \R^d$ to the maximal $q\in \catQ$ such that $q\leq p$ (we declare that $-\infty<p$ for all $p\in \R^d$). 
\ok{By Theorem \ref{thm:sufficient conditions for invertibility} \ref{item:finitely encodable}, it suffices to show that $\lfloor - \rfloor_\catQ$ is interval-preserving, and $\Int$-rank-preserving with respect to $M$ and $N$.}
\ok{It is not difficult to see that $\lfloor - \rfloor_{\catQ}$ is interval-preserving, and thus we only show that $\lfloor - \rfloor_{\catQ}$ is $\Int$-rank-preserving.}
\ok{First, we make the following observations regarding the map $\lfloor - \rfloor_{\catQ}$.} 
\begin{enumerate}
\item[1.] For any $\ok{p\leq p'\in \R^d}$, if
 $\lfloor p \rfloor_{\catQ} = \lfloor p' \rfloor_{\catQ}$, then $\varphi_M(p,p'):M_p \rightarrow M_{p'}$ is the identity. \label{item:floor1}
 \item[2.]  $\R^d$ is partitioned 
    into the preimages $B_q$ of $q\in\catQ$ under $\lfloor-\rfloor_Q$. In particular, 
    \[B_q= \begin{cases} \R^d\setminus q_\ast^\uparrow,& \mbox{if $q=-\infty$} \\  \prod_{i=1}^d [a_i,b_i),&\mbox{if $q\neq -\infty$}, \end{cases} \]
    for some intervals $[a_i,b_i)\subset \R$. Each $B_q$ will be called a \emph{block}.
\end{enumerate}

If $I\in \Int(\R^d)$ is not contained in $q_\ast^\uparrow=\{p\in \R^d:q_\ast\leq p\}$, then it is clear that
\[\rk_M(I) = \rk_N(\lfloor I \rfloor_{\catQ})=0.\]

Now it remains to prove:
\begin{equation}\label{eq:corollary Rd int}
    \mbox{for all intervals}\ I\subset q_\ast^\uparrow,\ \  \rk_M(I) = \rk_N(\lfloor I \rfloor_{\catQ}).
\end{equation}

Let $I\in \Int(\R^d)$ with $I\subset q_\ast^\uparrow$. Let $\Psi_M$ be the canonical limit-to-colimit map over $M\vert_I$ and $\Psi_N$ be the canonical limit-to-colimit map over $N\vert_{\lfloor I\rfloor_\catQ}$. By the rank-nullity theorem, it suffices to prove that   $\varprojlim M\vert_I\cong \varprojlim N\vert_{\lfloor I \rfloor_{\catQ}}$ and $\ker (\Psi_M)\cong \ker (\Psi_N)$. 
   
Consider the map $\Phi:\varprojlim M\vert_I\to \varprojlim N\vert_{\lfloor I \rfloor_{\catQ}}$ by, for each $(\ell_p)_{p\in I}\in \varprojlim M\vert_I$ (cf. Convention \ref{con:limit and colimit}),
    \[\Phi((\ell_p)_{p\in I}):=(j_q)_{q\in \lfloor I \rfloor_{\catQ}}\ ,\] where $j_q:=\ell_p$ for some $p\in I$ such that $\lfloor p \rfloor_{\catQ} = q$ (see Figure \ref{fig:rd mobius invertible} (A)). It suffices to prove the following subclaims:
    
\textbf{Subclaim 1.} $\Phi$ is well-defined. 

\textbf{Subclaim 2.} $\Phi$ is a linear isomorphism.

\textbf{Subclaim 3.} The restriction of $\Phi$ to $\ker(\Psi_M)$ is a bijection with $\ker (\Psi_N)$.   
\begin{proof}[Proof of Subclaim 1]     Obs.1 implies that $j_q$ does not depend on the choice of $p$. 
    Next, we show that for $(\ell_p)_{p\in I}\in \varprojlim M\vert_I$, $\Phi((\ell_p)_{p\in I})\in \varprojlim N\vert_{\lfloor I \rfloor_\catQ}$. 
    To this end, it suffices to show that if $q\leq q'\in \lfloor I \rfloor_{\catQ}$, then $\varphi_N(q, q')(j_q) = j_{q'}$.
    Let $p,p'\in I$ such that $\lfloor p \rfloor_{\catQ} = q$ and $\lfloor p' \rfloor_{\catQ} = q'$.
    If $p\leq p'$, then $\varphi_N(q, q')(j_q) = j_{q'}$ immediately follows from the fact that $\varphi_M(p, p')(\ell_p) = \ell_{p'}$.

    Thus, suppose that there is no comparable pair $p,p'\in I$ with $\lfloor p\rfloor_\catQ = q$ and $\lfloor p'\rfloor_\catQ = q'$.
    For an example of this scenario, see Figure \ref{fig:rd mobius invertible} (A).
    Since $\lfloor p \rfloor_{\catQ}< \lfloor p' \rfloor_{\catQ}$, we must have that $p\in B_1$ and $p'\in B_n$ are in different blocks $B_1 \neq B_n$.
    We say two blocks $B\neq B'$ are \emph{adjacent} if $B=\prod_{i=1}^d [a_i,b_i)$ and $B'=\prod_{i=1}^d [a'_i,b'_i)$, with $a_i=a_i'$ and $b_i=b_i'$ for all but one $1\leq i\leq d$, for which $b_i = a'_i$.    
    If $B$ and $B'$ are adjacent, with $a_i<a_i'$ for some $1\leq i\leq d$, then we write $B\leq B'$.    
    As $I$ is an interval, there exists a finite sequence of adjacent blocks $B_1\leq B_2\leq \ldots\leq B_{n-1}\leq B_n$, that each intersects $I$.

    Observe that the convexity of $I$ implies if $B$ and $B'$ are adjacent blocks, there must exist $p\in B\cap I$ and $p'\in B'\cap I$ such that $p\leq p'$.
    Thus, for $2\leq i\leq n-1$, we can find pairs of points $p_i,p_i'\in B_i\cap I$ such that $p_{i-1}'\leq p_i$ for $2\leq i\leq n$, further having the property that $p_1 = p$ and $p_n' = p'$.    
    As noted before, since $p_i,p_i'$ are in the same block, we have $\ell_{p_i} = \ell_{p_i'}$ for $1\leq i\leq n$.
    Since $B_i\cap I$ is an interval, there must exist a path connecting $p_i$ and $p_i'$ inside $B_i\cap I$.
    Thus, we have shown that for any $q,q'\in \lfloor I \rfloor_\catQ$, and $p,p'\in I$ such that $\lfloor p\rfloor_\catQ = q$ and $\lfloor p' \rfloor_\catQ = q'$, there exists a path $\Gamma$ of elements of $I$ connecting $p$ and $p'$.
    For an example of such a path $\Gamma$, see Figure \ref{fig:rd mobius invertible} (A).

    \begin{figure}
        \begin{center}
            \includegraphics[width=0.53\linewidth]{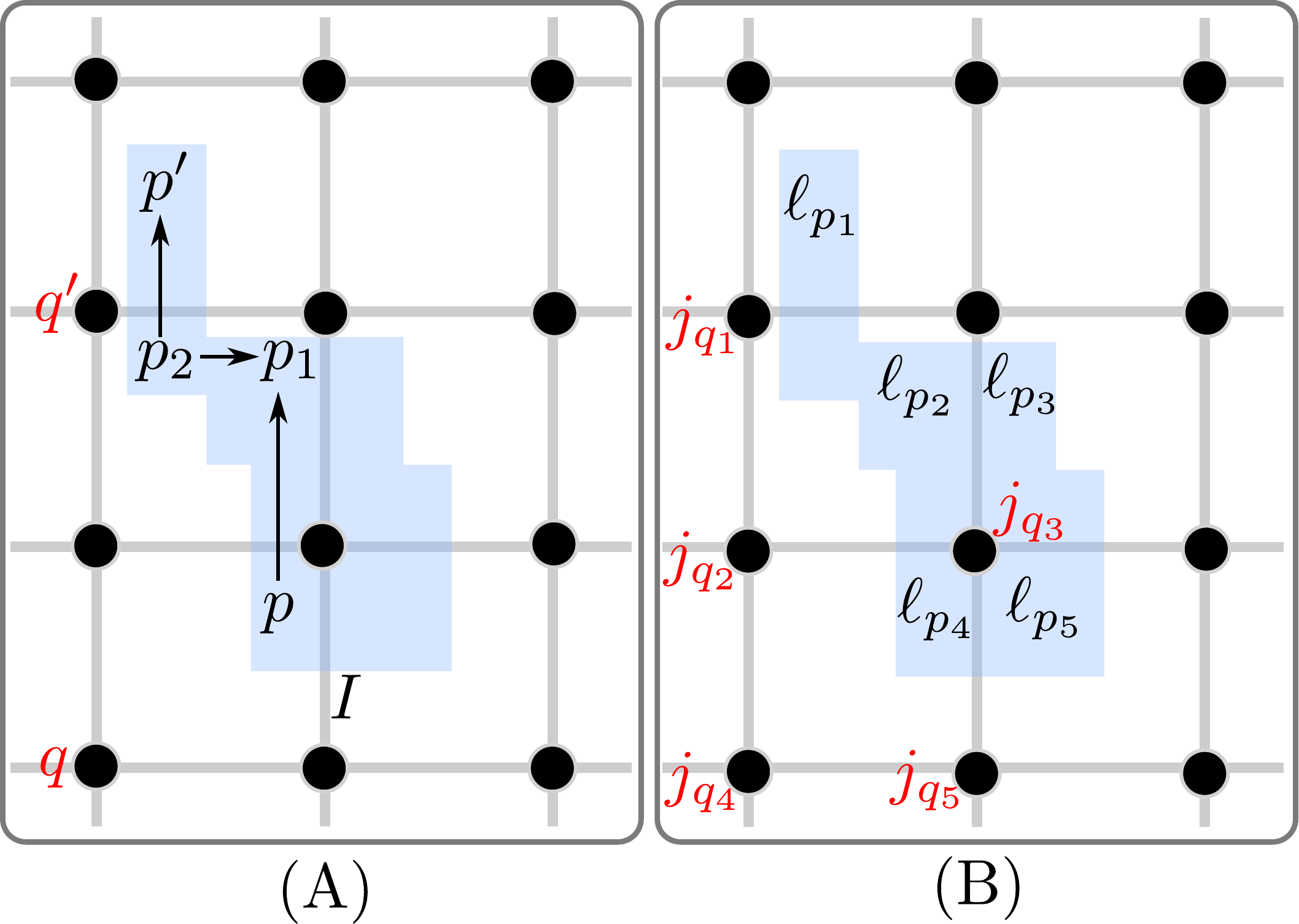}
            \caption{(A) Illustration for $q=\lfloor p \rfloor_Q$ , $q'=\lfloor p' \rfloor_Q$, and an interval $I\in \R^d$ which is the shaded region. Any section over $I$ is mapped to a section over $\lfloor I\rfloor_\catQ$. The existence of a path in $Q$ connecting $q$ and $q'$ guarantees the existence of a path in $I$, which is exemplified by $p'\leftarrow p_2\rightarrow p_1
            \leftarrow p$.          
            (B) A section $(\ell_p)_{p\in I}$ its image $(j_q)_{q\in \lfloor I \rfloor_\catQ}:= \Phi((\ell_p)_{p\in I})$.}
            \label{fig:rd mobius invertible}
        \end{center}
    \end{figure}
    
    Let $q_i:=\lfloor p_i \rfloor_{\catQ}$ for $1\leq i\leq n$.
    It is clear that we have $q = q_1\leq q_2\leq \ldots \leq q_{n-1}\leq q_n = q'$.
    By the fact that $(\ell_p)_{p\in I}$ is a section, we have $\varphi_M(p_{i-1}', p_i)(l_{p_{i-1}'}) = l_{p_i}$ for $2\leq i\leq n$, which implies $\varphi_N(q_{i-1}, q_i)(j_{q_{i-1}}) = j_{q_i}$ for $2\leq i\leq n$.
    By composing the internal morphisms in $N$, this gives us $\varphi_N(q, q')(j_q) =  \varphi_N(q_1, q_n)(j_{q_1}) = j_{q_n} = j_{q'}$.
    Hence, $\Phi((\ell_p)_{p\in \catP}=(j_q)_{q\in \lfloor I \rfloor_\catQ} \in \varprojlim N\vert_{\lfloor I\rfloor_\catQ}$.
    For an example of $\Phi$, see Figure \ref{fig:rd mobius invertible} (B).
    \end{proof}

    \begin{proof}[Proof of Subclaim 2] 
    We construct the inverse $\Phi\inv:\varprojlim N\vert_{\lfloor I\rfloor_\catQ}\to \varprojlim M\vert_I$.
    For $(j_q)_{q\in \lfloor I \rfloor_\catQ}$ a section in $\varprojlim N\vert_{\lfloor I\rfloor_\catQ}$, define $\Phi\inv((j_q)_{q\in \lfloor I \rfloor_\catQ}):= (\ell_p)_{p\in I}$, where $\ell_p :=j_q$ for $q = \lfloor p \rfloor_\catQ$.
    It is straightforward to see that $\Phi\inv$ maps sections to sections as if $p\leq p'\in I$ then $\lfloor p \rfloor_\catQ\leq \lfloor p'\rfloor_\catQ$.
    Lastly, it is immediate to see by the definitions that $\Phi\circ \Phi\inv$ is the identity on $\varprojlim N\vert_{\lfloor I\rfloor_\catQ}$ and $\Phi\inv\circ \Phi$ is the identity on $\varprojlim M\vert_I$.
    \end{proof}

    \begin{proof}[Proof of Subclaim 3.]
    We have already demonstrated that $\Phi$  injective, so it remains to show that $\Phi(\ker \Psi_M) = \ker \Psi_N$. 
    To see this, suppose $(\ell_p)_{p\in I}\in \ker \Psi_M$.
    By Lemma \ref{lem:path characterization of zero in colim}, this means that there exists $p^\ast\in I$ and a path $\Gamma$ of comparable elements in $I$ connecting $p^\ast$ to some $p'\in I$, and a section $(m_p)_{p\in I}\in \varprojlim M\vert_I$ with $m_{p^\ast}=\ell_{p^\ast}$ and $m_{p'}=0$.
    Define $(j_q)_{q\in \lfloor I\rfloor_\catQ}:= \Phi((\ell_p)_{p\in I})$ and $(h_q)_{q\in \lfloor I\rfloor_\catQ}:= \Phi((m_p)_{p\in I})$.
    Then if we denote $q^\ast:=\lfloor p^\ast \rfloor_\catQ$, $q':=\lfloor p'\rfloor_\catQ$ and $\Gamma_\catQ:=\lfloor \Gamma \rfloor_\catQ$, we see that $h_{q*} = \ell_{p^\ast}$, $h_{q'} = 0$, and $\Gamma_\catQ$ is a path of comparable elements in $\lfloor I\rfloor_\catQ$ connecting $q^\ast$ to $q'$, so $\Psi_N((j_q)_{q\in \lfloor I\rfloor_\catQ}) = 0$.
    Hence, $\Phi((\ell_p)_{p\in I})\in \ker \Psi_N$, meaning $\Phi(\ker \Psi_M)\subset \ker \Psi_N$.
    
    On the flip side, suppose we have a section $(j_q)_{q\in \lfloor I\rfloor_\catQ}\in \ker \Psi_N$. 
    Again, this means that there exists $q^\ast \in \lfloor I\rfloor_\catQ$, a path of comparable elements in $\lfloor I\rfloor_\catQ$ connecting $q^\ast$ to some $q'\in \lfloor I \rfloor_\catQ$, and a section $(h_q)_{q\in \lfloor I \rfloor_\catQ}$ such that $h_{q^\ast} = j_{q^\ast}$ and $h_{q'} = 0$.
    We can define $(\ell_p)_{p\in I}:=\Phi\inv((j_q)_{q\in \lfloor I\rfloor_\catQ})$ and $(m_p)_{p\in I}:=\Phi\inv((h_q)_{q\in \lfloor I\rfloor_\catQ})$.
    Fix $p^\ast,p'\in I$ such that $\lfloor p^\ast\rfloor_\catQ = q^\ast$ and $\lfloor p'\rfloor_\catQ = q'$.
    Then we have $\ell_{p^\ast} = m_{p^\ast}$ and $m_{p'} = 0$.
    When we showed $\Phi$ is well-defined, we demonstrated the existence of a (non-unique) path $\Gamma$ in $I$ connecting $p^\ast$ and $p'$, and so $(\ell_p)_{p\in I}\in \ker \Psi_M$.
    As $\Phi\inv$ is the inverse of $\Phi$, this implies $\Phi(\ker \Psi_M)\supset \ker \Psi_N$, and so $\ker \Psi_M\cong \ker \Psi_N$.\end{proof}   

\end{proof}

\begin{remark}\phantomsection\label{rem:rectangle decomposition} 

By Remark \ref{rem:monotonicity of mobius invertibility} \ref{item:monotonicity of mobius invertibility} and Theorem \ref{thm:sufficient conditions for invertibility} \ref{item:finitely presentable invertible}, the GRI over \emph{any} $\Jcal\subset\Int(\R^d)$ of any finitely presentable $\R^d$-module $M$ is M\"obius invertible over intervals (equivalently, by Remark \ref{rem:comparison with BOO21} \ref{item:GPD is more general than rank decomposition}, $\rk_{M}^\Jcal$ admits a rank decomposition).
This generalizes the fact that 
the RI of any finitely presentable $\R^d$-module admits a rank decomposition \cite[Corollary 5.6]{botnan2021signed}.
\label{item:rank decomposition over special dictionary sets}
\end{remark}

For the Con-GRI, the statement analogous to  Theorem \ref{thm:sufficient conditions for invertibility} \ref{item:finitely encodable} also holds:

\begin{proposition}\label{prop:tame+ does imply MI}

    Let $M$ be a $\catP$-module.
    If 
    there exists a finite poset $\catQ$, a poset morphism $\pi:\catP\to \catQ$ and $\catQ$-module $N$ such that $\pi$ is rank-preserving, i.e. for all $I\in \Con(\catP)$, $\rk_M(I) = \rk_N(\pi(I))$, then $\rk^\Con_M$ is M\"obius invertible. 
    
\end{proposition}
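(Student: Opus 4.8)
The plan is to follow the proof of Theorem \ref{thm:sufficient conditions for invertibility} \ref{item:finitely encodable} almost verbatim, relaxing ``interval'' to ``connected subset'' throughout, and to read the hypothesis ``$\pi$ is Con-rank-preserving'' as $\rk_M(I)=\rk_N(\pi(I))$ for all $I\in\Con(\catP)$. First I would record that this hypothesis is well posed: if $I\in\Con(\catP)$ then $\pi(I)\in\Con(\catQ)$, since an order-preserving map carries a comparability sequence in $I$ to a comparability sequence in $\pi(I)$. In contrast with the interval case there is no need for an analogue of Proposition \ref{prop:each component is an interval}, because every subset of $\catP$ is the disjoint union of its connected components, each of which lies in $\Con(\catP)$ by definition; this is why no hypothesis analogous to ``interval-preserving'' appears in the statement.

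Next, writing $C(S)$ for the set of connected components of a subset $S\subset\catP$, I would set $\Ical:=\bigcup_{J\in\Con(\catQ)}C(\pi\inv(J))\subset\Con(\catP)$, ordered by $\supset$. Since $\catQ$ is finite, so is $\Con(\catQ)$, although $\Ical$ itself may be infinite. The key observation is that every principal ideal of $(\Ical,\supset)$ is finite: if $K\in\Ical$ with $K\supset I$ for some $I\in\Con(\catP)$, then $K\in C(\pi\inv(J))$ for some $J\in\Con(\catQ)$, and $I\subset K\subset\pi\inv(J)$ forces $\pi(I)\subset J$; as the connected set $I$ lies in a unique component of $\pi\inv(J)$, the number of such $K$ is at most $\#\{J\in\Con(\catQ):\pi(I)\subset J\}\le\#\Con(\catQ)<\infty$. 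By Remark \ref{rem:convolvability} \ref{item:convolvability2}, $\rk_M^\Ical$ is convolvable over the locally finite poset $\Ical$, so $\dgm_M^\Ical$ is defined and $\rk_M^\Ical=\dgm_M^\Ical\ast\zeta_\Ical$ by Proposition \ref{prop:properties of GPD} \ref{item:universal property of GPD}.

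Then I would carry out the decisive identification. For $I\in\Con(\catP)$, let $I^\Ical$ be the connected component of $\pi\inv(\pi(I))$ containing $I$; since $\pi(I)\in\Con(\catQ)$ we have $I^\Ical\in\Ical$, and $\pi(I^\Ical)=\pi(I)$ because $I\subset I^\Ical\subset\pi\inv(\pi(I))$. Con-rank-preservation then gives $\rk_M(I)=\rk_N(\pi(I))=\rk_N(\pi(I^\Ical))=\rk_M(I^\Ical)$. Next I would check that for every $K\in\Ical$ one has $K\supset I$ if and only if $K\supset I^\Ical$: the backward direction is immediate from $I\subset I^\Ical$, and for the forward direction, if $K\supset I$ and $K\in C(\pi\inv(J))$, then $\pi(I)\subset J$, so $\pi\inv(\pi(I))\subset\pi\inv(J)$ and $I^\Ical$ is a connected subset of $\pi\inv(J)$ containing $I$, hence contained in the unique component $K$ of $\pi\inv(J)$ through $I$. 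Combining these,
\[\rk_M(I)=\rk_M(I^\Ical)=\sum_{\substack{J\supset I^\Ical\\ J\in\Ical}}\dgm_M^\Ical(J)=\sum_{\substack{J\supset I\\ J\in\Ical}}\dgm_M^\Ical(J),\]
with only finitely many nonzero summands; by Definition \ref{def:mobius invertible} this says precisely that $\rk_M^\Con$ is M\"obius invertible over $\Ical\subset\Con(\catP)$.

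There is no real obstacle here: the argument is the ``connected'' mirror of Theorem \ref{thm:sufficient conditions for invertibility} \ref{item:finitely encodable} and is, if anything, slightly shorter, since convexity never enters. The only two points needing a moment's care are (i) that connectivity, unlike the interval property, is automatically preserved by the order-preserving map $\pi$ (which is what lets $\Con$ here play the role ``interval-preserving'' played in the Int case), and (ii) the containment equivalence $K\supset I\iff K\supset I^\Ical$ for $K\in\Ical$, which uses that $K$ is a full connected component of some $\pi\inv(J)$ rather than an arbitrary connected superset of $I$.
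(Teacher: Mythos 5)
Your proof is correct and follows essentially the same route as the paper, which simply defines $\Ical := \bigcup_{J\in\Con(\catQ)} C(\pi\inv(J))$ and then repeats the argument of Theorem \ref{thm:sufficient conditions for invertibility} \ref{item:finitely encodable} with $\Int(-)$ replaced by $\Con(-)$. You have merely made explicit two small points the paper leaves implicit: that order-preserving maps automatically send connected sets to connected sets (so no analogue of the ``interval-preserving'' hypothesis is needed), and the containment equivalence $K\supset I\iff K\supset I^\Ical$ for $K\in\Ical$.
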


\begin{proof}
For any $J\subset \catQ$, let $C(\pi\inv(J))$ be the set of connected components of $\pi\inv(J)$.
This implies $C(\pi\inv(J))\subset \Con(\catP)$.
Define $\Ical\subset \Con(\catP)$ as $\Ical:=\cup_{J\in \Con(\catQ)} C(\pi\inv(J))$.
From here, the argument that $\rk_M^\Con$ is M\"obius invertible over $\Ical$ follows the same as the proof of Theorem \ref{thm:sufficient conditions for invertibility}
\ref{item:finitely encodable} upon replacing all instances of $\Int(-)$ with $\Con(-)$.\qedhere
\end{proof}
Interestingly, the statement analogous to Theorem \ref{thm:sufficient conditions for invertibility} \ref{item:finitely presentable invertible} for the Con-GRI is false:

\begin{remark}\label{rem:finitely presented R^d not Con MI}
       There exist finitely presentable $\R^d$-modules whose Con-GRIs are not M\"obius invertible.   The construction of such an example is parallel to the one described in the proof of Theorem \ref{thm:tame does not imply Int-GRI invertible}, and thus we omit it. In particular, when constructing such an example, the map $\lfloor - \rfloor_\catQ$ considered in the proof of Theorem \ref{thm:sufficient conditions for invertibility} \ref{item:finitely presentable invertible} is not Con-rank-preserving, and thus we cannot utilize Proposition \ref{prop:tame+ does imply MI} in that case. 
\end{remark}

\paragraph{Pulling-back of the GPD.} Theorem \ref{thm:sufficient conditions for invertibility} \ref{item:finitely encodable} and Proposition \ref{prop:tame+ does imply MI} show that the presence of a rank-preserving order-preserving map $\pi:\catP\rightarrow\catQ$ with $\abs{\catQ}<\infty$ ensures the M\"obius invertibility of the GRI. Next, we will demonstrate that by imposing an additional assumption on $\pi$, the GPD of $M$ can be fully determined by the GPD of $N$.
 
 An order-preserving map $\pi:\catP\rightarrow \catQ$ is called a \textbf{covering} if $\pi$ is surjective and for each $J\in \Con(\catQ)$, $\pi^{-1}(J)$ is a disjoint union of connected subposets $I\subset \catP$ such that $\pi(I)=J$.
We say that $\pi$ is Con\emph{-rank-preserving} if for all $I\in \Con(\catP)$, $\rk_M(I) = \rk_M(\pi(I))$.
\begin{theorem}\label{thm:pull-back GPD} Let $M$ be a $\catP$-module and let $N$ be a $\catQ$-module with a covering $\pi:\catP\rightarrow \catQ$ that is Con-rank-preserving. 
Then, the Con-GRI of $M$ is M\"obius invertible, and
 for $I\in \Con(\catP)$, 
\[\dgm_M(I)=\begin{cases}\dgm_N(\pi(I)),&\mbox{if $I$ is a connected component of $\pi^{-1}\pi(I)$}\\0,&\mbox{otherwise.}\end{cases}\]

Also, the analogous statement obtained by replacing every $\Con$ by $\Int$ holds,  
under the extra assumption that $\pi$ is interval-preserving.
\end{theorem}

\begin{proof} 
We only prove the statements for the Con-GRI.   
 Because $\pi$ is a covering and rank-preserving, we can extend the domain $\Con(\catP)$ of $\rk_M$  to $\Con(\catP)\cup \pi^{-1}\Con(\catQ)$ by:
\[I\mapsto\begin{cases} \rk_M(I),&\mbox{$I\in \Con(\catP)$}\\ \rk_M(I'),&\mbox{$I\in\pi^{-1}\Con(\catQ)$  and $I'$ is \emph{any} connected component of $I$.}
\end{cases}
\]
Indeed, this map is  well-defined even on the intersection $\Con(\catP)\cap \pi^{-1}\Con(\catQ)$. For the containment relation $\supset$ on $\Con(\catP)\cup \pi^{-1}\Con(\catQ),$ we claim that $\rk_M$ is $\pi^{-1}\Con(\catQ)$-constructible. Indeed, the rank-preserving property of $\pi$ and the definition of $\rk_M$ on $\pi^{-1}\Con(\catQ)$ imply that for any $I\in \Con(\catP)$, 
\[\rk_M(I)=\rk_N(\pi(I))=\rk_M(\pi^{-1}\pi(I)),\]
showing that the map $\pi^{-1}\pi$ on  $\Con(\catP)\ \cup\ \pi^{-1}\Con(\catQ)$ is the co-closure with image $\pi^{-1}\Con(\catQ)$.
Since $\catQ$ is finite, $\pi^{-1}\Con(\catQ)$ is finite.
Therefore, by Proposition \ref{prop:Mobius inversion of a contructible function}, the Con-GRI of $M$ is M\"obius invertible over $\pi^{-1}\Con(\catQ)$, i.e. there exists a function $d_M:\pi^{-1}\Con(\catQ)\rightarrow \Z$ such that
\begin{equation}\label{eq:sum over pi inverse conQ}
    \rk_M(I)=\sum_{\substack{L\supset I \\ L\in \pi^{-1}\Con(\catQ)}}d_M(L), \ \ \forall I\in \Con(\catP).
\end{equation}
Now, consider the subcollection of $\Con(\catP)$ given by \[\Ical:=\{J:\mbox{$J$ is a connected component of an element in $\pi^{-1}\Con(\catQ)$}\}.\]
Since $\pi^{-1}\Con(\catQ)$ is finite, for each $I\in \Con(\catP)$, we have that \[\Ical_{\supset I}:=\{J\supset I:\mbox{$J$ is a connected component of an element in $\pi^{-1}\Con(\catQ)$}\}\]is finite.  
Now let us define $\bar{d}_M:\Con(\catP)\rightarrow \Z$ by
\[I\mapsto \begin{cases}
d_M(\pi^{-1}\pi(I)), &I\in \Ical\\0
,& \mbox{otherwise.}
\end{cases}\] 
We claim that 
\begin{equation}\label{eq:sum over conP}
\rk_M(I)=\sum_{\substack{J\supset I \\ J\in \Con(\catP)}}\bar{d}_M(J), \ \ \forall I\in \Con(\catP).
\end{equation}
Fix any $I\in \Con(\catP)$. 
The RHS contains only finite many nonzero summands since $\Ical_{\supset I}$ is finite. For any $L$ containing $I$ with $L\in \pi^{-1}\Con(\catQ)$, there exists a unique connected component $J\in \Con(\catP)$ of $L$ containing $I$. Furthermore, by construction, we have $\bar{d}_M(J)=d_M(L)$. This proves that the sums given in Equations (\ref{eq:sum over pi inverse conQ}) and (\ref{eq:sum over conP}) coincide.

Note that, since $\pi$ is surjective, the two maps
\[\pi: \pi^{-1}\Con(\catQ) \leftrightarrows \Con(\catQ):\pi^{-1}\]
are inverse to each other and order isomorphisms. Therefore, $d_M(\pi^{-1}(I))=\dgm_N(I)$ for all $I\in \Con(\catQ)$. This implies that 
\[\bar{d}_M(I)=\begin{cases}\dgm_N(\pi(I)),&I\in \Ical\\0,&\mbox{otherwise,} \end{cases}\]
completing the proof.
\end{proof}

\begin{example}\label{ex:anti-diagonal}
Consider the $\Z^2$-module $M$ defined as follows:
\[M_{(x,y)}=\begin{cases} 0,&y<-x \\ k^2,&y=-x \\ k,&y>-x,
\end{cases}
\]
where $M$ is constant on $y>-x$ and each map $k^2\rightarrow k$ in $M$ is the projection $p_1$ to the first factor. Although $M$ is not finitely presentable, $M$ is tame: Let $N$ be the $\{1<2<3\}$-module given by
$0\rightarrow k^2\stackrel{p_1}{\rightarrow} k.$
There exists the obvious order-preserving map $\pi:\Z^2\rightarrow \{1,2,3\}$ such that $M=\pi^\ast N$. This $\pi$ is a covering and Int-rank-preserving. Thus, by the previous theorem, not only $\dgm_M$ exists, but also $\dgm_M$ is directly obtained from $\dgm_N$. 

Another way to show that the GRI of $M$ is M\"obius invertible is to prove that $M$ is interval-decomposable, and invoke Theorem \ref{thm:sufficient conditions for invertibility} \ref{item:interval-decomposable}. 
\end{example}

\ok{By exploiting Theorem \ref{thm:pull-back GPD},} we refine Theorem \ref{thm:sufficient conditions for invertibility} \ref{item:finitely presentable invertible} by specifying the precise types of intervals that appear in the support of the Int-GPD of a finitely presentable $\R^d$-module. 

\begin{proposition}\label{prop:finitely presentable invertiver over intmn}
The Int-GRI of any finitely presentable $\R^d$-module is M\"obius invertible over \ok{a finite subset of } $\Int_{m,n}(\R^d)$ (cf. Equation (\ref{eq:intmn})) for large enough $m,n\in \N$.
\end{proposition}
 \ok{In plain words, the Int-GRI of any finitely presentable $\R^d$-module admits a compact encoding as a  'persistence diagram' consisting of elements from $\Int_{m,n}(\R^d)$. This result sheds light on computational aspects of the Int-GRI and also on its compact
encoding.} 
\begin{proof}
\ok{Let $\pi:\R^d\rightarrow \catQ$ be the order-preserving map from the proof of Theorem \ref{thm:sufficient conditions for invertibility} \ref{item:finitely presentable invertible}, where $\catQ=\catQ'\cup\{-\infty\}$. The map $\pi$ is a covering that is both interval-preserving and $\Int$-rank-preserving. Also, observe that there exist $m,n\in \N$ (which depend on the size of the grid $\catQ'$) such that for all $I\in \Int(\R^d)$, $\pi^{-1}\pi(I)$ belongs to $\Int_{m,n}(\R^d)$. Now, the statement  immediately follows from Theorem \ref{thm:pull-back GPD}.}
\end{proof}

\section{Discriminating power of the generalized rank invariant}\label{sec:main} 

In this section, we investigate the discriminating power of the GRI from various perspectives: Section \ref{sec:known results} recalls known results, and Section \ref{sec:new discriminating power} presents new results.
In Section \ref{sec:gri vs. zz}, we restrict our attention to the setting of 2-parameter persistence modules. In Section \ref{sec:comparison}, we compare the GRI on various domains with other invariants of persistence modules.

\subsection{Known results}\label{sec:known results}

In this section, we review known results regarding the discriminating power of the GRI.
\begin{proposition}
\label{prop:gricontaining}
Let $M$ be any interval-decomposable $\catP$-module. Then, for each $I\in \Con(\catP)$,
$\rank_M(I)$ is equal to the total multiplicity of intervals $J\in \barc(M)$ such that $J\supset I$.
\end{proposition}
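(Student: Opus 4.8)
The plan is to reduce $\rank_M(I)$ to a count of interval summands isomorphic to $\kf_I$ and then invoke Theorem~\ref{thm:rkequalsintervals}. Since $M$ is interval-decomposable, write $M\cong\bigoplus_{J}\kf_J$, the sum ranging over $\barc(M)$ with multiplicities, and restrict to $I$ to get $M\vert_I\cong\bigoplus_{J}(\kf_J)\vert_I$. The first key observation is that each $(\kf_J)\vert_I$ is itself a direct sum of interval modules over intervals of $I$: its support is $J\cap I$, which is convex in $I$ (an intersection of convex sets), so each connected component $K$ of $J\cap I$ is convex and connected, i.e.\ $K\in\Int(I)$; moreover a comparable pair $p\le q$ with $p,q\in J\cap I$ is a connected subset and hence lies in a single component, so no structure map of $(\kf_J)\vert_I$ joins distinct components and $(\kf_J)\vert_I\cong\bigoplus_{K}\kf_{K}$, with $K$ ranging over the connected components of $J\cap I$. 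As every interval module is indecomposable \cite[Proposition 2.2]{botnan2018algebraic}, this exhibits $M\vert_I$ as a direct sum of indecomposable $I$-modules.

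Next, since $I$ is connected, Theorem~\ref{thm:rkequalsintervals} applies with $I$ in the role of the connected indexing poset: $\rank_M(I)=\rank(M\vert_I)$ equals the number, counted with multiplicity, of the summands above that are isomorphic to $\kf_I$. A summand $\kf_K$, with $K$ a component of $J\cap I$, satisfies $\kf_K\cong\kf_I$ exactly when $K=I$, which forces $J\cap I=I$, i.e.\ $J\supset I$; conversely, if $J\supset I$ then $J\cap I=I$ is already connected and $(\kf_J)\vert_I=\kf_I$ contributes exactly one such summand. Therefore the number of summands isomorphic to $\kf_I$ equals $\sum_{J\in\barc(M),\,J\supset I}\mult_{\barc(M)}(J)$, the total multiplicity of intervals $J\in\barc(M)$ with $J\supset I$, which is the asserted identity.

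The point needing the most care — and the reason I would route the argument through Theorem~\ref{thm:rkequalsintervals} rather than through additivity of the GRI together with Remark~\ref{rem:properties of rank invariant} \ref{item:the GRI of an interval module} — is that $\barc(M)$ may be infinite, so the finite additivity of Remark~\ref{rem:properties of rank invariant} \ref{item:additivity} does not directly cover the decomposition of $M\vert_I$, whereas Theorem~\ref{thm:rkequalsintervals} is stated for arbitrary direct sums of indecomposables and sidesteps this entirely. The only other point that must be spelled out is the elementary fact that restricting an interval module to a connected subposet splits it as a direct sum of interval modules indexed by the connected components of the restricted support, together with the observation that those components are themselves intervals of $I$ — both immediate from convexity of $J\cap I$.
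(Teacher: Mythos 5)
Your proof is correct, and it takes a genuinely different route from the paper's. The paper's own argument is split: for finite $\catP$ it invokes finite additivity of the GRI (Remark~\ref{rem:properties of rank invariant}~\ref{item:additivity}) together with the GRI of an interval module (Remark~\ref{rem:properties of rank invariant}~\ref{item:the GRI of an interval module}), and for the general case it simply cites \cite[Proposition 3.17]{kim2021generalized} and \cite[Proposition 2.1]{botnan2021signed}. You instead restrict $M$ to $I$ \emph{first}, split each restricted interval module $(\kf_J)\vert_I$ into interval modules over the connected components of $J\cap I$, and then read off $\rank_M(I)=\rank(M\vert_I)$ as the number of full-bar summands via Theorem~\ref{thm:rkequalsintervals} applied to the connected poset $I$. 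Your diagnosis of why this reroute is needed is exactly right: the stated additivity property is only for finite direct sums, while Theorem~\ref{thm:rkequalsintervals} is stated for arbitrary direct sums of indecomposables, so your argument is uniform across finite and infinite barcodes and gives a self-contained proof where the paper defers. The decomposition step is sound: $J\cap I$ is convex in $I$ because $J$ is convex in $\catP$ (the phrase ``intersection of convex sets'' is slightly loose since $I$ itself need not be convex in $\catP$, but the conclusion holds for the reason you give), its connected components are intervals of $I$, and no internal map of $(\kf_J)\vert_I$ links distinct components, so $(\kf_J)\vert_I\cong\bigoplus_K\kf_K$ with each $\kf_K$ indecomposable. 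The Azumaya--Krull--Remak--Schmidt theorem then makes the count well-defined, and $\kf_K\cong\kf_I$ if and only if $K=I$ if and only if $J\supset I$, giving the stated identity.
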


\begin{proof}
    When $\catP$ is finite, Remark \ref{rem:properties of rank invariant} \ref{item:additivity} and \ref{item:the GRI of an interval module} directly imply the claim. 
    The claim still holds even without these assumptions, see \cite[Proposition 3.17]{kim2021generalized} and \cite[Proposition 2.1]{botnan2021signed}.
\end{proof}

\begin{oldtheorem}
\label{thm:gricomplete}
Assume that the GRI of a $\catP$-module $M$ is convolvable over $\Ical\subset\Int(\catP)$. If $M$ is $\Ical$-decomposable, then for all $I\in \Ical$,
$\dgm_M^\Ical(I)$ is equal to the multiplicity of $I$ in $\barc(M)$.
\end{oldtheorem}

The proof of Theorem \ref{thm:gricomplete} below is along the same lines as that of \cite[Theorem 9]{dey2022computing}.

\begin{proof}
For $J\in \Ical$, let $m_J$ be the multiplicity of $J$ in $\barc(M)$. 
We have $\displaystyle M\cong \bigoplus_{J\in \Ical}(\kf_J)^{m_J}$ where $(\kf_J)^{m_J}$ denotes the direct sum of $m_J$ copies of $\kf_J$. Then, by Proposition \ref{prop:gricontaining} 
, we have that
\[\rank_M(I)
=\sum_{\substack{J\supset I\\ J\in \Ical}} m_J \ \ \ \forall I\in \Ical.\]
By the uniqueness of $\dgm_M^{\Ical}$ 
(Theorem \ref{thm:mobius} and Definition \ref{def:GPD over I}), 
we have that $\dgm_M^\Ical(I)=m_I$ for all $I\in \Ical$.
\end{proof}

\begin{corollary}\label{cor:gricomplete}Let $\Ical\subset \Int(\catP)$ be such that every principal ideal of $\Ical$ is finite (and thus $\Ical$ is locally finite). Then, the GRI over $\Ical$ is a complete invariant of $\Ical$-decomposable $\catP$-modules. 
\end{corollary}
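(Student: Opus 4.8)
The plan is to deduce the corollary from Theorem~\ref{thm:gricomplete} together with the uniqueness of Möbius inversion. One direction is immediate: isomorphic modules have equal GRIs, since $\rk_M(I)$ depends only on the isomorphism type of $M\vert_I$. So the content is the converse: if $M$ and $N$ are $\Ical$-decomposable $\catP$-modules with $\rk_M^\Ical = \rk_N^\Ical$, then $M \cong N$.

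First I would observe that the hypothesis on $\Ical$ makes every function $\Ical \to \kf$ convolvable: by assumption every principal ideal of the poset $(\Ical, \supset)$ is finite, so Remark~\ref{rem:convolvability}~\ref{item:convolvability2} applies. In particular $\rk_M^\Ical$ and $\rk_N^\Ical$ are convolvable over $\Ical$, so the generalized persistence diagrams $\dgm_M^\Ical = \rk_M^\Ical \ast \mu_\Ical$ and $\dgm_N^\Ical = \rk_N^\Ical \ast \mu_\Ical$ are well-defined (Definition~\ref{def:GPD over I}).

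Next, using the Azumaya--Krull--Remak--Schmidt theorem and the definition of $\Ical$-decomposability, I would write $M \cong \bigoplus_{J \in \Ical}(\kf_J)^{m_J}$ and $N \cong \bigoplus_{J \in \Ical}(\kf_J)^{n_J}$ (the multisets $\{m_J\}$ and $\{n_J\}$ being pointwise finite, which is automatic since $M$ and $N$ are pointwise finite-dimensional by our standing convention). Applying Theorem~\ref{thm:gricomplete} to $M$ and to $N$ yields $\dgm_M^\Ical(I) = m_I$ and $\dgm_N^\Ical(I) = n_I$ for every $I \in \Ical$. Since $\rk_M^\Ical = \rk_N^\Ical$ and Möbius inversion over the locally finite poset $(\Ical,\supset)$ is unique (Theorem~\ref{thm:mobius}), we get $\dgm_M^\Ical = \dgm_N^\Ical$, hence $m_I = n_I$ for all $I$, i.e. $\barc(M) = \barc(N)$. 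By the uniqueness part of Azumaya--Krull--Remak--Schmidt this forces $M \cong N$, completing the proof.

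I do not expect a genuine obstacle here; the corollary essentially repackages Theorem~\ref{thm:gricomplete}. The one place that deserves a line of care is the exact role of the hypothesis: mere local finiteness of $\Ical$ would not suffice to guarantee that $\rk_M^\Ical$ is convolvable (an element of $\Ical$ could be contained in infinitely many members of $\Ical$ on which the GRI is nonzero), so the assumption that every principal ideal of $\Ical$ is finite is precisely what makes the passage to $\dgm_M^\Ical$ legitimate and thereby lets Theorem~\ref{thm:gricomplete} apply.
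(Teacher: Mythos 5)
Your proof is correct and follows exactly the route the paper has in mind: derive convolvability of $\rk_M^\Ical$ from the principal-ideal hypothesis via Remark~\ref{rem:convolvability}~\ref{item:convolvability2}, invoke Theorem~\ref{thm:gricomplete} to read off the barcode from the GPD, and use uniqueness of Möbius inversion together with Krull--Remak--Schmidt to conclude $M\cong N$. One small caveat on your closing aside: for an $\Ical$-decomposable, pointwise finite-dimensional $M$, mere local finiteness of $\Ical$ would in fact already force $\rk_M^\Ical$ to be convolvable (the $J\in\Ical$ with $I\subset J$ and $\rk_M(J)>0$ sit inside finitely many segments $[K,I]$ with $K\in\barc(M)$, $K\supset I$), which is essentially why the paper later drops the hypothesis on $\Ical$ altogether in Theorem~\ref{thm:mainthm}; this does not affect the validity of your argument, which legitimately uses the stronger hypothesis as given.
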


We emphasize that this corollary is a direct consequence of the M\"obius inversion formula. 
Depending on \ok{the poset structures of} 
$\Int(\catP)$ and $\Ical\subset \Int(\catP)$, the GRI over $\Ical$ can actually be a complete invariant over an even larger collection than the collection of all $\Ical$-decomposable modules.
 Let $\widehat{\Ical}$ be the \textbf{limit completion} of $\Ical$, i.e. the collection of all unions of nested families of intervals in $\Ical$. In other words,
\begin{equation}\label{eq:limit intervals}\widehat{\Ical}:=\left\{\bigcup_{x\in X} I_x: \mbox{$X$ is totally ordered, $I_x\in \Ical$ and $I_x\subset I_y$ for all $x\leq y$ in $X$}\right\},
\end{equation}
which is a subcollection of $\Int(\catP)$ containing $\Ical$.
For example, if $\catP=\R$ and $\Ical:=\{[0,a]\in \Int(\R):a\in [0,1]\}$, then $\widehat{\Ical}=\Ical\cup\{[0,b):b\in (0,1]\}$.
\begin{proposition}[Restatement of {\cite[Proposition 2.10]{botnan2021signed}}]\label{prop:known discriminating power of rk} Let $\Ical\subset \Int(\catP)$.  Then, the GRI over $\Ical$ is a complete invariant on the collection of {$\catP$-modules} $M$ that are $\widehat{\Ical}$-decomposable.\footnote{The original statement includes the assumption $\rank_M(I)<\infty$ for all $I\in \Ical$. This is automatically guaranteed by our assumption that $M$ is pointwisely finite-dimensional.}
\end{proposition}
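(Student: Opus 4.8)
The plan is to show completeness by combining Corollary \ref{cor:gricomplete} with a limit argument. The key insight is that $\widehat{\Ical}$-decomposability reduces to $\Ical$-decomposability "in the limit," so the GRI over $\Ical$ should still determine barcodes even though intervals in $\widehat{\Ical}\setminus\Ical$ may not be directly tested. First I would recall that a $\widehat{\Ical}$-decomposable module $M$ has a barcode $\barc(M)$ consisting of intervals in $\widehat{\Ical}$, and by pointwise finite-dimensionality the multiset $\barc(M)$ is \pfin. The goal is to recover, from $\rk_M^\Ical$ alone, the full multiset $\barc(M)$, i.e. to prove that two $\widehat{\Ical}$-decomposable modules $M$ and $N$ with $\rk_M^\Ical=\rk_N^\Ical$ satisfy $\barc(M)=\barc(N)$.

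The main step is to prove the analogue of Proposition \ref{prop:gricontaining} in this setting: for each $I\in\Ical$ and each $\widehat{\Ical}$-decomposable $M$,
\[
\rk_M(I)=\sum_{\substack{J\in\barc(M)\\ J\supset I}} \mult_{\barc(M)}(J),
\]
where the sum is finite because $\barc(M)$ is \pfin\ and $I$ is nonempty (any $J\supset I$ contains a fixed point $p\in I$, and only finitely many bars pass through $p$). This follows from additivity of the GRI (Remark \ref{rem:properties of rank invariant} \ref{item:additivity}) together with the computation of $\rk_{\kf_J}(I)$ in Remark \ref{rem:properties of rank invariant} \ref{item:the GRI of an interval module}, exactly as in Proposition \ref{prop:gricontaining}; the cited references \cite[Proposition 3.17]{kim2021generalized} and \cite[Proposition 2.1]{botnan2021signed} already establish that $\rk_{\kf_J}(I)=1$ iff $J\supset I$ without finiteness hypotheses, so this extends verbatim. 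Thus $\rk_M^\Ical$ is a convolvable function on $\Ical$ and equals $\dgm'\ast\zeta_\Ical$ where $\dgm'(I):=\mult_{\barc(M)}(I)$ is the restriction of the barcode multiplicities to intervals lying in $\Ical$.

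The remaining work is to argue that $\rk_M^\Ical$ determines \emph{all} of $\barc(M)$, not just the part supported on $\Ical$. Here I would use the order structure of $\widehat{\Ical}$: each $\widehat J\in\widehat{\Ical}\setminus\Ical$ is, by definition, a union of a nested family $\{I_x\}_{x\in X}$ in $\Ical$, and a bar $\kf_{\widehat J}$ is "visible" in the GRI as a limit of the values $\rk_M(I_x)$ — more precisely, I would show that $\mult_{\barc(M)}(\widehat J)$ can be computed from the eventual behavior of the finite sums $\sum_{J\supset I_x,\, J\in\barc(M)}\mult(J)$ as $I_x$ grows. Concretely, by Proposition \ref{prop:if rk decomposes then the minimal one exists}-style bookkeeping one can induct on the "height" of bars: the bars maximal with respect to containment that lie over a given $I$ are detected first, and subtracting off their contribution (using $\rk_{\kf_{\widehat J}}^\Ical$, which is again an indicator of $\{I\in\Ical: I\subset \widehat J\}$) reduces to a module with a strictly smaller barcode. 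Since $\barc(M)$ is \pfin, this process terminates pointwise, and I would conclude $\barc(M)=\barc(N)$ whenever $\rk_M^\Ical=\rk_N^\Ical$. I expect the main obstacle to be making this "limit/induction on height" argument rigorous without local finiteness of $\Ical$ itself — one must be careful that the nested families realizing elements of $\widehat{\Ical}$ interact well with the \pfin\ condition, and that the recursion defining the recovered multiplicities is well-founded; this is precisely the content of \cite[Proposition 2.10]{botnan2021signed}, whose argument I would follow, adapting notation to the GRI-over-$\Ical$ language used here.
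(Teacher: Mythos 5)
Note first that the paper does not prove this proposition; it is stated as a restatement of \cite[Proposition 2.10]{botnan2021signed} (with a footnote observing the original finiteness hypothesis is automatic here), so there is no in-paper proof against which to compare.

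Assessing your sketch on its own, the second step contains a concrete error. You claim that $\rk_M^\Ical$ is convolvable over $\Ical$ and equals $\dgm'\ast\zeta_\Ical$ with $\dgm'=\mult_{\barc(M)}\vert_\Ical$. Neither holds. Convolvability over $\Ical$ requires that for each $I\in\Ical$, only finitely many $J\in\Ical$ containing $I$ have $\rk_M(J)\neq 0$; but the finite sum you derive from Proposition~\ref{prop:gricontaining} is indexed by $\barc(M)$, not by $\Ical$, and finiteness does not transfer. The paper's own example of a limit completion shows this: with $\catP=\R$, $\Ical=\{[0,a]:a\in[0,1]\}$ and $M=\kf_{[0,1)}$ (so $M$ is $\widehat\Ical$-decomposable), one has $\rk_M([0,a])=1$ for every $a<1$ and $0$ at $a=1$, so for any $a_0<1$ there are uncountably many $J\supset[0,a_0]$ in $\Ical$ with $\rk_M(J)\neq 0$. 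In fact, $\rk_M^\Ical$ is not M\"obius invertible over \emph{any} $\Ical'\subset\Ical$ in this example: a $\Z$-valued $d_M$ supported in $\Ical$ with only finitely many nonzero summands in Equation~(\ref{eq:mobius invertible over I_v2}) at each $I$ cannot yield a rank function that is constantly $1$ on $a<1$ and drops to $0$ at $a=1$. So the formula $\dgm'\ast\zeta_\Ical$ is simply unavailable, quite apart from the issue (which you do acknowledge) that it would in any case miss bars in $\widehat\Ical\setminus\Ical$.

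The remaining paragraph of your proposal defers the crucial step to \cite{botnan2021signed}, which is exactly the reference whose claim you are trying to reprove; as written, the argument is therefore circular rather than merely incomplete. A correct route that fits the paper's toolkit is to avoid M\"obius inversion over $\Ical$ altogether. Instead show that $\rk_M^\Ical$ determines $\rk_M^{\widehat\Ical}$ by a stabilization argument: for $\widehat J=\bigcup_{x\in X}I_x$ with $I_x\in\Ical$ nested, fix $p\in\widehat J$; by pointwise finiteness only finitely many $J\in\barc(M)$ contain $p$, and any such $J$ containing cofinally many $I_x$ contains all of $\widehat J$, so $\rk_M(I_x)$ is eventually constant and equal to $\rk_M(\widehat J)$. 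Then apply Theorem~\ref{thm:mainthm}(i) with $\Ical$ replaced by $\widehat\Ical$; that theorem needs no local finiteness of the domain, only that the barcode lies in a subposet with finite principal ideals, which Remark~\ref{rem:pointwisely finite multiset} guarantees. Your sketch needs this limit step spelled out (and the faulty M\"obius-inversion-over-$\Ical$ step removed) to become a proof.
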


\subsection{Optimality of Completeness, and Extent of Incompleteness}\label{sec:new discriminating power}

\ok{This section presents three key results concerning the discriminating power of the GRI:
\begin{itemize}
    \item Theorem \ref{thm:mainthm}, which establishes the completeness of the GRI for a specific class of persistence modules,
    \item Theorem \ref{thm:tightness}, which demonstrates the optimality of the previous completeness result
    in a suitable sense, and
    \item  Theorem \ref{thm:coincidence of rank invariants} and Corollary \ref{cor:minimal pairs}, which quantify the failure of the GRI to be complete on a collection of persistence modules.
\end{itemize}  
We emphasize that the M\"obius inversion formula plays a central role in the proofs of all of these theorems, \emph{even without}  the assumption of local finiteness on the domain of the GRI.}

First, we generalize Theorem \ref{thm:gricomplete} and Corollary \ref{cor:gricomplete} by dispensing with redundant assumptions on $\Ical\in\Int(\catP)$.

\begin{theorem}[Completeness]\label{thm:mainthm} \contributionn 
Let $\catP$ be any poset. Then, 
\begin{enumerate}[label=(\roman*),leftmargin=*]
 \item The GRI over any $\Ical\subset \Int(\catP)$ is a complete invariant on the collection of all $\Ical$-decomposable $\catP$-modules.
 \label{item:mainthm1}
 \item The direct sum decomposition of any interval-decomposable $\catP$-module $M$ can be obtained via M\"obius inversion of $\rk_M$ over the subposet  
  $$\Ical_M:=\{I\in \Int(\catP): I\in \barc(M)\}\subset\Int(\catP).$$ 
 \label{item:mainthm2}
 \end{enumerate}
\end{theorem}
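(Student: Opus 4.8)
The plan is to reduce everything, for both parts, to the M\"obius inversion formula (Theorem \ref{thm:mobius}) applied over a carefully chosen \emph{locally finite} subposet, thereby bypassing the fact that $\Ical$ need not be locally finite and $\rk_M^\Ical$ need not be convolvable over $\Ical$. The key preliminary observation is this: if $M$ is interval-decomposable (and pointwise finite-dimensional, as always here), then the set $\Ical_M:=\barc(M)$ of distinct intervals occurring in $\barc(M)$, regarded as a subposet of $(\Int(\catP),\supset)$, has every principal ideal finite. Indeed, $\barc(M)$ is a \pfin\ multiset (Remark \ref{rem:pointwisely finite multiset}), which is equivalent to saying that every principal ideal of $\{I\in\Int(\catP):\mult_{\barc(M)}(I)\neq 0\}=\Ical_M$ is finite. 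Hence $\Ical_M$ is locally finite and, by Remark \ref{rem:convolvability}\ref{item:convolvability2}, every function $\Ical_M\to\kf$ — in particular $\rk_M^{\Ical_M}$ — is convolvable over $\Ical_M$.

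For part \ref{item:mainthm2}: let $m_M:\Ical_M\to\Zplus$ send $J$ to its multiplicity in $\barc(M)$. Proposition \ref{prop:gricontaining} gives $\rk_M(I)=\sum_{J\in\Ical_M,\,J\supset I}m_M(J)$ for every $I\in\Ical_M$, i.e. $\rk_M^{\Ical_M}=m_M\ast\zeta_{\Ical_M}$ in the incidence algebra of $(\Ical_M,\supset)$; since both sides are convolvable, right-multiplication by $\mu_{\Ical_M}$ gives $m_M=\rk_M^{\Ical_M}\ast\mu_{\Ical_M}=\dgm_M^{\Ical_M}$. Thus the multiset $\barc(M)$, and hence the direct-sum decomposition of $M$ (unique by Azumaya--Krull--Remak--Schmidt), is exactly recovered from the M\"obius inversion of $\rk_M$ over $\Ical_M$, as claimed.

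For part \ref{item:mainthm1}: let $M,N$ be $\Ical$-decomposable with $\rk_M^\Ical=\rk_N^\Ical$; then $\Ical_M,\Ical_N\subset\Ical$. I would first check that $\Ical_M\cup\Ical_N$, as a subposet of $(\Int(\catP),\supset)$, still has every principal ideal finite: given $I$ in this union, pick $p\in I$ (nonempty, since $I$ is an interval); any $J$ with $J\supset I$ contains $p$, and each of $\Ical_M,\Ical_N$ has at most $\dim M_p$, resp.\ $\dim N_p$, members containing $p$. So $\Ical_M\cup\Ical_N$ is locally finite with all functions convolvable. Extending $m_M,m_N$ by zero to $\Ical_M\cup\Ical_N$, Proposition \ref{prop:gricontaining} gives $m_M\ast\zeta_{\Ical_M\cup\Ical_N}=\rk_M^\Ical|_{\Ical_M\cup\Ical_N}=\rk_N^\Ical|_{\Ical_M\cup\Ical_N}=m_N\ast\zeta_{\Ical_M\cup\Ical_N}$; by uniqueness in Theorem \ref{thm:mobius} (equivalently, right-multiplication by $\mu_{\Ical_M\cup\Ical_N}$), $m_M=m_N$, so $\barc(M)=\barc(N)$ and $M\cong N$.

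The main (essentially the only) obstacle is legitimizing the use of M\"obius inversion despite the possible failure of local finiteness of $\Ical$: one must isolate the correct locally finite subposet — $\Ical_M$ in part \ref{item:mainthm2}, $\Ical_M\cup\Ical_N$ in part \ref{item:mainthm1} — and verify that the property ``every principal ideal is finite'' is preserved, which rests on the nonemptiness of intervals together with pointwise finite-dimensionality of $M$ and $N$. Once this is in place, the statements follow directly from Proposition \ref{prop:gricontaining} and Theorem \ref{thm:mobius}, with no recourse to convolvability of $\rk_M$ over $\Ical$ itself.
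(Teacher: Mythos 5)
Your proof is correct and rests on the same engine as the paper's: Proposition \ref{prop:gricontaining} combined with M\"obius inversion over a locally finite subposet of $(\Int(\catP),\supset)$, with local finiteness coming precisely from pointwise finite-dimensionality (Remark \ref{rem:pointwisely finite multiset}). For part \ref{item:mainthm2} the two arguments coincide. For part \ref{item:mainthm1} there is a small tactical difference: the paper observes that the identity $\rk_M^\Ical(I)=\sum_{J\supset I,\,J\in\Ical_M}\mult_M^{\Ical_M}(J)$ holds for \emph{all} $I\in\Ical$, so that $\rk_M^\Ical$ is M\"obius invertible over $\Ical_M$ in the sense of Definition \ref{def:mobius invertible}, and then cites the uniqueness statement Proposition \ref{prop:justifying the term mobius invertibility} to conclude that $\rk_M^\Ical$ already pins down $\mult_M^{\Ical_M}$; thus \ref{item:mainthm1} falls out of the same paragraph as \ref{item:mainthm2} with no further construction. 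You instead form the union $\Ical_M\cup\Ical_N$, check that its principal ideals are finite (which the paper's Proposition \ref{prop:convolvable over union} already guarantees), and apply the automorphism $\ast\zeta_{\Ical_M\cup\Ical_N}$ there. This is equally valid and somewhat more self-contained, trading one appeal to the paper's pre-established uniqueness result for a small amount of explicit bookkeeping on the union.
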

The statement given in item \ref{item:mainthm1} is weaker than Proposition \ref{prop:known discriminating power of rk}. 
However, the proof of item \ref{item:mainthm1} given below is not only simpler than that of Proposition \ref{prop:known discriminating power of rk}, 
but also 
simultaneously proves item \ref{item:mainthm2}, that is not implied by Proposition \ref{prop:known discriminating power of rk}.

\begin{proof}[Proof of Theorem {\ref{thm:mainthm}}]
Consider the function $\mult_M^{\Ical_M}:\Ical_M\rightarrow \Zplus$ sending each $I\in \Ical$ to the multiplicity of $I$ in $\barc(M)$. 
  Proposition \ref{prop:gricontaining} implies
\[\rank_M^{\Ical}(I)=\sum_{\substack{J\supset I\\ J\in \Ical_M}}\mult_M^{\Ical_M}(J) \ \ \mbox{ for all $I\in \Ical$.}\]
By Definition \ref{def:mobius invertible}, $\rank_M^\Ical$ is M\"obius invertible over $\Ical_M$. By Proposition \ref{prop:justifying the term mobius invertibility}, the function $\mult_M^{\Ical_M}$ equals $\dgm_M^{\Ical_M}$, the M\"obius inversion of $\rk_M^{\Ical_M}$. Since $\rk_M^{\Ical_M}$ is the restriction of $\rk_M^{\Ical}$ to $\Ical_M$, we have proved that $\rank_M^{\Ical}$ uniquely determines $\mult_M^{\Ical_M}$, Item \ref{item:mainthm1}, as well as Item \ref{item:mainthm2}.  
\end{proof}

We now generalize the well-known result that the RI is a complete invariant on the collection of \emph{rectangle-decomposable} $\Z^2$-modules \cite[Theorem 2.1]{botnan2022rectangle} to the case of $\Z^d$- and $\R^d$-modules for $d\geq 2$:\footnote{{When $\catP=\R^d$ or $\Z^d$ and $\mathcal{I}=\seg(\catP)$, $\Ical$-decomposable $\catP$-modules are often called \emph{rectangle-decomposable}.}}  

\begin{corollary}\label{cor:rectangle decomposition}\contributionn
The RI is a complete invariant for a rectangle-decomposable $\R^d$- or $\Z^d$-module $M$ for any dimension  $d$.  
Furthermore, the multiplicity of each segment $I$ in the barcode of $M$ is the M\"obius inversion of $\rk_M$ (over some $\Ical\subset \Int(\catP)$ with $I\in\Ical$) evaluated at $I$.
\end{corollary}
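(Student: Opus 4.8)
The plan is to obtain this corollary as a direct specialization of Theorem~\ref{thm:mainthm}, using only the observation that for $\catP=\R^d$ or $\Z^d$ the segments $\seg(\catP)$ are exactly the closed boxes $\prod_{i=1}^d[p_i,q_i]$, so that ``rectangle-decomposable'' coincides with ``$\seg(\catP)$-decomposable'' in the paper's terminology.

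Granting this, the completeness assertion is immediate: the RI of $M$ is by definition $\rk_M^{\seg(\catP)}$ (Definition~\ref{def:generalized rank invariant}), so Theorem~\ref{thm:mainthm}~\ref{item:mainthm1} applied with $\Ical:=\seg(\catP)$ says precisely that the RI is a complete invariant on the collection of $\seg(\catP)$-decomposable, i.e.\ rectangle-decomposable, $\catP$-modules.

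For the ``furthermore'' part, I would fix a segment $I\in\seg(\catP)$ and set $\Ical_M:=\{J\in\Int(\catP):J\in\barc(M)\}$ (as in Theorem~\ref{thm:mainthm}~\ref{item:mainthm2}); since $M$ is rectangle-decomposable, $\Ical_M\subset\seg(\catP)$. Put $\Ical:=\Ical_M\cup\{I\}\subset\seg(\catP)\subset\Int(\catP)$, so that $I\in\Ical$ and $M$ is $\Ical$-decomposable. The first thing to check is that $(\Ical,\supset)$ has every principal ideal finite: for $J\in\Ical$, Proposition~\ref{prop:gricontaining} gives $\rk_M(J)=\sum_{K\in\barc(M),\,K\supset J}\mult_M(K)$, which is finite since $\rk_M(J)\le\dim M_p<\infty$ for any $p\in J$ (Remark~\ref{rem:generalized rank is finite for pfd modules}); hence $\{K\in\Ical:K\supset J\}$ is finite, so $\Ical$ is locally finite and $\rk_M^\Ical$ is convolvable over $\Ical$. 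Now, with $\mult_M^\Ical:\Ical\to\Zplus$ the barcode-multiplicity function, Proposition~\ref{prop:gricontaining} together with $\barc(M)\subset\Ical$ yields $\rk_M^\Ical(J)=\sum_{K\supset J,\,K\in\Ical}\mult_M^\Ical(K)$ for all $J\in\Ical$, and the M\"obius inversion formula (Theorem~\ref{thm:mobius}) over $(\Ical,\supset)$ gives $\dgm_M^\Ical=\rk_M^\Ical\ast\mu_\Ical=\mult_M^\Ical$. Evaluating at $I$ shows that the multiplicity of $I$ in $\barc(M)$ equals $\dgm_M^\Ical(I)$, i.e.\ the M\"obius inversion of $\rk_M$ over $\Ical$ evaluated at $I$ (this value being $0$ exactly when $I\notin\barc(M)$).

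I do not expect any genuinely hard step; the result is essentially bookkeeping on top of Theorem~\ref{thm:mainthm}. The only subtlety worth flagging is why one cannot simply M\"obius-invert over all of $\seg(\catP)$: that poset is not locally finite (for instance $\rk_{\kf_{\R^d}}\equiv 1$ on $\seg(\R^d)$), so the argument must be run over the locally finite subset $\Ical_M\cup\{I\}$ --- and it is precisely pointwise finite-dimensionality of $M$ that makes this subset locally finite.
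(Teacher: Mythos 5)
Your proof is correct and takes essentially the same route as the paper, which simply specializes Theorem~\ref{thm:mainthm} to $\Ical=\seg(\catP)$ for the completeness claim and invokes Theorem~\ref{thm:mainthm}~\ref{item:mainthm2} for the ``furthermore'' claim. The only wrinkle you add is replacing $\Ical_M$ by $\Ical_M\cup\{I\}$ so that the Möbius-inversion statement also covers segments $I\notin\barc(M)$ (where the value is $0$); the paper's terse proof, citing item \ref{item:mainthm2} directly, implicitly reads the corollary as ranging over $I\in\barc(M)$, and your augmentation is the right way to handle the literal phrasing ``some $\Ical$ with $I\in\Ical$'' for arbitrary $I$ without changing anything essential.
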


\begin{proof}
\ok{Theorem \ref{thm:mainthm} \ref{item:mainthm1} (or Proposition \ref{prop:known discriminating power of rk}) implies that the RI is a complete invariant for rectangle-decomposable $\R^d$- or $\Z^d$-modules for any dimension  $d$. Theorem \ref{thm:mainthm} \ref{item:mainthm2} implies that the multiplicity of each $I\in \barc(M)$ can be obtained via M\"obius inversion of $\rk_M$ over \[\{I\in \Int(\catP): I\in \barc(M)\}.\qedhere\]
}
\end{proof}

\begin{remark}\label{rem:relationship with known result} We clarify the relationship between Proposition \ref{prop:known discriminating power of rk} and Theorem \ref{thm:mainthm}. 
\begin{enumerate}[label=(\roman*),leftmargin=*]
    \item If every principal ideal of $\Ical$ is finite, then the limit completion $\widehat{\Ical}$ of $\Ical$  (Equation (\ref{eq:limit intervals})) is the same as $\Ical$, and thus Proposition \ref{prop:known discriminating power of rk} reduces to Item \ref{item:mainthm1} of Theorem \ref{thm:mainthm}.
    \item If $\Ical\subsetneq \widehat{\Ical}$,
by Remark \ref{rem:pointwisely finite multiset}, $\mult_{M}^{\widehat{\Ical}}$ has the support 
$\Ical'\subsetneq \widehat{\Ical}$ for which every principal ideal is finite (and thus locally finite). Then, the GRI over $\Ical'$ is a complete invariant of $M$ (note: $\Ical'$ is not necessarily contained in $\Ical$), \ok{and the barcode of $M$ can be obtained from M\"obius inversion of the GRI over $\Ical'$.} 
\end{enumerate}
\end{remark}
The statement of 
Corollary \ref{cor:gricomplete} is optimal in the following sense.
\begin{theorem}
[Optimality of Corollary  \ref{cor:gricomplete}]
\label{thm:tightness} 
\contributionn Let $\mathcal{I}\subset\Int(\catP)$ where every principal ideal is finite.
Let $\mathfrak{L}$ be any collection of indecomposable $\catP$-modules properly containing $\{\kf_I:I\in \Ical\}$.\footnote{The collection $\mathfrak{L}$ may include non-interval modules.} 
Then, the GRI over $\Ical$ is not a complete invariant on the collection of $\mathfrak{L}$-decomposable modules. 
\end{theorem}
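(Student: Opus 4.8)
The plan is to produce, for any $\Ical$ and $\mathfrak{L}$ as in the statement, two \emph{non-isomorphic} $\mathfrak{L}$-decomposable $\catP$-modules having the same GRI over $\Ical$. Since $\mathfrak{L}$ \emph{properly} contains $\{\kf_I : I\in\Ical\}$, fix an indecomposable $N\in\mathfrak{L}$ with $N\not\cong\kf_I$ for every $I\in\Ical$. Because every principal ideal of $(\Ical,\supset)$ is finite, $\Ical$ is locally finite and every function $\Ical\to\kf$ is convolvable (Remark \ref{rem:convolvability} \ref{item:convolvability2}); in particular $\rk_N^\Ical$ is convolvable over $(\Ical,\supset)$.

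Next I would apply Theorem \ref{thm:BOO's setup is more general} to the convolvable function $r:=\rk_N^\Ical$ on the locally finite poset $(\Ical,\supset)$, obtaining $\Ical$-decomposable $\catP$-modules $K^{+}$ and $K^{-}$, each a direct sum of interval modules $\kf_J$ with $J\in\Ical$, such that $\rk_N^\Ical=\rk_{K^{+}}^\Ical-\rk_{K^{-}}^\Ical$. Here I should note that the defining multisets of $K^{\pm}$ have support contained in $\Ical$, every principal ideal of which is finite, so $K^{\pm}$ are genuinely pointwisely finite-dimensional $\catP$-modules by Remark \ref{rem:pointwisely finite multiset}. As $\{\kf_I:I\in\Ical\}\subset\mathfrak{L}$, both $K^{+}$ and $K^{-}$ are $\mathfrak{L}$-decomposable, hence so are $A:=N\oplus K^{-}$ and $B:=K^{+}$.

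The key step is the identity $\rk_A^\Ical=\rk_B^\Ical$. By additivity of the GRI for binary direct sums (Remark \ref{rem:properties of rank invariant} \ref{item:additivity}) followed by the rank-decomposition identity above, $\rk_A^\Ical=\rk_N^\Ical+\rk_{K^{-}}^\Ical=\rk_{K^{+}}^\Ical=\rk_B^\Ical$. It then remains to check $A\not\cong B$: by the Azumaya-Krull-Remak-Schmidt theorem, $N$ occurs as an indecomposable summand of $A$, whereas every indecomposable summand of $B=K^{+}$ is an interval module $\kf_J$ with $J\in\Ical$, none isomorphic to $N$; therefore $\barc(A)\neq\barc(B)$ and $A\not\cong B$. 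This exhibits two non-isomorphic $\mathfrak{L}$-decomposable modules with identical GRI over $\Ical$, so the GRI over $\Ical$ is not complete on the class of $\mathfrak{L}$-decomposable modules, as claimed.

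I do not expect a genuine obstacle here; the construction is uniform and even the degenerate situations (e.g.\ $K^{+}=0$, or $\rk_N^\Ical\equiv 0$) are covered, since the separating feature is only that $N$ lies in $\barc(A)$ but not in $\barc(B)$. The few points deserving a sentence of care are: the convolvability of $\rk_N^\Ical$ (immediate from finiteness of principal ideals of $\Ical$), the fact that $K^{\pm}$ are bona fide $\cvec$-valued functors (Remark \ref{rem:pointwisely finite multiset}), and the use of \emph{binary} additivity of $\rk$ rather than additivity over the possibly infinite families of summands inside $K^{\pm}$ — all of which follow directly from the hypotheses and cited results.
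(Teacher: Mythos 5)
Your argument is, up to rewriting, the same as the paper's: both fix an indecomposable $M\in\mathfrak{L}$ that is not an interval module supported on $\Ical$, take the M\"obius inversion $\dgm_M^\Ical$ of $\rk_M^\Ical$ (which is exactly the family $(a_I)$ produced by Theorem~\ref{thm:BOO's setup is more general} applied to $r=\rk_M^\Ical$), split it into positive and negative parts, and use the two resulting $\Ical$-decomposable modules (your $K^{+}$ and $K^{-}$; the paper's $N$ and the summand added to $M$ to form $N'$) to produce the non-isomorphic pair with identical $\Ical$-GRI. Routing the construction through Theorem~\ref{thm:BOO's setup is more general} rather than directly through M\"obius inversion is a cosmetic, not substantive, difference, and your check that the witnessing pair is non-isomorphic (via Krull--Remak--Schmidt) is the same as the paper's.

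There is, however, a genuine gap at the point you flag but do not quite close: the pointwise finite-dimensionality of $K^{\pm}$. You appeal to the implication (iv)$\Rightarrow$(i) of Remark~\ref{rem:pointwisely finite multiset}, deducing that the multisets defining $K^{\pm}$ are pointwisely finite because their supports lie in $\Ical$ and every principal ideal of $(\Ical,\supset)$ is finite. That implication does not hold in general. Take $\catP$ to be an infinite fan with minimum $p_0$ and pairwise incomparable elements $q_1,q_2,\dots$ above it, and set $\Ical:=\{\{p_0,q_n\}:n\geq 1\}$. Then $(\Ical,\supset)$ is an antichain, so every principal ideal is a singleton and the hypothesis of Theorem~\ref{thm:tightness} is met; yet every $I\in\Ical$ contains $p_0$. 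With $\mathfrak{L}=\{\kf_I:I\in\Ical\}\cup\{\kf_\catP\}$ and $M=\kf_\catP$, one finds $\dgm_M^\Ical\equiv 1$, hence $K^{+}=\bigoplus_{n}\kf_{\{p_0,q_n\}}$, which has infinite dimension at $p_0$ and is therefore not a $\catP$-module. Indeed, for this choice of $\catP,\Ical,\mathfrak{L}$ the conclusion of Theorem~\ref{thm:tightness} itself fails: any $\mathfrak{L}$-decomposable module has the form $\bigoplus_n(\kf_{\{p_0,q_n\}})^{a_n}\oplus(\kf_\catP)^{b}$ with only finitely many $a_n\neq 0$, and its $\Ical$-GRI sends $\{p_0,q_n\}\mapsto a_n+b$, from which $b$ (the eventually constant value) and all the $a_n$ are recoverable, so the $\Ical$-GRI is complete on this class. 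To be fair, this gap is present in the paper's own proof, which never verifies that $N$ and $N'$ are pointwisely finite-dimensional; you get credit for noticing the issue even though the cited remark does not resolve it. Repairing the statement seems to require a stronger hypothesis on $\Ical$ — for instance, that every $p\in\catP$ lie in only finitely many members of $\Ical$ — under which both your proof and the paper's go through.
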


\begin{proof}
It suffices to find a non-isomorphic pair $N,N'$ of $\Lfrak$-decomposable $P$-modules that have the same GRI over $\Ical$. 
Let $M\in \Lfrak$ such that it is not isomorphic to $\kf_I$ for any $I\in \Ical$. 
Consider $\dgm_M^{\Ical}$, the GPD of $M$ over $\Ical$, which exists by the assumption that every principal ideal of $\Ical$ is finite. 
Now consider the two $\catP$-modules
\[N:=\bigoplus_{\substack{ I\in \Ical \\\dgm_M^{\Ical}(I)>0}} (\kf_I)^{\dgm^\Ical_M(I)}\ \ \ \ \mbox{and} \ \ \ \ N':=M\oplus \left(\bigoplus_{\substack{ I\in \Ical \\\dgm_M^{\Ical}(I)<0}} (\kf_I)^{-\dgm_M^\Ical(I)}\right), \]
where $(\kf_I)^{n}$ stands for the direct sum of $n$ copies of $\kf_I$.
While $N$ is $\Ical$-decomposable, $N'$ is not $\Ical$-decomposable as it has $M$ as a summand, and thus $N\not\cong N'$. 
By additivity of $\dgm_M^\Ical$ (cf. Remark \ref{rem:additivity of GPD}), the GPDs of $N$ and $N'$, $\dgm_N^\Ical, \dgm_{N'}^\Ical:\Ical\to \Z$ are both equal as functions to the map $(\dgm_M^\Ical)_+:\Ical\to\Z$ given by $I\mapsto \max(\dgm_M^\Ical(I),0)$. 
Therefore, $\rank_N^\Ical$ coincides with $\rank_{N'}^\Ical$, both of which are equal to $(\dgm_M^\Ical)_+\ast \zeta_\Ical$.
\end{proof}

\begin{corollary}\label{cor:Ical and Jcal}
    \contributionn  
    Let $\mathcal{I}\subset \Int(\catP)$ be such that every principal ideal is finite. Then, for any $\Jcal\subset \Int(\catP)$ with $\Ical\subsetneq \Jcal$, the GRI over $\Ical$ is \emph{not} a complete invariant on the collection of $\Jcal$-decomposable \pmo{}s.
\end{corollary}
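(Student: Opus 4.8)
The plan is to reduce this corollary to Theorem \ref{thm:tightness} by choosing an appropriate collection $\mathfrak{L}$ of indecomposable modules. The key observation is that the hypothesis $\Ical \subsetneq \Jcal$ provides an interval $I_0 \in \Jcal \setminus \Ical$, and the interval module $\kf_{I_0}$ is a $\Jcal$-decomposable $\catP$-module that is not isomorphic to $\kf_I$ for any $I \in \Ical$ (since interval modules supported on distinct intervals are non-isomorphic, by uniqueness of the barcode). Thus I would set $\mathfrak{L} := \{\kf_J : J \in \Jcal\}$; this collection consists of indecomposable $\catP$-modules (every interval module is indecomposable, cf. \cite[Proposition 2.2]{botnan2018algebraic}) and it properly contains $\{\kf_I : I \in \Ical\}$ precisely because $I_0 \in \Jcal\setminus\Ical$ yields $\kf_{I_0} \in \mathfrak{L}\setminus\{\kf_I : I\in\Ical\}$.

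Next I would invoke Theorem \ref{thm:tightness} directly: since every principal ideal of $\Ical$ is finite and $\mathfrak{L}$ properly contains $\{\kf_I : I \in \Ical\}$, the GRI over $\Ical$ is not a complete invariant on the collection of $\mathfrak{L}$-decomposable modules. Finally I would note that a $\mathfrak{L}$-decomposable $\catP$-module is exactly a $\Jcal$-decomposable $\catP$-module, by the definition of $\Jcal$-decomposability (every indecomposable summand is an interval module supported on an element of $\Jcal$). Hence the GRI over $\Ical$ is not complete on $\Jcal$-decomposable modules, as claimed. If one wants an explicit witnessing pair, it is produced by the proof of Theorem \ref{thm:tightness} applied with $M = \kf_{I_0}$: since $I_0 \notin \Ical$, the GPD $\dgm_{\kf_{I_0}}^{\Ical}$ need not be concentrated at a single interval, and the two modules $N$ and $N' = \kf_{I_0}\oplus(\cdots)$ built from its positive and negative parts are non-isomorphic but share the same GRI over $\Ical$.

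I do not anticipate a genuine obstacle here; the corollary is essentially a repackaging of Theorem \ref{thm:tightness}. The only point requiring a sentence of care is the verification that $\mathfrak{L} = \{\kf_J : J\in\Jcal\}$ genuinely \emph{properly} contains $\{\kf_I : I\in\Ical\}$ rather than merely containing it — this is where the strictness $\Ical \subsetneq \Jcal$ is used, together with the fact that $\kf_J \cong \kf_{J'}$ implies $J = J'$. Everything else is definitional unwinding.
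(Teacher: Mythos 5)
Your proposal is correct and is exactly the intended argument: the paper states the corollary without a separate proof precisely because it is the special case of Theorem \ref{thm:tightness} obtained by taking $\mathfrak{L} = \{\kf_J : J \in \Jcal\}$, and your verification that $\mathfrak{L}$ properly contains $\{\kf_I : I \in \Ical\}$ (using $\kf_J \cong \kf_{J'} \iff J = J'$) and that $\mathfrak{L}$-decomposability coincides with $\Jcal$-decomposability is all that is needed. For what it is worth, the paper later records an alternative proof in the case that $\Ical$ and $\Jcal$ are both finite, via the rank--nullity theorem applied to the restriction map $\pi_\Jcal^\Ical : \Q^\Jcal \to \Q^\Ical$ composed with the automorphism $\ast\zeta_\Jcal$; your route is the more general one and requires no finiteness of $\Jcal$.
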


Here is an application of Corollary \ref{cor:Ical and Jcal}: We claim that as $m,n\in \N$ increase, the discriminating power of the GRI over $\Int_{m,n}(\R^d)$ (cf. Equation (\ref{eq:intmn}))  on $\R^d$-modules  \emph{strictly} increases:
\begin{corollary}\label{cor:intmn discriminating power}
If $(m,n)<(m',n')$ in $\N^2$, then there is a pair of $\R^d$-modules that are distinguished by their GRIs over $\Int_{m',n'}(\R^d)$, but are confounded  by their GRIs over $\Int_{m,n}(\R^d)$. 
\end{corollary}
For any natural number $n$, let $[n]:=\{1,\ldots,n\}$ be equipped with the canonical order.\label{[n]}
\begin{proof}Let $\ell\in \N$ with $\ell>m',n'$. By Corollary \ref{cor:Ical and Jcal}, there exists a pair of $[\ell]^d$-modules $M$ and $N$ that are distinguished by their GRIs over $\Int_{m',n'}^\mathrm{cc}([\ell]^d)$ but not by $\Int_{m,n}^\mathrm{cc}([\ell]^d)$. 
Let $\iota:[\ell]^d\hookrightarrow \R^d$ be the canonical inclusion and let $\lfloor - \rfloor_{\iota}:\R^d\rightarrow [\ell]^d\cup\{-\infty\}$ 
send each $p\in \R^d$ to the maximal $q\in [\ell]^d\cup\{-\infty\}$ such that $q\leq p$ (where we declare that $-\infty<p$ for all $p\in \R^d$).
Let $M'$ and $N'$ respectively be the $\R^d$-modules given as the left Kan extensions of $M$ and $N$ along $\iota$, i.e. for all $p\leq q$ in $\R^d$,
\[M'_p=M_{\lfloor p \rfloor_\iota } \ \ \mbox{and}\ \  \varphi_{M'}(p, q)=\varphi_{M}({\lfloor p \rfloor_\iota, \lfloor q \rfloor_\iota }),\]
where $M_{-\infty}$ is defined to be $0$. Define $N'$ similarly. Observe that the GRIs of $M'$ and $N'$ over $\Int_{m,n}(\R^d)$ coincide as, for every $I\in \Int_{m,n}(\R^d)$, we have $\lfloor I\rfloor_\iota\in \Int_{m,n}^\mathrm{cc}([\ell]^d\cup \{-\infty\})$ so that  \[\rk_{M'}(I)=\rk_M(\lfloor I\rfloor_\iota)=\rk_N(\lfloor I\rfloor_\iota)=\rk_{N'}(I).\] On the other hand, we claim that the GRIs of $M'$ and $N'$ over $\Int_{m',n'}(\R^d)$ do not coincide. To see this, pick any $K\in  \Int_{m,n}^\mathrm{cc}([\ell]^d)$ such that $\rk_M(K)\neq \rk_N(K)$. Then we pick any element $J\in \Int_{m',n'}(\R^d)$ such that $\lfloor J \rfloor_\iota=K$, which implies
$\rk_{M'}(J)\neq \rk_{N'}(J).$
\end{proof}

Next, we investigate the extent to which the GRI over $\Ical$ fails to be complete on the collection of $\Jcal$-decomposable modules. 
We will see that the failure of completeness of the GRI over $\Ical$ can be quantified via the dimension of the kernel of a linear map, which equals the cardinality of $\Jcal\setminus\Ical$.

For $I\in  \Int(\catP)$, let $\mathbf{1}_I:\Int(\catP)\rightarrow \{0,1\}$ be the indicator supported on $I$. Then, by Remark \ref{rem:convolvability} \ref{item:convolvability1}, $\mathbf{1}_I$ is convolvable over \emph{any} locally finite $\Tcal\subset \Int(\catP)$. For simplicity, we denote the M\"obius inversion of the restriction $\mathbf{1}_I|_{\Tcal}$ over $\Tcal$ as $\mathbf{1}_I\ast\mu_{\Tcal}$.

\begin{theorem}\label{thm:coincidence of rank invariants} \contributionn\  
     Let $\Ical\subset \Jcal\subset \Int(\catP)$. Assume that the GRIs of $\catP$-modules $M$ and $N$ are convolvable over $\Jcal$. 
     Then, $\rank_M^\Jcal$ and $\rank_N^\Jcal$ coincide on $\Ical$ if and only if $\dgm_M^\Jcal-\dgm_N^\Jcal$ is a linear combination of the M\"obius inverses over $\Jcal$ of the indicators $\mathbf{1}_I$ for $I\in \Jcal\setminus\Ical$. 
\end{theorem}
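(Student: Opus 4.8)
The plan is to work entirely in the finite-dimensional world of M\"obius inversion over locally finite posets and exploit the fact that, on such posets, the zeta-convolution map $\ast\zeta$ is an automorphism whose inverse is $\ast\mu$. First I would choose a convenient common locally finite poset over which everything is simultaneously convolvable. Since $\rk_M^\Jcal$ and $\rk_N^\Jcal$ are assumed convolvable over $\Jcal$, the supports of $\dgm_M^\Jcal$ and $\dgm_N^\Jcal$ are contained in $\Jcal$, and I would let $\Tcal\subset\Jcal$ be the union of these two supports together with $\Jcal\setminus\Ical$ (or, if one prefers to keep things tidy, simply argue directly on $\Jcal$ using convolvability; both $\dgm_M^\Jcal$, $\dgm_N^\Jcal$, and each $\mathbf{1}_I\ast\mu_\Jcal$ are convolvable over $\Jcal$ by Proposition \ref{prop:properties of GPD} \ref{item:convolvability of GPD} and Remark \ref{rem:convolvability} \ref{item:convolvability1}). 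The key identity to keep in front is Proposition \ref{prop:properties of GPD} \ref{item:universal property of GPD}: $\rk_M^\Jcal = \dgm_M^\Jcal\ast\zeta_\Jcal$ and likewise for $N$, and also $\mathbf{1}_I = (\mathbf{1}_I\ast\mu_\Jcal)\ast\zeta_\Jcal$, so that $\rk_{\kf_I}^\Jcal = \mathbf{1}_{I\supset}$ restricted to $\Jcal$ equals $(\mathbf{1}_I\ast\mu_\Jcal)\ast\zeta_\Jcal$ — here using Remark \ref{rem:properties of rank invariant} \ref{item:the GRI of an interval module}.

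Next I would set $g := \dgm_M^\Jcal - \dgm_N^\Jcal : \Jcal\to\Z$, which is convolvable over $\Jcal$ by Remark \ref{rem:convolvability} \ref{item:convolvability3}, and observe that by linearity of $\ast\zeta_\Jcal$,
\[
\rk_M^\Jcal(J) - \rk_N^\Jcal(J) = (g\ast\zeta_\Jcal)(J) = \sum_{\substack{I\in\Jcal\\ I\supset J}} g(I)\qquad\text{for all }J\in\Jcal.
\]
Hence $\rk_M^\Jcal$ and $\rk_N^\Jcal$ agree on $\Ical$ if and only if $(g\ast\zeta_\Jcal)(J)=0$ for all $J\in\Ical$, i.e. if and only if $g\ast\zeta_\Jcal$ is a function supported on $\Jcal\setminus\Ical$. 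Now comes the linear-algebra heart of the argument: the functions $\{\mathbf{1}_I : I\in\Jcal\}$ form the obvious "standard basis" of (convolvable) $\Z$-valued functions on $\Jcal$, and since $\ast\mu_\Jcal$ is the inverse of the automorphism $\ast\zeta_\Jcal$, the condition "$g\ast\zeta_\Jcal$ is a $\Z$-linear combination of $\{\mathbf{1}_I : I\in\Jcal\setminus\Ical\}$" is equivalent, upon applying $\ast\mu_\Jcal$ to both sides, to "$g$ is a $\Z$-linear combination of $\{\mathbf{1}_I\ast\mu_\Jcal : I\in\Jcal\setminus\Ical\}$". That is exactly the claimed statement. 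One direction (if $g$ is such a combination then the ranks agree on $\Ical$) is just running this computation backwards: a combination $\sum_{I\in\Jcal\setminus\Ical} c_I(\mathbf{1}_I\ast\mu_\Jcal)$ convolved with $\zeta_\Jcal$ gives $\sum c_I\mathbf{1}_I$, which vanishes on $\Ical$.

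The main obstacle — and the only place one must be careful — is the bookkeeping around convolvability and the legitimacy of applying $\ast\mu_\Jcal$ and $\ast\zeta_\Jcal$ to these functions when $\Jcal$ is merely locally finite rather than having finite principal ideals. Concretely, I must check: (a) $g\ast\zeta_\Jcal$ is well-defined, which holds because $g$ is convolvable over $\Jcal$; (b) if $g\ast\zeta_\Jcal$ is supported on $\Jcal\setminus\Ical$, then it is itself convolvable (automatic once we know its M\"obius inversion $g$ is convolvable and we are simply re-expressing the pair), so that $\ast\mu_\Jcal$ may be applied and Theorem \ref{thm:mobius} inverts it cleanly; and (c) the "linear combination" appearing in the statement is a \emph{finite} one — this follows because $g\ast\zeta_\Jcal$, being convolvable, has the property that for each $J$ only finitely many $I\supset J$ carry nonzero value, but to get global finiteness of the combination one uses that $g\ast\zeta_\Jcal$ is the difference of two genuine rank invariants whose relevant support structure is controlled; in the cleanest writeup I would instead phrase "linear combination" as "possibly infinite but locally finite combination" if the paper's convention for $\Int(\catP)$-indexed sums already permits this (it does, cf. the convolvable-sum convention used throughout Section \ref{sec:GPD and Rk decomposition}), so that no genuine obstruction remains. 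Apart from this hygiene, the proof is a two-line consequence of the fact that $\ast\zeta_\Jcal$ and $\ast\mu_\Jcal$ are mutually inverse and carry $\{\mathbf{1}_I\}$ to $\{\rk_{\kf_I}\}$ and back.
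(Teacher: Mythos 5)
Your proposal is correct and follows essentially the same route as the paper's own proof: both arguments hinge on the observation that the restriction map $\pi_{\Jcal}^{\Ical}:\Q^\Jcal\to\Q^\Ical$ has kernel equal to the functions supported on $\Jcal\setminus\Ical$ (spanned by $\{\mathbf{1}_I:I\in\Jcal\setminus\Ical\}$), and then transport that kernel across the mutually inverse maps $\ast\zeta_\Jcal$ and $\ast\mu_\Jcal$, which carry $\dgm^\Jcal$ to $\rk^\Jcal$ and $\mathbf{1}_I$ to $\mathbf{1}_I\ast\mu_\Jcal$; you just run the chain of equivalences in the opposite direction to the paper (starting from $g:=\dgm_M^\Jcal-\dgm_N^\Jcal$ and convolving with $\zeta_\Jcal$, where the paper starts from agreement of the ranks on $\Ical$ and convolves with $\mu_\Jcal$). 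Your extra care about convolvability and the local-finiteness of the sums is, if anything, more scrupulous than the paper's published argument, which simply treats $\ast\zeta_\Jcal$ as an automorphism of $\Q^\Jcal$ without dwelling on the domain issues you flag.
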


We defer the proof to the end of this section.  

Any pair of non-isomorphic interval-decomposable $\catP$-modules $M$ and $N$ that have the same GRI over $\Ical\subset \Int(\catP)$ is called \textbf{$\Ical$-minimal} if there are no proper nonzero summands $M',N'$ of $M,N$ respectively such that $M'$ and $N'$ have the same GRI over $\Ical$.

\begin{corollary}\label{cor:minimal pairs} \contributionn\  Let $\Ical\subset \Jcal\subset \Int(\catP)$ and assume that every principal ideal of $\Jcal$ is finite.
Then, there exist $\abs{\Jcal\setminus\Ical}$ {distinct $\Ical$-}minimal non-isomorphic pairs of $\Jcal$-decomposable $\catP$-modules whose GRIs coincide on $\Ical$.
\end{corollary}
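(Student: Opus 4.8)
The plan is to produce, for each $I\in\Jcal\setminus\Ical$, an explicit witness pair obtained by M\"obius inversion of the indicator $\mathbf 1_I$ over $(\Jcal,\supset)$, and then to verify that these $\abs{\Jcal\setminus\Ical}$ pairs are well defined, pairwise distinct, non-isomorphic, satisfy the required coincidence, and are $\Ical$-minimal. Fix $I\in\Jcal\setminus\Ical$ and set $g_I:=\mathbf 1_I\ast\mu_\Jcal$, with positive and negative parts $(g_I)_+$ and $(g_I)_-$; define $M_I:=\kf_{(g_I)_+}$ and $N_I:=\kf_{(g_I)_-}$. Because $g_I$ is a function on $\Jcal$, the supports of $(g_I)_\pm$ sit inside $\Jcal$, so as subposets of $(\Int(\catP),\supset)$ their principal ideals are contained in those of $\Jcal$ and hence finite; by Remark~\ref{rem:pointwisely finite multiset} the multisets $(g_I)_\pm$ are pointwisely finite, so $M_I$ and $N_I$ are genuine (pointwise finite-dimensional) $\catP$-modules, $\Jcal$-decomposable by construction, and the same finiteness makes $\rk_{M_I}^\Jcal$ and $\rk_{N_I}^\Jcal$ convolvable over $\Jcal$. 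Applying Theorem~\ref{thm:gricomplete} with the ambient collection there equal to $\Jcal$ gives $\dgm_{M_I}^\Jcal=(g_I)_+$ and $\dgm_{N_I}^\Jcal=(g_I)_-$, so $\dgm_{M_I}^\Jcal-\dgm_{N_I}^\Jcal=g_I=\mathbf 1_I\ast\mu_\Jcal$.

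Since $I\in\Jcal\setminus\Ical$, this difference is (trivially) a linear combination of the M\"obius inverses over $\Jcal$ of the indicators $\mathbf 1_{I'}$ with $I'\in\Jcal\setminus\Ical$, so Theorem~\ref{thm:coincidence of rank invariants} yields that $\rk_{M_I}^\Jcal$ and $\rk_{N_I}^\Jcal$ coincide on $\Ical$. As $g_I(I)=\mu_\Jcal(I,I)=1>0$, the interval $I$ lies in $\barc(M_I)$ but not in $\barc(N_I)$, hence $M_I\not\cong N_I$. The same observation gives distinctness of the pairs: $\supp(g_{I'})\subseteq\{J\in\Jcal:J\subseteq I'\}$, so for $I\neq I'$ at least one of $I\not\subseteq I'$ or $I'\not\subseteq I$ holds, whence $I\notin\barc(M_{I'})$ or $I'\notin\barc(M_I)$; either way $(M_I,N_I)$ and $(M_{I'},N_{I'})$ are non-isomorphic pairs. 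Modulo $\Ical$-minimality this already produces $\abs{\Jcal\setminus\Ical}$ distinct non-isomorphic pairs of $\Jcal$-decomposable modules whose GRIs agree on $\Ical$, refining Corollary~\ref{cor:Ical and Jcal}.

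The remaining, and in my view principal, obstacle is $\Ical$-minimality. Suppose $M'$ and $N'$ are nonzero proper summands of $M_I$ and $N_I$ with $\rk_{M'}^\Jcal$ and $\rk_{N'}^\Jcal$ agreeing on $\Ical$; I want a contradiction. By Theorem~\ref{thm:coincidence of rank invariants}, $h:=\dgm_{M'}^\Jcal-\dgm_{N'}^\Jcal$ lies in $V:=\lin\{\mathbf 1_{I'}\ast\mu_\Jcal:I'\in\Jcal\setminus\Ical\}$. As $\dgm_{M'}^\Jcal$ is a sub-multiset of $(g_I)_+$ and $\dgm_{N'}^\Jcal$ a sub-multiset of $(g_I)_-$, with disjoint supports, $h$ is \emph{dominated} by $g_I$ (same sign as $g_I$ wherever nonzero, no larger in absolute value, supported in $\{J\in\Jcal:J\subseteq I\}$), and the complementary summands show $g_I-h$ is likewise dominated by $g_I$. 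Using $h\ast\zeta_\Jcal=\rk_{M'}^\Jcal-\rk_{N'}^\Jcal$ and that $I$ is the only member of $\supp(g_I)$ that contains $I$, one computes $(h\ast\zeta_\Jcal)(I)=h(I)=\dgm_{M'}^\Jcal(I)\in\{0,1\}$; after possibly swapping $(M',N')$ for the complementary pair we may assume $h(I)=0$, i.e.\ $I\notin\barc(M')$, so $\supp(h)\subseteq\{J\in\Jcal:J\subsetneq I\}$. From here the plan is to expand $h=\sum_{I'}c_{I'}(\mathbf 1_{I'}\ast\mu_\Jcal)$, read off $c_{I'}=(h\ast\zeta_\Jcal)(I')$, and argue downward from the $\subseteq$-maximal $I'$ with $c_{I'}\neq0$ that $h$ must equal $0$ or $g_I$, each of which contradicts the nonzero-and-proper hypothesis on $M',N'$. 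If a uniform direct argument proves recalcitrant (and there are genuine subtleties, e.g.\ when no interval of $\Ical$ sits inside $I$, or when $\barc(M_I)$ is infinite), the fallback is to pass from $(M_I,N_I)$ to a minimal sub-pair by iteratively peeling off a common-$\Ical$-GRI summand pair while always retaining the summand $\kf_I$ in one half (possible by the parity computation above), thereby obtaining a minimal pair still witnessing $I$ so that the $\abs{\Jcal\setminus\Ical}$ resulting pairs remain distinct; the delicate point there is termination of the peeling, which one settles by a maximality argument on the poset of $\kf_I$-containing summand sub-pairs, using that any interval $K\in\Ical$ contained in $I$ keeps $\barc(N^{(\alpha)})$ meeting the finite set $\{J\in\Jcal:J\supseteq K\}$ and hence from collapsing to $0$.
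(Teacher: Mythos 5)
You reproduce the paper's construction exactly: for each $I\in\Jcal\setminus\Ical$ you form $g_I:=\mathbf{1}_I\ast\mu_\Jcal$ and take $M_I:=\kf_{(g_I)_+}$, $N_I:=\kf_{(g_I)_-}$. Your verifications that these are well-defined $\Jcal$-decomposable modules, that the two members of each pair are non-isomorphic, that their GRIs coincide on $\Ical$, and that the pairs are pairwise distinct, are all correct (your distinctness argument, via barcode supports, differs slightly from the paper's, which appeals to $\ast\mu_\Jcal$ being an automorphism, but both work). You then flag $\Ical$-minimality as the remaining obstacle and leave it unclosed. That gap is real, and in fact the minimality claim for these specific pairs is false in general. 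Take $\catP=[4]$, $\Jcal=\{[1,4],[2,4],[1,3],[2,3]\}$, $\Ical=\{[2,3]\}$, and $I=[1,4]$. Then $g_I=\mathbf{1}_{[1,4]}-\mathbf{1}_{[2,4]}-\mathbf{1}_{[1,3]}+\mathbf{1}_{[2,3]}$, so $M_I=\kf_{[1,4]}\oplus\kf_{[2,3]}$ and $N_I=\kf_{[2,4]}\oplus\kf_{[1,3]}$; yet the proper nonzero summands $M'=\kf_{[1,4]}$ and $N'=\kf_{[2,4]}$ already have identical GRI over $\Ical$ (both are constantly $1$), so $(M_I,N_I)$ is \emph{not} $\Ical$-minimal. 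In this same example your case-analysis normalization produces $\tilde h=g_I-h=-g_{[1,3]}$, which is neither $0$ nor $g_I$, confirming that the downward induction you sketch cannot close --- nor could it, since it aims to prove something false.

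It is worth noting that the paper's own proof dispatches minimality with a bare "Clearly," so your argument is faithful to the published one and inherits the same unjustified step; you deserve credit for spotting it. The corollary's \emph{conclusion} may still be salvageable (in the example above there are $\binom{4}{2}=6\geq 3=\abs{\Jcal\setminus\Ical}$ minimal pairs, just not the ones the construction yields for $I=[1,4]$), and your fallback of peeling $(M_I,N_I)$ down to a minimal sub-pair that retains $\kf_I$ is the right instinct. However, as sketched the fallback does not yet deliver the required distinctness of the peeled pairs across different $I$, since a single minimal pair can simultaneously witness several elements of $\Jcal\setminus\Ical$; that step would need its own argument.
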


We remark that when $\Ical=\Jcal$, the corollary above reduces to Theorem \ref{thm:mainthm}.

\begin{proof} Let $I\in \Jcal\setminus\Ical$ and consider the M\"obius inversion $\mathbf{1}_{I}\ast\mu_{\Jcal}$ of the indicator $\mathbf{1}_{I}:\Jcal\rightarrow \{0,1\}$. 
For $J\in \Jcal$, let $d_J:=(\mathbf{1}_{I}\ast\mu_{\Jcal})(J)$.
Clearly,  the two $\Jcal$-decomposable $\catP$-modules 
\begin{equation}\label{eq:minimal non-iso pair}M^+_I:=\bigoplus_{\substack{J\in \Jcal\\ d_J>0}} (\kf_J)^{d_J} \ \ \mbox{and}\ \ \  M^-_I:=\bigoplus_{\substack{J\in \Jcal\\ d_J<0}} (\kf_J)^{-d_J} \end{equation}
form such a minimal non-isomorphic pair. 
Since $\ast\mu_\Jcal$ is an automorphism (cf. Remark \ref{rem:right multiplication} \ref{item:zeta defines an automorphism}), the pair $(M^+_I,M^-_I)$ is different from the pair $(M^+_{I'},M^-_{I'})$ for any $I'\in \Jcal\setminus \Ical$ with $I'\neq I$.
\end{proof}

Theorem \ref{thm:coincidence of rank invariants} and Corollary \ref{cor:minimal pairs} offer a theoretical foundation for well-known examples of non-isomorphic interval-decomposable multi-parameter persistence modules that share the same rank invariant: 

\begin{example}\label{ex:non-iso example 1} 
Let $\catP:=[4]$ and consider the subposets $\Ical:=\{[2,3],[1,3],[2,4]\}$ and $\Jcal:=\Ical\cup\{[1,4]\}$ of $\Int(\catP)$. 
Using the recursion in Equation (\ref{eq:mobius}), it is not hard to verify that $\mathbf{1}_{[1,4]}\ast\mu_{\Jcal}=\mathbf{1}_{[1,4]}-\mathbf{1}_{[1,3]}-\mathbf{1}_{[2,4]}+\mathbf{1}_{[2,3]}$. 
Since $|\Jcal \setminus \Ical| = 1$, Corollary \ref{cor:minimal pairs} implies that there is a single minimal non-isomorphic pair of $\Jcal$-decomposable $\catP$-modules whose GRIs coincide on $\Ical$.
As $\Jcal\setminus \Ical = \{[1,4]\}$, we let $I=[1,4]$ and compute the $\Ical$-minimal pair $M^+_I$ and $M^-_I$ by Equation (\ref{eq:minimal non-iso pair}).
This yields $M^+_I = \kf_{[1,4]}\oplus \kf_{[2,3]}$, and $M^-_I=\kf_{[1,3]}\oplus \kf_{[2,4]}$.
\end{example}

\begin{example}\label{ex:mobius inversion of indicator}
     For the intervals $I,J_1,J_2,J_3$ in the poset $[2]^2$ depicted below,
     let $M,N:[2]^2\rightarrow \cvec$ be given by
    \[M\cong \kf_{I}\oplus \kf_{J_3} \ \ \mbox{and} \ \ N\cong \kf_{J_1}\oplus \kf_{J_2}.\] 
    \begin{center}
    \includegraphics[width=.5\linewidth]{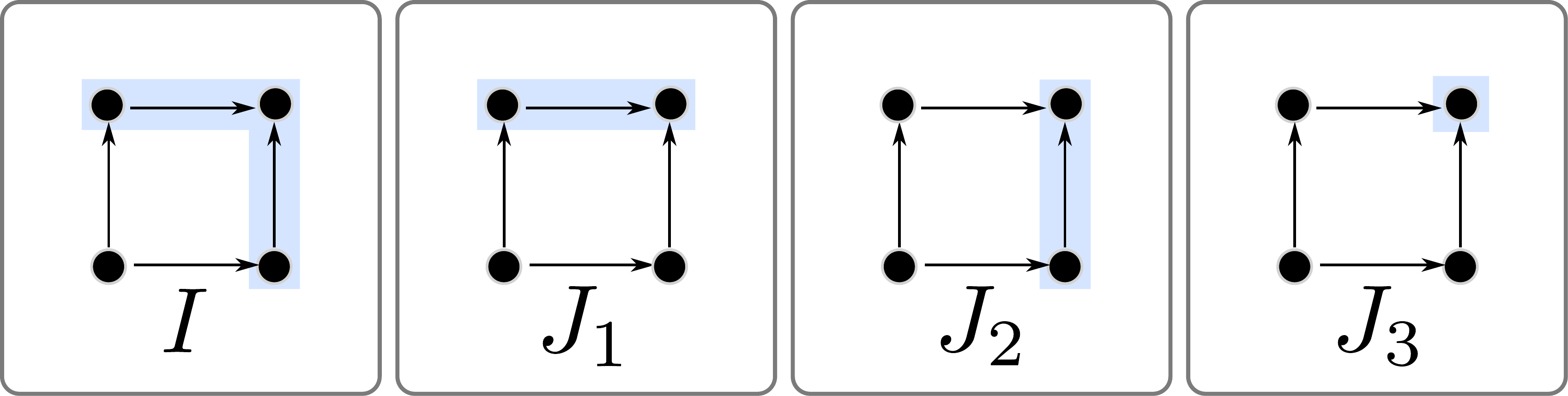} 
 \end{center}
 Note that $M$ and $N$ have the same GRI over $\Ical:=\{J_1,J_2,J_3\}$, but not over $\Jcal:=\Ical\cup \{I\}$. By Theorem \ref{thm:gricomplete}, we have $\dgm_M^{\Jcal}=\mathbf{1}_I+\mathbf{1}_{J_3}$ and $\dgm_N^{\Jcal}=\mathbf{1}_{J_1}+\mathbf{1}_{J_2}$.
 Using the recursion in Equation (\ref{eq:mobius}), one can verify that $\mathbf{1}_I\ast\mu_\Jcal=\mathbf{1}_I-\mathbf{1}_{J_1}-\mathbf{1}_{J_2}+\mathbf{1}_{J_3}=\dgm_M^{\Jcal}-\dgm_N^{\Jcal}$.

Since $|\Jcal\setminus \Ical|=1$, Corollary \ref{cor:minimal pairs} implies that $M$ and $N$ form the unique minimal pair of $\Jcal$-decomposable non-isomorphic $[2]^2$-modules.
\end{example}

Before proving Theorem \ref{thm:coincidence of rank invariants}, we set up notation. Let $\Q$ denote the set of rational numbers. 
 For any $\Jcal\subset \Int(\catP)$, let $\Q^\Jcal_{c}$ be the vector space of convolvable maps $\Jcal\rightarrow \Q$ (cf. Remark \ref{rem:convolvability} \ref{item:convolvability3}). Assume that $\Ical\subset \Jcal\subset \Int(\catP)$.  For the zeta function $\zeta_\Jcal$ of the poset $(\Jcal,\supset)$, the map $\ast\mob_{\Jcal}$ is an automorphism on $\Q_c^\Jcal$ with inverse $\ast\mu_{\Jcal}$ (cf. Remark \ref{rem:right multiplication} \ref{item:zeta defines an automorphism}). 
 Let $\pi_{\Jcal}^{\Ical}:\Q_c^\Jcal\rightarrow \Q_c^\Ical$ be  the restriction $f\mapsto f|_{\Ical}$. We have the following diagram: 
\begin{equation}\label{eq:maps}
    \begin{tikzcd}
\Q_c^\Jcal \arrow[rr, "\ast\zeta_{\Jcal}", shift left=1ex] && \Q_c^\Jcal \arrow[rr, "\pi_{\Jcal}^{\Ical}" ] \arrow[ll, "\ast\mu_{\Jcal}",  shift left=1ex] && \Q_c^\Ical 
\end{tikzcd}
\end{equation}

Given any $\catP$-module $M$ whose GRI over $\Jcal$ is convolvable, the functions $\dgm_M^\Jcal,\rank_M^\Jcal$ and $\rank_M^{\Ical}$ are mapped to each other via the maps given in Equation (\ref{eq:maps}):
\begin{equation}\label{eq:mapsto}
    \begin{tikzcd}
\dgm_M^{\Jcal} \arrow[rr, "\ast\zeta_{\Jcal}", shift left=1ex, mapsto] && \rank_M^{\Jcal} \arrow[rr, mapsto] \arrow[ll, "\ast\mu_{\Jcal}",  shift left=1ex, mapsto] && \rank_M^{\Ical}
\end{tikzcd}
\end{equation}

\begin{remark}\label{rem:kernel}The kernel of $\pi_{\Jcal}^\Ical$ consists of those functions $f:\Jcal\rightarrow \Q$ such that $f(I)=0$ for all $I\in I$. Equivalently, 
$\ker(\pi_\Jcal^\Ical)=\lin\{\mathbf{1}_I:I\in \Jcal\setminus\Ical\}.$    
\end{remark}

\begin{proof}[Proof of Theorem \ref{thm:coincidence of rank invariants}] From the diagram given in Equation (\ref{eq:maps}), we have:
    \begin{align*}        &\rank_M^\Ical=\rank_N^\Ical\\\Leftrightarrow\ &\rank_M^\Ical-\rank_N^{\Ical}=0\\\Leftrightarrow\ &\pi_{\Jcal}^\Ical(\rank_M^{\Jcal}-\rank_M^{\Jcal})=0\\\Leftrightarrow\ & \rank_M^{\Jcal}-\rank_M^{\Jcal}\in \lin\{\mathbf{1}_I:I\in\Jcal\setminus\Ical\}  &&\mbox{by Remark \ref{rem:kernel}}
    \\\Leftrightarrow\ & (\rank_M^\Jcal-\rank_N^{\Jcal})\ast \mu_{\Jcal}\in \lin\{\mathbf{1}_I\ast \mu_{\Jcal}:I\in\Jcal\setminus\Ical\}&&\mbox{by Remark \ref{rem:right multiplication} \ref{item:zeta defines an automorphism}}  \\
        \Leftrightarrow\ &\dgm_M^\Jcal-\dgm_N^{\Jcal}\in \lin\{\mathbf{1}_I\ast\mu_\Jcal:I\in \Jcal\setminus\Ical\}&&\mbox{by Definition \ref{def:GPD over I}.}\qedhere
    \end{align*}
\end{proof}

\begin{remark}[Another proof of Corollary \ref{cor:Ical and Jcal} for the case when $\Ical$, $\Jcal$ are  finite] When $\Ical\subset\Jcal\subset \Int(\catP)$ are finite, we can prove Corollary \ref{cor:Ical and Jcal} using the rank-nullity theorem. 
Note that, in Equation (\ref{eq:maps}), (i) $\Q_c^\Ical=\Q^\Ical$ and $\Q_c^\Jcal$=$\Q^\Jcal$, (ii) $\{\mathbf{1}_I:I\in \Ical\}$ and $\{\mathbf{1}_I:I\in \Jcal\}$ are bases for $\Q^\Ical$ and $\Q^\Jcal$ respectively, and (iii) Since $(\ast\zeta_{\Jcal})$ is an automorphism with the inverse $(\ast\mu_{\Jcal})$, the kernel of the composition $\pi_\Jcal^\Ical\circ (\ast\zeta_{\Jcal})$ coincides with the image of $\ker(\pi_{\Jcal}^\Ical)$ via $(\ast\mu_{\Jcal})$, which is $\abs{\Jcal\setminus\Ical}$-dimensional space by the rank-nullity theorem. 
We omit further details.
\end{remark}

\subsection{Results for 2-parameter persistence modules}\label{sec:gri vs. zz}
  In this section, we focus on clarifying the discriminating power of the GRI of $\Z^2$-modules. Specifically, we do this by comparing the GRI with the ZIB (Section \ref{sec:5.1}), and with the bigraded Betti numbers (Section \ref{sec:bigraded Bettis}).

\subsubsection{Comparison with ZIB over simple paths}\label{sec:ZIB over Z^2}

Let $\con(\catP)$ and $\fint(\catP)$ denote the sets of  \emph{finite} connected subsets and \emph{finite}  intervals of a given poset $\catP$, respectively. We use the term $\fint$-GRI to refer to the GRI over $\fint(\catP)$.\label{ex:mn subposet}
We already know that the ZIB of a $\Z^2$-module $M$ \textbf{determines} the int-GRI of $M$, i.e. if two $\Z^2$-modules have the same ZIB, then they have the same int-GRI; this is implied by \cite[Theorem 24]{dey2022computing} stating that for $I\in \fint(\Z^2)$, $\rk_M(I)$ equals the rank of a zigzag module arsing as the restriction of $M$ to a certain path $\partial I$ along the boundary of $I$.  A priori, the path $\partial I$  can contain repeated points in $\Z^2$. Hence, a natural question for efficient computation of the int-GRI is whether the ZIB over only \emph{simple paths} (i.e. paths without repeated points)  determines the int-GRI.

In this section, we show that the answer is negative. 
 Furthermore, we show that the int-GRI also does not determine the ZIB over simple paths. Hence, the discriminating power of the int-GRI and of the ZIB over simple paths are \emph{not} well-ordered. It then follows that the ZIB over \emph{all} paths is a \emph{strictly} finer invariant than both the ZIB over \emph{simple} paths and the $\fint$-GRI: see {Examples \ref{ex:Int does not recover simple ZZ}, \ref{ex:intvszz}} and {Figure \ref{fig:intro}}.   We take one step further and investigate how much the int-GRI can be recovered from the ZIB over simple paths, and the other way around: see {Remark \ref{rem:approximating Int rank invariant via simple zz}} and {Proposition \ref{prop:approximating zz}}. 

\paragraph{Comparison between the ZIB and the GRI.}
\label{sec:5.1}
 For $a,b\in \Z^2$, we write $a\triangleleft b$ if and only if $a<b$ and $[a,b]=\{a,b\}$. Let $\Gamma:p_1,p_2,\ldots,p_n$ be a path in $\Z^2$. 
We call $\Gamma$ \textbf{faithful} if  $p_i\triangleleft p_{i+1}$ or $p_{i+1} \triangleleft p_{i}$ for each $i=1,\ldots,  n-1$. In what follows, we consider only faithful paths; this is because for any $\Z^2$-module $M$ and for any non-faithful path $\Gamma$, there is a faithful path $\Gamma'$ containing $\Gamma$ as a subsequence, and thus $\barc(M_{\Gamma})$ can be read off from $\barc(M_{\Gamma'})$.

We call $\Gamma$ \textbf{simple} if  all of the points $p_1,p_2,\ldots,p_n$ are distinct from each other. 
There are two special types of simple paths: We call $\Gamma$ a \textbf{monotone path} or \textbf{positive path}, if $p_i\triangleleft p_{i+1}$ for each $i$.  We call $\Gamma$ a \textbf{negative path} if $\Gamma$ is obtained from the reflection of a monotone path with respect to the $y$-axis. 

\begin{definition}\label{def:zigzag-path-indexed barcode}
The restriction of the domain of the ZIB of any $\catP$-module to the set of all \emph{simple paths} of $\catP$ called the \textbf{ZIB over simple paths} and is denoted by $M_{\SZZ}$.
\end{definition}

Let $F$ and $G$ be two invariants of $\Z^2$-modules.
If $F$  determines $G$, then we write $F\Rightarrow G$, which defines a transitive relation on the class of invariants of $\Z^2$-modules.  For example, ZIB $\Rightarrow$ ZIB over simple paths.
If $F$ determines $G$ and vice versa, then we write $F \Leftrightarrow G$. 
We say that $F$ \textbf{strictly determines} $G$ if $F\Rightarrow G$ and $G\not\Rightarrow F$. Now,  we compare the ZIB and the GRI of $\Z^2$-modules over various domains:

\begin{figure}
    \centering
\includegraphics[width=\textwidth]{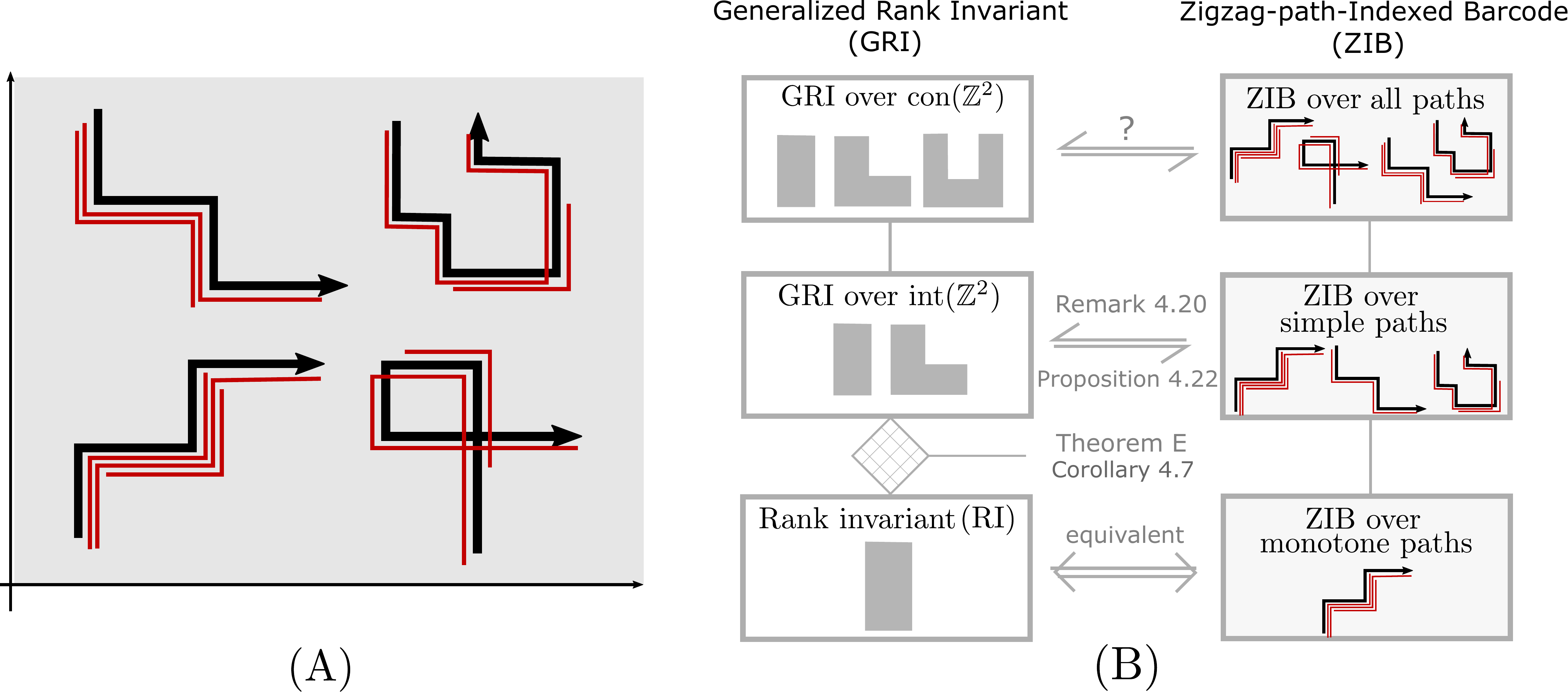}
    \caption{(A) Barcodes of a $\Z^2$-module over zigzag paths. 
    (B) Hierarchy of invariants for $\Z^2$-modules. 
    The hierarchy of the GRI (left column) is comparable to the hierarchy of barcodes over zigzag paths (right column). 
    In this figure, an invariant $F$ strictly determines another invariant $G$ at a lower height, regardless of which columns $F$ and $G$ belong to.
    The notations $\leftharpoonup$ and $\rightharpoondown$ indicate that one invariant does not determine another, but rather can be used to estimate the other. 
    See Remark \ref{rem:implications of gen by zz} for a full explanation.}
    \label{fig:intro}
\end{figure}

\begin{remark}
\phantomsection\label{rem:implications of gen by zz}
In Figure \ref{fig:intro} (B):
\begin{enumerate}[label=(\roman*),topsep=0pt,itemsep=-1ex,partopsep=1ex,parsep=1ex,leftmargin=*]
    \item If $F$ and $G$ are in the same column and $F$ is at a higher level than $G$, then $F\Rightarrow G$ is clear by definition.
     \item  {(GRI over $\con(\Z^2)$ $\Rightarrow$ ZIB over simple paths) follows from the definition of GRI over $\con(\catP)$.} \label{item:implications of gen by zz5} 
    \item (RI $\Leftrightarrow$ ZIB over monotone paths) was noted in \cite[Proposition 1.2]{lesnick2015interactive}.

    \item (ZIB over all paths $\Rightarrow$ GRI {over $\fint(\Z^2)$}) is a direct corollary of \cite[Theorem 24]{dey2022computing}.
    
     \item Theorem \ref{thm:mainthm} and Corollary \ref{cor:Ical and Jcal} demonstrate an interpolation between the $\fint$-GRI and RI of $\Z^2$-modules.
     Namely, these results show that if $\seg(\Z^2)\subset \Ical\subsetneq \Jcal\subset \Int(\Z^2)$, and every principal ideal of $\Ical$ is finite, then $\rk^\Jcal\Rightarrow \rk^\Ical$ but $\rk^\Ical\not\Rightarrow \rk^\Jcal$.\footnote{Theorem \ref{thm:mainthm} and Corollary \ref{cor:Ical and Jcal} claim more general statements as they consider $\catP$-modules not just $\Z^2$-modules.} \label{item:implications of gen by zz3}
\item Overall, by transitivity of the relation $\Rightarrow$, ($F\Rightarrow G$) holds if $F$ is at a higher level than $G$ regardless of the columns they belong to.

\end{enumerate}
\end{remark}

In the following two examples, we will see that ($\fint$-GRI$\not\Rightarrow$ZIB over simple paths) and (ZIB over simple paths $\not\Rightarrow$ $\fint$-GRI). 

\begin{example}[$\fint$-GRI$\not\Rightarrow$ZIB over simple paths]\label{ex:Int does not recover simple ZZ} Let $M,N$ be  $\Z^2$-modules defined as below whose supports are contained in $[3]^2\subset\Z^2$. Also, let $\Gamma:(1,2),(1,3),(2,3),(3,3),(3,2),$ $(3,1),(2,1)$ (cf. Figure \ref{fig:scaffold} (C)). It is not difficult to check that $M_{\Gamma}\not\cong N_{\Gamma}$, whereas
 $\rank_M(I)=\rank_N(I)$ for all $I\in \Int([3]^2)$ \cite[Example A.2]{kim2021bettis}. This shows that the GRI over $\fint(\Z^2)$ cannot fully recover the ZIB over simple paths, the ZIB over all paths, nor the GRI over $\con(\Z^2)$.

\[
M:=\begin{tikzcd}
\kf \arrow[r,"1"] & \kf \arrow[r,"1"] & \kf\\
\kf \arrow[u,"1"]\arrow[r,"\begin{bmatrix}0\\ 1\end{bmatrix}"] & \kf^2 \arrow[r,"\begin{bmatrix} 0 \ 1\end{bmatrix}"']\arrow[u,"\begin{bmatrix}0 \ 1\end{bmatrix}"'] & \kf\arrow[u,"1"]\\
0\arrow[r]\arrow[u] & \kf\arrow[u,"\begin{bmatrix}1\\ 1\end{bmatrix}"]\arrow[r,"1"] & \kf\arrow[u,"1"]
\end{tikzcd}
\oplus
\begin{tikzcd}
\kf \arrow[r,"1"] & \kf \arrow[r,"1"] & \kf\\
0 \arrow[u]\arrow[r] & \kf \arrow[r,"1"]\arrow[u,"1"] & \kf\arrow[u,"1"]\\
0\arrow[r]\arrow[u] & 0 \arrow[u]\arrow[r] & \kf\arrow[u,"1"]
\end{tikzcd}
\]
\[
N:=
\begin{tikzcd}
\kf \arrow[r,"1"] & \kf \arrow[r,"1"] & \kf\\
\kf \arrow[u,"1"]\arrow[r,"1"] & \kf \arrow[r,"1"]\arrow[u,"1"] & \kf\arrow[u,"1"]\\
0\arrow[r]\arrow[u] & 0 \arrow[u]\arrow[r] & \kf\arrow[u,"1"]
\end{tikzcd}
\oplus
\begin{tikzcd}
\kf \arrow[r,"1"] & \kf \arrow[r,"1"] & \kf\\
0 \arrow[u]\arrow[r] & \kf \arrow[r,"1"]\arrow[u,"1"] & \kf\arrow[u,"1"]\\
0\arrow[r]\arrow[u] & \kf \arrow[u,"1"]\arrow[r,"1"] & \kf\arrow[u,"1"]
\end{tikzcd}
\oplus 
\begin{tikzcd}
0 \arrow[r] & 0 \arrow[r,] & 0\\
0 \arrow[u]\arrow[r] & \kf \arrow[r,]\arrow[u] & 0\arrow[u,]\\
0\arrow[r]\arrow[u] & 0 \arrow[u]\arrow[r,] & 0\arrow[u,]
\end{tikzcd}
\]
\end{example}

\begin{example}[ZIB over simple paths $\not\Rightarrow$ $\fint$-GRI]\label{ex:intvszz}
Let $I\in \fint(\Z^2)$ with the following directed Hasse diagram. 
\[I:=
\begin{tikzcd}
\bullet\arrow[r] & \bullet\arrow[r] & \bullet \\
& \bullet \arrow[r]\arrow[u] & \bullet\arrow[r]\arrow[u] & \bullet
\end{tikzcd}\]
Let us define the following $\Z^2$-modules $M$ and $N$ supported on $I$:
\[M:= \kf_I \oplus 
\begin{tikzcd}
\kf\arrow[r,"\begin{bmatrix} 1\\0\end{bmatrix}"] & \kf^2\arrow[r,"\begin{bmatrix} 1 \  1\end{bmatrix}"] & \kf\\
& \kf\arrow[u,"\begin{bmatrix} 0\\ 1\end{bmatrix}"]\arrow[r,"\begin{bmatrix} 1\\ 1\end{bmatrix}"'] & \kf^2\arrow[r,"\begin{bmatrix} 0 \  1\end{bmatrix}"']\arrow[u,"\begin{bmatrix} 1 \ 0\end{bmatrix}"'] & \kf
\end{tikzcd}
\]
\[N:= \begin{tikzcd}
\kf\arrow[r,"\begin{bmatrix} 1\\ 0\end{bmatrix}"] & \kf^2\arrow[r,"\begin{bmatrix} 1 \ 1\end{bmatrix}"] & \kf\\
& \kf\arrow[u,"\begin{bmatrix} 0\\ 1\end{bmatrix}"]\arrow[r,"\id"'] & \kf\arrow[r,"\id"']\arrow[u,"\id"'] & \kf
\end{tikzcd}
\hspace{5mm}
\oplus 
\begin{tikzcd}
\kf\arrow[r,"\id"] & \kf\arrow[r,"\id"] & \kf\\
& \kf\arrow[u,"\id"]\arrow[r,"\begin{bmatrix} 1\\ 1\end{bmatrix}"'] & \kf^2\arrow[r,"\begin{bmatrix} 0 \ 1\end{bmatrix}"']\arrow[u,"\begin{bmatrix} 1 \ 0\end{bmatrix}"'] & \kf
\end{tikzcd}\]
Since each summand of $M$ and $N$ is indecomposable, by Theorem \ref{thm:rkequalsintervals}, we have $\rank_M(I) = 1$ and $\rank_N(I)=0$. 
Now we claim that $M_{\SZZ} = N_{\SZZ}$. Since $M$ and $N$ are supported on $I$, it suffices to show $M_\Gamma\cong N_{\Gamma}$ for all maximal simple paths $\Gamma$ in $I$. Indeed, it is not difficult to check that $M_\Gamma\cong N_\Gamma$ for all of the six maximal simple paths $\Gamma$ in $I$: 

\begin{center}
\begin{tikzcd}[scale cd=.3]
\bullet\arrow[r] & \bullet \arrow[r,gray] & \color{gray}{\bullet} \\
& \bullet \arrow[r]\arrow[u] & \bullet\arrow[r]\arrow[u,gray] & \bullet
\end{tikzcd}\hspace{-3mm}
\begin{tikzcd}[scale cd=0.17]
\bullet\arrow[r] & \bullet\arrow[r] & \bullet \\
& \bullet\arrow[r,gray]\arrow[u,gray] & \bullet\arrow[r]\arrow[u] & \bullet
\end{tikzcd}\hspace{-3mm}
\begin{tikzcd}[scale cd=0.17]
\bullet\arrow[r] & \bullet\arrow[r,gray] & \bullet &\\
& \bullet \arrow[r]\arrow[u] & \bullet\arrow[u]\arrow[r,gray]&\color{gray}{\bullet}
\end{tikzcd}\\\vspace{5mm}

\begin{tikzcd}[scale cd=0.17]
\bullet\arrow[r] & \bullet \arrow[r]& \bullet &\\
& \bullet \arrow[u,gray]\arrow[r] & \bullet\arrow[u]\arrow[r,gray]&\color{gray}{\bullet}
\end{tikzcd}\hspace{-3mm}
\begin{tikzcd}[scale cd=0.17]
\color{gray}{\bullet}\arrow[r,gray]& \bullet \arrow[r]& \bullet &\\
& \bullet \arrow[r]\arrow[u] & \bullet \arrow[r]\arrow[u,gray] &\bullet
\end{tikzcd}\hspace{-3mm}
\begin{tikzcd}[scale cd=0.17]
\color{gray}{\bullet}\arrow[r,gray]& \bullet \arrow[r]& \bullet &\\
& \bullet \arrow[u]\arrow[r,gray] & \bullet\arrow[u]\arrow[r] &\bullet
\end{tikzcd}
\end{center}
Therefore, the ZIB over simple paths cannot determine the int-GRI in general. In addition, invoking the fact that the ZIB determines the int-GRI, this example proves that the ZIB  \emph{strictly} determines the ZIB over simple paths. 
\end{example}

\begin{remark}Let $d\geq 3$. Since $\Z^2$ can be embedded into $\Z^d$, for $\Z^d$-modules, neither the int-GRI nor the ZIB over simple paths determine the other.
\end{remark}

\begin{figure}
    \centering
    \includegraphics[width=0.72\textwidth]{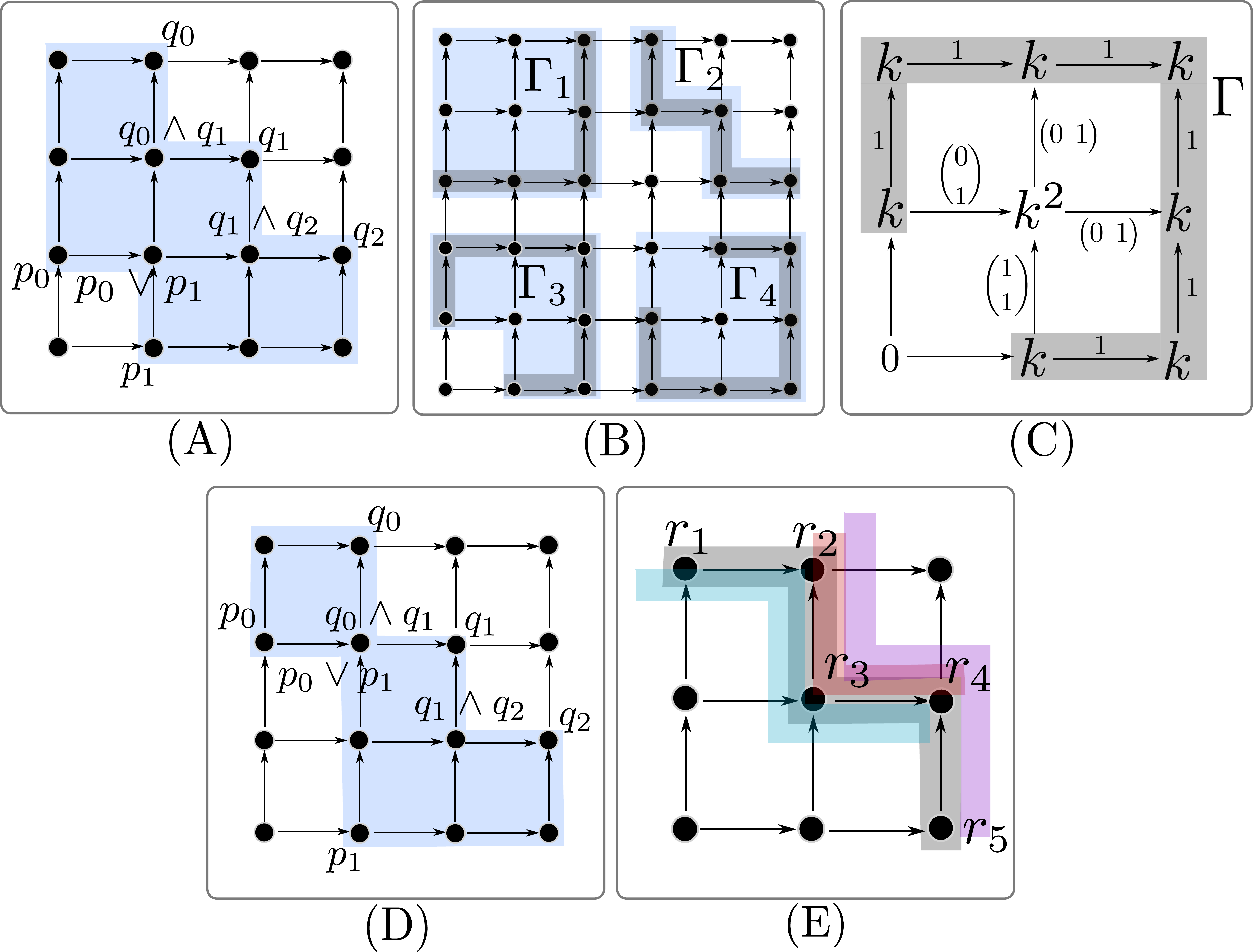}
    \caption{(A) An interval $I$ in $\Z^2$ with points in $\overmin(I)$ and $\overmax(I)$ labeled. (B) Paths $\Gamma_i$ for $i=1,2,3,4$ in $\Z^2$ (directions are not specified) and their interval-hulls. Whereas $\Gamma_i$ is a tame path for  $i=1,2,4$, $\Gamma_3$ is \emph{not} a tame path. (C) An example showing that the rank over the non-tame path $\Gamma$ does not coincide with the rank over the interval-hull $I_{\Gamma}$. (D) An interval in $\Z^2$ which is not solid nor thin. The interval in (A) is solid. The interval $I_{\Gamma_2}$ in (B) is thin. (E) Let $\Gamma$ be the path $r_1,\ldots,r_5$ and $\Gamma'$ be the subpath $r_2,r_3,r_4$. Then $\Gamma'^-$, $\Gamma'^+$, $\Gamma'^\pm$ are the paths $r_1,r_2,r_3,r_4$ and $r_2,r_3,r_4,r_5$ and $r_1,r_2,r_3,r_4,r_5$, respectively.  }
    \label{fig:scaffold}
\end{figure}

\paragraph{ZIB over simple paths and the int-GRI estimate each other.}\label{sec:5.2}

Although the int-GRI and the ZIB over simple paths fail to determine each other, we can \emph{estimate} one from the other. 
We describe how this is done in Remark \ref{rem:approximating Int rank invariant via simple zz} and Proposition \ref{prop:approximating zz} below.

Let $\Gamma$ be a path in $\Z^2$. Let $I_{\Gamma}$ be the smallest interval of $\Z^2$ that contains $\Gamma$, i.e. $$I_{\Gamma}:=\{q\in \Z^2:\exists p,r\in \Gamma,\ \ p\leq q\leq r \}.$$ We call $I_{\Gamma}$ the \textbf{interval-hull} of $\Gamma$.   See Figure \ref{fig:scaffold} (B) for illustrative examples.

Given any $I\in \fint(\Z^2)$, the set $\min (I)$ (resp. $\max (I)$) of minimal (resp. maximal) points of $I$ forms an  \emph{antichain}, i.e. any two different points in $\min (I)$ (resp. in $\max (I)$) are \emph{not} comparable. Hence, 
we can list the elements of $\min( I)$ in ascending order of their $x$-coordinates, i.e. $\min (I):=\{p_0,\ldots,p_k\}$ and such that for each $i=0,\ldots,k,$ the $x$-coordinate of $p_i$ is less than that of $p_{i+1}$. Similarly, let $\max (I):=\{q_0,\ldots,q_\ell\}$ be ordered in ascending order of their $x$-coordinates. We have that $p_0\leq q_0$ (cf. Figure \ref{fig:scaffold} (A)).
Let $\overmin(I)$ be the shortest faithful path in $\Z^2$ that contains the following sequence.
\begin{align}
    p_0<(p_0\vee p_1)>p_1<(p_1\vee p_2)>\cdots<(p_{k-1}\vee p_k)>p_k.
    \label{eq:zigzag posets}
\end{align}
Similarly, let $\overmax(I)$ be the shortest faithful path in $\Z^2$ containing
\begin{align}
    q_0>(q_0\wedge q_1)<q_1>(q_1\wedge q_2)<\cdots>(q_{\ell-1}\wedge q_\ell)>q_\ell. \label{eq:zigzag posets2}
\end{align}

 Let $\Gamma:p_1,\ldots, p_n$ be a path in $\Z^2$. By $\Gamma^{-1}$ we will denote its reverse path $p_n, p_{n-1}\ldots, p_1$. We call $\Gamma$ \textbf{tame} if:
\[\left[\overmin(I_{\Gamma})\leq \Gamma \mathrm{\ or\ } \overmin(I_{\Gamma})^{-1}\leq \Gamma\right] \mathrm{\ and\ }  \left[\overmax(I_{\Gamma})\leq \Gamma \mathrm{\ or\ } \overmax(I_{\Gamma})^{-1}\leq \Gamma\right].\]
For example, in Figure \ref{fig:scaffold} (B), $\Gamma_1$, $\Gamma_2$, and $\Gamma_4$ are tame but $\Gamma_3$ is not. Given a finite interval $I\subset \Z^2$, the concatenation of the paths  $\overmin(I)^{-1}$ and $\overmax(I)$ is called the \textbf{boundary cap} of $I$  \cite{dey2022computing}, which is an instance of a tame path. The boundary cap of $I$ is a simple path iff $\min_{\ZZ}(I)$ and $\max_\ZZ(I)$ do not intersect.

Let us fix a $\Z^2$-module $M$. 

\begin{proposition}\label{prop:rank via zigzag}  Given any tame path $\Gamma$ in $\Z^2$, it holds that $\rank(M_\Gamma)=\rank_M({I_\Gamma})$.  
\end{proposition}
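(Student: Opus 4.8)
The goal is to show that for a tame path $\Gamma$ in $\Z^2$, the rank of the zigzag module $M_\Gamma$ coincides with the generalized rank $\rank_M(I_\Gamma)$ over its interval-hull. I would approach this by relating both quantities to the canonical limit-to-colimit map, using the fact that $\Gamma$ being tame means $\Gamma$ contains (up to reversal) both $\overmin(I_\Gamma)$ and $\overmax(I_\Gamma)$ as subpaths. The key observation is that the limit (resp. colimit) of $M|_{I_\Gamma}$ is already "detected" along $\overmin(I_\Gamma)$ (resp. $\overmax(I_\Gamma)$): a section of $M|_{I_\Gamma}$ is determined by its values on the minimal elements of $I_\Gamma$, hence by its values along $\overmin(I_\Gamma)$, and dually the colimit is generated by the images of the maximal elements, hence is a quotient detected along $\overmax(I_\Gamma)$.

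**Key steps, in order.** First I would establish, via Convention \ref{con:limit and colimit}, that the restriction maps $\varprojlim M|_{I_\Gamma} \to \varprojlim M|_{\overmin(I_\Gamma)}$ and $\varinjlim M|_{\overmax(I_\Gamma)} \to \varinjlim M|_{I_\Gamma}$ are isomorphisms. For the limit: any compatible family over $\overmin(I_\Gamma)$ extends uniquely to all of $I_\Gamma$ because each $p\in I_\Gamma$ lies above some minimal point $p_i$, and the path $\overmin(I_\Gamma)$ connects the $p_i$ through their joins $p_i\vee p_{i+1}$, which forces consistency (here one uses that $\Z^2$ is a lattice, so $I_\Gamma$ being an interval/convex set means joins of minimal points that lie in $I_\Gamma$ stay in $I_\Gamma$, and every element of $I_\Gamma$ is above a minimal one — finiteness of $I_\Gamma$ is used). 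The colimit statement is dual, using meets of maximal points. Second, I would use Remark \ref{rem:properties of rank invariant}\ref{item:monotonicity} (monotonicity of the GRI under $\subset$) together with the factorization of limit-to-colimit maps: since $\overmin(I_\Gamma), \overmax(I_\Gamma) \subset I_\Gamma$, and since $\Gamma$ sits between them, we get a chain of factorizations $\psi_{M|_{I_\Gamma}}$ factors through $\psi_{M_\Gamma}$ factors through $\psi_{M|_{\overmin(I_\Gamma)}}$-type maps. Third, I would observe that because $\overmin(I_\Gamma)$ and $\overmax(I_\Gamma)$ are subpaths of $\Gamma$ (up to reversal, which does not change the zigzag module's limit/colimit/rank), the canonical maps $\varprojlim M_\Gamma \to \varprojlim M|_{\overmin(I_\Gamma)}$ and the colimit analogue are also isomorphisms — these are the "bottleneck" vertices of the zigzag. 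Combining: all four limits are canonically isomorphic, all four colimits are canonically isomorphic, and under these identifications all the relevant limit-to-colimit maps coincide, so their ranks agree.

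**Main obstacle.** The delicate point is Step 1, the claim that sections of $M|_{I_\Gamma}$ are faithfully represented along $\overmin(I_\Gamma)$ and symmetrically for the colimit — i.e. that $\overmin(I_\Gamma)$ is "cofinal enough" from below inside $I_\Gamma$. One must carefully verify that the zigzag path $\overmin(I_\Gamma)$, which visits $p_0, p_0\vee p_1, p_1, p_1\vee p_2, \ldots$, actually lives inside $I_\Gamma$ (so that $M$ is defined there) and that a compatible assignment on these vertices propagates uniquely and consistently to every point of $I_\Gamma$. Convexity of $I_\Gamma$ gives $p_i \vee p_{i+1}\in I_\Gamma$ when this join is below some element of $I_\Gamma$, which holds since the join of two minimal points is dominated by... this needs the structure of $\Z^2$ intervals. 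I would expect to invoke or reprove a lemma (likely present elsewhere in the paper, in the spirit of \cite{dey2022computing}) stating that for a finite interval $I\subset\Z^2$, $\varprojlim M|_I \cong \varprojlim M|_{\overmin(I)}$ and $\varinjlim M|_I \cong \varinjlim M|_{\overmax(I)}$ naturally, which is the technical heart; granting that, the rest is bookkeeping about subpaths and the factorization property of canonical limit-to-colimit maps.
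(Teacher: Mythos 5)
Your overall strategy — reduce the comparison of $\rank(M_\Gamma)$ and $\rank_M(I_\Gamma)$ to the subpaths $L:=\overmin(I_\Gamma)$ and $U:=\overmax(I_\Gamma)$, and exploit the fact that the limit of $M|_{I_\Gamma}$ is captured along $L$ and the colimit along $U$ — is precisely the paper's approach (this is Lemma \ref{lem:lower fence and upper fence isomorphism}, and your Step 1 is exactly that lemma). However, Step 3 contains a genuine gap: the restriction map $f:\varprojlim M_\Gamma \to \varprojlim M|_{L}$ is \emph{not} an isomorphism in general, only surjective; dually, the canonical map $g:\varinjlim M|_{U}\to \varinjlim M_\Gamma$ is only injective. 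A tame path $\Gamma$ can revisit vertices of $I_\Gamma$ and has segments beyond $L$, and a ``section along the path'' $\Gamma$ imposes compatibility only across consecutive arrows of the zigzag, so it can carry more information than a section over the subposet. For a concrete counterexample: take $I_\Gamma=\{0,1\}^2$ with $\Gamma: (0,0),(1,0),(1,1),(0,1),(0,0)$ (which is tame, since $L=\{(0,0)\}$ and $U=\{(1,1)\}$ are both trivially subpaths), and $M=\kf_{\{(0,0),(0,1)\}}$. Then $\varprojlim M|_L = M_{(0,0)}=\kf$, but $\varprojlim M_\Gamma\cong \kf^2$: a section of $M_\Gamma$ may assign an arbitrary value at the second visit to $(0,0)$ (and the induced value at $(0,1)$), independently of the first visit. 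So your claim ``all four limits are canonically isomorphic'' fails, and the assembly step breaks.

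The fix — and what the paper actually does — is not to claim isomorphisms at the path level, but to exhibit a factorization $\psi_{M_\Gamma} = g\circ \xi \circ f$, where $\xi:\varprojlim M|_L\to\varinjlim M|_U$ is the mixed map obtained by conjugating $\psi_{M|_{I_\Gamma}}$ by the isomorphisms from Lemma \ref{lem:lower fence and upper fence isomorphism}, $f$ is the section-restriction from $\Gamma$ to its $L$-subpath, and $g$ is the canonical map on colimits from the $U$-subpath. One then uses the elementary facts that $\rank(\beta\circ\alpha)=\rank(\beta)$ when $\alpha$ is surjective and $\rank(\beta\circ\alpha)=\rank(\alpha)$ when $\beta$ is injective. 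Surjectivity of $f$ follows because the section-extension $e$ from Lemma \ref{lem:lower fence and upper fence isomorphism} composed with restriction to $\Gamma$ is a right inverse; injectivity of $g$ is the dual statement. So your high-level plan is close, but it must be weakened from ``isomorphism'' to ``rank-preserving composition,'' which requires the extra surjectivity/injectivity bookkeeping rather than identifications.
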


This proposition is a generalization of \cite[Theorem 24]{dey2022computing} and its proof, which we defer to the appendix, is similar to that of \cite[Theorem 24]{dey2022computing}.
\medskip

Now, we describe how to estimate the $\fint$-GRI via the ZIB over simple paths.
An $I\in \fint(\Z^2)$ is called \textbf{solid} if $\min_{\ZZ}(I)$ and $\max_{\ZZ}(I)$ do not intersect (e.g. the interval given in Figure \ref{fig:scaffold} (A)). On the contrary, if $I$ equals $I_\Gamma$ for some negative path $\Gamma$, then $I$ is called \textbf{thin} (e.g. $\Gamma_2$ in Figure \ref{fig:scaffold} (B)). 

\begin{remark}
[Estimating the $\fint$-GRI via the ZIB over simple paths]
\label{rem:approximating Int rank invariant via simple zz} Let $I\in \fint(\Z^2)$. 
\begin{enumerate}[label=(\roman*),topsep=0pt,itemsep=-1ex,partopsep=1ex,parsep=1ex,leftmargin=*]
    \item If $I$ is a thin interval, then $I$ is covered by a simple path $\Gamma$. Thus, by Theorem \ref{thm:rkequalsintervals} and Proposition \ref{prop:rank via zigzag}, $\rank_M(I)$ equals the multiplicity of the `full bar' $I$ in $\barc(M_{\Gamma})$.
    \item If $I$ is solid, then the boundary cap $\Gamma$ of $I$ is a simple tame path such that $I=I_{\Gamma}$.
    Hence, $\rank_M(I)$  equals the multiplicity of the full bar in $\barc(M_{\Gamma})$. \label{item:approximating Int rank invariant via simple zz2}
    \item If $I$ is not solid nor thin, then there is no simple tame path spanning $I$ (see e.g. Figure \ref{fig:scaffold} (D)). 
    However, by monotonicity of the GRI (Remark \ref{rem:properties of rank invariant} \ref{item:monotonicity}), we have the following upper and lower bounds for $\rank_M(I)$: 
    \[ \max_{J} \rank_M(J)  \leq  \rank_M(I)\leq\min_{\Gamma}\rank(M_\Gamma)\]
    where the minimum is taken over all simple paths $\Gamma$ in $I$ and the maximum is taken over all solid intervals $J\supset I$. The both bounds can be obtained from $M_{\SZZ}$: Clearly, $\max_{\Gamma}\rank(M_\Gamma)$ is determined by $M_{\SZZ}$. From item \ref{item:approximating Int rank invariant via simple zz2}, $\max_{J} \rank_M(J)$ can also be computed from $M_{\SZZ}$.
\end{enumerate}
\end{remark}

\medskip

Conversely, we now describe how to estimate the ZIB over simple paths via the int-GRI.
Let $\Gamma$ be a simple path in $\Z^2$ and consider the following nonnegative integers:
\[ m_{\Gamma} := \rank_M(I_\Gamma) \mathrm{\ and \ } \ell_\Gamma:=\min \{\rank(M_{\Gamma'}): \Gamma'\leq \Gamma,\ \mathrm{and }\  \Gamma' \mathrm{\ is \ tame}\}.\]
By monotonicity of $\rank_M$, 
we have
\begin{equation}\label{eq:rank bounds}
    m_{\Gamma}\leq \rank(M_{\Gamma}) \leq \ell_{\Gamma}.
\end{equation}

\begin{remark}\phantomsection\label{rem:remark about bounding} By Proposition \ref{prop:rank via zigzag},
\begin{enumerate}[label=(\roman*),topsep=0pt,itemsep=-1ex,partopsep=1ex,parsep=1ex,leftmargin=*]  
    \item the int-GRI of $M$ \ok{can be used to compute}
    $m_{\Gamma}$ and $\ell_{\Gamma}$.\label{item:remark about bounding1}
    \item  if $\Gamma$ is tame, then $m_{\Gamma}=\ell_{\Gamma}=\rank(M_{\Gamma})$.\label{item:remark about bounding2}
\end{enumerate}
\end{remark}

 Let $\Gamma:r_1,r_2,\ldots,r_n$ be a path and let $\Gamma':r_k,r_{k+1},\ldots,r_{\ell}$ be a subpath of $\Gamma$. When $k\neq 1$, we consider the one-point extension $\Gamma'$ to \emph{the left}, i.e. $\Gamma'^-:r_{k-1},r_k,\ldots,r_\ell$. When $\ell\neq n$,  we consider the one-point extension of $\Gamma'$ to \emph{the right}, i.e. $\Gamma'^+:r_k,\ldots,r_{\ell} ,r_{\ell+1}$. When $k\neq 1$ and $\ell \neq n$, we consider the two-point extension $\Gamma'^\pm:r_{k-1},r_k,\ldots,r_\ell,r_{\ell+1}$ of $\Gamma'$ within $\Gamma$ (see Figure \ref{fig:scaffold} (E)).
 
 We obtain upper and lower bounds on the multiplicity of each subpath $\Gamma'$ of $\Gamma$ in $\barc(M_{\Gamma})$:

\begin{proposition}[Estimating the ZIB over simple paths via the $\fint$-GRI]
\label{prop:approximating zz}
Let $n_{\Gamma'}$ be the multiplicity of $\Gamma'$ in $\barc(M_\Gamma)$. Then, we have \begin{equation}\label{eq:multiplicity bounds}
    m_{\Gamma'}-\ell_{\Gamma'^+}-\ell_{\Gamma'^-}+m_{\Gamma'^{\pm}}\leq n_{\Gamma'} \leq \ell_{\Gamma'}-m_{\Gamma'^+}-m_{\Gamma'^-}+\ell_{\Gamma'^{\pm}},
\end{equation}
where the undefined terms in the upper and lower bounds are set to be zero.\footnote{For example, if $\Gamma$ and $\Gamma'$ have the same starting point but different end points, then $\Gamma'{^-}$ and $\Gamma'{^\pm}$ are undefined and thus Equation (\ref{eq:multiplicity bounds}) reduces to $  m_{\Gamma'}-\ell_{\Gamma'^+}\leq n_{\Gamma'}\leq \ell_{\Gamma'}-m_{\Gamma'^+}$.}
\end{proposition}

We remark that, by Remark \ref{rem:remark about bounding} \ref{item:remark about bounding2}, the upper and lower bounds match when (i) $\Gamma'^{\pm}\leq \Gamma$, and (ii) $\Gamma$ is either a monotone or negative path. 

\begin{proof}
By the principle of inclusion and exclusion, we have that \begin{equation}\label{eq:inclusion-exclusion over zigzag poset}
    n_{\Gamma'}=\rank(M_{\Gamma'})-\rank(M_{\Gamma'^+})-\rank(M_{\Gamma'^-})+\rank(M_{\Gamma'^\pm}) \ \ \ \mbox{(cf. \cite[Section 3]{kim2021generalized}).}
\end{equation}
The claimed inequalities follow from the inequalities in Equation (\ref{eq:rank bounds}). 
\end{proof}

\begin{remark}
  By Theorem \ref{thm:sufficient conditions for invertibility} \ref{item:finitely presentable invertible} and its proof, 
    if $M:\R^2\to \cvec$ is finitely presentable, then there is a discrete grid $G\subset \R^2$ such that the $\Int$-GRI of $M$ can be computed from the $\Int$-GRI of $M\vert_G$.
    The definitions and results in this section pertaining to the ZIB over $\Z^2$ naturally adapt to definitions and results for the setting of finite grids in $\R^2$, and so the ZIB is a useful tool for finitely presentable $\R^2$-modules as well.
\end{remark}

\begin{remark}\label{rem:estimation generalization}Generalizing Remark \ref{rem:approximating Int rank invariant via simple zz} and Proposition \ref{prop:approximating zz} to the setting of $\Z^d$-modules for $d\geq 3$ seems difficult, as it is 
unclear how to define 'tame paths in $\Z^d$' that can bridge between the GRI and the ZIB, as in Proposition \ref{prop:rank via zigzag}.
This uncertainty is supported by the fact that,  
for $d\geq 3$, there exists $\catP\in \fint(\Z^d)$ and a $\catP$-module $M$ with $\rk(M)\neq \rk(M_{\Gamma})$ for \emph{every} path $\Gamma$ in $\catP$.\footnote{
The recent work \cite{dey2024computing} utilizes zigzag persistence for computing the generalized rank of a persistence module over a general poset. This work is \emph{not} directly applicable to our setting. In that work, given a finite poset $\catP$ and a $\catP$-module $M$, the authors consider a path $\Gamma$ that covers $\catP$ and for it they compute a special decomposition of the zigzag module $M_{\Gamma}$ with the goal of determining the generalized rank of $M$. 
This special decomposition contains information that is not detected by $\barc(M_{\Gamma})$.}
For example, let $\catP:=\{(x,y,z)\in \Z^3:0\leq x,y,z\leq 1\}$ and consider the $\catP$-module:
\begin{center}
\tdplotsetmaincoords{60}{125}
\tdplotsetrotatedcoords{0}{30}{120} 
\begin{tikzpicture}[scale=2,tdplot_rotated_coords,
                    cube/.style={very thick,black},
                    grid/.style={very thin,gray},
                    axis/.style={->,blue,ultra thick},
                    rotated axis/.style={->,purple,ultra thick}]
\foreach \x in {0,1}
   \foreach \y in {0,1}
      \foreach \z in {0,1}{
           \ifthenelse{\x<1 }
           {
                \draw [->,black]   (\x+0.15,\y,\z) -- (\x+0.85,\y,\z);
           }
           {
           }
           \ifthenelse{ \y<1  }
           {
                \draw [->,black]   (\x,\y+0.15,\z)-- (\x,\y+0.85,\z);
           }
           {
           }
           \ifthenelse{ \z<1  }
           {
                \draw [->,black]   (\x,\y,\z+0.15) -- (\x,\y,\z+0.85);
           }
           {
           }}
         \node[scale=1] at (0,0,0) {$0$};
            \node at (0,0,1) {$k^2$};          
        \node at (1,0,0) {$k^2$};
                   \node at (0,1,0) {$k^2$};
            \node[scale=0.9] at (1,1,1) {$0$};
              \node[scale=0.9] at (1,1,0) {$k^2$};
                \node[scale=0.9] at (1,0,1) {$k^2$};
                  \node[scale=0.9] at (1,1,0) {$k^2$};                  
                  \node[scale=0.9] at (0,1,1) {$k^2$};
                  \node[scale=0.9] at (0.73,0.35,0) {$\phi_{\lambda}$}; 
 \end{tikzpicture}
\end{center}
in which all  arrows $k^2\rightarrow k^2$ denote the identity map with the exception of the arrow labeled with $\phi_\lambda$ where $\phi_{\lambda}:=\begin{pmatrix}\lambda &1 \\0 &\lambda \end{pmatrix}$, $\lambda\neq 0$. Let $I:=\{p\in \catP:M_p=k^2\}$, \ok{which is} an interval of $\catP$. Then, it is not difficult to verify that for \emph{every} path $\Gamma$ in $I$, we have $\rk(M_{\Gamma})=2$, whereas $\rk_M(I)=0$.
\end{remark}

\subsubsection{Comparison with  bigraded Betti numbers}\label{sec:bigraded Bettis}

In this section, we show that the bigraded Betti numbers of a \(\mathbb{Z}^2\)-module \(M\) are a much weaker invariant than the int-GRI of \(M\). This is in stark contrast with the fact that, for \(\mathbb{Z}^d\)-modules with \(d \geq 3\), the int-GRI is not a stronger invariant than the \(d\)-graded Betti numbers; see \cite[Theorem 4.1]{kim2021bettis}. We do this by showing that the bigraded Betti numbers do not even determine the GRI over simple paths of length 3, while the reverse is already known to hold.

 \begin{proposition}\label{prop:GRI determines bigraded} The GRI of any $\Z^2$-module over the zigzag posets
 in Equation (\ref{eq:zigzag posets in Z2}) below
 determines the bigraded Betti numbers of $M$.
 \end{proposition}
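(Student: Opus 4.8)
The plan is to chain together three facts already available in the excerpt. First, by \cite[Theorem 2.1]{moore2020combinatorial}, the bigraded Betti numbers $\beta_0^M, \beta_1^M, \beta_2^M$ of any $\Z^2$-module $M$ are recoverable from the isomorphism types of the restrictions of $M$ to the length-three zigzag posets $(x,y+1)\leftarrow(x,y)\rightarrow(x+1,y)$ and $(x-1,y)\rightarrow(x,y)\leftarrow(x,y-1)$, ranging over all $(x,y)\in\Z^2$. Second, each such zigzag module is interval-decomposable by Theorem \ref{thm:zigzag modules are interval-decomposable} (these posets are zigzag posets of three points), so its isomorphism type is exactly its barcode. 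Third, by Corollary \ref{cor:gricomplete}, the GRI over $\Ical$ is a complete invariant of $\Ical$-decomposable $\catP$-modules whenever every principal ideal of $\Ical$ is finite; applying this with $\catP$ the three-point zigzag poset $Q$ and $\Ical=\Int(Q)$ (which is finite, hence trivially has finite principal ideals), the GRI over $\Int(Q)$ determines the barcode of any $Q$-module.

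First I would fix notation: let $Q$ denote one of the two three-point zigzag posets appearing in Equation (\ref{eq:zigzag posets in Z2}), viewed as a subposet of $\Z^2$ in the obvious way, and let $M_Q$ denote the corresponding zigzag module $M|_Q$ (a translate of a fixed abstract three-point zigzag poset). The GRI of $M$ over the collection of these zigzag posets assigns to each such $Q$ (and each of its connected subposets) the rank $\rk(M|_J)$ for $J\in\Con(Q)$; restricting to $J\in\Int(Q)$ recovers $\rk_{M_Q}^{\Int(Q)}$, i.e. the Int-GRI of the zigzag module $M_Q$. Since $M_Q$ is interval-decomposable and $\Int(Q)$ is finite, Corollary \ref{cor:gricomplete} (with $\Ical=\Int(Q)$) gives that $\rk_{M_Q}^{\Int(Q)}$ determines $\barc(M_Q)$, and conversely $\barc(M_Q)$ determines $M_Q$ up to isomorphism. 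Doing this for every translate of $Q$ (both orientations) recovers the full collection of barcodes over the posets of Equation (\ref{eq:zigzag posets in Z2}), which by \cite[Theorem 2.1]{moore2020combinatorial} determines the bigraded Betti numbers of $M$.

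The one point requiring a little care is the bookkeeping about which ``GRI'' is being invoked: the statement of the proposition speaks of the GRI of $M$ (a $\Z^2$-module) over a family of subposets of $\Z^2$, whereas Corollary \ref{cor:gricomplete} is phrased for the GRI of a module over its own indexing poset. These agree because the GRI is defined via restriction: for a connected subposet $J$ of $Q\subset\Z^2$, $\rk_M(J)=\rk(M|_J)=\rk(M_Q|_J)=\rk_{M_Q}(J)$, so the GRI of $M$ over $\{J:J\in\Con(Q),\ Q \text{ a translate of a poset in Eq.\ (\ref{eq:zigzag posets in Z2})}\}$ literally contains each $\rk_{M_Q}^{\Con(Q)}$, hence each $\rk_{M_Q}^{\Int(Q)}$, as a sub-family. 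I do not anticipate any genuine obstacle here; the only subtlety is making sure the hypothesis of Corollary \ref{cor:gricomplete} is met, which it is since $\Int(Q)$ is a finite poset and therefore every principal ideal of it is finite. I would then simply assemble the implications: GRI over the length-three zigzags $\Rightarrow$ barcodes of all the associated zigzag modules $\Rightarrow$ bigraded Betti numbers of $M$, which is exactly the claim of Proposition \ref{prop:GRI determines bigraded}.
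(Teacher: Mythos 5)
Your proof is correct and takes the same route as the paper, whose argument is the paragraph immediately preceding the proposition: Moore's theorem expressing the bigraded Betti numbers via barcodes over the length-three zigzags, interval-decomposability of zigzag modules (Theorem~\ref{thm:zigzag modules are interval-decomposable}), and completeness of the GRI on interval-decomposable modules (Corollary~\ref{cor:gricomplete}). Your explicit restriction to $\Int(Q)$ is the right reading of the somewhat terse phrase ``GRI over the zigzag posets'' --- the values $\rk_M(Q)$ on the length-three zigzags alone would not suffice (e.g.\ $\kf_{\{p\}}$ and the zero module agree there but have different Betti numbers), so the GRI must be taken over those zigzags together with their connected sub-posets, which is precisely what you do.
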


 \begin{proof}The bigraded Betti numbers of $M$ is determined by the barcodes over the zigzag posets 
\begin{equation}\label{eq:zigzag posets in Z2}
(x,y+1)\leftarrow (x,y)\rightarrow (x+1,y) \ \mbox{and} \ (x-1,y)\rightarrow (x,y) \leftarrow (x,y-1)\ \mbox{for all $x,y\in \Z^2$};
\end{equation}
 see \cite[Theorem 2.1]{moore2020combinatorial}. Since the zigzag modules are interval-decomposable (Theorem \ref{thm:zigzag modules are interval-decomposable}), and the Int-GRI of any interval-decomposable module determines its barcode (Corollary \ref{cor:gricomplete}), the claim follows.
 \end{proof}

The reverse statement does not hold.
\begin{proposition}\label{prop:bigraded Bettis do not determine GRI over length-3 zigzags}\contributionn
The bigraded Betti numbers of a $\Z^2$-module do not determine its GRI over zigzag paths of length 3.
\end{proposition}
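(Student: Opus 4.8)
The plan is to exhibit a pair of $\Z^2$-modules $M$ and $N$ that share the same bigraded Betti numbers $\beta_0,\beta_1,\beta_2$ but are distinguished by their GRI over some zigzag path of length $3$, i.e.\ a path of the form $\bullet\to\bullet\leftarrow\bullet$ or $\bullet\leftarrow\bullet\to\bullet$. The cleanest way to organize the search is to recall that the bigraded Betti numbers are themselves computable from the barcodes over the \emph{specific} length-$3$ zigzag posets listed in Equation (\ref{eq:zigzag posets in Z2}), namely those of the form $(x,y+1)\leftarrow(x,y)\to(x+1,y)$ and $(x-1,y)\to(x,y)\leftarrow(x,y-1)$ (by Moore's theorem, cited just before this Proposition). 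So it suffices to find $M,N$ which agree on \emph{all} such ``axis-aligned'' length-$3$ zigzags but disagree on some length-$3$ zigzag whose two endpoints are in a more general position, e.g.\ $(x,y+1)\leftarrow(x,y)\to(x+1,y')$ with $y'\neq y$, or a zigzag between two non-adjacent comparable-to-a-common-point vertices.

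**Key steps.** First I would fix a small grid, say $[3]^2$ or even $[2]\times[3]$, and look for a candidate whose only ``interesting'' structure lives away from the immediate Hasse edges. A natural source is a module supported on an interval $I$ containing a local copy of the indecomposable $\kf\xrightarrow{[1\ 1]}\kf^2$-type configuration used in Examples \ref{ex:intvszz} and \ref{ex:Int does not recover simple ZZ}; indeed the module $M$ from Example \ref{ex:intvszz}, built from $\kf_I$ plus the ``commutative-square-with-$\kf^2$'' summand, together with the interval-decomposable $N$ of that example, is already known to satisfy $\rk_M(I)=1\neq 0=\rk_N(I)$ while agreeing on all simple paths. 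Second, I would verify that this $M$ and $N$ have the same bigraded Betti numbers: since both are built from explicit interval summands and $\kf^2$-summands whose presentations I can read off, I compute $\beta_i$ either directly from minimal free resolutions or via the Moore formula by checking that $M_\Gamma\cong N_\Gamma$ for every axis-aligned length-$3$ zigzag $\Gamma$ (which follows from the fact that the $\kf^2$'s only appear along edges where $M$ and $N$ already look alike locally). Third, I would pin down an explicit length-$3$ zigzag path $\Gamma_0$ — not of the axis-aligned type — on which $M_{\Gamma_0}\not\cong N_{\Gamma_0}$; the rank over a well-chosen $\Gamma_0$ picks out the obstruction $\rk_M(I_{\Gamma_0})\neq\rk_N(I_{\Gamma_0})$ via Theorem \ref{thm:rkequalsintervals}, since each summand of $M$ and $N$ is indecomposable. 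Finally, conclude: $\beta(M)=\beta(N)$ but $M_{\ZZ_3}\neq N_{\ZZ_3}$, so the bigraded Betti numbers do not determine the GRI over length-$3$ zigzags.

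**Main obstacle.** The delicate part is the \emph{simultaneous} bookkeeping: I need a pair that genuinely agrees on the full set of axis-aligned length-$3$ zigzags (to force equal Betti numbers by Moore's theorem) yet disagrees on a slightly longer-reach length-$3$ zigzag. In particular I must be careful that ``length $3$'' here means three \emph{vertices} along a faithful path, so the two non-middle vertices can be at grid-distance up to $2$ from each other in a given coordinate, giving genuinely more probing power than the unit-step zigzags of Moore's formula. The verification that $M_\Gamma\cong N_\Gamma$ for the relevant axis-aligned $\Gamma$'s reduces to comparing $3$-term zigzag representations (automatically interval-decomposable by Theorem \ref{thm:zigzag modules are interval-decomposable}), which is finite linear algebra, but one has to be meticulous about the basis choices in the $\kf^2$ slots — a misaligned basis is exactly what makes one zigzag split differently from another, and that is both the content of the counterexample and the easiest place to make an error. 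Once the module pair is nailed down, everything else is routine: apply Theorem \ref{thm:rkequalsintervals} to read off the ranks and apply the Moore correspondence for the Betti numbers.
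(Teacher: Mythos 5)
Your high-level strategy is the right one and matches the paper's: produce a pair of $\Z^2$-modules that agree on the unit-step zigzags of Moore's theorem (hence share Betti numbers) yet disagree on some length-3 zigzag with non-unit steps. However, the candidate pair you propose fails. First, you misremember Example \ref{ex:intvszz}: the module $N$ there is \emph{not} interval-decomposable (both of its summands carry a $\kf^2$ slot; you may be thinking of Example \ref{ex:Int does not recover simple ZZ}). Second, and fatally, those $M$ and $N$ cannot be separated by \emph{any} length-3 zigzag. The obstruction distinguishing them is $\rk_M(I)=1\neq 0=\rk_N(I)$ for the full seven-vertex interval $I$, which has two minimal and two maximal points. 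A length-3 zigzag $p_1\to p_2\leftarrow p_3$ has interval hull $[p_1,p_2]\cup[p_3,p_2]$ with a unique maximal point $p_2$, so it cannot probe that configuration; direct computation of the non-simple length-3 zigzags supported in $I$ (e.g.\ $t_1\to t_3\leftarrow b_2$, $t_1\to t_3\leftarrow b_1$, $t_3\leftarrow b_1\to b_3$) shows the dimensions, intermediate ranks, and overlap rank $\rk(M_\Gamma)$ all coincide with those of $N$. Step (iii) of your plan therefore cannot be completed with this pair.

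The paper instead gives a much simpler and cleaner example. With $a=(0,2)$, $b=(1,1)$, $c=(2,0)$, $D=(2,2)$, it sets $M:=\kf_{a^\uparrow\cup c^\uparrow}\oplus\kf_{b^\uparrow}$ and $N:=\kf_{a^\uparrow}\oplus\kf_{c^\uparrow}\oplus\kf_{b^\uparrow\setminus D^\uparrow}$. Both are interval-decomposable with $\beta_0=1$ at each of $a,b,c$, $\beta_1=1$ at $D$, and vanishing higher Betti numbers. But the length-3 zigzag $\Gamma\colon a\to D\leftarrow c$ produces a full bar in $\barc(M_\Gamma)$ (coming from $\kf_{a^\uparrow\cup c^\uparrow}$) and no full bar in $\barc(N_\Gamma)$. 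This example is engineered to exploit exactly the structural feature your candidate lacks: the interval $a^\uparrow\cup c^\uparrow$ has two incomparable minimal generators $a,c$ whose join $D$ lies in the module's support, so the single (non-unit-step) zigzag $a\to D\leftarrow c$ already straddles the fork and sees the distinction, whereas the rectangle-sum $\kf_{a^\uparrow}\oplus\kf_{c^\uparrow}$ looks identical on all pairwise probes.
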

\begin{proof} It suffices to construct a pair of $\Z^2$-modules that have the same bigraded Betti numbers but different barcodes over some zigzag path of length 3.
    Let $a = (0,2)$, $b = (1,1)$, $c = (2,0)$ and $D = (2,2)$.
    Let $M$ and $N$ be $\Z^2$-modules defined as 
    \[M:=\kf_{a^\uparrow \cup c^\uparrow}\oplus \kf_{b^\uparrow}  \ \ \mbox{and} \ \  N:=\kf_{a^\uparrow}\oplus \kf_{c^\uparrow}\oplus \kf_{b^\uparrow \setminus D^\uparrow}.\]
    It is straightforward to compute that the 
    zeroth Betti number for each module is 1 at $a$, $b$ and $c$, and that the first Betti number for each is 1 at $D$. For the both modules, the second and higher Betti numbers vanish.

    Now, consider $\Gamma$, the zigzag path of length 3 given by $a \rightarrow D \leftarrow c$.
    Since there is an interval summand over the upset $a^\uparrow \cup c^\uparrow$ in $M$ but not $N$, the zigzag barcode of M over $\Gamma$ includes a full bar, whereas the zigzag barcode of $N$ over $\Gamma$ does not include a full bar.
\end{proof}
\subsection{Generalized rank invariant compared to other invariants}\label{sec:comparison}
This section is dedicated to providing detailed explanations of Table \ref{table:comparison}.

\begin{enumerate}
\item[Row 1.] The dimension function of $M:\catP\rightarrow \cvec$, defined by the map $p\mapsto \dim (M_p)$ for $p\in \catP$, can be viewed as the  GRI over $\Jcal:=\{\{p\}:p\in \catP\}$. Clearly, $\rk_M^{\Jcal}$ is M\"obius invertible over $\Jcal$, and its M\"obius inversion is identical to itself.
\item[Row 3.] We call a line $\ell$ in $\R^d$ \emph{monotone}, if $\ell$ forms a totally ordered set with the order inherited from $\R^d$. In a fixed line $\ell$, let $\seg(\ell)$ be ordered by $\supset$. For different monotone lines, we declare that $\ell$ and $\ell'$, any pair of elements from $\seg(\ell)$ and $\seg(\ell')$ is not comparable. For any fixed line $\ell$, since $M|_{\ell}$ is interval-decomposable \cite{crawley2015decomposition}, $\rk_M^{\seg(\ell)}$ is M\"obius invertible (Theorem \ref{thm:sufficient conditions for invertibility} \ref{item:interval-decomposable}). Hence, the GRI of $M$ is also M\"obius invertible over the disjoint union $\bigsqcup \{\seg(\ell):\mbox{$\ell$ is monotone in $\R^d$}\}$.
\item[Row 6-7.] See \cite{asashiba2019approximation}.
\item[Row 8.] `$\Rightarrow$ Bigraded Betti numbers' follows from Propositions \ref{prop:GRI determines bigraded} and \ref{prop:bigraded Bettis do not determine GRI over length-3 zigzags}.
\item[Row 9.] `$\Rightarrow$ Int-GRI' follows from \cite[Theorem 3.12]{dey2022computing}.
\item[Row 10] `$\not\Leftrightarrow$ Int-GRI' follows from Examples \ref{ex:Int does not recover simple ZZ} and \ref{ex:intvszz}. 
\end{enumerate}

\section{Stability of the generalized rank invariant}\label{sec:stability}

In Section \ref{sec:stability of GRI}, we prove that the GRI  is stable in the erosion distance sense and thereby, in Section \ref{sec:5.3}, we establish a stability result for ZIBs.

\subsection{Stability of the generalized rank invariant}\label{sec:stability of GRI}

In this section, we prove that the GRI over $\Ical$ is stable in the erosion distance sense as long as $\Ical$ is closed under thickenings: see Definition \ref{def:closed under thickenings} and Theorem \ref{thm:gristability}.  

\ok{For ease of notation, in this section we focus on the setting of $\R^d$ or $\Z^d$-modules with the usual interleaving distance. Appendix \ref{sec:appendix for stability} deals with more general settings, specifically the stability of GRIs of $\catP$-modules under appropriate assumptions on $\catP$.} 

We begin by reviewing the definition of interleaving distance between $\R^d$ (or $\Z^d$)-modules \cite{lesnick2015theory}.
Let $M:\R^d\to \cvec$ and $\epsilon \in \R_{\geq 0}$.
Denote $\bareps:=\epsilon(1,\ldots,1)\in \R^d$.
Define the $\bareps$-shift of $M$, $M^{\bareps} :\R^d\to \cvec$ by $M^{\bareps}(a) := M(a)$ and $\varphi_{M^{\bareps}} (a, b):= M(a+\bareps\leq b+\bareps)$ for all $a,b\in \R^d$.
For a morphism of modules $\alpha$, define $\alpha(\bareps)_a=\alpha_{a+\bareps}$. 
Define the transition morphism $\varphi_M^{\bareps}:M\to M^{\bareps}$ as the morphism whose restriction to $M(a)$ is the linear map $\varphi_M(a, a+\bareps)$ for each $a\in \R^d$.
For $\epsilon\geq 0$, we say $\R^d$-modules $M$ and $N$ are $\epsilon$-\textbf{interleaved} if there exist morphisms $\alpha:M\to N^{\bareps}$ and $\beta:N\to M^{\bareps}$ such that $\beta(\bareps)\circ \alpha = \varphi_M^{2\bareps}$ and $\alpha(\bareps)\circ \beta = \varphi_N^{2\bareps}$.
The \textbf{interleaving distance} is defined as:
\[\dI(M,N) := \inf\{\epsilon\geq 0  :  M \,\mathrm{and}\, N \,\mathrm{are}\,\epsilon-\mathrm{interleaved}\},\]
and $\dI(M,N):=\infty$ if no such $\epsilon$-interleavings exist.

For $I\in \Int(\R^d)$, define the $\epsilon$-\emph{thickening} of $I$, $I^\epsilon$, as the set 
\[I^\epsilon := \{p\in \R^d\  :  \exists q\in I \, s.t. \, \|p-q\|_\infty \leq \epsilon\}.\]

\begin{definition}\label{def:closed under thickenings}
Let $\mathcal{I}\subset \Int(\R^d)$. 
We say that $\mathcal{I}$ is \textbf{closed under thickenings} if for all $\epsilon\geq 0$ and $I\in \mathcal{I}$, $I^\epsilon\in \mathcal{I}$.
\end{definition}

 For example, the set $\Int_{m.n}(\R^d)$ given in Equation (\ref{eq:intmn}) is closed under thickenings.
   
We now adapt the definition of erosion distance from \cite{patel2018generalized} to our setting:
\begin{definition}\label{def:erosion distance Z2}
Let $\mathcal{I}\subset \Int(\R^d)$ be closed under thickenings, and $M,N:\R^d\to \cvec$.
We say there is an $\epsilon$-\textbf{erosion} between $\rank_M^\mathcal{I}$ and $\rank_N^\mathcal{I}$ if for all $I\in \mathcal{I}$, we have
\[\rank_M(I^\epsilon)\leq \rank_N(I) \hspace{10mm} \mathrm{and} \hspace{10mm} \rank_N(I^\epsilon)\leq \rank_M(I)\]
The \textbf{erosion distance} between $\rank_M^\mathcal{I}$ and $\rank_N^\mathcal{I}$ is defined as:
\[\dE(\rank_M^\mathcal{I},\rank_N^\mathcal{I}):=\inf\{\epsilon\geq 0 :  \exists \, \mathrm{an} \, \epsilon-\mathrm{erosion \ between\ }\rank_M^\mathcal{I} \,\mathrm{and} \, \rank_N^\mathcal{I}\},\]
and $\dE(\rank_M^\mathcal{I},\rank_N^\mathcal{I}):=\infty$ if no such erosion exists.
\end{definition}

\ok{We remark that if $\dE(\rank_M^\mathcal{I},\rank_N^\mathcal{I})<\infty$, then $\dE(\rank_M^\mathcal{I},\rank_N^\mathcal{I})$ is a non-negative integer.}
We establish the following stability result:\footnote{\ok{A similar theorem is provided in the arXiv version of \cite{kim2021generalized}, but not in the published version.} }

\begin{theorem}\label{thm:gristability}
Let $\mathcal{I}\subset \Int(\R^d)$ be closed under thickenings. Then, for any $M,N:\R^d\to \cvec$,
\begin{equation}\label{eq:gristabilityeq}
    \dE(\rank_M^\mathcal{I},\rank_N^\mathcal{I})\leq \dI(M,N).
\end{equation}
\end{theorem}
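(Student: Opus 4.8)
\textbf{Proof plan for Theorem \ref{thm:gristability}.}

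The plan is to prove the statement by reducing it to the interaction between interleavings and the generalized rank via the monotonicity property of the GRI (Remark \ref{rem:properties of rank invariant} \ref{item:monotonicity}). First I would dispose of the trivial case: if $\dI(M,N)=\infty$ there is nothing to prove, so I may assume $M$ and $N$ are $\epsilon$-interleaved for some integer $\epsilon\geq 0$, and it suffices to show that there is an $\epsilon$-erosion between $\rk_M^{\mathcal I}$ and $\rk_N^{\mathcal I}$; by symmetry of the roles of $M$ and $N$ it is enough to prove $\rk_M(I^\epsilon)\leq \rk_N(I)$ for every $I\in\mathcal I$ (note $I^\epsilon\in\mathcal I$ since $\mathcal I$ is closed under thickenings, so $\rk_M(I^\epsilon)$ is a legitimate term of $\rk_M^{\mathcal I}$).

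The key step is the following ``sandwiching'' observation. Fix an $\epsilon$-interleaving pair $\alpha:M\to N^{\bareps}$, $\beta:N\to M^{\bareps}$, and fix $I\in\mathcal I$. Restricting everything to the relevant subposets, one gets a commuting diagram of the canonical limit-to-colimit maps: concretely, the limit-to-colimit map $\psi_{M|_{I^{2\epsilon}}}$ (equivalently the transition map $\varphi_M^{2\bareps}$ read at the level of limits/colimits over $I$) factors through the limit-to-colimit map $\psi_{N|_{I^{\epsilon}}}$, because $\beta(\bareps)\circ\alpha=\varphi_M^{2\bareps}$ sends the limit of $M|_{I^{2\epsilon}}$ into a section of $N$ over $I^\epsilon$ and then back into the colimit of $M$ over $I$. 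Here I would invoke the concrete description of limits and colimits (Convention \ref{con:limit and colimit}): a section of $M$ over $I^{2\epsilon}$ maps, via $\alpha$, to a section of $N$ over $I^\epsilon$ (shifting coordinates by $\bareps$), and the naturality of $\alpha,\beta$ together with the interleaving identities guarantees compatibility of the induced maps on colimits. Consequently $\rk(\psi_{M|_{I^{2\epsilon}}})\leq \rk(\psi_{N|_{I^\epsilon}})$, i.e. $\rk_M(I^{2\epsilon})\leq \rk_N(I^\epsilon)$. Running the same argument with $M$ and $N$ swapped yields $\rk_N(I^{2\epsilon})\leq \rk_M(I^\epsilon)$ as well. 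Chaining these two inequalities (and using monotonicity $\rk_M(I^{2\epsilon})\le \rk_M(I^{\epsilon'})$ for $\epsilon'\le 2\epsilon$ when needed to line up the thickening exponents) produces precisely $\rk_M(I^\epsilon)\leq \rk_N(I)$ and $\rk_N(I^\epsilon)\leq\rk_M(I)$ — wait, more carefully: from $\rk_M(I^{\epsilon})\le \rk_N(I^{?})$ I need the thickenings to telescope, so the cleanest route is to prove the single-step claim $\rk_M(J^\epsilon)\le \rk_N(J)$ directly from one half of the interleaving (using $\alpha$ to push sections forward and $\beta$ to pull the colimit back), which is what the factorization above gives with $J=I$, $J^{2\epsilon}$ replaced by $J^\epsilon$ after absorbing one shift; I would organize the bookkeeping so that a single $\epsilon$-shift of $\alpha$ composed with $\beta$ lands sections over $I^\epsilon$ into the colimit over $I$.

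I expect the main obstacle to be exactly this bookkeeping of shifts and the verification that the relevant square of limit/colimit maps genuinely commutes — that is, checking that a section of $M$ over $I^\epsilon$, pushed through $\alpha$ restricted appropriately and then through the colimit structure maps of $N$, agrees with what one gets by first passing to the colimit of $M$ over $I$ and then applying the map induced by $\alpha$ on colimits. This is where the explicit formulas of Convention \ref{con:limit and colimit} and the naturality of $\alpha$ and $\beta$ (Definition of morphism of persistence modules, plus the interleaving equations $\beta(\bareps)\circ\alpha=\varphi_M^{2\bareps}$, $\alpha(\bareps)\circ\beta=\varphi_N^{2\bareps}$) must be combined with care; the fact that $I^\epsilon$ is itself an interval of $\Z^2$ and lies in $\mathcal I$ is what makes all the $\rk$-terms well-defined. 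Once the commuting factorization is in hand, the rank inequality is immediate from the elementary fact that $\rk(g\circ f)\le \min(\rk f,\rk g)$, and taking the infimum over all $\epsilon$ for which $M$ and $N$ are $\epsilon$-interleaved gives Inequality (\ref{eq:gristabilityeq}). Finally I would remark that the argument uses nothing special about $\Z^2$ beyond the closure of $\mathcal I$ under thickenings and the compatibility of thickenings with the shift functor, which is the reason the more general statement is deferred to the appendix.
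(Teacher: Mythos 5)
Your proposal is correct and ultimately arrives at the same argument as the paper: after discarding the $2\epsilon$ detour (correctly noting it does not telescope), you set up precisely the commuting square that factors the limit-to-colimit map over the $\epsilon$-thickened interval through the limit-to-colimit map over the original interval, using one interleaving morphism at the level of sections/limits and the other at the level of colimits, and then conclude via $\rk(g\circ f)\leq\min(\rk f,\rk g)$. The paper's proof carries out exactly the shift-and-naturality bookkeeping you flag as the main obstacle, defining the two vertical maps via Convention~\ref{con:limit and colimit} and verifying commutativity at an arbitrary point $p_0\in I$ using the interleaving identity.
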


\begin{proof}[Proof of Theorem \ref{thm:gristability}]
If $\dI(M,N)=\infty$, there is nothing to prove. 
Let $\epsilon\geq 0$, and suppose that $\alpha:M\to N^{\bareps}$ and $\beta:N\to M^{\bareps}$ give an $\epsilon$-interleaving. 
Fix $I\in \mathcal{I}$. 
We will show that $\rank_N(I^\epsilon)\leq \rank_M(I)$. To this end, 
it suffices to show that there exist morphisms $\alpha'$ and $\beta'$ that make the following diagram commute: 
\begin{equation}\label{eq:lim-to-colim diagram}
\begin{tikzcd}
\varprojlim N\vert_{I^\epsilon} \arrow[r,"\psi_{N\vert_{I^\epsilon}}"]\arrow[d,"\alpha'"] & \varinjlim N\vert_{I^\epsilon}\\
\varprojlim M\vert_I \arrow[r,"\psi_{M\vert_I}"] & \varinjlim M\vert_I \arrow[u,"\beta'"]
\end{tikzcd}\end{equation}
Indeed, if such $\alpha'$ and $\beta'$ exist, then the limit-to-colimit map $\psi_{N\vert_{I^\epsilon}}$, whose rank is $\rank_N(I^\epsilon)$, factors through the  limit-to-colimit map $\psi_{M\vert_{I}}$, whose rank is $\rank_M(I)$, implying the desired bound.

Define $\alpha'$ by $(\ell_p)_{p\in I^\epsilon}\mapsto (\alpha_p(\ell_p))_{p+\bareps\in I}$. 
Then, from naturality of $\alpha$, and the fact that  $p+\bareps\in I$ implies $p\in I^\epsilon$, we have that $(\alpha_p(\ell_p))_{p+\bareps\in I}$ is a section of $M\vert_I$. 
Define $\beta'$ by $[v_p]\mapsto [\beta_p(v_p)]$ for any $p\in I$ and $v_p\in M_p$. 
By naturality of $\beta$, and since $p+\bareps\in I^\epsilon$, $\beta'$ is well-defined.

Fix any $p_0\in I$. 
Then, both $p_0+\bareps$ and $p_0-\bareps$ belong to $I^\epsilon$. 
Let $(\ell_p)_{p\in  I^\epsilon}$ be an element of $\varinjlim N\vert_{I^\epsilon}$ and observe commutativity of the following diagram:

\[\begin{tikzcd}
(\ell_p)_{p\in I^\epsilon} \arrow[r]\arrow[d,"\alpha'"] & \left[\ell_{p_0-\bareps}\right] = \left[\varphi_N(p_0-\bareps,p_0+\bareps)(\ell_{p_0-\bareps})\right]\\
(\alpha_p(\ell_p))_{p+\bareps\in I} \arrow[r] & \left[\alpha_{p_0-\bareps}(\ell_{p_0-\bareps})\right] \arrow[u,"\beta'"]
\end{tikzcd}\]
Thus, $\rank_N(I^\epsilon)\leq \rank_M(I)$, and a symmetric argument gives $\rank_M(I^\epsilon)\leq \rank_N(I^\epsilon)$, so we obtain $d_E(\rank_M^\Ical,\rank_N^\Ical)\leq \epsilon$, as desired.
\end{proof}

\begin{remark}[Computational efficiency vs. discriminating power]\label{rem:tension remark}
  Corollary \ref{cor:intmn discriminating power} implies that, as $m$ and $n$ increase, the discriminating power of $\rk^{\Int_{m,n}(\R^d)}$  strictly increases. 
    Hence, if we let $\Ical = \Int_{m,n}(\R^d)$
    in 
    Theorem \ref{thm:gristability},  we expect $\dE(\rank_M^\mathcal{I},\rank_N^\mathcal{I})$ to give an increasingly better approximation to $\dI(M,N)$ as $m,n$ increase.

    On the other hand, as $m$ and $n$ increase, the cost associated to computing $\dE(\rank_M^\mathcal{I},\rank_N^\mathcal{I})$ is also expected to increase. To illustrate this, by adapting the binary-search based algorithm described in \cite[Section 5]{kim2021spatiotemporal},\footnote{This algorithm was implemented in \cite{PHoDMSs}.} we can  verify that the resulting computational cost of $\dE$ between $\Z^d$-modules that are supported on the finite grid $[\ell]^d$ is $O(\ell^{d(m+n)}\log \ell)$ in time.
  \end{remark}

\subsection{Stability of  (restricted) ZIBs}\label{sec:5.3}

In this section, we reinterpret Theorem \ref{thm:gristability} in terms of  (restricted) ZIBs. 

A path $\Gamma$ in a poset $\catP$ is called \textbf{rank-representing} if for every $\catP$-module $M$, $\rk(M)=\rk(M_\Gamma)$. A subset $\Ical\subset \Int(\catP)$ is called \textbf{ZIB-rank-representing} if  each $I\in \Ical$ admits at least one rank-representing path $\Gamma$ in $I$. 

Let $\Ical\subset \Int(\R^d)$ be closed under thickenings and ZIB-rank-representing. 
For example, when $d=2$, the collection $\Ical$ can be $\Int_{m,n}^\mathrm{cc}(\R^2)$ (cf. Equation (\ref{eq:intmn cc})).
Let
\[\ZZ_{\Ical}:=\{\Gamma: \mbox{$\Gamma$ is a rank-representing path for some $I\in \Ical$}\}.\]
For $\Gamma\in \ZZ_{\Ical}$ and $\eps\geq 0$, let $\Gamma^\eps$ be any path that is rank-representing of $I^\eps$. 
We remark that $\Gamma^\eps$ may not be unique but exists in $\ZZ_{\Ical}$ by the assumption on $\Ical$.
Let $M_{\ZZ_{\Ical}}$ be the ZIB of $M$ on $\ZZ_{\Ical}$, i.e. the map sending each $\Gamma\in \ZZ_{\Ical}$ to $\barc(M_\Gamma)$. 
Given another $\R^d$-module $N$, we define the erosion distance $\dE(M_{\ZZ_{\Ical}},N_{\ZZ_{\Ical}})$ as the infimum $\eps\geq 0$ for which, for any $\Gamma\in \ZZ_{\Ical}$, we have $\rank(M_{\Gamma^\eps})\leq \rank(N_{\Gamma})$ and  $\rank(M_{{\Gamma}^\eps})\leq \rank(N_{\Gamma})$. In light of Theorems \ref{thm:zigzag modules are interval-decomposable}, \ref{thm:rkequalsintervals} and Proposition \ref{prop:rank via zigzag}, the condition $\rank(M_{\Gamma^\eps})\leq \rank(N_{\Gamma})$ can be read as: 
\begin{equation*}
    (\mathrm{Multiplicity \ of \ the \ full \ bar \ in \ }\barc(M_{\Gamma^\epsilon}))\ \leq \ (\mathrm{Multiplicity \ of \ the \ full \ bar \ in \ }\barc(N_\Gamma)).
\end{equation*}

\begin{theorem}[Reinterpretation of Theorem \ref{thm:gristability}]\label{thm:ZIB stability} 
Let $\Ical \subset \Int(\R^d)$ be any collection of intervals closed under thickenings and ZIB-rank-representing. Then, for any $\R^d$-modules $M$ and $N$ 
\begin{equation}\label{eq:erosion stability}
    \dE(M_{\ZZ_{\Ical}},N_{\ZZ_{\Ical}})\leq \dI(M,N).
\end{equation}
\end{theorem}

\begin{remark}\label{rem:erosion stability generalization}
When $d=2$, the collection $\Ical=\Int_{m,n}^\mathrm{cc}(\R^2)$ for any $m,n\in \N$ is a salient example for which this theorem is applicable. When $d\geq 3$, one such an example is $\Ical=\seg(\R^d)$. However, identifying some other such collections for $d\geq 3$ does not seem  straightforward for  a reason  analogous to the one described in Remark \ref{rem:estimation generalization}.
\end{remark}

\section{Discussion}\label{section:discussion}

We have addressed the four driving questions  
\begin{enumerate}[leftmargin=*]
\item[] \hyperlink{q:1}{\textbf{Question 1.}} 
How to restrict, if possible, the domain of the generalized rank invariant without any loss of information? 
\item[] \hyperlink{q:2}{\textbf{Question 2.}} 
Under what conditions can we more compactly encode the GRI as a 'persistence diagram,' even when the indexing poset $\catP$ is not discrete?

\item[] \hyperlink{q:3}{\textbf{Question 3.}} What is the trade-off  between
computational efficiency and the discriminating power of the GRI as the amount of the restriction varies? 

\item[] \hyperlink{q:4}{\textbf{Question 4.}} What proxies exist for persistence diagrams in the multi-parameter setting that can be derived from the GRI?
\end{enumerate}
Below, we delve into a selection of future research directions.
\medskip
\begin{itemize}
    \item 
Given that in the case of $\R^2$ or $\Z^2$-modules the generalized rank invariant and  the ZIB estimate each other (Section \ref{sec:gri vs. zz}),
 it becomes natural to seek an efficient algorithm for computing or estimating the generalized persistence diagram of a finitely presentable $\R^2$-module by harnessing zigzag persistence update algorithms \cite{dey2021updating}. Such an algorithm could also potentially be useful for estimating other invariants that are closely related to the generalized rank invariant
\cite{amiot2024invariants,asashiba2023approximation,blanchette2021homological,botnan2021signed,chacholski2022effective}. 

\item  A related direction is to study, for the case when $d\geq 3$, how to estimate the restricted or full GRI of an $\R^d$ or $\Z^d$-module via  its ZIB over a set of zigzag paths of manageable size, and to utilize the results of this study for establishing a stability result for ZIB; see Remarks \ref{rem:estimation generalization} and \ref{rem:erosion stability generalization}. 

\item Theorem \ref{thm:ZIB stability} suggests the possibility of utilizing zigzag persistence for an efficient estimation of the interleaving distance.

\item For the $\Z^2$-module $M$ given in the proof of Theorem \ref{thm:tame does not imply Int-GRI invertible}, the fact that $\rk_M^\Int$ is not M\"obius invertible implies that there exists \emph{no} finite exact sequence of interval-decomposable persistence modules $\{M_i\}_{i=0}^n$ and morphisms  $\{f_i\}_{i=0}^n$ with
\[0\rightarrow M_n\stackrel{f_n}{\rightarrow}\cdots\rightarrow  M_1\stackrel{f_1}{\rightarrow} M_0 \stackrel{f_0}{\rightarrow} M\rightarrow 0\]
such that $\rk_{M_i}^{\Int}=\rk_{\ker f_i}^\Int +\rk_{\ker f_{i-1}}^\Int$ for each $i=0,
\ldots, n$. Indeed, the existence of such a sequence implies the rank decomposition $\rk_M^\Int=\rk_{\bigoplus_{i}M_{2i}}^{\Int}-\rk_{\bigoplus_{i}M_{2i-1}}^\Int$ and thus the M\"obius invertibility of $\rk_M^{\Int}$. More generally, Remark \ref{rem:higher dimension} clarifies that there are $\Z^d$-modules, $d> 2$, that do not admit such finite sequences. This observation might have interesting implications in the perspective of recent studies at the intersection of representation theory and multi-parameter persistence \cite{amiot2024invariants,asashiba2023approximation,blanchette2021homological,botnan2021signed,chacholski2022effective}.
\end{itemize}

\bibliographystyle{plain}
\bibliography{references.bib}

\appendix

\section{Stability of the generalized rank invariants over general posets}\label{sec:appendix for stability}

\ok{In this section, we extend the erosion distance between the GRIs \cite{patel2018generalized,puuska2020erosion} to the general setting of $\catP$-modules and generalize Theorem \ref{thm:gristability} to this general setting as well (Definition \ref{def:erosion distance1} and Theorem \ref{thm:gristabilityold}). }

\ok{
To compare $\catP$-modules, we consider the interleaving distance between $\catP$-modules, developed by Bubenik et al. \cite{bubenik2015metrics} and expanded upon by de Silva et al. \cite{de2017theory}. This interleaving distance is an extension of the classical interleaving distance between $\R^d$-modules \cite{chazal2009proximity,lesnick2015theory}.}

\ok{Let $\catP$ be a poset, viewed as a category. 
For two order-preserving maps $T_1,T_2:\catP\to\catP$, we write $T_1\leq T_2$ if for all $p\in \catP$, $T_1(p)\leq T_2(p)$. Let $I_\catP$ be the identity functor on $\catP$.} 

\begin{definition}\label{def:superlinear family}
A \textbf{translation} on $\catP$ is an endofunctor $T:\catP\rightarrow \catP$ together with a natural transformation $\eta:I_\catP\to T$
\ok{(i.e. $p\leq T(p)$ for all $p\in \catP$).} 
A \textbf{family of superlinear translations} $\Omega=(\Omega_\epsilon)_{\epsilon \geq 0}$, is a family of translations $\Omega_\epsilon$ on $\catP$, for $\epsilon\geq 0$, such that $\Omega_0 = I_\catP$, and for $\epsilon,\zeta \geq 0$, $\Omega_\epsilon \Omega_\zeta \leq \Omega_{\epsilon+\zeta}$. 
\end{definition}

Throughout the following, $\Omega$ will refer to a family of superlinear translations on $\catP$.
For $\Omega_\epsilon$ a superlinear translation on $\catP$ and $I\subset \catP$, we denote $\Omega_\epsilon(I):=\{\Omega_\epsilon(p)  :  p\in I\}$. For all $\epsilon\geq 0$, the translation $\Omega_\epsilon$ comes with a natural transformation $\eta_\epsilon:I_\catP\to \Omega_\epsilon$. 
For any $M:\catP\to \cvec$, this induces a natural transformation $M\eta_\epsilon:M\to M\Omega_\epsilon$. 
This is used to define:

\begin{definition}\label{def:interleaving distance}
Two $\catP$-modules $M$ and $N$ are $\Omega_\epsilon$\textbf{-interleaved} if there exist a pair of natural transformations $\varphi:M\to N\Omega_\epsilon$ and $\psi:N\to M\Omega_\epsilon$ such that the diagram:
\[\begin{tikzcd}
M \arrow[rr,"M\eta_\epsilon"]\arrow[ddrr,"\varphi",pos=0.8] & &  M\Omega_\epsilon \arrow[rr,"M\eta_\epsilon\Omega_\epsilon"]\arrow[ddrr,"\varphi\Omega_\epsilon",pos=0.8] & & M\Omega_\epsilon \Omega_\epsilon\\
\\
N \arrow[rr,"N\eta_\epsilon",swap]\arrow[uurr,"\psi", pos=0.2,crossing over] & & N\Omega_\epsilon \arrow[rr,"N\eta_\epsilon\Omega_\epsilon",swap] \arrow[uurr,"\psi\Omega_\epsilon", pos=0.2,crossing over] & & N\Omega_\epsilon \Omega_\epsilon
\end{tikzcd}\]
commutes. 
The pair $(\varphi,\psi)$ is said to be an $\Omega_\epsilon$\textbf{-interleaving}.

The \textbf{interleaving distance with respect to} $\Omega$ is
\[\dI^\Omega(M,N) := \inf\{\epsilon\geq 0  :  M,N \ \mbox{are $\Omega_\epsilon$-interleaved}\},\]
or $\dI^\Omega(M,N):=\infty$ if there is no $\Omega_\epsilon$-interleaving for any $\epsilon\geq 0$.
\end{definition}

For example, let $P = \Z^n$ with the standard product order, and let $\Omega$ be the family with $\Omega_\epsilon$ the translation by $(\lfloor \epsilon\rfloor ,\lfloor \epsilon \rfloor,\ldots,\lfloor \epsilon \rfloor)\in \Z^n$.
Then $\dI^\Omega$ is the classical notion of interleaving used in Section \ref{sec:stability}.

\ok{We now extend the definition of erosion distance in a suitable way.}

\begin{definition}\label{def:epsthickening}
Let $I\subset \catP$ be non-empty.
For $\epsilon> 0$, we call $I^\epsilon$ the $\epsilon$-\textbf{thickening} of $I$, defined as:
\[I^\epsilon := \{r\in \catP  :  \exists p,p'\in I \ \mathrm{with} \ p\leq \Omega_\epsilon(r) \mathrm{\ and} \ r\leq \Omega_\epsilon(p')\}.\]
\end{definition}

Clearly, $I\subset I^\eps$. For example, 
if $P=\R$, and $\Omega$ is the family with $\Omega_\epsilon$ the translation by $\epsilon$ for $\epsilon\geq 0$, then for an interval $I=[a,b]$, its $\epsilon$-thickening would be the interval $I^\epsilon = [a-\epsilon,b+\epsilon]$. 
Furthermore, the $\epsilon$-thickening of an interval is an interval:

\begin{proposition}\label{prop:eps thickening is interval}
Let $I\in \Int(\catP)$. 
Then, $I^\epsilon\in \Int(\catP)$ for all $\epsilon\geq 0$. 
\end{proposition}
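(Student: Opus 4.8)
The plan is to verify the two defining properties of an interval from Definition \ref{def:intervals}, namely convexity and connectivity, for $I^\epsilon$, using only the superlinearity of $\Omega$ and the fact that $I$ itself is an interval. First I would observe that $I^\epsilon$ is nonempty because $I\subset I^\epsilon$ (take $p=p'=r$ and use $\eta_\epsilon$, i.e. $r\leq \Omega_\epsilon(r)$).

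For \emph{convexity}, suppose $r, r'\in I^\epsilon$ and $s\in \catP$ with $r\leq s\leq r'$. By definition of $I^\epsilon$ there are $p, p', q, q'\in I$ with $p\leq \Omega_\epsilon(r)$, $r\leq \Omega_\epsilon(p')$, $q\leq\Omega_\epsilon(r')$, and $r'\leq\Omega_\epsilon(q')$. Since $\Omega_\epsilon$ is order-preserving and $r\leq s\leq r'$, we get $p\leq \Omega_\epsilon(r)\leq \Omega_\epsilon(s)$ and $s\leq r'\leq \Omega_\epsilon(q')$. Thus $s\in I^\epsilon$ witnessed by the pair $(p, q')$ from $I$. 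This is the easy half.

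For \emph{connectivity}, let $r, r'\in I^\epsilon$. Pick witnesses $p, q\in I$ with $r\leq \Omega_\epsilon(p)$ and $r'\leq \Omega_\epsilon(q)$ (using the second clause of the thickening definition for each). Since $I$ is an interval, there is a finite sequence $p=t_0, t_1,\ldots, t_n=q$ of elements of $I$ with consecutive terms comparable. The key point is then that each $t_i\in I^\epsilon$ and, more importantly, that one can build a comparability chain in $I^\epsilon$ linking $r$ to $r'$ through the $\Omega_\epsilon(t_i)$'s. Concretely: each $\Omega_\epsilon(t_i)$ lies in $I^\epsilon$ (it is $\geq t_i\in I$ and $\leq \Omega_\epsilon(t_i)$, so the pair $(t_i, t_i)$ works — wait, one needs $t_i\leq \Omega_\epsilon(\Omega_\epsilon(t_i))$, which holds). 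If $t_i\leq t_{i+1}$ then $\Omega_\epsilon(t_i)\leq \Omega_\epsilon(t_{i+1})$; if $t_i\geq t_{i+1}$ then $\Omega_\epsilon(t_i)\geq \Omega_\epsilon(t_{i+1})$. So $\Omega_\epsilon(t_0),\ldots,\Omega_\epsilon(t_n)$ is a comparability chain in $I^\epsilon$. Finally $r\leq \Omega_\epsilon(p)=\Omega_\epsilon(t_0)$ gives a comparability $r\leftrightarrow \Omega_\epsilon(t_0)$ in $I^\epsilon$, and similarly $r'\leftrightarrow \Omega_\epsilon(t_n)$; prepending $r$ and appending $r'$ to the chain yields the required sequence in $I^\epsilon$ connecting $r$ and $r'$. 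Hence $I^\epsilon$ is connected.

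The main obstacle is the connectivity step: one must be careful that the intermediate elements used in the chain actually belong to $I^\epsilon$ and that the relation ``$r$ is comparable to $\Omega_\epsilon(p)$'' can legitimately be inserted into the connecting sequence (it is just one more comparable pair, so this is fine, but it requires noticing that $\Omega_\epsilon(p)\in I^\epsilon$). Everything else reduces to monotonicity of $\Omega_\epsilon$ and the natural transformation inequality $\mathrm{id}\leq \Omega_\epsilon$; superlinearity $\Omega_\epsilon\Omega_\zeta\leq\Omega_{\epsilon+\zeta}$ is not even needed for this particular proposition, only for later compositional statements. I would keep the writeup to a short paragraph for convexity and a slightly longer one for connectivity, citing Definition \ref{def:superlinear family} and Definition \ref{def:epsthickening}.
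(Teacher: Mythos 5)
Your proof is correct and follows essentially the same strategy as the paper's: nonemptiness from $I\subset I^\epsilon$, convexity by transporting the witness pair across the middle element via monotonicity of $\Omega_\epsilon$, and connectivity by pulling a comparability chain from $I$ and attaching $r$ and $r'$ at the ends using the observation that $\Omega_\epsilon(t)\in I^\epsilon$ for $t\in I$. The only cosmetic difference is in how you route the chain: the paper keeps the middle of the chain inside $I$ itself (which sits in $I^\epsilon$ automatically) and only uses $\Omega_\epsilon(\cdot)$ to bridge from $r$ and $r'$ to the endpoints of that chain, while you push the entire chain forward to $\Omega_\epsilon(t_0),\dots,\Omega_\epsilon(t_n)$; both are valid, and your side remark that superlinearity $\Omega_\epsilon\Omega_\zeta\leq\Omega_{\epsilon+\zeta}$ is not used here is also correct (the paper uses it only for the triangle inequality in Proposition~\ref{prop: erosion is pseudometric}).
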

\begin{proof}
Let $\epsilon\geq 0$.
We need to show that $I^\epsilon$ is non-empty, convex, and connected. 

Since $I(\neq\emptyset)\subset I^\eps$, we have that $I^\eps$ is non-empty. Suppose $a,b\in I^\epsilon$, and $a\leq c\leq b$. 
By definition, there exist $p_a,p'_a,p_b,p'_b\in I$ such that $p_a\leq \Omega_\epsilon(a)$, $p_b\leq \Omega_\epsilon(b)$, $a\leq \Omega_\epsilon(p'_a)$, and $b\leq \Omega_\epsilon(p'_b)$. 
Then, we have: $p_a\leq \Omega_\epsilon(a)\leq \Omega_\epsilon(c)$ and $c\leq b\leq \Omega_\epsilon(p'_b)$, and thus $c\in I^\eps$, so $I^\epsilon$ is convex.

We now establish connectivity. First note that for all $r\in I$, the point $\Omega_\epsilon(r)$ belongs to $I^\epsilon$, which can be seen by letting $p=p'=r$ in Definition \ref{def:epsthickening}. Suppose $p,p'\in I^\epsilon$.
Then, we can find $r_p,r_{p'}\in I$ with $p\leq \Omega_\epsilon(r_p)$ and $p'\leq \Omega_\epsilon(r_{p'})$. As $r_p,r_{p'}\in I$, there is a chain $r_p=a_0,a_1,\ldots,a_n=r_{p'}$ of sequentially comparable elements of $I$. Then $p\leq \Omega_\epsilon(r_p)\geq r_p = a_0,a_1,\ldots,a_n=r_{p'}\leq \Omega_\epsilon(r_{p'}) \geq p'$ gives a chain of sequentially comparable elements of $I^\epsilon$, so $I^\epsilon$ is connected.
\end{proof}

\begin{definition}\label{def:closed under Omega thickenings}
If $\mathcal{I}\subset \Int(\catP)$ and for all $I\in \mathcal{I}$ and $\epsilon\geq 0$, $I^\epsilon\in \mathcal{I}$, then we say $\mathcal{I}$ is \textbf{closed under} $\Omega$-\textbf{thickenings}.
\end{definition}

\begin{example}\label{ex:rectangle thickening}
If $P = \R^2$ with the usual product order, and $\Omega$ is the family with $\Omega_\epsilon$ the translation by $(\epsilon,\epsilon)$ for $\epsilon\geq 0$, then for a rectangle $I=[a,b]$, its $\epsilon$-thickening would be $I^\epsilon=[a-(\epsilon,\epsilon),b+(\epsilon,\epsilon)]$. This is still a rectangle, so the collection of rectangles in $\R^2$ is closed under $\Omega$-thickenings.
\end{example}
Now we define the erosion distance for $\catP$-modules:

\begin{definition}\label{def:erosion distance1}
Let $\mathcal{I}\subset \Int(\catP)$ be closed under $\Omega$-thickenings, and let $M,N$ be $\catP$-modules.  
We say there is an $\epsilon$-erosion between $\rank_M^\mathcal{I}$ and $\rank_N^\mathcal{I}$ if for all $I\in \mathcal{I}$, we have
\[\rank_M(I^\epsilon)\leq \rank_N(I) \hspace{5mm} \mathrm{and} \hspace{5mm} \rank_N(I^\epsilon)\leq \rank_M(I).\]
Define the \textbf{erosion distance} (with respect to $\Omega$) between $\rank_M^\mathcal{I}$ and $\rank_N^\mathcal{I}$ as:
\[d_E^\Omega(\rank_M^\mathcal{I},\rank_N^\mathcal{I}) := \inf\{\epsilon\geq 0  :  \exists  \ \mathrm{an \ } \epsilon-\mathrm{erosion \ between \ }\rank_M^\mathcal{I} \ \mathrm{and} \ \rank_N^\mathcal{I} \}\]
and $d_E^\Omega(\rank_M^\mathcal{I},\rank_N^\mathcal{I}):=\infty$ if  $\epsilon$-erosions do not exist for any $\epsilon \geq 0$.\end{definition}

\begin{proposition}\label{prop: erosion is pseudometric}
Fix a collection $\mathcal{I}\subset \Int(\catP)$ that is closed under $\Omega$-thickenings. 
Then, $d_E^\Omega$ is an extended pseudometric on the collection $\{\rank_M^\mathcal{I}  :  M:\catP\to \cvec\}$.
\end{proposition}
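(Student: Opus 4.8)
The statement asserts that $d_E^\Omega$ is an extended pseudometric on $\{\rank_M^\mathcal{I} : M:\catP\to\cvec\}$, so we must verify: (i) $d_E^\Omega(\rank_M^\mathcal{I},\rank_M^\mathcal{I})=0$; (ii) symmetry $d_E^\Omega(\rank_M^\mathcal{I},\rank_N^\mathcal{I})=d_E^\Omega(\rank_N^\mathcal{I},\rank_M^\mathcal{I})$; and (iii) the triangle inequality. (It is ``pseudo'' because distinct modules with the same GRI over $\Ical$ have distance $0$, and ``extended'' because the value $\infty$ is allowed.) Symmetry is immediate from the definition, since the defining condition for an $\epsilon$-erosion between $\rank_M^\mathcal{I}$ and $\rank_N^\mathcal{I}$ is visibly symmetric in $M$ and $N$. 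Reflexivity follows by taking $\epsilon=0$: since $\Omega_0=I_\catP$, we have $I^0=I$ for every $I$ (using $I\subset I^0$ and the fact that $p\leq\Omega_0(r)=r$ forces membership), hence $\rank_M(I^0)=\rank_M(I)\leq\rank_M(I)$ trivially, so $0$-erosion always exists between $\rank_M^\mathcal{I}$ and itself.

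The substantive part is the triangle inequality. First I would record the key monotonicity fact about thickenings: if $\epsilon\leq\epsilon'$ then $I^\epsilon\subseteq I^{\epsilon'}$, and more importantly the \emph{composition} property $(I^\epsilon)^\zeta\subseteq I^{\epsilon+\zeta}$. The latter follows from the superlinearity axiom $\Omega_\epsilon\Omega_\zeta\leq\Omega_{\epsilon+\zeta}$ in Definition \ref{def:superlinear family}: unwinding the definitions, if $r\in(I^\epsilon)^\zeta$ then there are $p,p'\in I^\epsilon$ with $p\leq\Omega_\zeta(r)$ and $r\leq\Omega_\zeta(p')$, and each of $p,p'$ is itself sandwiched by elements of $I$ via $\Omega_\epsilon$; composing and applying $\Omega_\epsilon\Omega_\zeta\leq\Omega_{\epsilon+\zeta}$ (together with order-preservation of the $\Omega$'s) yields elements of $I$ sandwiching $r$ via $\Omega_{\epsilon+\zeta}$, so $r\in I^{\epsilon+\zeta}$. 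Then, given an $\epsilon$-erosion between $\rank_M^\mathcal{I},\rank_L^\mathcal{I}$ and a $\zeta$-erosion between $\rank_L^\mathcal{I},\rank_N^\mathcal{I}$, for any $I\in\mathcal{I}$ we chain: $\rank_M(I^{\epsilon+\zeta})\leq\rank_M((I^\zeta)^\epsilon)\leq\rank_L(I^\zeta)\leq\rank_N(I)$, where the first inequality uses $(I^\zeta)^\epsilon\subseteq I^{\epsilon+\zeta}$ together with monotonicity of $\rank_M$ under inclusion of connected subsets (Remark \ref{rem:properties of rank invariant} \ref{item:monotonicity}), the second uses the $\epsilon$-erosion applied to $I^\zeta\in\mathcal{I}$ (here $\mathcal{I}$ closed under $\Omega$-thickenings is essential so that $I^\zeta\in\mathcal{I}$), and the third uses the $\zeta$-erosion applied to $I$. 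The symmetric chain gives $\rank_N(I^{\epsilon+\zeta})\leq\rank_M(I)$, so we obtain an $(\epsilon+\zeta)$-erosion between $\rank_M^\mathcal{I}$ and $\rank_N^\mathcal{I}$. Taking infima over $\epsilon$ and $\zeta$ yields $d_E^\Omega(\rank_M^\mathcal{I},\rank_N^\mathcal{I})\leq d_E^\Omega(\rank_M^\mathcal{I},\rank_L^\mathcal{I})+d_E^\Omega(\rank_L^\mathcal{I},\rank_N^\mathcal{I})$, with the convention that the inequality is vacuous if either summand is $\infty$.

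The main obstacle — really the only nonroutine point — is establishing $(I^\zeta)^\epsilon\subseteq I^{\epsilon+\zeta}$ cleanly from the superlinearity axiom; one must be careful because the thickening in Definition \ref{def:epsthickening} is a two-sided sandwich condition ($p\leq\Omega_\epsilon(r)$ \emph{and} $r\leq\Omega_\epsilon(p')$), so both directions of the sandwich need to be propagated, and one needs order-preservation of $\Omega_\epsilon$ (it is an endofunctor on $\catP$) to push the inequalities through. Everything else — reflexivity, symmetry, and the final infimum manipulation — is formal. I would also note in passing that monotonicity $I^\epsilon\subseteq I^{\epsilon'}$ for $\epsilon\leq\epsilon'$, which is implicitly needed to make the ``$\inf$'' well-behaved and to ensure that an $\epsilon$-erosion is also an $\epsilon'$-erosion, follows from $\Omega_\epsilon=\Omega_\epsilon\Omega_0\leq\Omega_{\epsilon'}$ when $\epsilon\leq\epsilon'$ (via $\Omega_0=I_\catP\leq\Omega_{\epsilon'-\epsilon}$ and superlinearity), combined again with order-preservation.
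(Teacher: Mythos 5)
Your proof is correct and follows essentially the same route as the paper: reflexivity from $\Omega_0 = I_\catP$, symmetry by inspection, and the triangle inequality by deriving $(I^\epsilon)^\zeta \subseteq I^{\epsilon+\zeta}$ from the superlinearity axiom and chaining the two erosions via monotonicity of the rank and closedness of $\Ical$ under thickenings. The only difference is that you spell out the composition containment $(I^\epsilon)^\zeta \subseteq I^{\epsilon+\zeta}$, which the paper asserts without proof.
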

\begin{proof}
Since $\Omega_0=I_\catP$, 
it is immediate that $d_E^\Omega(\rank_M^\Ical,\rank_M^\Ical) = 0$. Symmetry is immediate from the definition. 

It remains to show the triangle inequality. Note that $\Omega_\epsilon\Omega_{\epsilon'}\leq \Omega_{\epsilon+\epsilon'}$. This implies $(I^\epsilon)^{\epsilon'}\subset I^{\epsilon+\epsilon'}$. From this, if there is an $\epsilon$-erosion between $\rank_M^\Ical$ and $\rank_N^\Ical$, and an $\epsilon'$-erosion between $\rank_N^\Ical$ and $\rank_L^\Ical$, then for all $I\in \mathcal{I}$:
\[\rank_M(I)\geq \rank_N(I^\epsilon)\geq \rank_L((I^\epsilon)^{\epsilon'})\geq \rank_L(I^{\epsilon+\epsilon'})\]
\[\rank_L(I)\geq \rank_N(I^{\epsilon'})\geq \rank_M((I^{\epsilon'})^\epsilon)\geq \rank_M(I^{\epsilon'+\epsilon}),\]
hence there is an $(\epsilon+\epsilon')$-erosion between $\rank_M^\Ical$ and $\rank_L^\Ical$, as desired.
\end{proof}

To ensure stability of the generalized rank invariant for $\catP$-modules, we require the family $\Omega$ to satisfy two additional properties:
\begin{definition}\phantomsection\label{def:stability added conditions}
\begin{enumerate}[label=(\roman*),leftmargin=*]
    \item (\textbf{surjectivity}) For all $\epsilon\geq 0$, $\Omega_\epsilon:\catP\to \catP$ must be surjective, and\label{item:general stability 1}
    \item (\textbf{order embedding}) for all $\epsilon\geq 0$ and $p,p'\in \catP$, $p\leq p'\iff \Omega_\epsilon(p)\leq \Omega_\epsilon(p')$. \label{item:general stability 2}
\end{enumerate}
We call a family of superlinear translations $\Omega$ a \textbf{surjective order embedding} if $\Omega$ satisfies properties \ref{item:general stability 1} and \ref{item:general stability 2}.
\end{definition}

\begin{remark}
    Note that property \ref{item:general stability 2} is a strengthening of the condition in Definition \ref{def:superlinear family}, which only enforces the forward direction of the if and only if statement.
\newline \indent   
    Further, note that property \ref{item:general stability 2} implies that $\Omega_\epsilon$ is injective for all $\epsilon\geq 0$.
    This implies that if $\catP$ is finite, properties \ref{item:general stability 1} and \ref{item:general stability 2} imply each other.
    As demonstrated in the examples in Remark \ref{rem:general stability assumptions} below, the two properties are independent of each other when $P$ is infinite.
\end{remark}

For example, if $\Omega$ is the shift by $(\lfloor \epsilon \rfloor,\lfloor \epsilon \rfloor,\ldots \lfloor \epsilon\rfloor)$ in $\Z^n$, or more generally the standard shift by $(\epsilon,\epsilon,\ldots,\epsilon)$ in $\R^d$, then both properties are satisfied.
In this case, we call the family $\Omega$ is a surjective order embedding.

\begin{theorem}\label{thm:gristabilityold}
Fix $\Omega$ be a family of superlinear translations on $\catP$ such that $\Omega$ is a surjective order embedding.
Let $\mathcal{I}\subset \Int(\catP)$ be a collection of intervals closed under $\Omega$-thickenings.
Then for any $\catP$-modules $M$ and $N$:
\begin{equation}\label{eq:stability}
    d_E^\Omega(\rank_M^\Ical,\rank_N^\Ical)\leq \dI^\Omega(M,N).
\end{equation}
\end{theorem}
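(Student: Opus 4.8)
\textbf{Proof proposal for Theorem \ref{thm:gristabilityold}.}

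The plan is to mimic the proof of Theorem \ref{thm:gristability}, replacing the concrete $\bareps$-shift by the superlinear translation $\Omega_\epsilon$ and its natural transformation $\eta_\epsilon$, and upgrading the two elementary facts used in the $\Z^2$ case (``$p+\bareps\in I\implies p\in I^\epsilon$'' and ``$p\in I\implies p-\bareps, p+\bareps\in I^\epsilon$'') to their analogues in the general setting. If $\dI^\Omega(M,N)=\infty$ there is nothing to prove, so fix $\epsilon\ge 0$ and an $\Omega_\epsilon$-interleaving $(\varphi,\psi)$ with $\varphi:M\to N\Omega_\epsilon$ and $\psi:N\to M\Omega_\epsilon$. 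Fix $I\in\mathcal I$; by Proposition \ref{prop:eps thickening is interval} and the hypothesis that $\mathcal I$ is closed under $\Omega$-thickenings, $I^\epsilon\in\mathcal I$. As in the proof of Theorem \ref{thm:gristability}, it suffices to produce linear maps $\alpha':\varprojlim N|_{I^\epsilon}\to\varprojlim M|_I$ and $\beta':\varinjlim M|_I\to\varinjlim N|_{I^\epsilon}$ making the limit-to-colimit square commute, since then $\psi_{N|_{I^\epsilon}}$ factors through $\psi_{M|_I}$ and hence $\rk_N(I^\epsilon)\le\rk_M(I)$; the reverse inequality follows by the symmetric argument with the roles of $M$ and $N$ swapped, giving $d_E^\Omega(\rk_M^\mathcal I,\rk_N^\mathcal I)\le\epsilon$, and taking the infimum over $\epsilon$ finishes.

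The construction of $\alpha'$: given a section $(\ell_p)_{p\in I^\epsilon}\in\varprojlim N|_{I^\epsilon}$, I would set $\alpha'\big((\ell_p)_{p\in I^\epsilon}\big):=\big(\varphi_p(\ell_p)\big)_{p\,:\,\Omega_\epsilon(p)\in I}$, interpreted as a tuple indexed by $\Omega_\epsilon(p)\in I$ via the map $p\mapsto\Omega_\epsilon(p)$. For this to make sense I must check that $\Omega_\epsilon(p)\in I$ implies $p\in I^\epsilon$: this is exactly Definition \ref{def:epsthickening} with $p'$ (and $p$) taken to be $\Omega_\epsilon(p)\in I$ and using $p\le\Omega_\epsilon(p)$, so $p\in I^\epsilon$; here I also need $\Omega_\epsilon$ to be injective (which follows from the order-embedding property) so that the reindexing $p\leftrightarrow\Omega_\epsilon(p)$ is unambiguous, and surjectivity of $\Omega_\epsilon$ onto $\catP$ to know $\Omega_\epsilon^{-1}(I)$ is a section-sized index set; naturality of $\varphi$ (i.e. $\varphi$ being a natural transformation $M\to N\Omega_\epsilon$) gives that the resulting tuple is indeed a section of $M|_I$. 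For $\beta'$, set $\beta'\big([v_p]\big):=[\psi_p(v_p)]$ for $p\in I$, $v_p\in M_p$; well-definedness needs $p\in I\implies p\in I^\epsilon$ (immediate, $I\subset I^\epsilon$) together with naturality of $\psi$. The commutativity of the square is then checked pointwise exactly as in the proof of Theorem \ref{thm:gristability}: pick $p_0\in I$, trace a section of $\varprojlim N|_{I^\epsilon}$ around both ways, and use the interleaving relations $\psi\Omega_\epsilon\circ\varphi = M\eta_\epsilon\Omega_\epsilon\circ M\eta_\epsilon$ (i.e. the composite equals the transition $M\to M\Omega_\epsilon\Omega_\epsilon$) to identify the two images in $\varinjlim N|_{I^\epsilon}$; this is where one needs $\Omega_\epsilon(p_0)$ and the ``$\Omega_\epsilon^{-1}$-preimage'' of $p_0$ to both lie in $I^\epsilon$, which again follows from $I\subset I^\epsilon$ and the definition of thickening applied to $\Omega_\epsilon(p_0)\in I$.

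The main obstacle is bookkeeping the index sets: in the $\Z^2$ case the shift $p\mapsto p+\bareps$ is a bijection of the poset, so one silently identifies $\varprojlim N|_{I^\epsilon}$ with a space indexed by $\{p: p+\bareps\in I\}$; here $\Omega_\epsilon$ need only be a surjective order embedding, not an automorphism, so I must be careful that (a) the reindexing $p\leftrightarrow\Omega_\epsilon(p)$ is a well-defined bijection between $\Omega_\epsilon^{-1}(I)$ and $I$ (injectivity of $\Omega_\epsilon$), (b) $\Omega_\epsilon^{-1}(I)\subset I^\epsilon$ so that restricting a section of $N|_{I^\epsilon}$ to these indices is legitimate, and (c) the natural transformations $M\eta_\epsilon$ etc.\ intertwine the internal maps of $M|_I$ with those of $M\Omega_\epsilon$ correctly over the relevant comparabilities. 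I expect all of these to go through using precisely the surjectivity and order-embedding hypotheses on $\Omega$ (Definition \ref{def:stability added conditions}) — indeed this is presumably why those hypotheses were imposed — and the commuting square itself reduces to one diagram chase identical in spirit to the displayed one in the proof of Theorem \ref{thm:gristability}.
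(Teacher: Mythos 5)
Your proposal is correct and takes essentially the same route as the paper: both reduce to the proof of Theorem \ref{thm:gristability} by replacing the $\bareps$-shift with $\Omega_\epsilon$, and both isolate the surjectivity and order-embedding hypotheses as precisely what makes $\alpha'$ well-defined (surjectivity so every $q\in I$ is $\Omega_\epsilon(p)$ for some $p$, and the two-sided order embedding so that $\Omega_\epsilon(p)\leq\Omega_\epsilon(p')$ forces $p\leq p'$, letting naturality transfer the section condition). A small cosmetic slip — in your $\alpha'$ you write $\varphi_p(\ell_p)$ where $\psi_p(\ell_p)$ is needed for the types $N_p\to M_{\Omega_\epsilon(p)}$ to match, a swap also present in the paper's own proof of Theorem \ref{thm:gristability} — does not affect the substance.
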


We omit most of the details of the proof of Theorem \ref{thm:gristabilityold} as it follows the same steps as the proof of Theorem \ref{thm:gristability}, under adjustments to the general setting such as replacing $p+\epsilon$ with $\Omega_\epsilon(p)$.
The assumption that $\Omega$ is a surjective order embedding ensures that the map the map $\alpha':\varprojlim N\vert_{I^\epsilon}\to \varprojlim M\vert_I$ is well-defined, i.e. $\alpha'$ sends sections to sections.

To see that $\alpha':\varprojlim N\vert_{I^\epsilon}\to M\vert_I$ sends a section to a section, let $(\ell_p)_{p\in I^\epsilon}$ be a section in $\varprojlim N\vert_{I^\epsilon}$.
$\alpha'$ sends maps this section to the collection $(\alpha_p(\ell_p))_{\Omega_\epsilon(p)\in I}$.
Since $\Omega_\epsilon$ is surjective, we know that this image $(\alpha_p(\ell_p))$ contains an element for all $p\in I$.
Property \ref{item:general stability 2} alongside naturality of $\alpha$ implies that for all $p\leq p'\in I^\epsilon$ with $\Omega_\epsilon(p)\leq \Omega_\epsilon(p')\in I$, \[\varphi_M(\Omega_\epsilon(p), \Omega_\epsilon(p'))(\alpha_p(\ell_p)) = \alpha_{p'}(\ell_{p'}),\] demonstrating that $(\alpha_p(\ell_p))_{\Omega_\epsilon(p)\in I}$ is indeed a section.

\begin{remark}\label{rem:general stability assumptions}
    \ok{We show that if the family $\Omega$ of Theorem \ref{thm:gristabilityold} lacks either property \ref{item:general stability 1} or property \ref{item:general stability 2}, the inequality presented in Equation (\ref{eq:stability}) cannot be guaranteed.}

    \ok{To see the necessity of property \ref{item:general stability 1},} \ok{
    let $\catP=[0,\infty)\subset \R$ with the usual order.
    Let $\Omega$ be the family of superlinear translations $\Omega_\epsilon:\catP\to \catP$, where for $\epsilon\geq 0$ and $a\in \catP$, $\Omega_\epsilon(a) := a+\epsilon$.
    Clearly, $\Omega$ satisfies property \ref{item:general stability 2}, but not \ref{item:general stability 1}.}

    \ok{Let $M:=\kf_{\catP}$ and $N:=\kf_{(0,\infty)}$.
    We have $\dI^\Omega(M,N) = 0$, as there is an $\Omega_\epsilon$-interleaving for all $\epsilon>0$.
    However, we claim $\dE^\Omega(\rk^{\Int}_M,\rk^{\Int}_N)=\infty$.
    To see this, let $I=[0,1]\subset \catP$.
    Then $\rk^{\Int}_N(I)=0$ because $N(0) = 0$.
    However, for all $\epsilon\geq 0$, $\rk^{\Int}_M(I^\epsilon) = 1$, and so there is no $\epsilon\geq 0$ for which $\rk^{\Int}_M(I^\epsilon)\leq \rk^{\Int}_N(I)$, implying the claim.
    Thus, if $\Omega$ is not surjective, the inequality presented in Equation \ref{eq:stability} cannot be guaranteed.}

 \ok{To see the necessity of property \ref{item:general stability 2},}    
let $\catP$ be the $x$-axis in $\R^2$ union the \ok{negative part of} the $y$-axis, $\{0\}\times (-\infty,0]$.
Let the poset structure on $\catP$ be induced by the standard order on $\R^2$.
For $\epsilon\geq 0$, let $\Omega_\epsilon$ be defined as $\Omega_\epsilon((a, 0)):= (a+\epsilon, 0)$, and for $b<0$, $\Omega_\epsilon((0,b)):= \begin{cases} (0,b+\epsilon), & \epsilon \leq -b\\ (\epsilon + b, 0), & \epsilon > -b\end{cases}$.
It is straightforward to check that $\Omega:=(\Omega_\epsilon)_{\epsilon\geq 0}$ forms a family of superlinear translations on $\catP$.
Further, it is straightforward to verify that $\Omega_\epsilon$ is surjective for all $\epsilon\geq 0$, so property \ref{item:general stability 1} holds.
However, property \ref{item:general stability 2} fails.
For one example, $\Omega_3((-2,0)) = (1,0) \leq (2,0) = \Omega_3((0,-1))$, however $(-2,0)$ and $(0,-1)$ are incomparable.

Let $M$ and $N$ be $\catP$-modules defined as follows. 
$M$ is the the direct sum of interval modules $\kf_\catP\oplus \kf_{(0,0)}$.
Let $N$ consist of the same vector spaces as $M$ pointwisely, but with maps given by, for any $a<0$, $\varphi_N((a,0), (0,0)) = \begin{bmatrix} 1\\ 0\end{bmatrix}$, $\varphi_N((0,a), (0,0)) = \begin{bmatrix} 0\\ 1\end{bmatrix}$, and for any $b>0$, $\varphi_N((0,0), (b,0)) = \begin{bmatrix} 1 & 1\end{bmatrix}$.
All other admissible maps $\varphi_N(a, b):\kf\to \kf$ are the identity.
It is straightforward to check that $N$ is indecomposable, and not interval-decomposable as it has dimension 2 at $(0,0)$, so $M$ and $N$ are non-isomorphic.

We claim that $\dI^\Omega(M,N) = 0$.
Let $\epsilon > 0$.
We claim there exists interleaving maps $\psi_N:N\to M\Omega_\epsilon$ and $\psi_M:M\to N\Omega_\epsilon$.
Define $\psi_N$ and $\psi_M$ pointwise as follows:
\[\psi_N\vert_{N(a)} := \begin{cases}
    \varphi_M(a, \Omega_\epsilon(a)) & a\neq (0,0)\\
    \varphi_N(a, \Omega_\epsilon(a)) & a = (0,0)
\end{cases}\] 
\[\psi_M\vert_{M(a)} := \begin{cases}
    \varphi_N(a, \Omega_\epsilon(a)) & a\neq (0,0)\\
    \varphi_M(a, \Omega_\epsilon(a)) & a = (0,0)
\end{cases}\] 

It is straightforward to check that $\psi_N$ and $\psi_M$ satisfy the all commutative diagrams for the interleaving condition (Definition \ref{def:interleaving distance}) are satisfied.
Hence, there exists an $\epsilon$-interleaving between $M$ and $N$ for all $\epsilon>0$, and so $\dI^\Omega(M,N)=0$.

Lastly, we claim $\dE^\Omega(\rk^\Int_M,\rk^\Int_N) = \infty$.
For any $I\in \Int(\catP)$ such that $\{(0,0)\}\subsetneq I$, it is immediate that $\rk^\Int_M(I) = 1$, since $M$ has as a summand $\kf_\catP$.
Let $I$ be the interval which is the convex hull (within $\catP$) of the points $(-1,0)$ and $(0,-1)$.
We claim $\rk^\Int_N(I) = 0$. 
This follows immediately as $\varprojlim N\vert_I = 0$.
To see this limit is trivial, let $\ell:=(\ell_p)_{p\in I}$ be a section of $N\vert_I$.
Then $\exists \ a,b\in \kf$, with $\ell_{(-1,0)} = a$ and $\ell_{(0,-1)} = b$.
Then by the definition of a section (Convention \ref{con:limit and colimit}), we must have $\ell_{(0,0)} = \varphi_N((-1,0), (0,0))(a) = (a,0)$ and $\ell_{(0,0)} = \varphi_N((0,-1),(0,0))(b) = (0,b)$, which implies $a=b=0$, and thus the only section $\ell$ over $N\vert_I$ has $\ell_p = 0$ for all $p\in I$.

Hence, for all $\epsilon\geq 0$, $\rk^\Int_N(I) = 0< 1=\rk^\Int_M(I^\epsilon)$, and so $\dE^\Omega(\rk^\Int_M,\rk^\Int_N) = \infty$ as claimed.
Thus, if property \ref{item:general stability 2} does not hold, the inequality presented in Equation \ref{eq:stability} cannot be guaranteed.
\end{remark}

\section{Proofs from Section \ref{sec:gri vs. zz}}

\noindent\paragraph{Proof of Proposition \ref{prop:rank via zigzag}} To prove Proposition \ref{prop:rank via zigzag}, we need the following definition and lemmas (which are also used in \cite{dey2022computing}). Recall the construction of the (co)limit from Convention \ref{con:limit and colimit}.

Let $\catP$ be a poset and let $M$ be any $\catP$-module.
Let $p,p'\in \catP$ and let $v_{p}\in M_p$ and $v_{p'}\in M_{p'}$.
We write $v_p\sim v_{p'}$ if $p$ and $p'$ are comparable, and either $v_p$ is mapped to $v_{p'}$ via $\varphi_M(p,p')$ or $v_{p'}$ is mapped to $v_p$ via $\varphi_M(p',p)$.

\begin{definition}
Let $\Gamma:p_0,\ldots,p_k$ be a path in $\catP$. 
A $(k+1)$-tuple $\bv\in \bigoplus_{i=0}^k M_{p_i}$ is called a \textbf{section of $M$ along $\Gamma$} if $\bv_{p_i}\sim\bv_{p_{i+1}}$ for each $i$.
\end{definition} 
Note that $\bv$ is not necessarily a section of the restriction  $M\vert_{\{p_0,\ldots,p_k\}}$  of $M$ to the subposet $\{p_0,\ldots,p_k\}\subset I$ \cite[Example 21]{dey2022computing}. 
Furthermore, $\Gamma$ can contain multiple copies of the same point in $\catP$.

\begin{lemma}\label{lem:path characterization of zero in colim}
Let $p,p'\in \catP$. 
For any vectors $v_{p}\in M_{p}$ and $v_{p'}\in M_{p'}$, it holds that $[v_{p}]=[v_{p'}]$ as elements of\footnote{For simplicity, we write $[v_{p}]$
 and $[v_{p'}]$ instead of $[j_p(v_p)]$ and $[j_q(v_{p'})]$ respectively where $j_p:M_p\rightarrow \bigoplus_{r\in \catP}M_r$  and  $j_{p'}:M_{p'}\rightarrow \bigoplus_{r\in \catP}M_r$ are the canonical inclusion maps.} the colimit $\varinjlim M$ if and only if there exist a path $\Gamma:p=p_0,p_1,\ldots,p_n=p'$ in $\catP$ and a section $\bv$ of $M$ along $\Gamma$ such that $\bv_{p}=v_{p}$ and $\bv_{p'}=v_{p'}$. 
\end{lemma}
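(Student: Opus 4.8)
\textbf{Proof plan for Lemma \ref{lem:path characterization of zero in colim}.}

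The plan is to unwind the explicit construction of the colimit $\varinjlim M$ given in Convention \ref{con:limit and colimit} (ii), namely $\varinjlim M \cong \left(\bigoplus_{p\in \catP}M_p\right)/V$ where $V$ is spanned by the elementary relations $\bar\iota_p(v_p)-\bar\iota_{p'}(v_{p'})$ over comparable $p\leq p'$ with $v_{p'}=\varphi_M(p,p')(v_p)$. First I would prove the ``if'' direction, which is the easy one: given a path $\Gamma:p=p_0,\ldots,p_n=p'$ and a section $\bv$ of $M$ along $\Gamma$, the relation $\bv_{p_i}\sim \bv_{p_{i+1}}$ means that $[\bv_{p_i}]=[\bv_{p_{i+1}}]$ in $\varinjlim M$ directly from the definition of $V$ (whether the comparability goes up or down, the elementary generator of $V$ witnesses the equality). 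Chaining these equalities along $\Gamma$ yields $[v_p]=[\bv_{p_0}]=[\bv_{p_n}]=[v_{p'}]$.

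For the ``only if'' direction, suppose $[v_p]=[v_{p'}]$ in the quotient, i.e. $\bar\iota_p(v_p)-\bar\iota_{p'}(v_{p'})\in V$. Then I can write this element as a finite linear combination $\sum_{j} c_j\big(\bar\iota_{a_j}(x_j)-\bar\iota_{b_j}(y_j)\big)$ with $a_j\leq b_j$ and $y_j=\varphi_M(a_j,b_j)(x_j)$. The key step is to reorganize this finite sum into a ``telescoping'' structure: introduce a finite auxiliary multigraph whose vertices are the (finitely many) points of $\catP$ appearing among the $a_j,b_j,p,p'$, decorated with the relevant vectors, and whose edges record the elementary relations; the condition that the combination equals $\bar\iota_p(v_p)-\bar\iota_{p'}(v_{p'})$ forces a cancellation pattern that, after passing to a connected component containing $p$ and $p'$, can be realized as a walk from $p$ to $p'$. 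Along such a walk one reads off a path $\Gamma$ in $\catP$ (allowing repeated vertices, which Convention permits) together with vectors $\bv_{p_i}\in M_{p_i}$ at each step that are related by $\sim$, i.e. a section of $M$ along $\Gamma$ with the prescribed endpoints $v_p$ and $v_{p'}$. This is precisely the standard argument that two elements of a directed/filtered colimit of vector spaces agree iff they are connected by a zigzag, adapted to an arbitrary (not necessarily filtered) poset by allowing the connecting sequence to zigzag up and down.

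The main obstacle is making the bookkeeping in the ``only if'' direction fully rigorous: one must handle coefficients $c_j$ that need not be $\pm 1$, vectors that may appear at the same point $\catP$-vertex with different values, and the possibility that the witnessing walk revisits vertices. The cleanest way to manage this, which I would adopt, is to work one elementary relation at a time and argue by induction on the number of generators $c_j(\bar\iota_{a_j}(x_j)-\bar\iota_{b_j}(y_j))$ used to express $\bar\iota_p(v_p)-\bar\iota_{p'}(v_{p'})$, at each stage peeling off a generator incident to the current ``free'' endpoint and extending the path $\Gamma$ by one comparable step (up or down according to the direction of that generator), updating the carried vector via $\varphi_M$; linearity of $M_p$ ensures the carried vectors can always be chosen consistently. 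Since only finitely many generators occur, the induction terminates and produces the required finite path and section. I expect the rest — checking that the resulting tuple genuinely satisfies $\bv_{p_i}\sim\bv_{p_{i+1}}$ and has the right endpoints — to be routine from the construction.
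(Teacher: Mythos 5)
Your ``if'' direction is correct: each step $\bv_{p_i}\sim\bv_{p_{i+1}}$ exhibits $\bar\iota_{p_i}(\bv_{p_i})-\bar\iota_{p_{i+1}}(\bv_{p_{i+1}})$ (up to sign) as a generator of the subspace $V$ of Convention \ref{con:limit and colimit}, and summing along $\Gamma$ shows $\bar\iota_p(v_p)-\bar\iota_{p'}(v_{p'})\in V$. The ``only if'' direction, however, has a fatal gap, and the step you yourself flag as ``the main obstacle'' cannot be made rigorous: the implication is in fact \emph{false} for an arbitrary poset $\catP$. Take $\catP=\{b_1,b_2,c_1,c_2\}$ with $b_1,b_2<c_1,c_2$, the $b_i$ incomparable, the $c_j$ incomparable, and work over a field of characteristic $\neq 2$. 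Set $M_p=\kf$ for all $p$, $\varphi_M(b_i,c_j)=\id$ for $(i,j)\neq(2,2)$, and $\varphi_M(b_2,c_2)=-\id$ (there are no nontrivial compositions in $\catP$, so this is a functor). The four relators $(1,0,-1,0)$, $(0,1,-1,0)$, $(1,0,0,-1)$, $(0,1,0,1)$ in $\kf^4$ have determinant $2\neq 0$, so $V=\kf^4$, $\varinjlim M=0$, and in particular $[1_{b_1}]=[0_{c_1}]$. Yet every structure map is a bijection of $\kf$, so any section along a path from $b_1$ to $c_1$ carrying $1$ at $b_1$ has value $\pm 1$ at every vertex and can never be $0$ at $c_1$; no witnessing path and section exist.

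The counterexample shows exactly where the peeling/telescoping plan fails. When you write $\bar\iota_p(v_p)-\bar\iota_{p'}(v_{p'})=\sum_j c_j(\bar\iota_{a_j}(x_j)-\bar\iota_{b_j}(y_j))$, the $p$-component only satisfies $v_p=\sum_{j:a_j=p}c_jx_j-\sum_{j:b_j=p}c_jy_j$; no individual relator incident to the ``current free endpoint'' need carry the current endpoint vector, so there is nothing to peel. Linearity does not rescue this, because the relator configuration can be cyclic with nontrivial monodromy (as above), and then the cancellations at internal vertices happen in aggregate rather than pairwise. The ``standard argument'' you invoke is the one for \emph{filtered} colimits of vector spaces; it genuinely does not extend to arbitrary posets, which is a classical cautionary fact. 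Note, too, that the paper offers no proof of this lemma --- it is imported from \cite{dey2022computing} --- and the only places the ``only if'' direction is actually used (well-definedness and injectivity of $i$, $g$, $i'$ in the proof of Proposition \ref{prop:rank via zigzag}) apply it with $\catP$ a \emph{zigzag poset}, where the section values along a path really do telescope and the claim holds. To make the lemma (and your proof) correct as written, a hypothesis excluding crown-type subposets of $\catP$ --- or an outright restriction to zigzag posets --- is required.
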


\begin{lemma}\label{lem:lower fence and upper fence isomorphism}
Let $I$ be a finite interval of $\Z^2$. 
Let $L:=\overmin(I)$ and $U:=\overmax(I)$.
Given any $I$-module $M$, we have
$\varprojlim M \cong \varprojlim M\vert_{L}$ and $\varinjlim M \cong \varinjlim M\vert_{U}$.
\end{lemma}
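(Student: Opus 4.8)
\textbf{Plan for proving Lemma \ref{lem:lower fence and upper fence isomorphism}.}

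The plan is to prove the statement $\varprojlim M \cong \varprojlim M\vert_L$ (the dual statement $\varinjlim M \cong \varinjlim M\vert_U$ follows by reversing all arrows, i.e. by passing to the opposite poset and using that $\overmax(I)$ for $I$ is $\overmin(I^{\mathrm{op}})$ for $I^{\mathrm{op}}$). The key observation is that the inclusion $L\hookrightarrow I$ is a \emph{final} functor in the categorical sense (or rather its opposite is initial), so that restriction along it preserves limits; however, since $L$ as a subposet of $I$ need not literally be a final subcategory, I would instead give a hands-on argument using the concrete description of the limit from Convention \ref{con:limit and colimit}. Recall $L = \overmin(I)$ is the shortest faithful path containing the zigzag $p_0 < (p_0\vee p_1) > p_1 < \cdots > p_k$ where $\min(I) = \{p_0,\ldots,p_k\}$ is listed by increasing $x$-coordinate; crucially every point of $\min(I)$ and every relevant join $p_i\vee p_{i+1}$ lies in $I$ by convexity, and $L$ is connected and lies entirely "low" in $I$.

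First I would set up the restriction map $\rho: \varprojlim M \to \varprojlim M\vert_L$ sending a section $(\ell_p)_{p\in I}$ to $(\ell_p)_{p\in L}$; this is well-defined since the commutation relations defining a section over $I$ restrict to those over $L$. The heart of the argument is to construct an inverse: given a section $(m_p)_{p\in L}$ of $M\vert_L$, I would extend it to all of $I$ by setting, for an arbitrary $q\in I$, $\widetilde{m}_q := \varphi_M(p, q)(m_p)$ for a suitable minimal point $p\in\min(I)$ with $p\le q$ lying "below" $q$ in a way compatible with $L$. The two things to check are: (1) well-definedness — if $p, p'\in\min(I)$ both satisfy $p\le q$ and $p'\le q$, then $\varphi_M(p,q)(m_p) = \varphi_M(p',q)(m_{p'})$ — and (2) that the resulting tuple $(\widetilde m_q)_{q\in I}$ genuinely satisfies the section relations $\varphi_M(q,q')(\widetilde m_q) = \widetilde m_{q'}$ for all $q\le q'$ in $I$. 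For (1), I would use that along the path $L$ the values $m_{p_0},\ldots,m_{p_k}$ are "linked" through the joins $p_i\vee p_{i+1}$: since $m_{p_i}$ and $m_{p_{i+1}}$ both map to $m_{p_i\vee p_{i+1}}$, and both $p_i\vee p_{i+1} \le q$ when $p_i,p_{i+1}\le q$ (using convexity of $I$ to ensure $p_i\vee p_{i+1}\in I$), induction along the chain of minimal points below $q$ gives equality; the antichain structure of $\min(I)$ together with the "increasing $x$-coordinate" ordering ensures the set $\{i : p_i \le q\}$ is an interval of indices, so the chain argument goes through. For (2), given $q\le q'$ in $I$, pick $p\in\min(I)$ with $p\le q\le q'$; then $\widetilde m_{q'} = \varphi_M(p,q')(m_p) = \varphi_M(q,q')\varphi_M(p,q)(m_p) = \varphi_M(q,q')(\widetilde m_q)$ by functoriality.

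The step I expect to be the main obstacle is establishing that $\{i : p_i \le q\}$ is a contiguous block of indices (so the "linking via joins" argument connects the relevant $m_{p_i}$'s without leaving $I$), and more generally handling the well-definedness claim (1) cleanly — this is exactly the place where the specific combinatorial structure of $\overmin(I)$ (shortest faithful path through the staircase of minima) and the convexity of $I$ are used, and where a careless argument could fail if $I$ had a complicated shape. Once $\rho$ and its inverse are shown to be mutually inverse linear maps, compatibility with the limit cones $\{\pi_p\}$ is immediate from the construction (the cone maps for $p\in L$ are literally unchanged, and for general $q$ the relation $\widetilde m_q = \varphi_M(p,q)(m_p)$ is forced by the cone condition), so $\rho$ is an isomorphism of limits. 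Finally, I would remark that the dual statement for $\varinjlim$ and $\overmax(I)$ is obtained by applying the just-proved result to the $I^{\mathrm{op}}$-module $M^{\mathrm{op}}$, noting $\overmin(I^{\mathrm{op}}) = \overmax(I)$ and $\varprojlim M^{\mathrm{op}} = (\varinjlim M)^{*}$-style correspondence, or simply by repeating the argument with all inequalities and maps reversed.
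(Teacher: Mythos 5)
Your plan is correct and, for the limit statement, matches the paper's construction: the paper defines the isomorphism as the canonical ``section extension'' $e:\varprojlim M\vert_L\to\varprojlim M$ (with inverse the restriction), and well-definedness is governed by the connectedness of $L\cap q^\downarrow$ for each $q\in I$. Your contiguity-of-$\{i:p_i\le q\}$-plus-linking-through-the-joins-$p_i\vee p_{i+1}$ argument is precisely a hands-on verification of that connectedness, and the flagged obstacle does go through for intervals of $\Z^2$. One small correction to your framing: you hedge by saying $L\hookrightarrow I$ ``need not literally be a final subcategory,'' but the inclusion $L\hookrightarrow I$ is in fact an initial functor --- the comma category $L\downarrow q = L\cap q^\downarrow$ is nonempty (it contains a minimal point of $I$ below $q$) and connected for every $q\in I$, which is exactly what your well-definedness argument establishes --- so the categorical shortcut you mention applies directly and would let you conclude $\varprojlim M\cong\varprojlim M\vert_L$ immediately. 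For the colimit side you dualize (which works, since $I$ is finite and $M$ is pointwise finite-dimensional, and $\overmax(I)$ corresponds to $\overmin$ of the opposite poset); the paper instead gives a direct description of the map $i:\varinjlim M\vert_U\to\varinjlim M$, $[v_p]\mapsto[v_p]$, checking well-definedness via Lemma~\ref{lem:path characterization of zero in colim}. Either route is valid; the paper's direct construction has the minor advantage that the explicit map $i$ is reused verbatim in the proof of Proposition~\ref{prop:rank via zigzag}.
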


The isomorphism $\varprojlim M \cong \varprojlim M\vert_{L}$ in Lemma \ref{lem:lower fence and upper fence isomorphism} is given by the canonical \emph{section extension} $e:\varprojlim M\vert_{L} \rightarrow \varprojlim M$. Namely,
\begin{equation}\label{eq:section extension}
    e: (\bv_{p})_{p\in L}\mapsto (\bw_{p'})_{p'\in \catP },
\end{equation}
where for any $p'\in \catP$, the vector $\bw_{p'}$ is defined as $\varphi_M(p,p')(\bv_p)$ for \emph{any} $p\in L\cap p'^{\downarrow}$; the connectedness of $L\cap p'^{\downarrow}$ guarantees that $\bw_{p'}$ is well-defined. 
Also, if $p'\in L$, then $\bw_{p'}=\bv_{p'}$. The inverse $r:=e^{-1}$ is the canonical section restriction. 
The other isomorphism $\varinjlim M \cong \varinjlim M\vert_{U}$ in Lemma \ref{lem:lower fence and upper fence isomorphism} is given by the map $i:\varinjlim M\vert_U\rightarrow \varinjlim M$ defined by $[v_p]\mapsto [v_p]$ for any $p\in U$ and any $v_p\in M_p$; the fact that this map $i$ is well-defined follows from Lemma \ref{lem:path characterization of zero in colim}.

\begin{proof}[Proof of Proposition \ref{prop:rank via zigzag}]
Let $L$ and $U$ be as in Lemma \ref{lem:lower fence and upper fence isomorphism} above.
Let us define $\xi: \varprojlim M\vert_{L} \rightarrow \varinjlim M\vert_{U}$ by $i^{-1}\circ \psi_M\circ e$. By construction, the following diagram commutes
\begin{equation}\label{eq:diagram commutes}
\begin{tikzcd}
\varprojlim M\vert_{L} \arrow{r}{\xi} \arrow[d, "e", "\cong"']
&\varinjlim M\vert_{U} \arrow[d,"i","\cong"']\\
\varprojlim M \arrow{r}{\psi_M} &\varinjlim M,
\end{tikzcd}
\end{equation}
where $\psi_M$ is the canonical limit-to-colimit map of $M$. Hence we have that $\rank(\psi_M)=\rank(\xi)$. 
Now, it suffices to show:
\[\rank(\xi)=\rank(\psi_{M_{\Gamma}}:\varprojlim M_{\Gamma} \rightarrow \varinjlim M_{\Gamma}).\]
Let us recall the following: let $\alpha:V_1\rightarrow V_2$ and $\beta:V_2\rightarrow V_3$ be two linear maps. 
If $\alpha$ is surjective, then $\rank(\beta\circ \alpha)=\rank(\beta)$.
If $\beta$ is injective, then $\rank(\beta\circ \alpha)=\rank(\alpha)$.  
Therefore, it suffices to show that there exist a surjective linear map $f:\varprojlim M_{\Gamma}\rightarrow \varprojlim M\vert_L$ and an injective linear map $g:\varinjlim M\vert_U \rightarrow \varinjlim M_{\Gamma}$ such that $\psi_{M_{\Gamma}}=g\circ \xi \circ f$. 
We define $f$ as the canonical section restriction $(\bv_q)_{q\in \Gamma}\mapsto (\bv_{q})_{q\in L}$. We define $g$ as the canonical map, i.e. $[v_{q}]\mapsto [v_q]$ for any $q\in U$ and any $v_q\in M_q$. By Lemma \ref{lem:path characterization of zero in colim} and  by construction of $M_{\Gamma}$, the map $g$ is well-defined. 

We now show that $\psi_{M_{\Gamma}}=g\circ \xi \circ f$. Let $\bv:=(\bv_q)_{q\in \Gamma}\in \varprojlim M_{\Gamma}$. 
Then, by definition of $\psi_{M_{\Gamma}}$, the image of $\bv$ via $\psi_{M_{\Gamma}}$ is $[\bv_{q_0}]$ where $q_0\in U$ is defined as in Equation (\ref{eq:zigzag posets2}).
Also, we have \[\bv\stackrel{f}{\longmapsto} (\bv_{q})_{q\in L} \stackrel{\xi}{\longmapsto} [\bv_{q_0}] \big(\in \varinjlim M\vert_U) \stackrel{g}{\longmapsto} [\bv_{q_0}] (\in \varinjlim M_{\Gamma}\big),\] which proves the equality $\psi_{M_{\Gamma}}=g\circ \xi \circ f$.

We claim that $f$ is surjective. Let $r':\varprojlim M \rightarrow \varprojlim M_{\Gamma}$ be the canonical section restriction map $(\bv_q)_{q\in I}\mapsto (\bv_{q})_{q\in \Gamma}$. Then, the restriction $r:\varprojlim M \rightarrow \varprojlim M\vert_L$, can be seen as the composition of two restrictions $r=f\circ r'$.
 Since $r$ is the inverse of the isomorphism $e$ in Equation (\ref{eq:diagram commutes}), $r$ is surjective and thus so is $f$. 

Next we claim that $g$ is injective. Let $i':\varinjlim M_{\Gamma}\rightarrow \varinjlim M$ be defined by $[v_q]\mapsto [v_q]$ for any $q\in \Gamma$ and any $v_q\in M_q$.  By Lemma \ref{lem:path characterization of zero in colim} and  by construction of $M_{\Gamma}$, the map $i'$ is well-defined. Then, for the isomorphism $i$ in Equation (\ref{eq:diagram commutes}), we have $i=i'\circ g$. This implies that $g$ is injective.
\end{proof}
\end{document}